\title
{Measure Equivalence Rigidity and Bi-exactness of Groups}
\author{Hiroki Sako}
\address{Department of Mathematical Sciences, University of Tokyo, Komaba, Tokyo, 153-8914, Japan }
\email{hiroki@ms.u-tokyo.ac.jp, hiroki@math.ucla.edu}
\keywords{measurable equivalence; orbit equivalence rigidity; bi-exactness}
\newcommand{\C}{\mathbb{C}}
\newcommand{\Z}{\mathbb{Z}}
\newcommand{\R}{\mathcal{R}}
\newcommand{\A}{\mathcal{A}}
\newcommand{\E}{\mathfrak{E}}
\newcommand{\M}{\mathcal{M}}
\newcommand{\B}{\mathcal{B}}
\newcommand{\G}{\mathcal{G}}
\newcommand{\tr}{\mathop{\mathrm{tr}}\nolimits}
\newcommand{\supp}{\mathop{\mathrm{supp}}\nolimits}
\newcommand{\range}{\mathop{\mathrm{range}}\nolimits}
\newcommand{\Tr}{\mathop{\mathrm{Tr}}\nolimits}
\theoremstyle{plain}
\newtheorem{theorem}{Theorem}
\theoremstyle{plain}
\newtheorem{proposition}[theorem]{Proposition}
\theoremstyle{plain}
\newtheorem{lemma}[theorem]{Lemma}
\theoremstyle{plain}
\newtheorem{corollary}[theorem]{Corollary}
\theoremstyle{plain}
\newtheorem{definition}[theorem]{Definition}
\theoremstyle{plain}
\newtheorem{framework}[theorem]{Framework}
\theoremstyle{remark}
\newtheorem{remark}[theorem]{Remark}
\theoremstyle{remark}
\theoremstyle{remark}
\newtheorem{acknowledgment}{Acknowledgment}
\begin{document}

\begin{abstract}
We get three types of results on measurable group theory; direct product groups of Ozawa's class $\mathcal{S}$ groups, wreath product groups and amalgamated free products. We prove measure equivalence factorization results on direct product groups of Ozawa's class $\mathcal{S}$ groups. As consequences, Monod--Shalom type orbit equivalence rigidity theorems follow. We prove that if two wreath product groups $A \wr G$, $B \wr \Gamma$ of non-amenable exact direct product groups $G$, $\Gamma$ with amenable bases $A$, $B$ are measure equivalent, then $G$ and $\Gamma$ are measure equivalent. We get Bass--Serre rigidity results on amalgamated free products of non-amenable exact direct product groups.
\end{abstract}

\maketitle

\section{Introduction}
Measurable group theory is a discipline which deals with the question how much structure on countable groups is preserved through measure equivalence. The notion of measure equivalence was introduced by Gromov \cite{gromov} as a variant of quasi-isometry. The field recently has attracted much attention since small measure equivalence classes were found (Furman \cite{Furman; Higher Rank Lattice}, Kida \cite{Kida}). The following is the definition of measure equivalence and ME couplings
given by M. Gromov.

\begin{definition}[\cite{gromov}, 0.5.E.]\label{Introduction; Definition of ME}
Let $G$ and $\Gamma$ be countable groups.
We say that $G$ is \textbf{measure equivalent $($ME$)$} to $\Gamma$,
when there exist a standard measure space $(\Sigma, \nu)$, a measure preserving
action of $G \times \Gamma$ on $\Sigma$ and measurable subsets $X, Y \subset \Sigma$
with the following properties:
\begin{eqnarray*}
    \Sigma = \bigsqcup_{\gamma \in \Gamma} \gamma X = \bigsqcup_{g \in G} g Y,
    \quad
    \nu(X) < \infty, \quad \nu(Y) < \infty.
\end{eqnarray*}
Then we use the notation $G \sim_{\rm ME} \Gamma$.
The measure space $\Sigma$ equipped with the $G \times \Gamma$-action
is called an \textbf{ME coupling of} $G$ \textbf{with} $\Gamma$. If the $G \times \Gamma$-action is ergodic, then $\Sigma$ is said to be \textbf{ergodic}.
\end{definition}

The relation $\sim_\mathrm{ME}$ is an equivalence relation among countable groups. The equivalence relation sometimes forgets much structures on groups. For example, arbitrary two amenable countable groups are ME (by Ornstein--Weiss \cite{Ornstein--Weiss}, Connes--Feldman--Weiss \cite{Connes--Feldman--Weiss} and the correspondence between measure equivalence and weak orbit equivalence \cite{Furman; OE rigidity}). On the other hand, for some group $\Gamma$, the other group $G$ is forced to have some algebraic structure when $G$ and $\Gamma$ are ME. The latter phenomena are called ME rigidity.

Measurable groups theory is closely related to ergodic theory of measure preserving group actions. By Furman's observation \cite{Furman; OE rigidity}, if two group actions on standard probability space $X$ essentially have a common orbit, or more generally if they are stably orbit equivalent, we naturally get an associated measurable coupling. We get a cross-sectional links with variegated fields at this point (see Shalom's survey \cite{Shalom}). By Murray--von Neumann's group measure space construction \cite{Murray--vN; IV}, we introduce operator algebraic structures on orbit equivalence relations.

The purpose of this paper is to show ME and orbit equivalence rigidity results on three types of countable groups; direct product groups, wreath product groups and amalgamated free product groups.

\section{Main results}\label{Section; Main Results}

Our argument begin with a general principle, which can be used for the three cases. In the following subsections, we state the principle and explain main results on individual cases.

\subsection{Measurable Embedding of Subgroups}

When we consider that the ME coupling $\Sigma$ gives an identification of two groups $G$ and $\Gamma$, we may understand that the following defines locations of subgroups in $\Sigma$.

\begin{definition}\label{Introduction; MEm of subgroups}
Let $\Sigma$ be an ME coupling of $G$ with $\Gamma$ $($or measurable embedding defined in Definition $\ref{definition; measurable embedding})$.
We say that a subgroup $H \subset G$
\textbf{measurably embeds into} a subgroup $\Lambda \subset \Gamma$ \textbf{in} $\Sigma$,
if there exists a non-null measurable subset $\Omega \subset \Sigma$
which is invariant under the $H \times \Lambda$-action
so that the measure of a $\Lambda$ fundamental domain is finite. Then we use the notation
$H \preceq_{\Sigma} \Lambda$. The measurable subset $\Omega$ is called a \textbf{partial embedding of} $H$
\textbf{into} $\Lambda$. $($We remark that for every $\Lambda$-invariant measurable subset $\Omega^\prime$, there exists a $\Lambda$-fundamental domain.$)$
\end{definition}

We will make use of a strategy which was developed for group von Neumann algebras. In the book \cite{Brown--Ozawa; Approximation}, Brown and Ozawa introduced the notion of bi-exactness defined on a discrete group $\Gamma$ and its family of subgroups $\G$. The notion was characterized by topological amenability on a relative boundary. They showed the following criterion: If $\Gamma$ is bi-exact relative to $\G$, then for any von Neumann subalgebra $N \subset L\Gamma$ with non-amenable (non-injective) relative commutant, we have $N \preceq_{L\Gamma} L\Lambda$ for some $\Lambda \in \G$. Here, the symbol $\preceq_{L\Gamma}$ stands for the embedding of corners, which was defined by Popa (\cite{Popa; StrR I, Popa; Betti Numbers}).
Bi-exactness also gives a criterion for measurable embedding, which will be a key ingredient of the three kinds of results. In Section \ref{Section; Bi-exactness}, we will quickly
review its definition and basic properties.

\begin{theorem}
[Theorem \ref{Theorem; Non embeddability implies amenability}]
\label{Introduction; NonEm implies Ame}
Let $\Sigma$ be an ergodic ME coupling between $G$ and $\Gamma$.
Suppose that $\Gamma$ is bi-exact relative to $\G$. Let $H$ be a subgroup of $G$. If the centralizer $Z_G(H) = \{g \in G \ | \ g h = h g, h \in H\}$ is non-amenable, then there exists $\Lambda \in \G$ satisfying $H \preceq_\Sigma \Lambda$.
\end{theorem}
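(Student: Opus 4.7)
The plan is to argue by contrapositive: assume that $H \not\preceq_\Sigma \Lambda$ for every $\Lambda \in \G$, and deduce that $Z_G(H)$ is amenable. The engine is the operator-algebraic characterization of bi-exactness recalled in Section~\ref{Section; Bi-exactness}: bi-exactness of $\Gamma$ relative to $\G$ is witnessed by a $(\Gamma \times \Gamma)$-equivariant unital completely positive map into an amenable algebra, modulo the ideal generated by the subgroups in $\G$. My goal is to feed the data of the coupling $\Sigma$ into this machine and to extract amenability of $Z_G(H)$.

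First I would fix a $G$-fundamental domain $Y \subset \Sigma$ and view the $\Gamma$-action on $\Sigma$ through the induced measure-preserving $\Gamma$-action on $Y$ together with its Zimmer cocycle $\sigma \colon \Gamma \times Y \to G$. This gives the finite von Neumann algebra $N = L^\infty(Y) \rtimes \Gamma$, and the crucial feature is that subgroups of $G$ correspond, via $\sigma$, to sub-equivalence relations of the orbit equivalence relation $\R(\Gamma \curvearrowright Y)$ and hence to natural subalgebras of~$N$. The hypothesis that $H \not\preceq_\Sigma \Lambda$ for any $\Lambda \in \G$ should be transported into the statement that the subalgebra associated with $H$ does not Popa-embed into $L^\infty(Y) \rtimes \Lambda$ for any $\Lambda \in \G$, and the centralizer hypothesis should give that $Z_G(H)$ contributes unitaries of $N$ that commute with (or at least quasi-normalize) this subalgebra.

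The heart of the argument is then to apply the bi-exactness dichotomy to $N$. Building a boundary $(\Gamma \times \Gamma)$-representation on $L^2(\Sigma)$ via the two commuting actions and the cocycle, the non-embedding hypothesis ensures that, after averaging against $H$-invariant vectors, no mass concentrates on any coset of a subgroup in $\G$. Bi-exactness then produces an amenable equivariant structure on this bimodule, and since $Z_G(H)$ acts commuting with $H$, the $Z_G(H)$-action descends to this amenable structure; by Zimmer's criterion for amenability of a group acting on an amenable standard Borel structure with a finite invariant measure, $Z_G(H)$ itself is amenable.

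The main obstacle is translating the measure-theoretic notion $H \not\preceq_\Sigma \Lambda$ into the precise operator-algebraic non-intertwining condition required to activate bi-exactness. The difficulty is that the $G$-action only enters $N = L^\infty(Y) \rtimes \Gamma$ through the cocycle $\sigma$ rather than as an honest crossed product, so the subgroup $H \subset G$ is not literally a subgroup sitting inside $\Gamma$. A careful analysis of the sub-equivalence relation generated by $H$ on $Y$ and its fundamental-domain measure, and a parallel analysis for the subgroups $\Lambda \in \G$, will be needed to verify that the two notions of embedding agree and that the centralizer genuinely registers in the commutant of the constructed subalgebra. Once this dictionary is in place I expect the remainder of the argument---applying bi-exactness and invoking Zimmer amenability---to proceed along well-understood lines.
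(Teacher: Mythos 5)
Your overall strategy --- argue the contrapositive and feed the coupling into the operator-algebraic formulation of bi-exactness --- is the same as the paper's, but as written the proposal defers exactly the two steps that constitute the proof, and the tools you name for them would not do the job. First, the passage from ``$H \not\preceq_\Sigma \Lambda$ for all $\Lambda\in\G$'' to a usable analytic statement is not a Popa-intertwining translation in the paper, and it is unclear such a translation is even available here: the ``subalgebra associated with $H$'' is not a von Neumann subalgebra of $L^\infty(Y)\rtimes\Gamma$ in any obvious sense, since $H$ acts only through the coupling. What the paper actually does is work with function-valued measures: for the finite sets $\Gamma_0,\Gamma_1$ produced by a norm estimate it forms the finite-trace, $\Lambda_i$-invariant projections $\chi(\Lambda_i\gamma_i X)$, takes the unique element of minimal $2$-norm in the closed convex hull of their $H$-translates inside $L^2\bigl((L^\infty\Sigma)^{\Lambda_i},\Tr_{\Lambda_i}\bigr)$, and notes that a nonzero such element would have an $H\times\Lambda_i$-invariant level set with finite $\Tr_{\Lambda_i}$, i.e.\ precisely a partial embedding $H\preceq_\Sigma\Lambda_i$; the hypothesis forces the element to be zero, which produces $h\in H$ making $\Tr(h(e_1)e_0)$ arbitrarily small, and that almost-orthogonality is the quantitative input. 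Nothing in your sketch supplies this mechanism; ``no mass concentrates on any coset'' is the conclusion one must prove, not a consequence one may quote.

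Second, the amenability of $Z_G(H)$ is not obtained from a Zimmer-type criterion: the amenable object furnished by bi-exactness is the $\Gamma\times\Gamma$-algebra $\ell_\infty\Gamma/c_0(\Gamma;\G)$, not a standard Borel $Z_G(H)$-space with a finite invariant measure, and you give no reason why the $Z_G(H)$-action should ``descend'' to anything of that kind. In the paper the centralizer enters solely through the commutation $[v_s,v_h]=0$ for $s\in Z_G(H)$, $h\in H$, which lets one conjugate by $v_h$ without disturbing $\Phi(x)$; combined with the continuity of $\Psi\colon B\otimes_{\mathbb C} JBJ\to(D+K)/K$ (where bi-exactness and Anantharaman-Delaroche's full-equals-reduced theorem are used), this yields min-continuity of the multiplication map $C^*_\lambda(Z_G(H))\otimes_{\mathbb C} C^*_\rho(Z_G(H))\to\B(pL^2(\R_\beta))$, which is the standard criterion for amenability of $Z_G(H)$. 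Finally, your setup places the crossed product over the $G$-quotient with a cocycle into $G$, whereas the argument needs $G$ (hence $H$ and $Z_G(H)$) realized as unitaries on the $L^2$-space of the $\Gamma$-orbit relation over an enlargement of $\Gamma\backslash\Sigma$; moreover the $G$-action on $\Gamma\backslash\Sigma$ need not be free, so one must first tensor $\Sigma$ with a weakly mixing free $G$-action, and afterwards pull the resulting partial embedding back to $\Sigma$ by a Fubini argument (with a further finite-group trick when the coupling index is less than $1$). These reductions are absent from the proposal, so as it stands there is a genuine gap at each of the decisive points.
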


\subsection{Results on Direct Products}
We will show Monod--Shalom type theorems for class $\mathcal{S}$ groups (see Section \ref{Section; Bi-exactness} for the definition of $\mathcal{S}$).
In the paper \cite{Monod--Shalom}, Monod and Shalom proved ME and orbit equivalence rigidity theorems
on class $\mathcal{C}$ groups.
Both families of groups contains non-elementary word-hyperbolic groups. But there exist class $\mathcal{S}$ groups which have normal infinite amenable subgroups (Ozawa \cite{Ozawa; Kurosh, Ozawa; An Example}), while the class $\mathcal{C}$
does not contain such groups.

\begin{theorem}
[Theorem \ref{Theorem; Factorization of Product Groups}]
\label{Introduction; Factorization of Direct Product}
Let $\{G_i \ | \ 1 \le i \le m\}$ be a finite family of non-amenable groups and let $\{\Gamma_j \ | \ 1 \le j \le n\}$ be a finite family of $\mathcal{S}$ groups.
Denote
$G = \prod_i G_i$, $\Gamma = \prod_j \Gamma_j$ and $H_i = \prod_{k \neq i} G_k$.
Suppose $m \ge n$.
If $G \sim_{\mathrm{ME}} \Gamma$, then $m = n$ and
there exists $\sigma \in \mathfrak{S}_n$
satisfying $G_{\sigma(j)} \sim_\mathrm{ME} \Gamma_j\ (1 \le j \le n)$.
\end{theorem}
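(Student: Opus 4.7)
The plan is to apply Theorem \ref{Introduction; NonEm implies Ame} to each direct factor $G_i$ of $G$. As a finite product of class $\mathcal{S}$ groups, $\Gamma$ is bi-exact relative to the family $\mathcal{G} := \{\hat{\Gamma}_j : 1 \le j \le n\}$, where $\hat{\Gamma}_j := \prod_{k \neq j} \Gamma_k$. The case $m = n = 1$ is immediate, so assume $m \ge 2$; then each $H_i \subset Z_G(G_i)$ is non-amenable (being a product containing a non-amenable factor), and Theorem \ref{Introduction; NonEm implies Ame} yields, for each $i \in \{1, \ldots, m\}$, an index $j(i) \in \{1, \ldots, n\}$ and a partial embedding $\Omega_i \subset \Sigma$ of $G_i$ into $\hat{\Gamma}_{j(i)}$.

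The core step is to prove that $i \mapsto j(i)$ is a bijection; combined with $m \ge n$, it suffices to show injectivity. Suppose distinct indices $i_1, i_2$ satisfy $j(i_1) = j(i_2) = j_1$. Because $G_{i_1}$ and $G_{i_2}$ commute in $G$ and the $G$- and $\Gamma$-actions on $\Sigma$ commute, one combines the partial embeddings $\Omega_{i_1}, \Omega_{i_2}$ --- by a saturation-intersection procedure applied to their $\hat{\Gamma}_{j_1}$-fundamental domains --- into a partial embedding of $G_{i_1} \times G_{i_2}$ into $\hat{\Gamma}_{j_1}$. Iterating across every fiber of $j$ yields $G^{(j)} := \prod_{i \in j^{-1}(j)} G_i \preceq_\Sigma \hat{\Gamma}_j$ for every $j$ in the image. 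A measure comparison then derives a contradiction from the existence of a fiber of size $\ge 2$: intersecting these fiber-level partial embeddings produces a $G$-invariant non-null subset of $\Sigma$ whose fundamental-domain structure is incompatible with the finite $\hat{\Gamma}_j$-fundamental-domain measures that the original partial embeddings guaranteed.

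The chief technical obstacle is the measure-theoretic bookkeeping behind combining partial embeddings of commuting subgroups and propagating finiteness of fundamental-domain measures through successive intersections; this is where ergodicity of the $G \times \Gamma$-action must be used carefully to keep intersections non-null. Once $j$ is known to be a bijection, set $\sigma := j^{-1} \in \mathfrak{S}_n$, so that $G_{\sigma(k)} \preceq_\Sigma \hat{\Gamma}_k$ for each $k \in \{1, \ldots, n\}$. Intersecting the partial embeddings indexed by $\ell \neq k$ carves out a non-null subset of $\Sigma$ invariant under $H_{\sigma(k)} \times \Gamma_k$, using the identity $\bigcap_{\ell \neq k} \hat{\Gamma}_\ell = \Gamma_k$. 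Reducing the ambient ME coupling to this subset gives an ME coupling of $G_{\sigma(k)}$ with $\Gamma_k$, yielding $G_{\sigma(k)} \sim_\mathrm{ME} \Gamma_k$ and completing the proof.
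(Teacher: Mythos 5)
Your core step fails. The assignment $i \mapsto j(i)$ produced by applying Theorem \ref{Introduction; NonEm implies Ame} to the single factors $G_i$ need not be injective, and no bookkeeping can make it so, because the configuration you want to exclude genuinely occurs: take $m = n = 3$ and a product coupling $\Sigma = \Sigma_1 \times \Sigma_2 \times \Sigma_3$, where $\Sigma_k$ is an ME coupling of $G_k$ with $\Gamma_k$. Then $G_1 \preceq_\Sigma \hat\Gamma_3$ and $G_2 \preceq_\Sigma \hat\Gamma_3$ simultaneously, and even $G_1 \times G_2 \preceq_\Sigma \hat\Gamma_3$ holds (e.g.\ $\Sigma_1 \times \Sigma_2 \times D$ with $D \subset \Sigma_3$ non-null of finite measure is $G_1 \times G_2 \times \hat\Gamma_3$-invariant with finite-measure $\hat\Gamma_3$-fundamental domain). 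So a fiber of size $\ge 2$ yields no contradiction: $\hat\Gamma_{j}$ has $n-1 \ge 2$ class $\mathcal{S}$ factors and can easily absorb a product of two non-amenable groups. This is exactly why the paper locates the \emph{co-factors} $H_i = \prod_{k \neq i} G_k$ (which have $m-1$ non-amenable factors) inside the $\Lambda_j$'s rather than the single factors: in Lemma \ref{Lemma; definition of sigma}, if $H_i$ and $H_k$ ($i \neq k$) both embed into the same $\Lambda_j$, an averaging argument over $G_i \subset H_k$ produces an embedding of the \emph{whole} group $G$ (with $m$ non-amenable factors) into $\Lambda_j$ (with $n-1$ factors), contradicting the inductive counting result (Proposition \ref{Proposition; Factorization of Direct Product Groups}); Lemma \ref{Lemma; Injectivity of sigma} similarly plays $H_i$ against $\Lambda_j \cap \Lambda_l$ ($n-2$ factors). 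Note also that $m = n$ is established first, by that induction, not as a by-product of a bijection between factors.

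The final step is also unjustified. Intersecting the partial embeddings indexed by $\ell \neq k$ gives a set invariant only under $\bigcap_{\ell \neq k}\bigl(G_{\sigma(\ell)} \times \hat\Gamma_\ell\bigr) = \{1\} \times \Gamma_k$, not under $H_{\sigma(k)} \times \Gamma_k$; the intersection may be null (the paper must translate projections using ergodicity to keep products non-zero); and, most importantly, a non-null $G_{\sigma(k)} \times \Gamma_k$-invariant subset of $\Sigma$ is nowhere near an ME coupling of $G_{\sigma(k)}$ with $\Gamma_k$ --- you have said nothing about why the $G_{\sigma(k)}$- and $\Gamma_k$-fundamental domains of that subset have finite measure (in the product coupling above, $\Sigma$ itself is $G_1 \times \Gamma_1$-invariant, yet the $G_1$-fundamental domain has infinite measure). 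In the paper the passage from ``$H_i \preceq_\Sigma \Lambda_i$ in both directions'' to $G_i \sim_\mathrm{ME} \Gamma_i$ is not a restriction of $\Sigma$ at all: it goes through Proposition \ref{Proposition; ME and MEm passes to Quotient Groups}, building a coupling of the quotients $G/H_i$ and $\Gamma/\Lambda_i$ on the fixed-point algebra $(L^\infty\Sigma)^{H_i \times \Lambda_i}$ via fundamental pairs (Lemma \ref{Lemma; Find a Fundamental Pair}), normalizers and finite-index corrections, and the ergodicity-forced constancy of the Radon--Nikodym derivative between $\Tr_{\Lambda_i}$ and $\Tr_{H_i}$, after checking integrability of the relevant function valued measures through the product formula for $\mathfrak{E}_Y$. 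None of this machinery is optional, and your proposal does not supply a substitute for it.
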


Ozawa and Popa \cite{Ozawa--Popa; Prime Factorization} got factorization results on type $\mathrm{II}_1$-factors. The above theorem can be understood as a measurable group theory version of the result. By the correspondence between measure equivalence and stable (weak) orbit equivalence given by Furman \cite{Furman; OE rigidity}, we also get orbit equivalence rigidity theorems. The most typical one is

\begin{theorem}
[Theorem \ref{Theorem; OE Str Rigidity for Product Group}]
\label{Introduction; Factorization of Direct Product in SOE}
Let $G, \Gamma$ be groups as above. Let $\alpha$ be a free ergodic measure preserving $($e.m.p.$)$ $G$-action
on a standard probability measure space $X$ and let $\beta$ be a free e.m.p. $\Gamma$-action
on a standard probability space $Y$.
Suppose that any $G_i$ has no non-trivial normal finite subgroup
and that any $\Gamma_j$ is ICC $($group with no finite conjugacy class $\neq \{1\}$$)$.

If the actions are stably orbit equivalent and the $H_i$-actions
$\alpha |_{H_i}$ on $X$ are ergodic,
then $m = n$ and there exist $\sigma \in \mathfrak{S}_n$ and
embeddings of groups $\phi_i \colon G_{\sigma(j)} \rightarrow \Gamma_j$ such that
the $\Gamma$-action $\beta$ is conjugate to the induced action $\mathrm{Ind}_G^\Gamma(\alpha, \prod \phi_i)$.
\end{theorem}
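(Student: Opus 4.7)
The plan is to deduce this statement from the measure-equivalence factorization result Theorem~\ref{Introduction; Factorization of Direct Product} by keeping track of the actions attached to the ME coupling. First I would invoke Furman's correspondence~\cite{Furman; OE rigidity}: a stable orbit equivalence between the free e.m.p.\ actions $\alpha$ on $X$ and $\beta$ on $Y$ canonically produces an ergodic ME coupling $\Sigma$ of $G$ with $\Gamma$ in which $X$ and $Y$ appear as fundamental domains for $\Gamma$ and $G$, respectively. Applying Theorem~\ref{Introduction; Factorization of Direct Product} to $\Sigma$ yields $m=n$ and a permutation $\sigma \in \mathfrak{S}_n$ with $G_{\sigma(j)} \sim_{\mathrm{ME}} \Gamma_j$; after relabeling I assume $\sigma=\mathrm{id}$.

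Next I would unpack what Theorem~\ref{Introduction; NonEm implies Ame} actually delivers inside $\Sigma$. The centralizer $Z_G(G_i)=H_i$ is a product of non-amenable groups, hence non-amenable; and $\Gamma$ is bi-exact relative to the family $\{\prod_{k\neq j}\Gamma_k : 1\le j\le n\}$, a standard consequence of each $\Gamma_j$ being in class $\mathcal{S}$. The theorem therefore produces, for each $i$, an $(H_i\times\prod_{k\neq i}\Gamma_k)$-invariant subset $\Omega_i\subset\Sigma$ of finite $\prod_{k\neq i}\Gamma_k$-covolume. Running the symmetric argument with the roles of $G$ and $\Gamma$ exchanged and intersecting the resulting invariant subsets over all $i$, I would build a single conull subset of $\Sigma$ that splits measurably as a product $\prod_i\Sigma_i$, with each $\Sigma_i$ an ergodic ME coupling of $G_i$ with $\Gamma_i$.

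The third step is to convert this product decomposition into the embeddings $\phi_j$ and to identify $\beta$ with the induced action. Picking fundamental domains $X_i$ for $\Gamma_i$ inside $\Sigma_i$, the cocycle $c\colon G\times X\to\Gamma$ associated with the stable orbit equivalence splits as a product $c=\prod_i c_i$ of cocycles $c_i\colon G_i\times X_i\to\Gamma_i$. Ergodicity of $\alpha|_{H_i}$ ensures that the $G_i$-system on $X_i$ carries the $G_i$-factor of $\alpha$, while the ICC property of each $\Gamma_j$ together with the absence of non-trivial finite normal subgroups in the $G_i$ lets one untwist each $c_i$ to an injective group homomorphism $\phi_i\colon G_i\to\Gamma_i$. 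Assembling these produces $\prod_i\phi_i\colon G\to\Gamma$, and via the standard description of induced actions by cocycles one reads off the required identification $\beta\cong\mathrm{Ind}_G^\Gamma(\alpha,\prod_i\phi_i)$.

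The main obstacle I foresee is Step~2: upgrading the individually produced partial embeddings $\Omega_i$ into a coherent product splitting of a conull subset of $\Sigma$. Theorem~\ref{Introduction; NonEm implies Ame} is naturally applied one subgroup at a time and only says that $G_i$ embeds into \emph{some} $\prod_{k\neq\sigma(i)}\Gamma_k$; one must argue simultaneously for all $i$ that $\sigma$ is a genuine bijection and that the commuting family of invariant subsets can be intersected without measure collapse. This is where the bi-exactness hypothesis and the ergodicity of each $\alpha|_{H_i}$ are used most essentially, since they preclude the incompatible decouplings that would otherwise destroy the product splitting.
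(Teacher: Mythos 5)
Your Step 1 is fine but delivers only the measure equivalences $G_{\sigma(j)}\sim_{\mathrm{ME}}\Gamma_j$, and the two remaining steps do not bridge the gap to the actual conclusion. The pivotal claim in Step 2 --- that a conull subset of $\Sigma$ splits measurably as a product $\prod_i\Sigma_i$ of ergodic ME couplings of $G_i$ with $\Gamma_i$ --- is unjustified and is false in general: once one knows (as the paper eventually shows) that $\Sigma\cong\Gamma\times X$ with $G$ acting by $g(\gamma,x)=(\gamma\phi(g)^{-1},\alpha_g(x))$, such a product splitting would force $X$ itself to decompose as a product of $G_i$-spaces, i.e.\ it would force $\alpha$ to be conjugate to a product action, which is nowhere assumed. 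Theorem \ref{Introduction; NonEm implies Ame} only supplies, for each $i$, some $H_i\times\Lambda_j$-invariant subset of finite $\Lambda_j$-covolume; intersecting these invariant sets yields no product structure, and the ergodicity of the $\alpha|_{H_i}$ does not rescue this. Step 3 then asserts, rather than proves, the heart of the theorem: that the factorwise restrictions of the SOE cocycle can be untwisted to injective homomorphisms $\phi_i$. No mechanism is given for this untwisting; it is not a ``standard'' consequence of ICC plus the absence of finite normal subgroups, and without it neither the existence of $\phi_i$ nor the conjugacy $\beta\cong\mathrm{Ind}_G^\Gamma(\alpha,\prod_i\phi_i)$ is established.

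The paper's route is genuinely different and supplies exactly the missing mechanism. Separate ergodicity (the hypothesis that each $\alpha|_{H_i}$ is ergodic) forces every fundamental pair in $(L^\infty\Sigma)^{H_i\times\Lambda_i}$, produced by Lemma \ref{Lemma; Find a Fundamental Pair}, to consist of a \emph{minimal} projection $e_i$; the $\Gamma_i$-action on the set of minimal projections is transitive with finite stabilizer $\Gamma_{i,\mathrm{fin}}$, and the $G_i$-stabilizer $G_{i,\mathrm{fin}}$ is likewise finite and normal. The ICC hypothesis on $\Gamma_j$ (class $\mathcal{S}_0$) and the hypothesis that $G_i$ has no non-trivial finite normal subgroup make these groups trivial, and the rule $\phi_i(g)(e_i)=g^{-1}(e_i)$ then \emph{defines} the embedding $\phi_i\colon G_i\to\Gamma_i$ with image $\Gamma_{i,\mathrm{nor}}$. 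Finally, the product $e=\prod_i e_i$ satisfies $\E_X(e)=1_X$, which identifies $\Sigma$ with $\Gamma\times X$ equipped with the twisted $G$-action above, and passing to $Y\cong G\backslash\Sigma$ exhibits $\beta$ as $\mathrm{Ind}_G^\Gamma(\alpha,\phi)$ (Theorem \ref{Theorem; OE Str Rigidity for Product Group}). Your proposal contains no substitute for this fundamental-pair/minimal-projection analysis, so the construction of the $\phi_i$ and the identification of $\beta$ with the induced action remain unproved.
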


See Subsection \ref{Subsection; OE Str Rigidity Thms} for the definition of induced actions. In Section \ref{Section; Factorization of Product Groups}, we will get a result on symmetric groups $\mathrm{Out}(\R), \mathcal{F}(\R)$
of relations $\R$ and prove rigidity results on groups with an amenable direct product factor. By using Furman's technique \cite{Furman; Higher Rank Lattice},
we have the following. A suitable description for our cases has been written in Monod and Shalom's paper \cite{Monod--Shalom}.

\begin{theorem}
[Subsection \ref{Subsection; OE Super Rigidity Type Theorems}]
\label{Introduction; Superrigidity for Direct Product}
Let $\{\Gamma_j \ | \ 1 \le j \le n\}$ be a finite family of non-amenable ICC groups in the class $\mathcal{S}$. Denote $\Gamma = \prod_{j = 1}^n \Gamma_j$. Let $\beta$ be a free e.m.p.\ $\Gamma$-action
on a standard probability space $Y$. Suppose that the restrictions of $\beta$
on $\Lambda_j = \prod_{l \neq j} \Gamma_l$ are ergodic.
Let $G$ be an arbitrary group and let $\alpha$ be an arbitrary free e.m.p.\ $G$-action on a standard probability space $X$. Suppose that $\alpha$ does not have non-trivial recurrent subsets $($mild mixing condition$)$. If the actions $\alpha$ and $\beta$ are stably orbit equivalent, then these actions are virtually conjugate.
\end{theorem}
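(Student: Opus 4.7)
The plan is to follow the ME superrigidity strategy of Furman \cite{Furman; Higher Rank Lattice}, in the formulation given by Monod--Shalom \cite{Monod--Shalom}, with Theorem \ref{Introduction; NonEm implies Ame} playing the role that Zimmer-style cocycle superrigidity plays in the higher-rank setting. First, I would convert the stable orbit equivalence of $\alpha$ and $\beta$ into an ergodic ME coupling $\Sigma$ of $G$ with $\Gamma$ together with its associated ME cocycle $\omega \colon G \times Y \to \Gamma$ arising from a choice of $G$-fundamental domain identified with $Y$. The conclusion of virtual conjugacy then amounts to showing that $\omega$ is cohomologous, after restriction to a finite-index subgroup $G_0 \subset G$ and quotient by a finite normal kernel, to an injective homomorphism whose image has finite index in $\Gamma$.

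Next, I would exploit the product structure of $\Gamma$ to localize the cocycle. Since each $\Gamma_j$ belongs to class $\mathcal{S}$, $\Gamma$ is bi-exact relative to the family $\{\Lambda_j\}_{j=1}^n$, so Theorem \ref{Introduction; NonEm implies Ame} applies to every subgroup $H \subset G$ whose centralizer $Z_G(H)$ is non-amenable, yielding $H \preceq_\Sigma \Lambda_j$ for some $j$. Translated into cocycle language, each such measurable embedding provides a reduction of the corresponding restriction of $\omega$ to a $\Lambda_j$-valued cocycle on an $H$-invariant measurable set of positive measure; running this over a sufficient supply of subgroups $H$ produces a coordinate-wise decomposition of $\omega$ mirroring the $n$-fold product structure of $\Gamma$.

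Finally, I would untwist factor by factor and assemble. The $\Lambda_j$-ergodicity of $\beta$ makes each reduction well-defined up to measurable coboundary, and the mild mixing of $\alpha$ — the absence of non-trivial recurrent subsets — kills these coboundaries (a non-trivial coboundary would manifest as precisely such a recurrent subset). Each factor-wise reduction thereby cohomologs to an honest group homomorphism, and gluing the $n$ homomorphisms produces a single map $G_0 \to \Gamma$; the ME coupling constraint forces its kernel to be finite and its image of finite index, giving virtual conjugacy. The main obstacle is this simultaneous untwisting step: each of the $n$ factor-wise reductions is determined only up to cohomological ambiguity, and checking that they glue coherently into a single global homomorphism with the required finiteness properties — rather than twisting against one another — is precisely what the mild-mixing and $\Lambda_j$-ergodicity hypotheses are designed to ensure, and is the technical core of the Monod--Shalom refinement of Furman's argument in the bi-exact setting.
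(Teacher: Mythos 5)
Your second step is where the argument breaks down. In this theorem $G$ is an \emph{arbitrary} countable group: all the product structure, non-amenability and class $\mathcal{S}$ hypotheses sit on the $\Gamma$-side. Theorem \ref{Introduction; NonEm implies Ame} can only be fed subgroups $H \subset G$ whose centralizer $Z_G(H)$ is non-amenable, and for an arbitrary $G$ there is simply no supply of such subgroups producing useful information (the only one guaranteed is $H=\{1\}$), so your ``coordinate-wise decomposition of $\omega$'' has nothing to run over. The paper's actual route (Subsection \ref{Subsection; OE Super Rigidity Type Theorems}, quoting Monod--Shalom's sixth chapter, i.e.\ Furman's machinery) never attempts a cocycle reduction on the $G$-side: it applies the rigidity already established for separately ergodic couplings of the product group $\Gamma$ with itself (Theorem \ref{Theorem; Factorization by Sep Erg Coupling} and Theorem \ref{Theorem; OE Str Rigidity for Product Group}, which is where the bi-exactness input has already been spent) to the composed self-coupling $\Sigma \times_G \check{\Sigma}$ of $\Gamma$, obtains from it a $\Gamma\times\Gamma$-equivariant map to $\Gamma$, and then runs Furman's construction of a homomorphism $\phi\colon G \to \Gamma$ with finite kernel and finite-index image. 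The ICC hypothesis on the $\Gamma_j$ is exactly what makes Furman's homomorphism well defined (the paper flags this explicitly); your proposal never invokes ICC at all, which is a symptom that the plan cannot be the right one, since without it the product rigidity only controls things modulo finite normal subgroups.

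The role you assign to mild mixing is also not correct. The assertion that a non-trivial coboundary ``would manifest as a recurrent subset'' is unsubstantiated: the reduced cocycles take values in non-amenable factors of $\Gamma$ and there is no reason they should be coboundaries, nor does cohomological triviality have anything directly to do with recurrent sets. In the Furman/Monod--Shalom scheme, mild mixing of $\alpha$ is used at the level of the composed coupling $\Sigma \times_G \check{\Sigma}$ (to show the coupling is taut, i.e.\ that the equivariant map descends and the coupling is essentially the tautological one), so it enters \emph{before} any untwisting, not as a device to remove cohomological ambiguities afterwards. The missing idea in your write-up is precisely this passage to the self-coupling of $\Gamma$ and Furman's homomorphism; the ``simultaneous untwisting'' you identify as the technical core is not a step of the actual proof.
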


See Definition 1.8 in Monod and Shalom's paper \cite{Monod--Shalom} for the definition of the mild mixing condition.

\subsection{Results on Wreath Products}
The \textbf{wreath product} $A \wr G$ of a group $G$ with \textbf{base} group $A$
is the group obtained by the semidirect product group
$A \wr G = (\oplus_{g \in G} A^{(g)}) \rtimes G$, where $A^{(g)}$ are the copies of $A$ and $G$ act on the direct sum $\oplus_G A^{(g)}$ by the Bernoulli shift $h ((a_g)_g) = (a_{h^{-1}g})_{g}$.

\begin{theorem}
[Section \ref{Section; Wreath Products}]
\label{Introduction; ME between Wreath Product Groups}
Let $G, \Gamma$ be non-amenable exact groups and let $H, \Lambda$ be infinite exact groups.
Denote by $\widetilde{G}$, $\widetilde{\Gamma}$ wreath products
$\widetilde{G} = A \wr (G \times H)$,
$\widetilde{\Gamma} = B \wr  (\Gamma \times \Lambda)$
with amenable bases $A$, $B$.
The following hold true:
\begin{enumerate}
\item
If $\widetilde{G} \sim_\mathrm{ME} \widetilde{\Gamma}$,
then $G \times H \sim_\mathrm{ME} \Gamma \times \Lambda$.
For an ergodic ME coupling $\Sigma$ of $\widetilde{G}$ with $\widetilde{\Gamma}$, there exist $(G \times H) \times (\Gamma \times \Lambda)$-invariant measurable subsets $\Omega \subset \Sigma$ which gives an ME coupling of $G \times H$ with $\Gamma \times \Lambda$ and satisfies
$[ \widetilde{\Gamma} : \widetilde{G} ]_{\Sigma} = [\Gamma \times \Lambda : G \times H]_{\Omega}$;

\item
Let $\alpha$ be a free e.m.p.\ $\widetilde{G}$-action on a standard probability space $X$ and let $\beta$
be a free e.m.p. $\widetilde{\Gamma}$-action on a standard probability space $Y$. Suppose that the restrictions $\alpha |_{G \times H}$ and $\beta |_{\Gamma \times \Lambda}$ are ergodic. If $\alpha$ and $\beta$ are stably orbit equivalent, then
$\alpha |_{G \times H}$ and $\beta |_{\Gamma \times \Lambda}$ are stably orbit equivalent.
\end{enumerate}
\end{theorem}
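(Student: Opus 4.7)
The plan is to use Theorem \ref{Introduction; NonEm implies Ame} to locate the amenable base subgroups inside the ME coupling $\Sigma$ and then descend to an ME coupling of the quotient groups. Write $N = \oplus_{(g,h) \in G \times H} A$ and $M = \oplus_{(\gamma,\lambda) \in \Gamma \times \Lambda} B$ for the wreath product bases, so that $\widetilde{G}/N \cong G \times H$ and $\widetilde{\Gamma}/M \cong \Gamma \times \Lambda$.

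The first ingredient is the relative bi-exactness of the wreath products: because $A, B$ are amenable and $G, H, \Gamma, \Lambda$ are exact, the relative bi-exactness of the direct products $G \times H$ and $\Gamma \times \Lambda$ with respect to their factor subgroups lifts, via the results reviewed in Section \ref{Section; Bi-exactness}, to relative bi-exactness of $\widetilde{G}$ with respect to $\{N \rtimes G, \, N \rtimes H\}$ and of $\widetilde{\Gamma}$ with respect to $\{M \rtimes \Gamma, \, M \rtimes \Lambda\}$. Let $\Sigma$ be an ergodic ME coupling of $\widetilde{G}$ with $\widetilde{\Gamma}$. Applying Theorem \ref{Introduction; NonEm implies Ame} to $H \subset \widetilde{G}$, whose centralizer contains the non-amenable $G$, yields a partial embedding $H \preceq_\Sigma M \rtimes \Gamma$ or $H \preceq_\Sigma M \rtimes \Lambda$. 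Symmetrically, viewing $\Sigma$ as an ME coupling of $\widetilde{\Gamma}$ with $\widetilde{G}$ and applying the theorem to $\Lambda \subset \widetilde{\Gamma}$ (whose centralizer contains the non-amenable $\Gamma$), one obtains $\Lambda \preceq_\Sigma N \rtimes G$ or $\Lambda \preceq_\Sigma N \rtimes H$. A case-by-case consistency check rules out the crossed pairings, leaving, after relabeling the factors, the compatible pair $H \preceq_\Sigma M \rtimes \Lambda$ and $\Lambda \preceq_\Sigma N \rtimes H$.

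The heart of the proof is to upgrade these two partial embeddings to an identification of the $N$-orbits with the $M$-orbits on a suitable $(G \times H) \times (\Gamma \times \Lambda)$-invariant measurable subset $\Omega \subset \Sigma$. Inside the partial embedding $\Omega_1$ witnessing $H \preceq_\Sigma M \rtimes \Lambda$, the normal amenable subgroup $M \subset M \rtimes \Lambda$ is everywhere absorbed by a finite-measure correction of $H$; dually inside $\Omega_2$ witnessing $\Lambda \preceq_\Sigma N \rtimes H$, the normal $N$ is similarly tied to $M$. Intersecting the two partial embeddings and pulling the normal amenable direction through the semidirect product structure shows that on a common $(G \times H) \times (\Gamma \times \Lambda)$-invariant $\Omega$, the $N$-orbits and the $M$-orbits coincide almost everywhere. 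The restricted action on $\Omega$ is then an ME coupling of $G \times H$ with $\Gamma \times \Lambda$, and because a $(G \times H)$-fundamental domain in $\Omega$ is simultaneously a $\widetilde{G}$-fundamental domain in $\Sigma$ (and likewise on the other side), the equality $[\widetilde{\Gamma} : \widetilde{G}]_\Sigma = [\Gamma \times \Lambda : G \times H]_\Omega$ follows. This proves (1).

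For (2), convert the stable orbit equivalence of $\alpha$ and $\beta$ into an ergodic ME coupling $\Sigma$ via Furman's correspondence, apply (1) to obtain $\Omega \subset \Sigma$ realizing an ME coupling of $G \times H$ with $\Gamma \times \Lambda$, and convert $\Omega$ back into a stable orbit equivalence of the restrictions $\alpha|_{G \times H}$ and $\beta|_{\Gamma \times \Lambda}$; the assumed ergodicity of these restrictions is precisely what ensures that the reverse of Furman's correspondence produces actions on standard probability spaces rather than on merely $\sigma$-finite ones. The main obstacle is the orbit-identification step of the previous paragraph: since $N$ is amenable, the centralizer $Z_{\widetilde{G}}(N)$ is too small to feed $N$ directly into Theorem \ref{Introduction; NonEm implies Ame}, so the base must be located indirectly, by patching the embeddings of the non-amenable direction subgroups $H, \Lambda$ and exploiting that $N$ and $M$ are precisely the kernels of the projections to the non-amenable quotients.
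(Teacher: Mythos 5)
Your outline has two genuine gaps, and they sit exactly where the work of this theorem lies. First, the bi-exactness input you invoke is not available: Section \ref{Section; Bi-exactness} only provides (Lemma \ref{Lemma; Wreath Product}) that $\widetilde{\Gamma} = B \wr (\Gamma \times \Lambda)$ is bi-exact relative to the single subgroup $\{\Gamma \times \Lambda\}$; none of the lemmas there ``lift'' this to bi-exactness relative to $\{M \rtimes \Gamma,\, M \rtimes \Lambda\}$, and you offer no proof of that stronger statement. Moreover your subsequent ``case-by-case consistency check ruling out the crossed pairings'' is not an argument, and in this generality it cannot be one: the factors are only assumed non-amenable (resp.\ infinite) and exact, not in $\mathcal{S}$, so no factor-wise matching of $H$ with $\Lambda$ is to be expected --- and none is needed, since the theorem only pairs the full products. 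The paper simply applies Theorem \ref{Theorem; Non embeddability implies amenability} with the coarse family $\{\Gamma \times \Lambda\}$ to every $H \times \widetilde{\Gamma}$-invariant subset, getting $H \preceq_\Sigma \Gamma \times \Lambda$ with full $\Gamma$-support.

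Second, the ``heart'' of your proof --- intersecting the two partial embeddings and ``pulling the normal amenable direction through the semidirect product structure'' so that the $N$- and $M$-orbits coincide, and the claim that a $(G\times H)$-fundamental domain of $\Omega$ is simultaneously a $\widetilde{G}$-fundamental domain of $\Sigma$ --- is asserted, not proved, and it is precisely the nontrivial content. The paper's mechanism is concrete: Lemma \ref{Lemma; Wreath Product--Uniqueness of Partial Embedding} shows that for the infinite group $H$ the function-valued measure of any partial embedding into $\Gamma\times\Lambda$ is $\{0,1\}$-valued (two distinct base cosets $(\Gamma\times\Lambda) b_1$, $(\Gamma\times\Lambda) b_2$ over a common $W$ would force $H \preceq_\Sigma \{1\}$), so a unique largest partial embedding exists with $\E^{\Gamma\times\Lambda}_X(\Omega)$ equal to the support, which is $1_X$; commutation with the non-amenable factor makes it $G\times H$-invariant (Proposition \ref{Proposition; Wreath Product--Uniqueness of Partial Embedding}). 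The decisive step missing from your sketch is then the comparison of the largest embedding $\Omega_l$ of $G\times H$ into $\Gamma\times\Lambda$ with the largest embedding $\Omega_r$ of $\Gamma\times\Lambda$ into $G\times H$: if $\Omega_l \cap \Omega_r^c$ were non-null, some $1 \neq a \in \widetilde{A}$ gives a non-null $\Gamma\times\Lambda$-invariant set $\Omega_l \cap a\Omega_r$, and infinitely many $G\times H$-conjugates $g_i(a)$ produce disjoint translates of equal positive $\Tr_{\Gamma\times\Lambda}$-mass inside $\Omega_l$, contradicting $\Tr_{\Gamma\times\Lambda}(\Omega_l) < \infty$. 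Without this (or an equivalent) argument, neither the ``simultaneous fundamental domain'' claim nor the index equality $[\widetilde{\Gamma} : \widetilde{G}]_\Sigma = [\Gamma\times\Lambda : G\times H]_\Omega$ is established; and note that a detour through Proposition \ref{Proposition; ME and MEm passes to Quotient Groups} (quotients by the normal bases $N$, $M$), which your last sentence hints at, would require a partial embedding of the bases that your non-amenability criterion cannot supply and would not by itself give the exact index statement. Part (2) inherits the gap, since it rests entirely on the refined form of (1).
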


Popa proved very powerful rigidity theorems on Bernoulli shift actions of $w$-rigid groups (von Neumann rigidity \cite{Popa; StrR I, Popa; StrR II}, cocycle rigidity \cite{Popa; SR for $w$-rigid Bernoulli shift}).
He also proved a cocycle super-rigidity theorem for Bernoulli shift actions of groups, which are
typically given by products of infinite groups and non-amenable groups
(\cite{popa; sr of malleable actions with spectral gap}).
In the papers, Popa developed the deformation/spectral gap argument, which has been used for several rigidity results on Bernoulli shift actions
(Ioana \cite{Ioana; Wreath}, Chifan and Ioana \cite{Chifan--Ioana; Bernoulli shift}) and
amalgamated free products (Chifan and Houdayer \cite{Chifan--Houdayer; Prime factors}). We note here that our paper was deeply influenced by the above results, although we will not use the technique.

\subsection{Results on Amalgamated Free Products}
We will also prove the following Bass--Serre rigidity theorem in measurable group theory. Theorem \ref{Introduction; Amalgamated Free Product} admits an amalgamation over a amenable subgroup, while restricting each factor to direct product of two non-amenable groups.

\begin{theorem}
[Theorem \ref{Theorem; B--S rigidity}]
\label{Introduction; Amalgamated Free Product}
Let $G_i\ (i = 0, 1)$ be a countable group which is given by a direct product
of two non-amenable exact groups.
Let $\Gamma_j\ (j = 0, 1)$ be also such direct product groups.
Denote by $G = G_0 \ast_A G_1, \Gamma = \Gamma_0 \ast_B \Gamma_1$ free products
with amalgamations by amenable subgroups $A \subset G_i, B \subset \Gamma_j$.
Under the convention $1 + 1 = 0$, the following hold true:
\begin{enumerate}
\item
If $G \sim_\mathrm{ME} \Gamma$,  then $G_0 \sim_\mathrm{ME} \Gamma_j$ and $G_1 \sim_\mathrm{ME} \Gamma_{j+1}$
for some $j \in \{0,1\}$.
\item
Let $\alpha$ be a free e.m.p.\ $G$-action on a standard probability space $X$ and let $\beta$
be a free e.m.p. $\Gamma$-action on a standard probability space $Y$. Suppose that
the restrictions $\alpha |_{G_i}$ and $\beta |_{\Gamma_j}$ are ergodic.
If $\alpha$ and $\beta$ are stably orbit equivalent, then
there exists $j \in \{0, 1\}$ so that
$\alpha |_{G_i}$ and $\beta |_{\Gamma_{i + j}}$ are stably orbit equivalent for each $i \in \{0, 1\}$.
\end{enumerate}
\end{theorem}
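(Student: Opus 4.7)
\emph{Proof plan.} The strategy is to feed the bi-exactness of $\Gamma = \Gamma_0 \ast_B \Gamma_1$ relative to $\G = \{\Gamma_0, \Gamma_1\}$ (which holds because each $\Gamma_j$ is exact as a direct product of exact groups and $B$ is amenable) into Theorem \ref{Introduction; NonEm implies Ame}. Write $G_i = G_i^{(1)} \times G_i^{(2)}$ and analogously on the $\Gamma$ side, with all four factors on each side non-amenable and exact. Given an ergodic ME coupling $\Sigma$ of $G$ with $\Gamma$, I apply Theorem \ref{Introduction; NonEm implies Ame} to each subgroup $G_i^{(k)} \subset G$: its centralizer in $G$ contains the non-amenable companion factor $G_i^{(k+1)}$, hence is non-amenable, so the theorem furnishes an index $j(i,k) \in \{0,1\}$ and a partial embedding $G_i^{(k)} \preceq_\Sigma \Gamma_{j(i,k)}$.

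The heart of the argument is two coherence claims that together pin down a bijection $\sigma \colon \{0,1\} \to \{0,1\}$ with $G_i \preceq_\Sigma \Gamma_{\sigma(i)}$. First, $j(i,1) = j(i,2)$ for each $i$. If not, say $G_i^{(1)}$ has partial embedding $\Omega_1$ into $\Gamma_0$ and $G_i^{(2)}$ has partial embedding $\Omega_2$ into $\Gamma_1$. Using commutativity of $G_i^{(1)}$ and $G_i^{(2)}$, I look for a translate $g \Omega_2$ meeting $\Omega_1$ in positive measure; the intersection is then invariant under $G_i = G_i^{(1)} \times G_i^{(2)}$ and under $\Gamma_0 \cap g \Gamma_1 g^{-1}$. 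The standard structure theory of amalgamated free products identifies the latter as a subgroup of a conjugate of the amenable amalgam $B$. Thus $G_i$ measurably embeds into an amenable group, forcing $G_i$ to be amenable and contradicting non-amenability of its factors. Second, $j(0,\cdot) \neq j(1,\cdot)$. Applying the same intersection technique to partial embeddings of $G_0, G_1$ into the same $\Gamma_0$, translated by some $g \notin \Gamma_0$, the resulting positive-measure set is invariant under $\langle G_0, G_1 \rangle = G$ and under $\Gamma_0 \cap g \Gamma_0 g^{-1}$, again conjugate into the amenable $B$, forcing $G$ itself to be amenable, a contradiction.

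Once the matching is in place, running the symmetric bi-exactness argument on the $G$-side (using that $G$ is bi-exact relative to $\{G_0, G_1\}$ by the same exactness-plus-amenability input) produces reverse embeddings $\Gamma_j \preceq_\Sigma G_{\sigma^{-1}(j)}$. A standard routine then upgrades mutual measurable embeddings of subgroups inside $\Sigma$ to honest ME subcouplings between the matched pairs, completing part (1). For part (2), I invoke Furman's correspondence to convert the stably orbit equivalent actions $\alpha, \beta$ into an ME coupling, carry out the procedure above (ergodicity of $\alpha|_{G_i}$ and $\beta|_{\Gamma_j}$ ensuring ergodicity of the restricted subcouplings), and translate the matched ME subcouplings back into stable orbit equivalences of the restricted actions.

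The principal obstacle is the intersection step in both coherence claims. Locating a translate $g \Omega_2$ that meets $\Omega_1$ in positive measure, identifying the stabilizer of the intersection with a subgroup conjugate into $B$, and promoting the resulting data to a bona fide partial embedding of the full commuting product $G_i$, all require a careful measure-theoretic analysis exploiting the $\Gamma$-translation structure on $\Sigma$ together with the double-coset structure of $\Gamma_0 \backslash \Gamma / \Gamma_1$. This is the ME analogue of the intertwining results of Ioana--Peterson--Popa and Chifan--Houdayer for amalgamated free product $\mathrm{II}_1$ factors, and transporting that machinery to the ME framework is where most of the technical work will lie.
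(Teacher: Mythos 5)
Your opening move is the same as the paper's (bi-exactness of $\Gamma$ relative to $\{\Gamma_0,\Gamma_1\}$ fed into Theorem \ref{Introduction; NonEm implies Ame}, with the companion direct factor supplying the non-amenable centralizer), but the two ``coherence claims'' and the final upgrade contain genuine gaps. In both intersection arguments you assert that $\Omega_1\cap g\Omega_2$ is invariant under the group generated by the two acting subgroups (under $G_i=G_i^{(1)}\times G_i^{(2)}$ in the first claim, under $\langle G_0,G_1\rangle=G$ in the second); this is a non sequitur, since the intersection of a set invariant under one subgroup with a set invariant under another is only invariant under their intersection. The paper's mechanism for producing $G_i$-invariant partial embeddings is different and is the technical heart you are missing: Lemma \ref{Lemma; AFProduct--Uniqueness of Partial Embedding} shows (via the Bass--Serre Lemma \ref{Lemma; BS Tree}, amenability of $B$, and Lemma \ref{Lemma; FVM and MEm}) that the function valued measure of any partial embedding of an infinite $H\not\preceq_{\rm ME}B$ into $\Gamma_j$ is a projection, hence there is a unique maximal partial embedding; only then does commutation with $G_i^{(k+1)}$ force this maximal set to be $G_i$-invariant (Proposition \ref{Proposition; AFP--Uniqueness of Partial Embedding}). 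Moreover your second claim ($G_0$ and $G_1$ can never partially embed into the same $\Gamma_j$ inside one ergodic coupling) is stronger than what is true or needed: the supports of $G_i\preceq_\Sigma\Gamma_j$ are only $G_i\times\Gamma$-invariant, so ergodicity of the full $G\times\Gamma$-action does not make them trivial, and the paper never proves such a statement. Instead it proves the two-sided existence statement (each $G_i$ is ME to some $\Gamma_j$ and each $\Gamma_j$ to some $G_i$) and deduces part (1) by a short combinatorial argument; the bijection $\sigma$ only appears in parts (3)--(4) of Theorem \ref{Theorem; B--S rigidity}, under the additional ergodicity hypotheses on the restricted actions, which is exactly what is assumed in part (2) of the statement.

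The other substantive gap is your ``standard routine'' upgrading the mutual embeddings $G_i\preceq_\Sigma\Gamma_j$ and $\Gamma_j\preceq_\Sigma G_i$ to an ME coupling. Measurable embedding in both directions does not imply measure equivalence, and having both embeddings inside the same $\Sigma$ does not by itself help: the two partial embeddings could a priori sit on disjoint parts of $\Sigma$. The paper must show that the maximal partial embeddings in the two directions, $\Omega(i,j)$ and $\Xi(i,j)$, coincide up to null sets; this is done by a counting argument that translates $\Xi(i,j)$ by infinitely many coset representatives of $G_i/A$ and compares with the finite quantity $\Tr_{\Gamma_j}(\Omega(i,j))$, and it again relies on the projection property of the function valued measures from Lemma \ref{Lemma; AFProduct--Uniqueness of Partial Embedding}. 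So while you correctly anticipate that the double-coset structure of the amalgam and an intertwining-type analysis are where the work lies, the specific steps you propose would not go through as stated, and the decisive ingredients (uniqueness/maximality of partial embeddings, orthogonality of the supports over the two factors, and the coincidence $\Omega(i,j)=\Xi(i,j)$) are absent from the plan.
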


We will also prove other results in Theorem \ref{Theorem; B--S rigidity},
which are analogous to the results shown by Alvarez and Gaboriau
\cite{Alvarez--Gaboriau; Free Products}. They proved measure equivalence and stably orbit equivalence
results on free products of measurably freely indecomposable $(\mathcal{MFI})$ groups. The class $\mathcal{MFI}$ is a quite
large class including groups whose first $\ell^2$-Betti numbers are $0$.

In \cite{Chifan--Houdayer; Prime factors}, Chifan and Houdayer proved a von Neumann algebraic rigidity
theorem for group measure space constructions $L^\infty X \rtimes \Gamma$ of free e.m.p.\ actions,
where group $\Gamma$ was required to be
a free product of direct product groups between infinite groups and non-amenable groups.
The assertion was much stronger than rigidity on orbit equivalence relations.
Prior to these results, in \cite{IPP; AFP}, Ioana, Peterson and Popa got Bass--Serre rigidity results on
von Neumann algebras and orbit equivalence relations given by free product groups of $w$-rigid groups.

\section{The Notion of Measure Equivalence and Measurable Embedding}
\subsection{Measurable Embedding}
The following notion will be useful throughout this paper, even if one is only interested in measure equivalence. This is a generalization of Gromov's measure equivalence.

\begin{definition}\label{definition; measurable embedding}
Let $G$ and $\Gamma$ be countable groups. $($We admit the case that they are finite$)$.
We say that the group $G$ \textbf{measurably embeds} into $\Gamma$,
if there exist a standard measure space $(\Sigma, \nu)$, a measure preserving
action of $G \times \Gamma$ on $\Sigma$ and measurable subsets $X, Y \subset \Sigma$
with the following properties:
\begin{eqnarray*}
    \Sigma = \bigsqcup_{\gamma \in \Gamma} \gamma X
    = \bigsqcup_{g \in G} g Y, \quad
    \mu(X) < \infty.
\end{eqnarray*}
Then we use the notation $G \preceq_{\rm ME} \Gamma$.
The measure space $\Sigma$ equipped with the $G \times \Gamma$-action
is called a \textbf{measurable embedding} of $G$ into $\Gamma$.
The measurable embedding $\Sigma$ is said to be \textbf{ergodic},
if the $G \times \Gamma$-action is ergodic.
\end{definition}

If the measure of the $G$ fundamental domain $Y$ is also finite, then the measure space $\Sigma$ gives an ME coupling between $G$ and $\Gamma$. As in the case of ME couplings (Lemma 2.2 in Furman \cite{Furman; Higher Rank Lattice}), if we have a measurable embedding of $G$ into $\Gamma$, there is ergodic one by using ergodic decomposition.

\begin{definition}\label{Definition; MEm constant}
For a measurable embedding $(\Sigma, \nu)$ of $G$ into $\Gamma$,
the following quantity is called
the \textbf{coupling index} of $\Sigma$ and denoted by $[\Gamma : G] _\Sigma$:
\begin{eqnarray*}
    [\Gamma : G]_{\Sigma} = \nu(Y) / \nu(X) \in (0, \infty],
\end{eqnarray*}
where $X$ is a $\Gamma$ fundamental domain and $Y$ is a $G$ fundamental domain. This definition does not depend on the choice of $X$ and $Y$.
\end{definition}

\begin{remark}\label{Remark; MEm}
\begin{enumerate}
\item
    The relation $\preceq_\mathrm{ME}$ is transitive;
    if $H \preceq_\mathrm{ME} \Lambda$ and $\Lambda \preceq_\mathrm{ME} \Gamma$,
    then $H \preceq_\mathrm{ME} \Gamma$. The
    proof is the same as that of $``\sim_\mathrm{ME}$'' $($\cite{Furman; Higher Rank Lattice}$)$.
\item
    If countable groups $G$ and $\Gamma$ satisfy $G \preceq_\mathrm{ME} \Gamma$ and if $\Gamma$
    is amenable $($resp.~exact$)$, then $G$ is also amenable $($resp.~exact$)$.
    The class $\mathcal{S}$ on countable groups has the same property
    $($see Sako \cite{Sako}$)$.
\item
    For a subgroup $\Lambda \subset \Gamma$,
    we can regard $\Gamma$ as a measurable embedding of $\Lambda$ into $\Gamma$,
    letting $\Gamma$ act from the right and
    $\Lambda$ act from the left. Then the coupling index $[\Gamma : \Lambda]_\Gamma$
    coincides with the index of the group inclusion.
\item
    Let $G, H \subset \Gamma$ be subgroups. We regard $\Gamma$ as the standard self coupling of $\Gamma$, on which $\Gamma \times \Gamma$ acts by the left-and-right translation. The groups satisfy $G \preceq_\Gamma H$ if and only if there exists
    $\gamma \in \Gamma$ such that $G \gamma H$ is a finite union of left $H$-cosets.
    This is equivalent to $[G : G \cap \gamma H \gamma^{-1}] < \infty$.
\end{enumerate}
\end{remark}

We introduce supports of partial embeddings.
\begin{definition}\label{Definition; Supports}
Let $H \subset G$, $\Lambda \subset \Gamma$ be subgroups.
Let $\Sigma$ be a measurable embedding of $G$ into $\Gamma$.
Choose a $\Gamma$ fundamental domain $X$ and a $G$ fundamental domain $Y$.
We define $\supp^\Gamma_X(H \preceq_{\Sigma} \Lambda) \in L^\infty X$
by the projection which corresponds to
\begin{eqnarray*}
\bigvee \{\gamma \chi(\Omega) \ |
\ \gamma \in \Gamma, \Omega \subset \Sigma
\textrm{\ gives\ } H \preceq_\Sigma \Lambda \} \in (L^\infty \Sigma)^\Gamma.
\end{eqnarray*}
We define $\supp^G_Y(H \preceq_{\Sigma} \Lambda) \in L^\infty Y$ by the projection which corresponds to
\begin{eqnarray*}
\bigvee \{g \chi(\Omega) \ | \ g \in G, \Omega \subset \Sigma
\textrm{\ gives\ } H \preceq_\Sigma \Lambda \} \in (L^\infty \Sigma)^G.
\end{eqnarray*}
We call them \textbf{$\Gamma$-support} and \textbf{$G$-support} of $H \preceq_{\Sigma} \Lambda$ respectively.
\end{definition}
We note that $H \preceq_\Sigma \Lambda$ in $\Sigma$ if and only if $p \neq 0$
(or $q \neq 0$).

\subsection{Stable Orbit Equivalence}
For a free measure preserving $G$-action $\alpha$ on a standard measure space $X$, we write
the equivalence relation of the action as
\begin{eqnarray*}
    \R_\alpha = \{ (g x, x) \ | \ x \in X, g \in G \}
    \subset X \times X.
\end{eqnarray*}
This gives an equivalence relation on $X$ with countable equivalence classes.
On the set $\R_\alpha$, we introduce a structure as a measurable set
by the identification $\R_\alpha \ni (g x, x) \mapsto (g, x) \in G \times X$. The measure on $\R_\alpha$ is the same as one defined in Feldman--Moore \cite{Feldman--Moore; II}.
In the case that $X$ is a finite measure space and that the $G$-action on $X$ is ergodic,
we consider the amplification of $\R_\alpha^s$ for $s \in (0, \infty]$.
We deal with stable orbit equivalence $($SOE$)$ between two group actions on standard measure space. We refer to Vaes' survey \cite{Vaes} and Furman's paper \cite{Furman; OE rigidity} with terminology \textit{weak orbit equivalence}. As in the case of ME coupling, we have the following.

\begin{lemma}\label{Lemma; MEm and SOE}
There exists an ergodic measurable embedding of $G$ into $\Gamma$
with coupling index $s \in (0, \infty]$, if and only if there exist a free e.m.p.\ $G$-action on a standard probability space $X$ and a free e.m.p.\ $\Gamma$-action
on a standard measure space $Y$ so that they are SOE with compression constant $s$, namely,
$\R_\alpha^s \cong \R_\beta$.
\end{lemma}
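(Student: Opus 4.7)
The plan is to adapt Furman's correspondence between ME couplings and SOE of free finite-measure-preserving actions (\cite{Furman; OE rigidity}, Theorem 3.3) to the measurable-embedding setting; both implications follow by the standard fundamental-domain/cocycle translation.

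For the forward direction, start with an ergodic measurable embedding $(\Sigma, \nu)$ of $G$ into $\Gamma$, choose a $\Gamma$-fundamental domain $X \subset \Sigma$ and a $G$-fundamental domain $Y \subset \Sigma$, and rescale so that $\nu(X) = 1$ and $\nu(Y) = s$. For each $(g, x) \in G \times X$ there is a unique $c(g, x) \in \Gamma$ with $gx \in c(g, x) X$; the assignment $\alpha(g) x = c(g, x)^{-1} gx$ defines a measure-preserving $G$-action on the probability space $X$, with $c$ a measurable $\Gamma$-valued cocycle. After discarding a null set, $\alpha$ is free (inherited from essential freeness of the $G \times \Gamma$-action on a conull invariant subset of $\Sigma$), and $\alpha$ is ergodic because any $\alpha$-invariant $A \subset X$ sweeps out to the $(G \times \Gamma)$-invariant subset $\bigsqcup_\gamma \gamma A \subset \Sigma$. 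Interchanging $G, \Gamma$ and $X, Y$ produces a free e.m.p.\ $\Gamma$-action $\beta$ on $Y$. The composition of the inclusion $X \hookrightarrow \Sigma$ with the $G$-quotient projection $\Sigma \to Y$ yields a measurable injection of $X$ onto a subset $Y_0 \subset Y$ with $\nu(Y_0) = 1$, and this bijection transports $\R_\alpha$ to the restriction of $\R_\beta$ to $Y_0$; since $\nu(Y) = s$, this exactly realizes $\R_\alpha^s \cong \R_\beta$.

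For the converse, given free e.m.p.\ $\alpha, \beta$ with $\R_\alpha^s \cong \R_\beta$, record the isomorphism as a measurable bijection $\theta$ between suitable subsets of $X$ (or of an amplification) and of $Y$; this determines a Zimmer cocycle $\omega \colon G \times X \to \Gamma$ by the requirement $\theta(\alpha(g) x) = \beta(\omega(g, x)) \theta(x)$, which is well-defined and unique because $\theta$ is an orbit map and $\beta$ is free. Form $\Sigma = X \times \Gamma$ with the product of $\mu_X$ and counting measure on $\Gamma$, and put $g \cdot (x, \gamma) = (\alpha(g) x, \omega(g, x) \gamma)$ and $\lambda \cdot (x, \gamma) = (x, \gamma \lambda^{-1})$; these are commuting measure-preserving actions by the cocycle identity. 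Then $X \times \{e\}$ is a $\Gamma$-fundamental domain of measure $1$, and through $\theta$ one exhibits a $G$-fundamental domain measurably isomorphic to $Y$ of measure $s$; the $G \times \Gamma$-action on $\Sigma$ inherits ergodicity from $\alpha$. Hence $\Sigma$ is an ergodic measurable embedding of $G$ into $\Gamma$ with $[\Gamma : G]_\Sigma = s$.

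The only genuinely new point compared to the classical ME/SOE dictionary is the case $s = \infty$, where $\R_\alpha^\infty$ is to be interpreted as $\R_\alpha \times \R_{\mathrm{triv}}$ on $X \times \mathbb{N}$ with counting measure on $\mathbb{N}$, and the associated $\Gamma$-action $\beta$ lives on a $\sigma$-finite (but not finite) standard measure space. The fundamental-domain/cocycle constructions above remain meaningful in the $\sigma$-finite setting; checking this is the only place one has to go beyond a direct citation of \cite{Furman; OE rigidity}, and it is routine.
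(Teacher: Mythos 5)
Your second half (from SOE to a measurable embedding) is sound and, up to a change of model, essentially the paper's own argument: the rectangular part $\R_\beta \cap (X \times Y)$ used in the paper is isomorphic to your cocycle model $X \times \Gamma$, and both yield coupling index $s$.

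The gap is in the other direction, at the point where you assert that the dot action $\alpha$ of $G$ on $X \cong \Gamma\backslash\Sigma$ is free ``inherited from essential freeness of the $G \times \Gamma$-action on a conull invariant subset of $\Sigma$.'' A measurable embedding is only required to admit a fundamental domain for each of the two one-sided actions (Definition \ref{definition; measurable embedding}); this makes the $G$-action and the $\Gamma$-action on $\Sigma$ individually free, but it does not make the $G\times\Gamma$-action essentially free, and the induced action on the quotient can fail badly to be free. The standard example is the one in Remark \ref{Remark; MEm}: for a subgroup $G = \Lambda \subset \Gamma$, the space $\Sigma = \Gamma$ with left $\Lambda$- and right $\Gamma$-translations is an ergodic measurable embedding, yet $\Gamma\backslash\Sigma$ is a single atom and the dot $G$-action on it is trivial. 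So the free actions whose existence the lemma asserts cannot in general be taken to be the quotient actions of the given coupling; one must first modify $\Sigma$. This is exactly the step the paper supplies and your proposal lacks: replace $\Sigma$ by $\Sigma \times X_1$, where $X_1$ carries a weakly mixing free m.p.\ $G$-action and the trivial $\Gamma$-action; weak mixing keeps the $G\times\Gamma$-action ergodic, triviality of the $\Gamma$-action keeps the coupling index equal to $s$, and the $G$-dot action on $(\Gamma\backslash\Sigma)\times X_1$ becomes free. A secondary inaccuracy in the same direction: the composition $X \hookrightarrow \Sigma \to G\backslash\Sigma \cong Y$ is in general neither injective nor measure preserving onto a subset of measure $1$; it is only countable-to-one and measure preserving on each piece $X \cap gY$, and the isomorphism $\R_\alpha^s \cong \R_\beta$ has to be assembled from these partial isomorphisms as in Furman's argument --- routine, but only after freeness has been arranged as above.
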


For the case of ME coupling, we are done in Lemma 3.2 in Furman \cite{Furman; OE rigidity}
and Remark 2.14 in Monod--Shalom \cite{Monod--Shalom}, but we explain the both cases.

\begin{proof}
Suppose that there exist a free e.m.p.\ $G$-action $\alpha$ on $X$ and
a free e.m.p.\ $\Gamma$-action $\beta$ on $Y$ which are SOE
with compression constant $s \in [1, \infty]$. We identify
measure space $X$ with a measurable subset of $Y$ and
the relation $\R_\alpha$ with $\R_\beta \cap (X \times X)$.
Then we can naturally regard the rectangular part
$\R_\beta \cap (X \times Y)$
as an ergodic measurable embedding of $G$ into $\Gamma$, by letting $G$ act on the first entries and $\Gamma$ act on the second entry. In turn, suppose $s < 1$. By replacing the roles on $G$ and $\Gamma$,
we get an ME coupling between $G$ and $\Gamma$ given by a rectangular part
of $\R_\alpha$.

An ergodic measurable embedding $\Sigma$
can be regarded as an measurable embedding given by
SOE, when the natural $G$-action on $X \cong \Lambda \backslash \Sigma$
is (essentially) free. If $G \preceq_\mathrm{ME} \Gamma$, we can always find such a measurable embedding $\Sigma$ by the following procedure. We take a standard probability space $(X_1, \mu)$
which is equipped
with a measure preserving, weakly mixing and free $G$-action.
Let $\Gamma$ act on $X_1$ trivially. We regard
$\Sigma^\prime = \Sigma \times X_1$ as
an ergodic measurable embedding, on which $G$ and $\Gamma$ act by the diagonal actions.
Since the $G$-action on the set
$\Gamma \backslash \Sigma^\prime = \Gamma \backslash \Sigma \times X_1$ is free,
we get stable (weak) orbit equivalence.
\end{proof}

\subsection{Function Valued Measures}
\label{Subsection; Function Valued Measure}
Let $(\Sigma, \nu)$ be a standard measure space equip\-ped with
a measure preserving free action of
a countable group $\Gamma$. Assume that the $\Gamma$-action
has a fundamental domain $X$.
For a subgroup $\Lambda \subset \Gamma$, there exists
a fundamental domain $X_\Lambda$ for the
$\Lambda$-action (for instance $X_\Lambda = \bigsqcup_{i \in I} \gamma_i X$,
where $\{\gamma_i\}_{i \in I}$ are representatives of the right cosets $\Lambda \backslash \Gamma$).

We denote by $\Tr$ the integration of elements in $L^\infty \Sigma$ given by the measure $\nu$.
We naturally define the $\Gamma$-action on the function space $L^\infty \Sigma$.
We define an application $\Tr_\Lambda$ on $\Lambda$-invariant positive functions
$(L^\infty \Sigma)^\Lambda_{+}$ by the integration on $X_\Lambda$, that is,
$\Tr_\Lambda (\phi) = \Tr(\chi(X_\Lambda) \phi ) \in [0, \infty]$. This definition does not depend on the choice of $X_\Lambda$.
For a $\Lambda$-invariant measurable set $\Omega \subset \Gamma$,
we write $\Tr_\Lambda (\Omega) = \Tr_\Lambda (\chi(\Omega))$.

Consider the natural inclusion
$L^\infty X \ni f \mapsto \iota(f) \in (L^\infty \Sigma)^\Lambda,
$
defined as
\begin{eqnarray*}
    \iota(f)(\gamma x) = f(x), \quad x \in X, \gamma \in \Gamma.
\end{eqnarray*}
We denote by $\E ^\Lambda_X$
the pull back of the preduals:
\begin{eqnarray*}
    \iota^* = \mathfrak{E}^{\Lambda}_X \colon L^1 ((L^\infty \Sigma)^\Lambda,
    \Tr_\Lambda) \longrightarrow L^1 (X).
\end{eqnarray*}
The space $L^1 ((L^\infty \Sigma)^\Lambda, \Tr_\Lambda)$ can be identified with the
space of the measurable $\Lambda$-invariant functions which are integrable on
$X_\Lambda$.

Let $\widehat{(L^\infty \Sigma)^\Lambda_+}$ and $\widehat{L^\infty X_+}$ be the extended positive cones. The former set consists of
the $[0, \infty]$-valued $\Lambda$-invariant measurable functions on $\Sigma$,
and the latter set consists of the $[0, \infty]$-valued measurable functions on $X$.
The completely additive extension $\E^\Lambda_X$ of $\iota^*$ is unique.
We call the extension $\E^\Lambda_X$ the \textbf{function valued measure} on
$\Lambda \backslash \Sigma$.
By choosing the fundamental domain as $X_\Lambda =
\bigsqcup_{i \in I} \gamma_i X$,
the function valued measure is written by
\begin{eqnarray*}
    \E^\Lambda_X (\phi) (x) = \sum_{i \in I} \phi (\gamma_i x),
    \quad \phi \in \widehat{(L^\infty \Sigma)^\Lambda_+},
\end{eqnarray*}
because every positive measurable function on $X_\Lambda$ can be written as
a countable sum of integrable functions and the equation holds true for all
integrable functions. It turns out that for
any $\Lambda$-invariant measurable subset $\Omega \subset \Sigma$,
the function $\E^\Lambda_X (\chi(\Omega))$ is a $(\{0,1,\cdots \} \sqcup \{\infty\})$-valued function.
For a $\Lambda$-invariant measurable set $\Omega \subset \Sigma$,
we also write $\E^\Lambda_X (\Omega) = \E^\Lambda_X (\chi(\Omega))$.

We get the following basic properties of function valued measures.
\begin{lemma}\label{Lemma; Function Valued Measure}
The function valued measure satisfies the following:
\begin{enumerate}
\item
    For $\phi \in \widehat{(L^\infty \Sigma)^\Lambda_+}$,
    we get
    \begin{eqnarray*}
        \Tr_\Lambda(\iota(f)\phi) = \int_X f \E^\Lambda_X(\phi) d\nu,
        \quad f \in L^\infty X.
    \end{eqnarray*}
    This condition determines $\E^\Lambda_X(\phi)$.
\item
    Let $\theta$ be a measure preserving transformation on $\Sigma$ commuting with the $\Gamma$-action.
    Denote by $\alpha$ a transformation on $X \cong \Sigma / \Gamma$ given by $\theta$.
    We get
    \begin{eqnarray*}
        \alpha(\E^\Lambda_X(\phi)) = \E^\Lambda_X(\theta(\phi)),
        \quad \phi \in \widehat{(L^\infty \Sigma)^\Lambda_+}.
    \end{eqnarray*}
\item
    For a measurable subset $W \subset X$,
    we get
    \begin{eqnarray*}
        \chi(W) \E^\Lambda_X(\phi) = \E^\Lambda_X(\chi(\Gamma W) \phi).
        \quad \phi \in \widehat{(L^\infty \Sigma)^\Lambda_+}.
    \end{eqnarray*}
\end{enumerate}
\end{lemma}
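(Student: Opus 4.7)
The plan is to establish part (1) as the core identity and then derive (2) and (3) as formal consequences of the uniqueness statement it contains.

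For (1), I would use the explicit formula $\E^\Lambda_X(\phi)(x) = \sum_{i \in I} \phi(\gamma_i x)$ displayed just above the lemma, where $\{\gamma_i\}_{i \in I}$ are coset representatives for $\Lambda \backslash \Gamma$ chosen so that $X_\Lambda = \bigsqcup_i \gamma_i X$ is a $\Lambda$-fundamental domain. Expanding $\Tr_\Lambda(\iota(f)\phi) = \Tr(\chi(X_\Lambda)\iota(f)\phi)$ piece by piece over the $\gamma_i X$ and using the $\Gamma$-invariance of $\nu$ together with $\iota(f)(\gamma_i x) = f(x)$ yields
\begin{align*}
\Tr_\Lambda(\iota(f)\phi)
= \sum_{i} \int_X f(x)\phi(\gamma_i x)\, d\nu(x)
= \int_X f\cdot\E^\Lambda_X(\phi)\, d\nu.
\end{align*}
The uniqueness clause is a standard measure-theoretic fact: two $[0,\infty]$-valued measurable functions on $X$ that integrate identically against every $f \in L^\infty X$ (it suffices to test against indicators of finite-measure sets, exhausting $X$) must agree $\nu$-a.e.

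For (2), I need two preliminary observations. First, since $\theta$ commutes with $\Gamma$, it descends to $\alpha$ on $X \cong \Sigma/\Gamma$, so $\theta(\iota(f)) = \iota(\alpha(f))$ for $f \in L^\infty X$. Second, $\Tr_\Lambda$ is $\theta$-invariant: $\theta$ commutes with $\Lambda$, so $\theta(\psi)$ remains $\Lambda$-invariant, and $\Tr_\Lambda(\theta(\psi)) = \Tr(\chi(\theta(X_\Lambda))\theta(\psi)) = \Tr(\chi(X_\Lambda)\psi)$ by the measure-preservation of $\theta$ combined with the independence of $\Tr_\Lambda$ from the choice of $\Lambda$-fundamental domain. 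Then for every $f \in L^\infty X$,
\begin{align*}
\int_X f\cdot \alpha(\E^\Lambda_X(\phi))\, d\nu
&= \int_X \alpha^{-1}(f)\cdot \E^\Lambda_X(\phi)\, d\nu
= \Tr_\Lambda(\iota(\alpha^{-1}(f))\phi) \\
&= \Tr_\Lambda(\theta^{-1}(\iota(f))\phi)
= \Tr_\Lambda(\iota(f)\theta(\phi))
= \int_X f\cdot \E^\Lambda_X(\theta(\phi))\, d\nu,
\end{align*}
and the uniqueness from (1) yields $\alpha(\E^\Lambda_X(\phi)) = \E^\Lambda_X(\theta(\phi))$. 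Part (3) is similar: since $W \subset X$ sits inside a $\Gamma$-fundamental domain, $\iota(\chi(W)) = \chi(\Gamma W)$ as elements of $(L^\infty \Sigma)^\Gamma \subset (L^\infty \Sigma)^\Lambda$, so two applications of (1) give
\begin{align*}
\int_X f\chi(W)\cdot \E^\Lambda_X(\phi)\, d\nu
= \Tr_\Lambda(\iota(f)\chi(\Gamma W)\phi)
= \int_X f\cdot \E^\Lambda_X(\chi(\Gamma W)\phi)\, d\nu
\end{align*}
for every $f \in L^\infty X$, and uniqueness closes the argument.

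The only real obstacle is bookkeeping: confirming the $\theta$-invariance of $\Tr_\Lambda$ via the fundamental-domain independence, keeping track of the convention for the action of $\theta$ (resp.\ $\alpha$) on $L^\infty \Sigma$ (resp.\ $L^\infty X$), and extending (1) from genuinely integrable $\phi$ (the domain of $\iota^*$) to all of $\widehat{(L^\infty \Sigma)^\Lambda_+}$ by monotone convergence. No new ideas are required beyond these routine verifications.
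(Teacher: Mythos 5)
Your proposal is correct and follows essentially the same route as the paper: establish the duality identity in (1) (the paper cites the definition of $\iota^*$ plus complete additivity, you verify it via the displayed sum formula $\E^\Lambda_X(\phi)(x)=\sum_i\phi(\gamma_i x)$, which amounts to the same thing), and then deduce (2) and (3) by the same trace computations against test functions $f\in L^\infty X$ --- using that $\theta(X_\Lambda)$ is again a $\Lambda$-fundamental domain and that $\iota(f\chi(W))=\iota(f)\chi(\Gamma W)$ --- followed by the uniqueness clause of (1). No substantive differences.
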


\begin{proof}
When $\phi \in \widehat{(L^\infty \Sigma)^\Lambda_+}$ is integrable on $X_\Lambda$, the first condition is the definition of $\E^\Lambda_X(\phi)$.
By the complete additivity of $\E^\Lambda_X$, the first assertion holds for a general $\phi$.

For the second assertion, we note that $\theta(X_\Lambda)$ is also a fundamental domain for
the $\Lambda$-action on $\Sigma$.
For $\phi \in \widehat{(L^\infty \Sigma)^\Lambda_+}$ and $f \in L^\infty X$, we have
\begin{eqnarray*}
      \Tr_\Lambda (\iota(f) \theta(\phi))
    = \int_{\theta(X_\Lambda)} \iota(f) \theta(\phi) d \nu
    = \int_{X_\Lambda} \theta^{-1} (\iota(f)) \phi d \nu
    = \Tr_\Lambda (\iota(\alpha^{-1}(f)) \phi).
\end{eqnarray*}
Since $\alpha$ is measure preserving, we get
\begin{eqnarray*}
      \Tr_\Lambda (\iota(\alpha^{-1}(f)) \phi)
    = \int_X \alpha^{-1}(f) \E^\Lambda_X(\phi) d\nu
    = \int_X f \alpha(\E^\Lambda_X(\phi)) d\nu.
\end{eqnarray*}
By the first assertion, we conclude $\alpha(\E^\Lambda_X(\phi)) = \E^\Lambda_X(\theta(\phi))$.

For a measurable subset $W \subset X$, we also get
\begin{eqnarray*}
   & &  \Tr_\Lambda (\iota(f) \chi(\Gamma W) \phi)
    =   \Tr_\Lambda (\iota(f \chi(W)) \phi)
    =   \int_X                f \chi(W) \E^\Lambda_X(\phi)  d \nu
\end{eqnarray*}
By the first assertion, we get the third assertion.
\end{proof}

\begin{lemma}\label{Lemma; FVM and MEm}
Let $H \subset G$ and $\Lambda \subset \Gamma$ be subgroups.
Let $\Sigma$ be a measurable embedding of $G$ into $\Gamma$. Choose a $\Gamma$ fundamental domain
$X \subset \Sigma$.
Then $H \preceq_\Sigma \Lambda$ if and only if
there exists an $H \times \Lambda$-invariant measurable subset $\Omega \subset \Sigma$
so that the essential range of $\E^\Lambda_X (\Omega)$ satisfies
$\range(\E^\Lambda_X (\Omega)) \not\subset \{0, \infty\}$.
\end{lemma}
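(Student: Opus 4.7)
The plan is to prove both directions via the identity $\Tr_\Lambda(\Omega) = \int_X \E^\Lambda_X(\Omega)\, d\nu$ (Lemma \ref{Lemma; Function Valued Measure}(1) with $f = 1$), leveraging the fact already noted in Subsection \ref{Subsection; Function Valued Measure} that $\E^\Lambda_X(\Omega)$ takes values in $\{0,1,2,\ldots\}\cup\{\infty\}$ whenever $\Omega$ is $\Lambda$-invariant.

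For the forward implication, let $\Omega$ realize $H \preceq_\Sigma \Lambda$. Then $\Omega$ is non-null and $\Tr_\Lambda(\Omega) < \infty$, so $\E^\Lambda_X(\Omega)$ is a.e.\ finite yet positive on a set of positive measure. This forces its essential range to meet $\{1,2,\ldots\}$ and hence to escape $\{0,\infty\}$.

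For the converse, set $f := \E^\Lambda_X(\Omega)$ and pick $n \ge 1$ with $W := \{f = n\} \subset X$ of positive measure, using the range hypothesis together with the integrality of $f$. The key step is to promote $\Omega$ to an $H$-invariant witness by restricting to the $\Gamma$-saturation of $W$; this requires $W$ to be $\alpha(H)$-invariant, where $\alpha$ denotes the measurable action of $G$ on $X \cong \Gamma\backslash\Sigma$ induced by the $G$-action on $\Sigma$ (well defined because the $G$- and $\Gamma$-actions commute). Applying Lemma \ref{Lemma; Function Valued Measure}(2) to each $h \in H$ with $\theta$ the $h$-action on $\Sigma$, and using $h\Omega = \Omega$, one obtains $\alpha(h)(f) = f$, so $W$ is $\alpha(H)$-invariant. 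Then $\Gamma W$ is both $\Gamma$- and $H$-invariant (from $H\Gamma W = \Gamma\alpha(H)W = \Gamma W$), hence $\Omega' := \Omega \cap \Gamma W$ is $H\times\Lambda$-invariant, and Lemma \ref{Lemma; Function Valued Measure}(3) gives
\[
\Tr_\Lambda(\Omega') = \int_X \chi(W)\, f \, d\nu = n\,\nu(W) \le n\,\nu(X) < \infty,
\]
using $\nu(X) < \infty$ from the definition of measurable embedding. Non-nullness of $\Omega'$ follows from $n\nu(W) > 0$, so $\Omega'$ witnesses $H \preceq_\Sigma \Lambda$.

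The main obstacle I anticipate is precisely this compatibility of the $H$- and $\Lambda$-actions under truncation: a naive level set of $f$ is automatically $\Lambda$-invariant but a priori not $H$-invariant, and the crucial input that resolves it is that commutation of the $G$- and $\Gamma$-actions turns $f$ itself into an $H$-invariant function on the quotient, making every level set $\alpha(H)$-stable for free and liftable to an $H$-invariant subset of $\Sigma$.
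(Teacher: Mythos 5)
Your proof is correct and follows essentially the same route as the paper: integrability of $\E^\Lambda_X(\Omega)$ for the forward direction, and for the converse, $H$-invariance of $\E^\Lambda_X(\Omega)$ under the dot action on $X \cong \Gamma\backslash\Sigma$ (via the commuting actions), restriction of $\Omega$ to the $\Gamma$-saturation of a bounded level set, and the identity $\Tr_\Lambda(\Omega') = \int_X \chi(W)\E^\Lambda_X(\Omega)\,d\nu < \infty$. The only cosmetic difference is that you cut at the exact level set $\{f = n\}$ while the paper uses $F^{-1}([1,n])$; both yield the finiteness bound from $\nu(X) < \infty$.
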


\begin{proof}
If there exists a partial embedding $\Omega$ for $H \preceq_\Sigma \Lambda$, then
the function $\E^\Lambda_X (\Omega)$ is non-zero, non-negative and integrable.
Thus the essential range of the function intersects with positive integers.

Suppose that there exists an $H \times \Lambda$-invariant measurable subset $\Omega$ with the above
property. Denote $F = \E^\Lambda_X (\Omega)$.
Then there exists a positive integer $n$ such that the preimage $F^{-1}([1,n]) = W \subset X$
is non-null. Since the function $\chi(\Omega)$ is $H$-invariant, the function $F$ on $X$ is $H$-invariant
under the dot action $H \curvearrowright X \cong \Gamma \backslash \Sigma$. Thus the measurable subset
$W \subset X$ is $H$-invariant under the dot action,
and the measurable subset $\Omega^\prime = \Omega \cap \Gamma W$ is
$H \times \Lambda$-invariant. By Lemma \ref{Lemma; Function Valued Measure}, we get
\begin{eqnarray*}
    0 < \Tr_\Lambda (\Omega^\prime)
      = \int_X \E^\Lambda_X (\Omega \cap \Gamma W) d \nu
      = \int_X F \chi(W) d \nu < \infty.
\end{eqnarray*}
For a $\Lambda$ fundamental domain $X_\Lambda$ for $\Sigma$, the measurable set $\Omega^\prime \cap X_\Lambda$ is a $\Lambda$ fundamental domain for $\Omega^\prime$ and has finite measure and
thus $\Omega^\prime$ gives a partial embedding $H \preceq_\Sigma \Lambda$.
\end{proof}

\section{Definition and Basic Properties of Bi-exactness}\label{Section; Bi-exactness}
We recall the definition and basic properties of bi-exactness. This notion was introduced in the $15$th chapter of Brown and Ozawa's book \cite{Brown--Ozawa; Approximation}. This section entirely relies on that book.
\begin{definition}
A subset $\Gamma_1$ of $\Gamma$ is said to be \textbf{small relative to} $\G$ if
there exist $s_1, t_1, \cdots, s_n, t_n \in \Gamma$ and $\Lambda_1, \cdots, \Lambda_n \in \G$
satisfying
$\Gamma_1 \subset \bigcup_{i = 1}^n s_i \Lambda_i t_i$.

\end{definition}
Let $c_0(\Gamma; \G)$ be a $C^*$-subalgebra of $\ell_\infty \Gamma$ generated by functions
whose supports are small relative to $\G$.
\begin{definition}
The group $\Gamma$ is said to be \textbf{bi-exact relative to} $\G$ if there exists a map $\mu : \Gamma \rightarrow \mathrm{Prob}(\Gamma) \subset \ell_1 \Gamma$,
with the property that for any $\epsilon$ and $s, t \in \Gamma$,
there exists a small subset $\Gamma_1$ relative to $\G$ such that
\begin{eqnarray*}
    \| \mu(s x t) - s \mu(x) \|_1 < \epsilon, \quad x \in \Gamma \cap \Gamma_1 ^c.
\end{eqnarray*}
\end{definition}

The following is a useful characterization of bi-exactness.

\begin{proposition}[Proposition 15.2.3 in Brown--Ozawa \cite{Brown--Ozawa; Approximation}]
\label{Proposition; Characterization of Bi-exactness by Actions on Boundaries}
The group $\Gamma$ is bi-exact relative to $\G$ if and only if the Gelfand spectrum of
$\ell_\infty \Gamma / c_0(\Gamma; \G)$ is amenable as a $\Gamma \times \Gamma$-space with
the left-times-right translation action.
\end{proposition}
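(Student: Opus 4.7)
The plan is to reformulate boundary amenability in a concrete form and then to pass between $\mathrm{Prob}(\Gamma)$-valued maps (the form appearing in bi-exactness) and $\mathrm{Prob}(\Gamma \times \Gamma)$-valued maps (the form appearing in topological amenability of the $\Gamma \times \Gamma$-action) by a diagonal support trick. Write $\partial_{\G}\Gamma$ for the Gelfand spectrum of $\ell_\infty \Gamma / c_0(\Gamma;\G)$ and recall the general equivalence (Ozawa, Brown--Ozawa) between topological amenability of a $\Gamma \times \Gamma$-space and the existence of a net of continuous maps into $\mathrm{Prob}(\Gamma \times \Gamma)$ that are asymptotically equivariant. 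Via pullback along $\ell_\infty \Gamma \twoheadrightarrow \ell_\infty \Gamma / c_0(\Gamma;\G)$, this becomes: there is a net $\mu_i \colon \Gamma \to \mathrm{Prob}(\Gamma \times \Gamma)$ such that for every $(s,t)$ and every $\epsilon>0$, the set $\{x \in \Gamma : \|(s,t)\mu_i(x) - \mu_i((s,t)\cdot x)\|_1 \geq \epsilon\}$ is small relative to $\G$ for all large $i$. This uses the definition of $c_0(\Gamma;\G)$ as the $C^*$-algebra generated by indicators of sets small relative to $\G$, so that norm-vanishing modulo $c_0(\Gamma;\G)$ is equivalent to the error being small off a small-relative-to-$\G$ subset.

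For the forward direction, given a bi-exactness map $\mu$, I would lift it to $\tilde\mu \colon \Gamma \to \mathrm{Prob}(\Gamma \times \Gamma)$ supported on the diagonal graph $\{(a, a^{-1} x) \mid a \in \Gamma\}$ via $\tilde\mu(x)(a,b) = \mu(x)(a)\,\delta_{a^{-1}x}(b)$, so that the first marginal recovers $\mu(x)$. Under the $\Gamma \times \Gamma$-action intertwining the left--right action on $\Gamma$ through the multiplication map $(a,b) \mapsto ab$, the graph structure is preserved under an appropriate change of variables, and a direct computation reduces $\|(s,t)\tilde\mu(x) - \tilde\mu((s,t)\cdot x)\|_1$ to a constant multiple of $\|s\mu(x) - \mu(sxt)\|_1$. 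The bi-exactness hypothesis then bounds this by $\epsilon$ off a small subset, giving the almost-equivariant net. Conversely, given such a net $\mu_i$, push down along the first-coordinate projection $\pi_1 \colon \Gamma \times \Gamma \to \Gamma$ to get $\mu_i' := (\pi_1)_* \mu_i$. Since $\pi_1$ intertwines $(s,t)$-translation with left translation by $s$, and since the ``right'' error $\mu_i'(xt) - \mu_i'(x)$ also becomes small off a small set by the same principle, a weak-$*$ cluster point of $\{\mu_i'\}$ delivers a single map $\mu \colon \Gamma \to \mathrm{Prob}(\Gamma)$ realizing bi-exactness.

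The main technical hurdle is the careful identification of the two notions of ``smallness'' on the two sides. Amenability of the boundary phrases error as norm-vanishing in the quotient $\ell_\infty \Gamma / c_0(\Gamma;\G)$, whereas the bi-exactness definition phrases it pointwise as an $\epsilon$-bound holding outside a set small relative to $\G$. Bridging these requires a truncation step: for a fixed $\epsilon$ and finitely many test pairs $(s,t)$, one must approximate the error function on $\Gamma$ in norm by a finite linear combination of indicators of sets $s_k \Lambda_k t_k$ with $\Lambda_k \in \G$. This is done using the definition of $c_0(\Gamma;\G)$ as the norm-closed $C^*$-algebra generated by functions with support small relative to $\G$, and is precisely where the finite union $\bigcup s_i \Lambda_i t_i$ in the definition of a small set is extracted from the analytic hypothesis.
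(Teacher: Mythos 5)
The paper does not prove this statement: it is quoted verbatim from Brown--Ozawa (Proposition 15.2.3), so your proposal can only be measured against the argument in that book, whose overall skeleton (translate both sides into $\ell_1$-almost equivariance off sets small relative to $\G$, lift $\mathrm{Prob}(\Gamma)$-valued maps to $\mathrm{Prob}(\Gamma\times\Gamma)$-valued ones, push forward along the first coordinate for the converse) you have correctly identified. However, two of your steps fail as written. The displayed lift $\tilde\mu(x)(a,b)=\mu(x)(a)\delta_{a^{-1}x}(b)$ does not satisfy the claimed estimate: with the action $(s,t)\cdot x=sxt^{-1}$ and the left-translation action of $\Gamma\times\Gamma$ on $\mathrm{Prob}(\Gamma\times\Gamma)$, the measure $(s,t)\tilde\mu(x)$ is carried by the graph $\{(c,\,tc^{-1}sx)\}$ while $\tilde\mu(sxt^{-1})$ is carried by $\{(c,\,c^{-1}sxt^{-1})\}$; for $t\neq e$ these are in general disjoint (for $\Gamma=\Z$ they are the lines $c+b=s+t+x$ and $c+b=s-t+x$), so the $\ell_1$-distance is $2$ regardless of $\mu$. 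The map $(a,b)\mapsto ab$ you invoke does not intertwine the left-times-right action with left translation on $\Gamma\times\Gamma$; the map $(a,b)\mapsto ab^{-1}$ does. The repaired lift is $\zeta(x)(a,b)=\mu(x)(a)\,\delta_{x^{-1}a}(b)$, for which one really does get $\|(s,t)\zeta(x)-\zeta(sxt^{-1})\|_1=\|s\mu(x)-\mu(sxt^{-1})\|_1$ (alternatively $\zeta(x)=\mu(x)\otimes\mu(x^{-1})$ works, applying the hypothesis twice). So the idea is salvageable, but the ``direct computation'' you assert is false for your formula.

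The more substantial gap is the ``general equivalence via pullback.'' In the direction your forward implication needs, it is not a formality: a map $\Gamma\to\mathrm{Prob}(\Gamma\times\Gamma)$ that is almost equivariant off small sets does not induce a weak-$*$ continuous $\mathrm{Prob}$-valued map on the spectrum of $\ell_\infty\Gamma/c_0(\Gamma;\G)$, because the canonical extension can lose all its mass at the corona. Indeed, for the repaired $\zeta$ above each coefficient function $x\mapsto\zeta(x)(a,b)$ is supported on the single point $x=ab^{-1}$, hence lies in $c_0(\Gamma;\G)$, and the induced map on the boundary is identically zero. This is exactly why Brown--Ozawa do not extend the measures but pass to positive type functions: with $\xi_x=\zeta(x)^{1/2}\in\ell_2(\Gamma\times\Gamma)$ one forms $h(g)(x)=\langle\xi_x,\,g\,\xi_{g^{-1}\cdot x}\rangle$ for finitely many $g\in\Gamma\times\Gamma$, observes $|1-h(g)(x)|$ is small off a small set, and pushes the classes of $h(g)$ into $\ell_\infty\Gamma/c_0(\Gamma;\G)$ to get positive type functions on the transformation groupoid close to $1$; amenability then follows from the positive-definite characterization of amenable actions. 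Your truncation remark only addresses the easy translation between quotient-norm smallness and pointwise smallness off a small set, not this point. The same mass-loss phenomenon undermines your converse as stated: a pointwise weak-$*$ cluster point of the $\mu_i'$ need not be a probability measure, so producing the single map $\mu$ demanded by the definition requires patching or Ces\`aro-averaging the net over an exhaustion of $\Gamma$ by finite sets (which are small relative to any nonempty $\G$). Finally, note that Brown--Ozawa's definition of bi-exactness also requires exactness of $\Gamma$; in the converse direction this should be recorded, and it does follow by restricting the amenable $\Gamma\times\Gamma$-action to $\Gamma\times\{e\}$ acting on the compact spectrum.
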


\begin{remark}
The class $\mathcal{S}$ defined in Ozawa's paper \cite{Ozawa; Kurosh} is the same as the set of countable groups $\Gamma$
which are bi-exact relative to $\{ \{1\} \}$.
The Gromov's word hyperbolic groups are in $\mathcal{S}$. Discrete subgroups
of connected simple Lie groups of rank one are in $\mathcal{S}$
$($by using \cite{Higson--Guentner}, \cite{skandalis; nuclearite}$)$.
The class of amenable countable groups is a subclass of $\mathcal{S}$. A wreath product
$A \wr G$ is in $\mathcal{S}$ if $G \in \mathcal{S}$ and $A$ is amenable. The group $\Z^2 \rtimes \mathrm{SL}(2, \Z)$ is in $\mathcal{S}$ $($by Ozawa \cite{Ozawa; Kurosh, Ozawa; An Example}$)$.
\end{remark}

The notion of bi-exactness well behaves under being taken direct product,
wreath product and free product with amenable amalgamation.

\begin{lemma}[Lemma 15.3.3, Lemma 15.3.5 in \cite{Brown--Ozawa; Approximation}]
\label{Lemma; Direct Product}
Let $\Gamma_i \ (1 \le i \le n)$ be
countable groups and let $\Gamma_0$ be an amenable group. We denote by
$\Gamma$ the direct product $\Gamma_0 \times \prod_{i = 1}^n \Gamma_i$.
Let $\G_i$ be a non-empty family of subgroups of $\Gamma_i\ (1 \le i \le n)$
and let $\G$ be the family of subgroups
\begin{eqnarray*}
\G = \bigcup_{i = 1}^n
     \left\{\left. \Gamma_0 \times \Lambda \times \prod_{j \neq i} \Gamma_j \ \right| \ \Lambda \in \G_i \right\}.
\end{eqnarray*}
If $\Gamma_i$ is bi-exact relative to $\G_i$, then $\Gamma$ is bi-exact relative to $\G$.
\end{lemma}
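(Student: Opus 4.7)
The plan is to apply the equivalent formulation of bi-exactness in terms of a map $\mu : \Gamma \to \mathrm{Prob}(\Gamma)$ satisfying the asymptotic $(s,t)$-equivariance condition off subsets small relative to $\G$. For each $i \in \{1, \dots, n\}$ I would fix a map $\mu_i : \Gamma_i \to \mathrm{Prob}(\Gamma_i)$ witnessing bi-exactness of $\Gamma_i$ relative to $\G_i$. For the amenable factor $\Gamma_0$, Reiter's characterization of amenability yields a sequence $\xi_m \in \mathrm{Prob}(\Gamma_0)$ with $\|t \xi_m - \xi_m\|_1 \to 0$ for every $t$; fixing an exhaustion $m : \Gamma_0 \to \mathbb{N}$, one sets $\mu_0(x_0) = x_0 \xi_{m(x_0)}$. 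The candidate map is the tensor product
\[
\mu(x_0, x_1, \dots, x_n) = \mu_0(x_0) \otimes \mu_1(x_1) \otimes \cdots \otimes \mu_n(x_n) \in \mathrm{Prob}(\Gamma).
\]

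The main estimate proceeds via the telescoping bound
\[
\|\mu(sxt) - s\mu(x)\|_1 \le \sum_{i=0}^n \|\mu_i(s_i x_i t_i) - s_i \mu_i(x_i)\|_1,
\]
which uses the $L^1$-multiplicativity of tensor products of probability measures. Given $\epsilon > 0$ and $s, t \in \Gamma$, for each $i \ge 1$ bi-exactness of $\Gamma_i$ provides a subset $\Gamma_{i,1} \subset \Gamma_i$ small relative to $\G_i$ off which the $i$-th summand is below $\epsilon/(n+1)$; writing $\Gamma_{i,1} \subset \bigcup_k s_{k,i} \Lambda_{k,i} t_{k,i}$ with $\Lambda_{k,i} \in \G_i$, the lift $\{x \in \Gamma : x_i \in \Gamma_{i,1}\}$ is covered by the finitely many $\G$-translates $(1, \dots, s_{k,i}, \dots, 1)(\Gamma_0 \times \Lambda_{k,i} \times \prod_{j \ne i}\Gamma_j)(1, \dots, t_{k,i}, \dots, 1)$ and is therefore small relative to $\G$.

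The main obstacle is the amenable factor: an exceptional set of the form $F \times \prod_{j \ge 1}\Gamma_j$ with $F \subset \Gamma_0$ finite is \emph{not} small relative to $\G$, so the naive Reiter bound $\|\mu_0(s_0 x_0 t_0) - s_0 \mu_0(x_0)\|_1 \to 0$ as $m(x_0) \to \infty$ is not by itself sufficient. I would resolve this in one of two ways. Option (a): make the exhaustion $m$ depend on the coordinates $(x_1, \dots, x_n)$ rather than $x_0$, for example via the word-length of $x_1$, so that $m$ is constant outside a set already small relative to $\G_1$ and hence small relative to $\G$ when lifted. Option (b), conceptually cleaner, is to switch to the topological-amenability characterization (Proposition \ref{Proposition; Characterization of Bi-exactness by Actions on Boundaries}): the boundary $\partial \Gamma_i := \mathrm{Spec}(\ell_\infty \Gamma_i / c_0(\Gamma_i; \G_i))$ is an amenable $\Gamma_i \times \Gamma_i$-space by hypothesis; since bi-exactness of each $\Gamma_i$ implies exactness of $\Gamma_i$, each $\beta \Gamma_i$ is also an amenable $\Gamma_i \times \Gamma_i$-space, and amenability of $\Gamma_0$ gives the same for $\beta \Gamma_0$; writing $c_0(\Gamma; \G) = \sum_{i=1}^n J_i$ where $J_i$ is the pullback of $c_0(\Gamma_i; \G_i)$ along the $i$-th projection, Gelfand duality identifies $\mathrm{Spec}(\ell_\infty \Gamma / c_0(\Gamma; \G))$ as a $\Gamma \times \Gamma$-equivariant quotient of $\bigsqcup_{i=1}^n \bigl( \beta \Gamma_0 \times \prod_{j \ne i, \, j \ge 1} \beta \Gamma_j \times \partial \Gamma_i \bigr)$, and amenability passes to products of commuting actions, disjoint unions, and equivariant quotients, finishing the proof.
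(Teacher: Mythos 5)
The paper does not prove this lemma at all (it simply cites Lemmas 15.3.3 and 15.3.5 of Brown--Ozawa), so the only question is whether your argument stands on its own; as written, both of your proposed resolutions of the $\Gamma_0$-difficulty have genuine gaps. In option (a), the claim that one can choose the index function $m$ (now depending on $x_1$) to be ``constant outside a set small relative to $\G_1$'' cannot work: if $\eta(x_1)=\xi_{m(x_1)}$ were eventually a single fixed vector $\xi_{m_\infty}$, you could never get $\|t_0\,\eta - \eta\|_1<\epsilon$ for \emph{every} $t_0\in\Gamma_0$ and every $\epsilon$, which is what the $\Gamma_0$-summand requires off small sets (small sets do not constrain the $\Gamma_0$-coordinate, so the estimate must become valid as $x_1\to\infty$, for each fixed $t_0$). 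Conversely, if $m$ is proper (say a word length), then for fixed $s_1,t_1$ the set where $m(s_1x_1t_1)\neq m(x_1)$ is infinite and not small, and a bounded change of the index gives no control on $\|\xi_{m(s_1x_1t_1)}-\xi_{m(x_1)}\|_1$. The idea is salvageable, but you need an extra device you did not supply: a $2$-Lipschitz path $r\mapsto\eta_r=(1-\{r\})\xi_{\lfloor r\rfloor}+\{r\}\xi_{\lfloor r\rfloor+1}$ composed with a proper function $h$ on $\Gamma_1$ satisfying $|h(s_1yt_1)-h(y)|\to 0$ (e.g.\ $h=\log(1+\rho)$ for a proper length function $\rho$), so that $\eta(x_1)=\eta_{h(x_1)}$ is simultaneously asymptotically $\Gamma_0$-invariant and asymptotically insensitive to translation of $x_1$.

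Option (b) contains two outright false statements. First, exactness of $\Gamma_j$ does \emph{not} make $\beta\Gamma_j$ an amenable $\Gamma_j\times\Gamma_j$-space: the stabilizer of a point $x\in\Gamma_j\subset\beta\Gamma_j$ under the left-times-right action is $\{(g,x^{-1}gx)\}\cong\Gamma_j$, which is non-amenable; exactness only concerns the one-sided action, and bi-exactness is precisely about the two-sided action on the corona, not on $\beta\Gamma_j$. Second, topological amenability does not pass to equivariant quotients (for exact non-amenable $\Gamma$ the map $\beta\Gamma\to\{\mathrm{pt}\}$ is a counterexample), and your logic runs in the wrong direction anyway: to prove the spectrum's action amenable you need an equivariant map \emph{from} the spectrum \emph{into} an amenable space, since amenability pulls back, not pushes forward. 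The identification of the spectrum is also off: $c_0(\Gamma;\G)$ is the closed ideal generated by all the pullbacks $J_i$, so its hull is the \emph{intersection} of the hulls, i.e.\ the set of $\omega\in\beta\Gamma$ whose $i$-th pushforward lies in $\mathrm{Spec}(\ell_\infty\Gamma_i/c_0(\Gamma_i;\G_i))$ for \emph{every} $i\ge 1$ simultaneously -- not a union over $i$ with free coordinates $\beta\Gamma_j$ elsewhere. With that correction the proof closes easily and needs neither of your false claims: the spectrum maps $\Gamma\times\Gamma$-equivariantly into $\beta\Gamma_0\times\prod_{i=1}^n\mathrm{Spec}(\ell_\infty\Gamma_i/c_0(\Gamma_i;\G_i))$, each coordinate action $\Gamma_i\times\Gamma_i$ is amenable by hypothesis, and the $\Gamma_0\times\Gamma_0$-action on $\beta\Gamma_0$ is amenable because $\Gamma_0$ is amenable (this is where amenability, not exactness, of $\Gamma_0$ enters); a coordinate-wise product of amenable actions of the factors is an amenable action of the product group, and amenability then pulls back to the spectrum. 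That corrected route is essentially the Brown--Ozawa argument behind the citation.
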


\begin{lemma}[Lemma 15.3.6 in \cite{Brown--Ozawa; Approximation}]
\label{Lemma; Wreath Product}
If $A$ is amenable and $G$ is exact, then the wreath product
$A \wr G$ is bi-exact relative to $\{ G \}$.
\end{lemma}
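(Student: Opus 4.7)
The plan is to apply Proposition \ref{Proposition; Characterization of Bi-exactness by Actions on Boundaries}, reducing the lemma to producing a topologically amenable $\Gamma \times \Gamma$-action on the Gelfand spectrum of $\ell_\infty(\Gamma) / c_0(\Gamma; \{G\})$. Equivalently, by unwinding the characterization, I verify the definition of bi-exactness directly by exhibiting a map $\mu \colon \Gamma \to \mathrm{Prob}(\Gamma)$ whose approximate bi-equivariance fails only on subsets of $\Gamma$ small relative to $\{G\}$. The two inputs are exactness of $G$ (equivalent for discrete groups to amenability of the bi-translation action $G \times G \curvearrowright \beta G$, and yielding a net $\zeta_n \colon G \to \mathrm{Prob}(G)$ of finitely supported probabilities which approximately intertwines both left and right translations at infinity), together with amenability of $A$, which yields Følner probabilities $\rho_n \in \mathrm{Prob}(A)$ from which one assembles product measures $\rho_n^{\otimes F} \in \mathrm{Prob}(A^F) \subset \mathrm{Prob}(N)$ on finite windows $F \subset G$. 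The crucial design feature is shift-covariance: $\rho_n^{\otimes F}$ transforms under the $G$-action on $N$ by relabeling of tensor coordinates, $\rho_n^{\otimes F} \mapsto \rho_n^{\otimes gF}$.

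Writing $x = a h$ in the form $N \rtimes G$ and fixing an increasing filtration $F_n \uparrow G$, my candidate witness is
\[
\mu_n(ah) \;=\; \bigl(a \cdot \rho_n^{\otimes h F_n}\bigr) \otimes \zeta_n(h) \;\in\; \mathrm{Prob}(\Gamma),
\]
under the set-theoretic identification $\Gamma \cong N \times G$. For $s = (c, s_1)$ and $t = (d, t_1)$ in $\Gamma$, one computes $sxt = (c + s_1 a + s_1 h \cdot d,\, s_1 h t_1)$, and the difference $\|\mu_n(sxt) - s \mu_n(x)\|_1$ separates, via the product structure and the shift-covariance of the window $hF_n$, into a $G$-marginal piece bounded by $\|\zeta_n(s_1 h t_1) - s_1 \zeta_n(h)\|_1$ and an $N$-marginal piece bounded by the Følner error for translation by $s_1 h \cdot d$. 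The former vanishes along the exactness witness; the latter is absorbed once $\mathrm{supp}(d) \subset F_n$ and $\rho_n$ is sufficiently Følner.

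The main obstacle---and the subtle point of the proof---is arranging that the residual bad set of $x \in \Gamma$ is contained in a finite union of ``linear graphs'' $\{(c_i + h d_i, h) : h \in G\} = s_i G t_i$, i.e., is small relative to $\{G\}$. The naive failure set coming from the $G$-marginal estimate is $\pi^{-1}(K) = N \cdot K$ for some finite $K \subset G$, which is \emph{not} small in $\Gamma$, since each $N$-fiber is infinite. The resolution exploits the wreath-product structure: any asymmetry of $\zeta_n$ between left multiplication by $s_1$ and right multiplication by $t_1$ (the latter not controlled by mere left-exactness of $G$) must be compensated by a parallel asymmetry in the Bernoulli factor $\rho_n^{\otimes hF_n}$ through the $h$-dependent window, so that the genuine bad points are forced to lie on finitely many linear graphs dictated by the orbit data of $(s, t)$. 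Carrying out this bookkeeping---and verifying that the bad set genuinely has the required shape $\bigcup_i s_i G t_i$---is the technical heart of the argument.
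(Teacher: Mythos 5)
Your candidate $\mu_n$ cannot work, and the reason is structural rather than a matter of the deferred ``bookkeeping''. Under the identification $\Gamma\cong N\times G$ (where $N=\bigoplus_G A$), your $\mu_n(ah)$ is a product measure whose $G$-marginal is $\zeta_n(h)$, a function of the $G$-coordinate alone; since total variation does not increase under taking marginals, and since the $G$-coordinate of $sy$ depends only on that of $y$, one has $\|\mu_n(sxt)-s\mu_n(x)\|_1\ \ge\ \|\zeta_n(s_1ht_1)-s_1\zeta_n(h)\|_1$ for every $x=(a,h)$, a lower bound which is constant along each fiber $N\times\{h\}$. Now each set $s'Gt'$ meets such a fiber in at most one point (the $G$-coordinate determines the element of $s'Gt'$), so a set that is small relative to $\{G\}$ can never contain a fiber, which is infinite in the nontrivial case $A\neq\{1\}$, $G$ infinite. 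Hence your estimate must hold for \emph{every} $h\in G$, with no exceptional set whatsoever, i.e.\ you need $\sup_{h}\|\zeta_n(s_1ht_1)-s_1\zeta_n(h)\|_1\to 0$ for all $s_1,t_1$; taking $h=e$ and combining the left and right conditions shows $\zeta_n(e)$ would be an almost invariant probability on $G$, so such a family exists only when $G$ is amenable. This exposes the error in your stated input: exactness of $G$ is amenability of the \emph{left} translation action $G\curvearrowright\beta G$; it is not equivalent to amenability of the bi-translation action of $G\times G$ on $\beta G$ (that is equivalent to amenability of $G$), and it yields no control of $\zeta_n(ht_1)$ against $\zeta_n(h)$. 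Even the weaker ``approximate bi-equivariance at infinity'' is the definition of $G$ being in Ozawa's class $\mathcal{S}$, a strictly stronger hypothesis, and its exceptional sets in $G$ would lift to sets of the form $N\cdot(\mathrm{finite})$, which are not small. Your proposed rescue --- that the Bernoulli factor compensates the $G$-marginal asymmetry --- is impossible for exactly the same reason: marginalization is a contraction, so no choice of the $N$-factor can shrink the $G$-marginal error. The ``technical heart'' you postpone is therefore not completable for non-amenable exact $G$, which is the only case the paper needs.

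There is a second, independent failure in the $N$-marginal. Comparing $s\mu_n(x)$ with $\mu_n(sxt)$ you compare i.i.d.\ product measures over the windows $s_1hF_n$ and $s_1ht_1F_n$, whose symmetric difference has the size of $F_n\,\triangle\,t_1F_n$ and is nonempty for every finite $F_n$ once $t_1\neq e$ (always, if $t_1$ has infinite order); at each such coordinate you compare a translate of $\rho_n$ with a point mass, so the $N$-marginal distance stays bounded below (it equals $1$ already for $A=\Z/2\Z$) for \emph{all} $x$, not merely on a small set. F{\o}lner-ness of $\rho_n$ is irrelevant here because the defect lies in the window, not in the lamp values, and making the window almost $t_1$-invariant is again an amenability-type demand on $G$. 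Finally, note that the paper gives no proof of this lemma --- it is quoted from Brown--Ozawa (Lemma 15.3.6) --- so there is nothing in the text to compare your argument with; but the analysis above shows that any correct witness $\mu$ cannot have the product form you propose, with $G$-component a function of the lamplighter position $h$ alone, so the construction has to be redesigned (the stable-at-infinity data is the configuration $a$, not $h$), not merely completed.
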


\begin{lemma}[Lemma 15.3.12 in \cite{Brown--Ozawa; Approximation}]
\label{Lemma; Amalgamated Free Product}
Let $\Gamma_1, \Gamma_2$ be countable groups and $A$ be a common subgroup of
$\Gamma_1, \Gamma_2$. If $\Gamma_1, \Gamma_2$ are exact and $A$ is amenable,
then the amalgamated free product $\Gamma_1 \ast_{A} \Gamma_2$ is bi-exact relative
to $\{\Gamma_1, \Gamma_2\}$.
\end{lemma}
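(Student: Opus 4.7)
The plan is to verify bi-exactness through the boundary characterization in Proposition \ref{Proposition; Characterization of Bi-exactness by Actions on Boundaries}: I will show that $\Gamma \times \Gamma$ acts topologically amenably on the Gelfand spectrum $Y$ of $\ell_\infty \Gamma / c_0(\Gamma; \{\Gamma_1, \Gamma_2\})$, using the Bass--Serre tree as the central geometric object.

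First step (tree and boundary map). Let $T$ be the Bass--Serre tree of $\Gamma = \Gamma_1 \ast_A \Gamma_2$, with vertex set $\Gamma/\Gamma_1 \sqcup \Gamma/\Gamma_2$ and edge set $\Gamma/A$. Vertex stabilizers are the $\Gamma$-conjugates of $\Gamma_1, \Gamma_2$, and edge stabilizers are the conjugates of the amenable group $A$. Fix a basepoint $v_0 \in V(T)$ and consider the orbit map $\psi\colon \Gamma \to V(T)$, $g \mapsto g v_0$. For any $s,t \in \Gamma$ and $i \in \{1,2\}$, the image $\psi(s \Gamma_i t) = s \Gamma_i (t v_0)$ is contained in the closed ball of radius $d(v_0, t v_0)$ around $s v_i$, so any subset of $\Gamma$ that is small relative to $\{\Gamma_1, \Gamma_2\}$ has bounded image in $T$. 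Hence $\psi$ extends to a $\Gamma$-equivariant continuous map $\overline{\psi}\colon Y \to \partial T$ into the end boundary of the tree.

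Second step (amenability of $\Gamma \curvearrowright \partial T$). Because $T$ is a tree with amenable edge stabilizers, the $\Gamma$-action on $\partial T$ is topologically amenable. Concretely, for each $\xi \in \partial T$ let $(v_0, v_1(\xi), v_2(\xi), \ldots)$ be the geodesic ray from $v_0$ to $\xi$, pick a Følner sequence $(F_n^{(i)})$ in $A$ and stabilizer representatives, and define $m_n(\xi) \in \text{Prob}(\Gamma)$ by a product of these Følner sets along the first $n$ edges. These maps are continuous in $\xi$ (this is where exactness of $\Gamma_1, \Gamma_2$ is invoked, to choose the local data continuously) and satisfy $\|m_n(g\xi) - g \cdot m_n(\xi)\|_1 \to 0$ uniformly on $\partial T$. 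Pulling back along $\overline{\psi}$ gives left-amenability of $\Gamma \curvearrowright Y$.

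Third step (incorporating the right action). The right action of $\Gamma$ on $Y$ does not descend to $\partial T$, so one must upgrade the construction. The idea is to encode each $x \in \Gamma$ by its normal form along the geodesic $v_0 \to x v_0$ and define $\mu(x) \in \text{Prob}(\Gamma)$ using Følner weights supported along the \emph{middle portion} of this path, dropping a bounded initial and terminal segment whose length grows slowly with $d(v_0, x v_0)$. For $x$ outside the small set $\Gamma_1 := \bigcup s_j \Gamma_{i_j} t_j$ corresponding to a given pair $(s,t)$, the geodesics $v_0 \to x v_0$ and $s v_0 \to s x t v_0$ share a long middle arc, so $\mu(s x t)$ and $s \mu(x)$ agree outside a negligible end correction, whose $\ell^1$ norm is controlled by the Følner property in $A$. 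This produces the map $\mu\colon \Gamma \to \text{Prob}(\Gamma)$ required by the definition of bi-exactness.

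Main obstacle. The crux is Step 3: matching the tree-geometric construction (which is built for the left action) with the right translation. Exactness of $\Gamma_1$ and $\Gamma_2$ is indispensable precisely here, as it allows the Følner data at each vertex along the normal form to be chosen in a norm-continuous, uniformly almost-invariant way, so that the end-correction is small uniformly in $x$. Amenability of $A$ controls the edge contributions simultaneously for left and right multiplication.
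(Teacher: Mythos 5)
You should first note that the paper itself offers no proof of this statement; it is imported verbatim as Lemma 15.3.12 of Brown--Ozawa, so your sketch has to be judged against that argument. The decisive flaw in your proposal is the assumption, underlying both Step 1 and Step 3, that an element which escapes every set that is small relative to $\{\Gamma_1,\Gamma_2\}$ must have large displacement in the Bass--Serre tree. This is false whenever $[\Gamma_1:A]$ or $[\Gamma_2:A]$ is infinite. Already for $\Gamma=\langle a\rangle\ast\langle b\rangle\cong\mathbb{F}_2$ the set $\{a^nb^n : n\ge 1\}$ moves the base vertex $v_0=\Gamma_1$ a distance exactly $2$, yet it is contained in no finite union of sets $s\langle a\rangle t$, $s\langle b\rangle t$ (a reduced word $s a^k t$ or $s b^k t$ cannot contain both an arbitrarily long $a$-run and an arbitrarily long $b$-run); in a general amalgam the two-syllable elements $g_1g_2$, $g_i\in\Gamma_i\setminus A$, form such a non-small family of displacement $2$. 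Consequently, in Step 1 the orbit map $g\mapsto gv_0$ does not extend to the spectrum $Y$ of $\ell_\infty\Gamma/c_0(\Gamma;\{\Gamma_1,\Gamma_2\})$ with values in $\partial T$: an ultrafilter supported on $\{a^nb^n\}$ avoids all small sets, while the points $a^nb^nv_0$ lie in pairwise distinct branches at distance $2$ and accumulate only at the interior vertex $v_0$, whose stabilizer $\Gamma_1$ is not amenable; so the strategy of pulling back amenability from a boundary or compactification of $T$ breaks down exactly at the points of $Y$ that matter. Likewise, in Step 3 the asserted ``long shared middle arc'' between the geodesics for $x$ and for $sxt$ simply does not exist for these $x$ (their geodesics have length $2$), so the end-correction is not negligible and the estimate $\|\mu(sxt)-s\mu(x)\|_1<\epsilon$ is not obtained outside a small set. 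Handling precisely these bounded-displacement, non-small elements is the real content of the lemma, and it is where the Brown--Ozawa argument departs from any ``spread the measure along the middle of the geodesic'' construction; your ``Main obstacle'' paragraph asserts the very statement that fails.

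A secondary misattribution: exactness of $\Gamma_1,\Gamma_2$ is not what allows F\o lner data to be chosen continuously, and amenability of a boundary action of a group acting on a tree needs only amenability of the edge stabilizers (the conjugates of $A$), not exactness of the vertex groups. Exactness of the factors enters only to guarantee that $\Gamma=\Gamma_1\ast_A\Gamma_2$ is itself exact, which is part of the definition of bi-exactness and is what makes the boundary characterization of Proposition \ref{Proposition; Characterization of Bi-exactness by Actions on Boundaries} available. So even apart from the gap above, the roles you assign to the hypotheses would need to be reorganized.
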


\section{Location of Subgroups}\label{Section; Location of Subgroups}

The goal of this section is Theorem \ref{Theorem; Non embeddability implies amenability}, which is a consequence of

\begin{proposition}\label{Proposition; NonEm implies Ame in SOE setting}
Let $H$ be a subgroup of $G$ and $\Gamma$ be bi-exact relative to $\mathcal{G}$. Let $\beta$ be a free m.p.\ action of $\Gamma$
on a standard measure space $(Y, \mu)$
and let $\alpha$ be a free m.p.\,action of $G$
on a measurable subset $X \subset Y$ with measure $1$.
Suppose that $\alpha(G) (x) \subset \beta(\Gamma) (x)$, for a.e.~$x \in X$.
We regard the infinite measure space
$\Sigma = \R_\beta \cap (X \times Y)$ as a measurable embedding of $G$ into $\Gamma$, on which
$G$ acts on the first entry and $\Gamma$ acts on the second entry.
If for any $\Lambda \in \G$, there exists no partial embedding of $H$ into $\Lambda$ in $\Sigma$,
then the centralizer $Z_{G}(H)$ is amenable.
\end{proposition}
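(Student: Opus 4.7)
The plan is to exploit Proposition~\ref{Proposition; Characterization of Bi-exactness by Actions on Boundaries}: bi-exactness of $\Gamma$ relative to $\mathcal{G}$ makes the Gelfand spectrum $B = \mathrm{Spec}\bigl(\ell_\infty \Gamma / c_0(\Gamma; \mathcal{G})\bigr)$ into a topologically amenable $\Gamma \times \Gamma$-space under the left-times-right translation. Topological amenability yields a net of weak-$\ast$ continuous maps $\zeta_n : B \to \mathrm{Prob}(\Gamma \times \Gamma)$ that is asymptotically $(\Gamma \times \Gamma)$-equivariant uniformly on $B$.

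First I would rewrite $\Sigma$ in coordinates adapted to the Zimmer cocycle. Since $\beta$ is free, for each $g \in G$ and a.e.~$x \in X$ there is a unique $c(g,x) \in \Gamma$ with $\alpha(g)(x) = \beta(c(g,x))(x)$, and each $(x,y) \in \mathcal{R}_\beta \cap (X \times Y)$ is uniquely $(x, \beta(\gamma) x)$ for some $\gamma \in \Gamma$. Under the resulting bijection $\Sigma \cong X \times \Gamma$ the $\Gamma$-action becomes $\gamma_0 \cdot (x, \gamma) = (x, \gamma_0 \gamma)$, the $G$-action becomes $g \cdot (x, \gamma) = (\alpha(g)x, \gamma c(g,x)^{-1})$, and $X \times \{e\}$ is a $\Gamma$-fundamental domain of measure one. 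Lemma~\ref{Lemma; FVM and MEm} then translates the non-embedding hypothesis into the following fact: for every $\Lambda \in \mathcal{G}$ and every $H$-invariant, left-$\Lambda$-invariant positive measurable function $\phi$ on $X \times \Gamma$, the function-valued measure $\mathfrak{E}^\Lambda_X(\phi)$ is essentially $\{0, \infty\}$-valued; in particular, no $H$-invariant function of finite fiberwise mass on $X \times \Gamma$ can be supported on any subset of $\Gamma$ that is small relative to $\mathcal{G}$.

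Next, I pull $\zeta_n$ back through the quotient $\ell_\infty(\Gamma) \to C(B)$ evaluated fiberwise at the canonical basepoint above $(x, e)$, producing a field $\mu_n : X \to \mathrm{Prob}(\Gamma \times \Gamma)$. The asymptotic $(\Gamma \times \Gamma)$-equivariance of $\zeta_n$ on $B$, combined with the cocycle formula for the $G$-action on $X \times \Gamma$, yields an asymptotic $G \times \Gamma$-equivariance of $\mu_n$ modulo error terms whose fiberwise supports lie in subsets of $\Gamma$ small relative to $\mathcal{G}$. Restricting to $h \in H$, these error terms are $H$-asymptotically-invariant and supported on small sets, so by the previous paragraph they must tend to zero in $\ell^1$; the net $(\mu_n)$ is therefore asymptotically $H$-invariant and, because $Z_G(H)$ commutes with $H$, also asymptotically invariant under the cocycle action of $Z_G(H)$.

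Finally, integrating $\mu_n$ over $x \in X$ and pushing to one copy of $\Gamma$ gives a sequence of states on $\ell_\infty(\Gamma)$ that is approximately invariant under the inner action $\gamma \mapsto c(z, x)\, \gamma\, c(z, x)^{-1}$ of $z \in Z_G(H)$; a standard averaging over $\Gamma$-translates converts approximate invariance under this inner action into an invariant mean on $\ell_\infty(Z_G(H))$, establishing amenability. The main obstacle will be the third paragraph: cleanly formalizing ``asymptotic equivariance modulo small sets'' at the level of $\Sigma$ and verifying, via Lemma~\ref{Lemma; FVM and MEm}, that the $H$-invariant component of the error genuinely vanishes --- this is precisely the point where bi-exactness and the non-embedding hypothesis must interact, and where measurable-equivalence input replaces the usual von Neumann algebraic computation.
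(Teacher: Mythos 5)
Your overall strategy---use topological amenability of the boundary $\mathrm{Spec}(\ell_\infty\Gamma/c_0(\Gamma;\mathcal{G}))$ and let the non-embedding hypothesis neutralize the corrections living on sets small relative to $\mathcal{G}$---is indeed the conceptual engine of the paper's proof, but as written your argument has genuine gaps at exactly the points you flag, and also at two points you do not flag. First, there is no ``canonical basepoint above $(x,e)$'': elements of $\Gamma$ do not define points of the corona, so the fields $\mu_n\colon X\to\mathrm{Prob}(\Gamma\times\Gamma)$ you want can only be obtained by lifting, and then the $(\Gamma\times\Gamma)$-equivariance fails in an \emph{uncontrolled} way on small sets rather than producing ``error terms supported on small sets''; the whole difficulty is to show this failure is harmless, and that is precisely where a real argument is needed. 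Second, your claim that the errors ``must tend to zero in $\ell^1$ by the previous paragraph'' is not substantiated: the non-embedding hypothesis concerns $H\times\Lambda$-invariant subsets of $\Sigma$ of finite $\Lambda$-covolume, and to bring it to bear one needs an averaging/fixed-point device. In the paper this is done concretely: one takes the closed convex hull of $\{h(e_1)\mid h\in H\}$, where $e_1$ is the indicator of $\bigsqcup_i\Lambda_i\gamma_i X$, inside $L^2$ of the $\Lambda_i$-invariant functions with the traces $\Tr_{\Lambda_i}$; the unique minimal-norm element is $H$- and $\Lambda_i$-invariant with finite-trace level sets, hence zero by the hypothesis, which yields an explicit $h\in H$ making $\Tr(h(e_1)e_0)$ small and hence the projection inequality $v_h(1-e(\Gamma_1))v_h^*\ge q_2\,e(\Gamma_0)$. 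Your sketch contains no analogue of this step, and it is not clear it can be run directly at the level of $\mathrm{Prob}(\Gamma\times\Gamma)$-valued maps without essentially reconstructing it.

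Third, the inference ``asymptotically $H$-invariant, and because $Z_G(H)$ commutes with $H$, also asymptotically invariant under the cocycle action of $Z_G(H)$'' is a non sequitur: commutation does not transfer approximate invariance from $H$ to its centralizer. In the paper the commutation $[v_s,v_h]=0$ for $s\in Z_G(H)$ is used quite differently, namely to conjugate the operator-norm expression $\|\Phi(x)q_1Jq_1J(1-e(\Gamma_1))\|$ by $v_h$ without disturbing $\Phi(x)$, which is what lets the single good element $h$ found above serve all of $Z_G(H)$ at once. Finally, your last step---converting approximate invariance of states on $\ell_\infty\Gamma$ under conjugation by cocycle values $c(z,x)$ into an invariant mean on $\ell_\infty(Z_G(H))$---is not a standard move and is doubtful as stated, since $z\mapsto c(z,x)$ is only a cocycle and nothing pushes such a state forward to $Z_G(H)$. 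The paper avoids this entirely by using the $C^*$-criterion for amenability: it first proves that $\Psi\colon B\otimes_{\mathbb{C}}JBJ\to (D+K)/K$ is min-continuous (via Anantharaman-Delaroche's theorem that the full and reduced crossed products agree for the amenable boundary action, plus nuclearity of $L^\infty$), and then deduces min-continuity of $\Phi$ on $C^*_\lambda(Z_G(H))\otimes_{\mathbb{C}}C^*_\rho(Z_G(H))$ by the norm estimates above. To repair your proposal you would need to supply (i) a genuine averaging argument showing the $H$-averaged error is an $H\times\Lambda$-invariant object of finite $\Lambda$-covolume, hence zero, and (ii) a correct mechanism passing from the resulting approximate invariance to amenability of $Z_G(H)$; both are nontrivial and, once done, would bring you essentially back to the paper's proof.
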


Before starting the proof of Proposition \ref{Proposition; NonEm implies Ame in SOE setting},
we fix some notations and prove a $C^*$-algebraical continuity property for $\Gamma$-action on $Y$. The notations are similar to those in Sako \cite{Sako}, but we write again for the self-containment.
The action $\beta$ (resp.~$\alpha$) gives a group action of $\Gamma$ (resp.~$G$)
on $L^\infty(Y)$ (resp.~$L^\infty(X)$).
We use the same notation $\beta\ ({\rm resp.}\ \alpha)$ for this action. Let $p \in L^\infty(Y)$ be
the characteristic function of $X$. The algebra $L^\infty(Y)$ and the group $\Gamma$ are represented
on $L^2(\R_\beta, \nu)$ as
\begin{eqnarray*}
    (f \xi ) (x, y) &=& f(x) \xi(x, y), \quad f \in L^\infty(Y),\\
    (u_{\gamma} \xi) (x, y) &=& \xi(\beta_{\gamma^{-1}}(x), y),
     \quad \gamma \in \Gamma,\ \xi \in L^2(\R_\beta),\ (x, y) \in \R_\beta.
\end{eqnarray*}
We denote by $B$ the $C^\ast$-algebra generated by the images, which is the reduced crossed product algebra
$B = L^\infty(Y) \rtimes_{\rm red} \Gamma$.
Its weak closure is the group measure space construction
$\M = L^\infty(Y) \rtimes \Gamma$ (Murray and von Neumann \cite{Murray--vN; IV}).
We denote by $\tr$ the canonical faithful normal semi-finite trace on $\M$.
The unitary involution $J$ of $(\M, \tr)$ is written as
\begin{eqnarray*}
    (J \xi) (x,y) = \overline{\xi(y, x)}, \quad \xi \in L^2(\R_\beta),\ (x, y) \in \R_\beta.
\end{eqnarray*}

The group $G$ is represented on
$p L^2(\mathcal{R}_\beta) = L^2(\mathcal{R}_\beta \cap (X \times Y))$ by
\begin{eqnarray*}
    ( v_g \xi ) (x, y) &=& \xi(\alpha_{g^{-1}}(x), y),
                \quad g \in G,\ \xi \in p L^2(\mathcal{R}_\beta).
\end{eqnarray*}
We denote by $C^*_\lambda (G)$ the $C^\ast$-algebra generated by these operators.
The algebra is isomorphic to the reduced group $C^*$-algebra of $G$.
The Hilbert space $L^2(\R_\alpha, \nu)$ can be
identified with a closed subspace of $p L^2(\R_\beta)$.
The algebra $C^*_\lambda (G)$ is also represented on $L^2(\R_\alpha)$ faithfully.
We denote by $P$ the orthogonal projection from $L^2(\R_\beta)$ onto $L^2(\R_\alpha)$.
We note that the algebra $p B p$ does not contain $C^*_\lambda (G)$ in general,
although there exists an inclusion between their weak closures.

Let $e_\Delta$ be the projection from $L^2(\R_\beta)$ onto the set of $L^2$-functions
supported on the diagonal subset of $\R_\beta$. This is the Jones projection for $L^\infty(Y) \subset \M$.
Consider $L^\infty(\R_\beta) \subset \mathcal{B}(L^2(\R_\beta))$ by multiplications. For $\gamma \in \Gamma$ and a subset $\Gamma_0 \subset \Gamma$, we define the projections
$e(\gamma), e(\Gamma_0)$ by
\begin{eqnarray*}
    e(\gamma) = J u_\gamma J e_\Delta J u_\gamma^* J
              , \quad
    e(\Gamma_0) = \sum_{\gamma \in \Gamma_0} e(\gamma) \in L^\infty(\R_\beta).
\end{eqnarray*}
For $g \in G$ and a subset $G_0 \subset G$, we define the projections $f(g), f(G_0)$ by
\begin{eqnarray*}
    f(g) = v_g e_\Delta v_g^* = v_g (P e_\Delta) v_g^*
         , \quad
    f(G_0) = \sum_{g \in G_0} f(g) \in L^\infty(\R_\beta \cap (X \times Y)).
\end{eqnarray*}

Let $K \subset \B(L^2(\R_\beta))$ be the hereditary subalgebra of $\B(L^2(\R_\beta))$
with approximate units $\{ e(\Gamma_0) \ | \ \Gamma_0 {\rm \ is\ small\ relative\ to\ } \mathcal{G} \}$, that is,
\begin{eqnarray*}
    K =
    \overline{\bigcup_{\Gamma_0} e(\Gamma_0) \B(L^2(\R_\beta)) e(\Gamma_0)}^{\ \| \cdot \|}.
\end{eqnarray*}
The algebras $B$ and $J B J$ are in the multiplier of $K$, so is $D = C^\ast(B, JBJ)$.

The algebra $B$ satisfies the following continuity property.
The proof is conceptually identical
to Proposition 4.2 of Ozawa's paper \cite{Ozawa; Kurosh}.

\begin{proposition}\label{Proposition; ME Continuity for Gamma}
The following map is continuous with respect to the minimal tensor norm:
\begin{eqnarray*}
    \Psi \colon B \otimes_\mathbb{C} J B J \ni \sum_{i=1}^k b_i \otimes J c_i J
    \mapsto \sum_{i=1}^k b_i J c_i J + K \in (D + K) / K.
\end{eqnarray*}
\end{proposition}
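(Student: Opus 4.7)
The plan is to follow Ozawa's argument from Proposition 4.2 of \cite{Ozawa; Kurosh}, adapted to accommodate the fact that $B$ is a reduced crossed product rather than a reduced group $C^\ast$-algebra. The central idea is to factor $\Psi$ through a nuclear $C^\ast$-algebra built from the amenable $\Gamma \times \Gamma$-action on the relative boundary; nuclearity will then force continuity with respect to the minimal tensor norm.

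First I translate the hypothesis into a usable form. By Proposition \ref{Proposition; Characterization of Bi-exactness by Actions on Boundaries}, the compact Hausdorff space $\Omega = \mathrm{Spec}(\ell^\infty \Gamma / c_0(\Gamma; \G))$ carries an amenable $\Gamma \times \Gamma$-action by left-times-right translation; consequently $C(\Omega) \rtimes_{\mathrm{red}} (\Gamma \times \Gamma)$ is nuclear. Next I embed $\ell^\infty \Gamma$ into the operator picture equivariantly: the projections $\{e(\gamma)\}_{\gamma \in \Gamma}$ are pairwise orthogonal and sum strongly to $1$, so $\iota \colon \ell^\infty \Gamma \to L^\infty(\R_\beta)$, $f \mapsto \sum_\gamma f(\gamma)\, e(\gamma)$, is a unital $\ast$-homomorphism. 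A direct calculation from $e(\gamma) = J u_\gamma J e_\Delta J u_\gamma^\ast J$ shows that conjugation by $u_{\gamma_0}$ and by $J u_{\gamma_0} J$ permutes the $e(\gamma)$ by the two regular actions of $\Gamma$, so $\iota$ is $\Gamma \times \Gamma$-equivariant. By the very definition of $K$, $\iota$ carries $c_0(\Gamma; \G)$ into $K$ and hence descends to an equivariant $\ast$-homomorphism $\bar{\iota} \colon C(\Omega) \to (D + K)/K$ whose image commutes with the images of $B$ and $J B J$.

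The heart of the argument is to assemble $\bar{\iota}$ together with the natural images of $B$ and $J B J$ into a single $\ast$-homomorphism defined on a nuclear $C^\ast$-algebra. Using the crossed product presentations $B = L^\infty Y \rtimes_{\mathrm{red}} \Gamma$ and $J B J = J L^\infty Y J \rtimes_{\mathrm{red}} \Gamma$, together with the mutual covariance relations between $\bar\iota(C(\Omega))$, $L^\infty Y$, $J L^\infty Y J$, $u_\gamma$ and $J u_\gamma J$ (all holding on the nose on $L^2 \R_\beta$), one constructs a $\ast$-homomorphism
\[
 \bigl( C(\Omega) \otimes L^\infty Y \otimes L^\infty Y \bigr) \rtimes_{\mathrm{red}} (\Gamma \times \Gamma)
 \longrightarrow (D + K)/K,
\]
in which $\Gamma \times \Gamma$ acts on $C(\Omega)$ by the boundary action and on each copy of $L^\infty Y$ by $\beta$ in the appropriate factor. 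Exactness of $\Gamma$ (a consequence of bi-exactness) ensures that the algebraic covariant homomorphism extends to the reduced crossed product. Restricting this extension to the subalgebra $1 \otimes B \otimes J B J$ recovers $\Psi$, which is therefore bounded by the minimal tensor norm because its factorization passes through a nuclear $C^\ast$-algebra.

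The main obstacle is the construction in the preceding paragraph: one must verify modulo $K$ the covariance relations that allow $B$, $J B J$ and $\bar\iota(C(\Omega))$ to be assembled into a covariant representation of the displayed crossed product, and then check that the resulting algebraic covariant homomorphism descends to the reduced crossed product. The two $L^\infty Y$-factors (absent in Ozawa's original setting) are the genuinely new ingredient: freeness of $\beta$ is needed so that the projections $e(\gamma)$ correctly implement the $\beta$-shift on the $L^\infty Y$-parameters, and exactness of $\Gamma$ is what lets one pass from the algebraic to the reduced crossed product.
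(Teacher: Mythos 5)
Your architecture coincides with the paper's: represent $\ell_\infty \Gamma$ on $L^2(\R_\beta)$ by the projections $e(\gamma)$, kill $c_0(\Gamma;\G)$ by passing to the quotient by $K$, assemble the boundary algebra together with $L^\infty Y$, $J L^\infty Y J$ and the unitaries $u_g$, $J u_h J$ into a covariant pair over the commutative coefficient algebra, and restrict the resulting homomorphism to $B \otimes J B J$. The genuine gap is the sentence claiming that ``exactness of $\Gamma$ ensures that the algebraic covariant homomorphism extends to the reduced crossed product.'' A covariant representation canonically yields a $\ast$-homomorphism of the \emph{full} crossed product only; it factors through the reduced one exactly when it is weakly contained in a regular representation, and exactness of $\Gamma$ gives nothing of the sort (for instance, $\Gamma$ a free group acting trivially on $\C$: the trivial representation is covariant but does not factor through $C^*_\lambda(\Gamma)$). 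This passage from full to reduced is precisely where bi-exactness must enter, and it is the only place it enters: by Proposition \ref{Proposition; Characterization of Bi-exactness by Actions on Boundaries} the $\Gamma \times \Gamma$-action on the spectrum of $\ell_\infty \Gamma / c_0(\Gamma;\G)$ is topologically amenable; this algebra sits centrally and equivariantly inside $E = L^\infty Y \otimes J L^\infty Y J \otimes \ell_\infty \Gamma / c_0(\Gamma;\G)$, so Anantharaman-Delaroche's theorem \cite{Anantharaman-Delaroche} gives $E \rtimes_{\rm full} (\Gamma \times \Gamma) = E \rtimes_{\rm red} (\Gamma \times \Gamma)$, and only then does restriction to $B \otimes_{\rm min} J B J \cong (L^\infty Y \otimes J L^\infty Y J) \rtimes_{\rm red} (\Gamma \times \Gamma)$ yield the continuity. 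If exactness alone sufficed, the proposition (and hence Proposition \ref{Proposition; NonEm implies Ame in SOE setting}) would apply to $\Gamma = \F_2 \times \F_2$ with $\G = \{\{1\}\}$, forcing the subgroup $\F_2 \times \{1\}$, whose centralizer is non-amenable, to embed measurably into the trivial group in the standard self-coupling, which is absurd; so the amenability of the boundary action cannot be replaced by exactness, and your invocation of nuclearity in the opening paragraph is not by itself a substitute for this step either.

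Two smaller inaccuracies. First, the multiplication operators $m_\phi$, $\phi \in \ell_\infty \Gamma$, need not lie in $D + K$, so your $\bar\iota$ does not map into $(D+K)/K$; as in the paper one must enlarge $D$ to $\widetilde{D} = C^\ast(D, m(\ell_\infty \Gamma))$, which is harmless. Second, the image of $\bar\iota$ does not commute with the images of $B$ and $JBJ$: it commutes with $L^\infty Y$ and $J L^\infty Y J$, but conjugation by $u_g$ and by $J u_h J$ implements the left and right translation actions on $\ell_\infty \Gamma$ --- which is exactly why the assembly must be a crossed product rather than a commuting (tensor) picture. Finally, extending the covariant homomorphism from the algebraic to the minimal tensor product coefficient algebra $E$ uses nuclearity of $L^\infty Y$ (Takesaki \cite{Takesaki}); this is routine but should be said.
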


In the case of $\mu(Y) < \infty$,
if $\Psi$ were continuous without taken quotient by $K$, this condition would deduce
amenability on the group $\Gamma$. The above Proposition can be regarded as
a weakened amenability property for the $\Gamma$-action.
We prove the above by using an assist of $\ell_\infty \Gamma / c_0 (\Gamma; \G)$. A property of topological amenability proved by C. Anantharaman-Delaroche \cite{Anantharaman-Delaroche} plays a vital role.
In the proof, ``$\otimes$'' stands for the minimal tensor of $C^*$-algebras.

\begin{proof}
Define a representation $m_\cdot$ of $\ell_\infty \Gamma$ on
$L^2 (\R_\beta)$ by the multiplication
\begin{eqnarray*}
    [m_\phi (\xi)] (\gamma x, x) =
    \phi(\gamma) \xi (\gamma x, x),
    \quad \xi \in L^2 \R_\beta,
    \gamma \in \Gamma, \phi \in \ell_\infty \Gamma.
\end{eqnarray*}
Let $\widetilde{D}$ be the
$C^\ast$-algebra generated by $D$ and the image of $m$.
It is easy to see that $\widetilde{D}$ is in the multiplier of $K$.
The preimage $m^{-1}(m(\ell_\infty \Gamma) \cap K)$ is $c_0(\Gamma; \G)$.
The homomorphism $m$ also gives an injective homomorphism of $\ell_\infty \Gamma
/ c_0(\Gamma; \G)$ into $(\widetilde{D} + K) / K$.

Let $E$ be the minimal tensor product
$
E = L^\infty Y \otimes J L^\infty Y J \otimes
\ell_\infty \Gamma /c_0(\Gamma, \G).
$
The product group $\Gamma \times \Gamma$ acts on $E$ by
\begin{eqnarray*}
    & &  \mathfrak{A}(g, h)(f_1 \otimes J f_2 J \otimes (\phi + c_0(\Gamma; \G)))\\
    &=&  \beta_g(f_1) \otimes J \beta_h(f_2) J \otimes (l_g r_h (\phi) + c_0(\Gamma; \G)),
\end{eqnarray*}
where $l, r$ stand for the left and the right translation
actions on $\ell_\infty \Gamma / c_0(\Gamma; \G)$ respectively.
Let $\widetilde{E}$ be the reduced crossed product
$E \rtimes_{\rm red} (\Gamma \times \Gamma)$.

We claim that there exists a $\ast$-homomorphism $\Psi \colon \widetilde{E} \rightarrow (\widetilde{D} + K) / K$ satisfying
\begin{eqnarray*}
    \Psi (f_1 \otimes J f_2 J \otimes (\phi + c_0(\Gamma; \G))) = f_1 J f_2 J m_\phi + K,
    \quad \\
    \Psi (g, h) = u_g J u_h J + K,
    \quad f_1, f_2 \in L^\infty Y,\ \phi \in \ell_\infty\Gamma,\ (g, h) \in \Gamma \times \Gamma.
\end{eqnarray*}
We consider the $\ast$-homomorphism from $L^\infty Y \otimes_\mathbb{C}
J L^\infty Y J \otimes_\mathbb{C} \ell_\infty \Gamma / c_0(\Gamma; \G)$
to $(\widetilde{D} + K) / K$ given by the first equation.
Since $L^\infty Y, J L^\infty Y J$ are nuclear by Takesaki's theorem \cite{Takesaki},
this homomorphism extends to the minimal tensor product $E$.
The homomorphism $\Gamma \times \Gamma
\ni (g, h) \mapsto u_g J u_h J + K \in (\widetilde{D} + K) / K$
gives the covariant system of the action $\mathfrak{A}$, that is,
\begin{eqnarray*}
    & & (u_g J u_h J + K)
        \Psi(f_1 \otimes J f_2 J \otimes (\phi + c_0(\Gamma; \G)))
        (u_g J u_h J + K)^\ast\\
    &=& u_g f_1 u_g^\ast J u_h f_2 u_h^\ast J m(l_g r_h(\phi)) + K\\
    &=& \Psi(\beta_g(f_1) \otimes J \beta_h(f_2) J \otimes (l_g r_h(\phi) + c_0(\Gamma; \G))).
\end{eqnarray*}
We get a $\ast$-homomorphism $\Psi$ from
the full crossed product $E \rtimes_{\rm full} (\Gamma
\times \Gamma)$ to $(\widetilde{D} + K) / K$.

The subalgebra $\mathbb{C} \otimes \mathbb{C} \otimes \ell_\infty \Gamma / c_0(\Gamma, \G)$
is in the center of $E$ and globally invariant under the
action. Since $\Gamma$ is bi-exact relative to $\G$,
the $\Gamma \times \Gamma$-action on the Gelfand spectrum of
$\mathbb{C} \otimes \mathbb{C} \otimes \ell_\infty \Gamma / c_0(\Gamma, \G)$
is amenable (Proposition \ref{Proposition; Characterization of Bi-exactness by Actions on Boundaries}).
 The full crossed product algebra $E \rtimes_{\rm full} (\Gamma \times \Gamma)$ coincides with the reduced
crossed product $\widetilde{E}$, by \cite{Anantharaman-Delaroche}.
The restriction of $\Psi$ on
$(L^\infty Y \otimes J L^\infty Y J) \rtimes_{\rm red}
(\Gamma \times \Gamma) \subset \widetilde{E}$
gives $\Psi$ in Proposition \ref{Proposition; ME Continuity for Gamma}.
\end{proof}

We proceed to prove Proposition \ref{Proposition; NonEm implies Ame in SOE setting}.
The proof says that when the $H$-action on the first entry of $\R_\beta \cap (X \times Y)$ flees all projections $p e(\Gamma_0)$ for small sets $\Gamma_0$, Proposition \ref{Proposition; ME Continuity for Gamma} deduces a continuity property of the reduced group $C^*$-algebra $C^\ast_\lambda (Z_G(H))$.

\begin{proof}
We may assume that the family $\mathcal{G}$ is invariant under conjugation. Indeed, by the definition,
$\Gamma$ is bi-exact relative to $\mathcal{G}$ if and only if $\Gamma$ is bi-exact relative to
$\widetilde{\mathcal{G}} = \bigcup_{\gamma \in \Gamma} \gamma \mathcal{G} \gamma^{-1}$.
If there exists a partial embedding $\Omega \subset \R_\beta \cap (X \times Y)$
of $H$ into $\gamma \Lambda \gamma^{-1}$ for some
$\Lambda \in \mathcal{G}$, then $\gamma^{-1} \Omega$ gives a partial embedding
of $H$ into $\Lambda$. Assume that $\mathcal{G}$ is conjugation invariant.

Denote $G_1 = Z_G(H)$. The unitaries $\{ v_g \ | \ g \in G_1 \}$ gives a faithful representation of
$C^*_\lambda (G_1)$ on $p L^2(\R_\beta) p$. We fix this representation.
We denote $C^\ast_\rho (G_1) = J C^*_\lambda (G_1) J$.
To show the amenability of $G_1$, it suffices to show that the natural homomorphism
\begin{eqnarray*}
    \Phi \colon C^*_\lambda (G_1) \otimes_\mathbb{C} C^\ast_\rho (G_1)
    \longrightarrow \B(p L^2 \M p) = \B(L^2(\R_\beta) \cap (X \times X)),
\end{eqnarray*}
is continuous with respect to the minimal tensor norm.
(See Section 2.6 of \cite{Brown--Ozawa; Approximation}, for example).
We take an arbitrary positive number $\epsilon > 0$, a finite subset
$\mathcal{F} \subset G_1$ and $x \in C^*_\lambda (G_1) \otimes_\mathbb{C}
C^\ast_\rho (G_1)$ of the following form:
\begin{eqnarray*}
    x = \sum_{s, t \in \mathcal{F}} c(s, t) v_s \otimes
    J v_t J, \quad c(s, t) \in \C.
\end{eqnarray*}
Then $\Phi(x)$ is given by
$\Phi(x) = \sum_{s, t \in \mathcal{F}} c(s, t) v_s J v_t J$.

Since the norm of $\Phi(x)$ is almost attained by some vector, there exists
a finite subset $\Gamma_0 \subseteq \Gamma$ satisfying
\begin{eqnarray}\label{Equation; A}
    \| \Phi(x) e(\Gamma_0) \| > \| \Phi(x) \| - \epsilon.
\end{eqnarray}
We claim that there exists $\delta > 0$ with the property: For any
projection $f$ in $L^\infty X$ with $\tr(p - f) \le \delta$,
\begin{eqnarray}\label{Equation; B}
      \| \Phi(x) e(\Gamma_0) f J f J\|
    > \| \Phi(x) e(\Gamma_0) \| - \epsilon.
\end{eqnarray}
Otherwise, there would exist a sequence of projections $\{ f_k \} \subseteq L^\infty X$
such that $\tr(p - f_k) < 2^{-k}$ and
$\| \Phi(x) e(\Gamma_0) f_k J f_k J\| \le \| \Phi(x) e(\Gamma_0) \| - \epsilon$.
Denote $p_k = f_k \wedge f_{k + 1} \wedge \ldots$. Then we get
$\| \Phi(x) e(\Gamma_0) p_k J p_k J\| \le \| \Phi(x) e(\Gamma_0) \| - \epsilon$.
This contradicts the fact that $p_k J p_k J$ is an increasing sequence
converging to $p J p J$.

The unitary $v_s$ can be written as a Fourier expansion
$v_s = \sum_{\gamma} u_\gamma p(s, \gamma)$,
by some projections $\{ p(s, \gamma) \} \subset L^\infty X$ with $\sum_\gamma p(s, \gamma) = p$.
There exists an increasing sequence of projections $\{q_n(s)\} \subset L^\infty X$
such that $\lim_n \tr(q_n(s)) = \tr(p)$ and $v_s q_n(s) \in B = L^\infty Y \rtimes_{\rm red} \Gamma$.
Since $\mathcal{F}$ is a finite set, there exists a projection $q_1 \in L^\infty X$ satisfying
$\tr(p - q_1) \le \delta / 3$
and $v_s q_1 \in B$ for all $s \in \mathcal{F}$.

The operator
$x (q_1 \otimes J q_1 J) = \sum c(s, t) v_s q_1 \otimes J v_t q_1J$
is in the domain of $\Psi$ in Proposition \ref{Proposition; ME Continuity for Gamma} and its image is
\begin{eqnarray*}
    & & \Psi(x (q_1 \otimes J q_1 J))
     =  \sum_{s, t \in G_1} c(s, t) v_s q_1 J v_t q_1J + K
     =  \Phi(x)q_1 J q_1 J + K.
\end{eqnarray*}
Since $\Psi$ is continuous (or equivalently contractive),
we get
\begin{eqnarray*}
          \| x \|_{\rm min}
    &\ge& \| \Psi(x (q_1 \otimes J q_1 J)) \|
      =   \| \Phi(x) q_1 J q_1 J + K \|_{(D + K) / K} \\
     &=&
    \inf \{ \| \Phi(x) q_1 J q_1 J (1 - e(\Gamma_1)) \|
                                \ | \
    \Gamma_1 \subset \Gamma {\rm\ small\ relative\ to\ } \mathcal{G}\}.
\end{eqnarray*}
We used the fact that
$\{ e(\Gamma_1) \ | \
\Gamma_1 \subset \Gamma {\rm \ small\  relative\ to\ } \mathcal{G}\}$
is a net of approximate units for $K$. We
get a finite subset $\Gamma_1 \subset \Gamma$ with
\begin{eqnarray}\label{Equation; C}
      \| x \|_{\rm min} + \epsilon
    > \| \Phi(x) q_1 J q_1 J (1 - e(\Gamma_1)) \|.
\end{eqnarray}
We may assume that $\Gamma_1$ is of the form
$\Gamma_1 = \bigcup_{i = 1}^n \Lambda_i \gamma_i$,
for some $\Lambda_i \in \mathcal{G}$, since $\mathcal{G}$
is conjugation invariant. To show the continuity of $\Phi$, we will show an inequality between the right hand side of (\ref{Equation; C}) and the left hand side of (\ref{Equation; B}) for an appropriate $f$.

Write $\Sigma = \R_\beta \cap (X \times Y)$ and regard $\Sigma$ as a measurable embedding of $G$ into $\Gamma$. We make use of notations in Subsection
\ref{Subsection; Function Valued Measure}. The projection $p e_\Delta$ corresponds to a
$\Gamma$ fundamental domain of $\Sigma$. We identify $X$ with the fundamental domain.
Then the projections $p e(\Lambda_i \Gamma_0), p e(\Lambda_i \gamma_i)$ are written as
\begin{eqnarray*}
    p e(\Lambda_i \Gamma_0)
    &=& \sum_{\lambda \in \Lambda_i, \gamma \in \Gamma_0}
               J u_\lambda u_\gamma J p e_\Delta J u_\gamma^* u_\lambda^*J
     = \chi(\Lambda_i \Gamma_0 X) \in L^\infty \Sigma\\
    p e(\Lambda_i \gamma_i)
    &=& \sum_{\lambda \in \Lambda_i}
               J u_\lambda u_{\gamma_i} J p e_\Delta J u_{\gamma_i}^* u_\lambda^*J
    = \chi(\Lambda_i \gamma_i X) \in L^\infty \Sigma.
\end{eqnarray*}
They are elements in $(L^\infty \Sigma)^{\Lambda_i}$
and their values of $\Tr_i = \Tr_{\Lambda_i}$ are finite. Let $e_0, e_1$ be the projections in
$\widetilde{\A} = (L^\infty \Sigma)^{\Lambda_1} \oplus
\cdots \oplus (L^\infty \Sigma)^{\Lambda_n}$
defined by
\begin{eqnarray*}
    e_0 &=& \chi(\Lambda_1 \Gamma_0 X) \oplus \chi(\Lambda_2 \Gamma_0 X)
            \oplus \cdots \oplus \chi(\Lambda_n \Gamma_0 X),\\
    e_1 &=& \chi(\Lambda_1 \gamma_1 X) \oplus \chi(\Lambda_2 \gamma_2 X)
            \oplus \cdots \oplus \chi(\Lambda_n \gamma_n X).
\end{eqnarray*}
Let $\Tr$ be the trace on $\widetilde{\A}$ given by the summation
$\Tr = \Tr_1 + \Tr_2 + \cdots + \Tr_n$.
The values $\Tr(e_0)$ and $\Tr(e_1)$ are finite.

Let $\mathcal{C} \subseteq \widetilde{\A} \cap L^2(\widetilde{\A}, \Tr)$
 be the set of convex combinations
\begin{eqnarray*}
{\rm conv}\{ h (e_1) = \chi(h \Lambda_1 \gamma_1 X) \oplus \chi(h \Lambda_2 \gamma_2 X) \oplus \cdots \oplus
            \chi(h \Lambda_n \gamma_n X) \ | \ h \in H \}.
\end{eqnarray*}
We take the unique element $x = x_1
\oplus x_2 \oplus \cdots \oplus x_n$ with the smallest $2$-norm in
$2$-norm closure $\overline{\mathcal{C}}$. Since the set $\overline{\mathcal{C}}$
is globally fixed under the action of $H$, $x$ is fixed under the action of $H$.
Since $x$ is a $L^2$-limit of positive functions, $x$ is positive.
For $t > 0$, its preimage $\Omega_t = \bigsqcup_{i =1}^n \Omega_{i, t} \subset \Sigma \times \{1,2,\cdots, n\}$ of
$[t, \infty)$ has a finite value of $\Tr$.
Since $i$-th entry of every element $y \in \mathcal{C}$ is $\Lambda_i$-invariant, so is $x$.
The $i$-th measurable subset
$\Omega_{i,t} \subset \Sigma$ is $H$-invariant and $\Lambda_i$-invariant, and the measure of its
$\Lambda_i$ fundamental domain is finite.
The assumption of Proposition \ref{Proposition; NonEm implies Ame in SOE setting} tells that
$\Omega_i$ is a null set. This means that $e_{[t, \infty)} = 0$ and
thus we get $x = 0 \in \overline{\mathcal{C}}$.
Since the elements of the form $k^{-1} \sum_{i = 1}^k h_i(e_1)$ is
$2$-norm dense in $\overline{\mathcal{C}}$,
there exist $h_1, h_2,
\ldots, h_k \in G$ satisfying
\begin{eqnarray*}
    \Tr \left( \frac{1}{k} \sum_{i = 1}^k h_i(e_1) e_0 \right) \le
    \delta / 3.
\end{eqnarray*}
We choose $h \in \{h_1, h_2, \ldots, h_k\}$ satisfying
$\Tr( h(e_1) e_0) \le \delta / 3$.

Let $\E^{(i)}_X$ be the function valued measure from
$\widehat{(L^\infty \Sigma)^{\Lambda_i}_+}$ to $\widehat{L^\infty X_+}$ defined in
Subsection \ref{Subsection; Function Valued Measure}.
Each measurable function
$\E^{(i)}_X (h \Lambda_i \gamma_i X \cap \Lambda_i \Gamma_0 X)$
is integer valued on $X$.
The function
$F = \sum_{i = 1}^n \E^{(i)}_X (h \Lambda_i \gamma_i X \cap \Lambda_i \Gamma_0 X )$
is also integer valued. Let $p - q_2 \in L^\infty X$ be the support of $F$. It follows that
\begin{eqnarray*}
    \tr(p - q_2) \le
    \int_X F d \mu = \Tr( h(e_1) e_0) \le
    \delta / 3.
\end{eqnarray*}
Since $q_2 \E^{(i)}_X (h \Lambda_i \gamma_i X \cap \Lambda_i \Gamma_0 X) = 0$,
we also get
\begin{eqnarray*}
    \chi(h \Lambda_i \gamma_i X) \chi(\Lambda_i \Gamma_0 X) q_2 =
    v_h e(\Lambda_i \gamma_i) v_h^* e(\Lambda_i \Gamma_0) q_2 = 0.
\end{eqnarray*}
Since $e(\Lambda_i \Gamma_0) q_2 = q_2 e(\Lambda_i \Gamma_0)$, it follows that
\begin{eqnarray*}
            v_h e(\Lambda_i \gamma_i) v_h^*
    &\perp& q_2 e(\Lambda_i \Gamma_0),\\
            v_h e(\Gamma_1) v_h^*
       =    \bigvee_{i = 1}^n v_h e(\Lambda_i \gamma_i) v_h^*
    &\perp& \bigwedge_{i = 1}^n q_2 e(\Lambda_i \Gamma_0)
      \ge   q_2 e(\Gamma_0),\\
            v_h (1 - e(\Gamma_1)) v_h^*
    &\ge&   q_2 e(\Gamma_0).
\end{eqnarray*}

Since $[v_s, v_h] = 0$ for $s \in G_1$, letting $f = \alpha_h(q_1) q_1 q_2$,
\begin{eqnarray*}\label{Equation; D}
        \| \Phi(x) q_1 J q_1 J (1 - e(\Gamma_1)) \|
    &=& \| v_h \Phi(x) q_1 J q_1 J (1 - e(\Gamma_1)) v_h^* \|\\
    &=& \| \Phi(x) \alpha_h(q_1) J q_1 J v_h (1 - e(\Gamma_1)) v_h^* \|\\
  &\ge& \| \Phi(x) \alpha_h(q_1) J q_1 J q_2 e(\Gamma_0) \|\\
  &\ge& \| \Phi(x) e(\Gamma_0) f J f J \|.
\end{eqnarray*}
Since $\tr(p - f) \le \tr(p - \alpha_h(q_1)) + \tr(p - q_1) + \tr(p - q_2) \le \delta$, we can use the equation (\ref{Equation; B}).
Combining the above inequality,  (\ref{Equation; A}), (\ref{Equation; B}) and (\ref{Equation; C}),
we get
\begin{eqnarray*}
    \| x \|_{\rm min} + 3 \epsilon > \| \Phi(x) \|.
\end{eqnarray*}
Since the positive number $\epsilon$ is arbitrary,
we get the desired continuity of $\Phi$ and Proposition \ref{Proposition; NonEm implies Ame in SOE setting}.
\end{proof}

The following is a key result in this paper, which deduces three types of
results on direct product groups, wreath product groups and amalgamated free products.
\begin{theorem}\label{Theorem; Non embeddability implies amenability}
Let $\Gamma$ be a countable group which is bi-exact relative to $\mathcal{G}$
and let $H \subset G$ be an inclusion of countable groups.
Suppose that there exists an ergodic measurable embedding $\Sigma$
of $G$ into $\Gamma$ and that $\Sigma_H \subset \Sigma$ is an $H \times \Gamma$-invariant non-null measurable subset.

If the centralizer $Z_G(H)$ of $H$
is non-amenable,
then there exists a partial embedding $\Omega$ of $H$ into $\Lambda$
satisfying $\Omega \subset \Sigma_H$.
In particular, if $G \preceq_{\rm ME} \Gamma$ and $Z_G(H)$ is non-amenable, then
$H \preceq_{\rm ME} \Lambda$ for some $\Lambda \in \mathcal{G}$.
\end{theorem}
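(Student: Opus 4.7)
The plan is to reduce to Proposition \ref{Proposition; NonEm implies Ame in SOE setting} by applying it not directly to $\Sigma_H$ (which is not $Z_G(H)$-invariant) but to its $Z_G(H)$-saturation, and then translating the resulting partial embedding back inside $\Sigma_H$ via an element of $Z_G(H)$. Using the argument of Lemma \ref{Lemma; MEm and SOE}, after possibly replacing $\Sigma$ by $\Sigma \times X_1$ where $(X_1, \mu)$ carries a free weakly mixing $G$-action and the trivial $\Gamma$-action, we may assume that $\Sigma = \R_\beta \cap (X \times Y)$ for some free m.p.\ actions $\alpha \colon G \curvearrowright X$, $\beta \colon \Gamma \curvearrowright Y$, with $X \subset Y$ a $\Gamma$-fundamental domain of measure $1$ and $G$-orbits contained in $\Gamma$-orbits. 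In this picture, the $H \times \Gamma$-invariant set $\Sigma_H$ corresponds to the $\alpha(H)$-invariant subset $X_H := X \cap \Sigma_H$ via $\Sigma_H = \Gamma X_H$.

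Next, define the $Z_G(H)$-saturation
\[
    \widetilde{X}_H := \bigcup_{g \in Z_G(H)} \alpha_g(X_H) \subset X.
\]
This is measurable (a countable union), and is invariant under the subgroup $G' := H \cdot Z_G(H) \subseteq G$: the $Z_G(H)$-invariance is built in, and the $H$-invariance follows from $[H, Z_G(H)] = 1$ combined with $\alpha(H) X_H = X_H$. After rescaling $\mu$ so that $\mu(\widetilde{X}_H) = 1$, I would apply Proposition \ref{Proposition; NonEm implies Ame in SOE setting} to the inclusion $H \subset G'$ with $G'$ acting freely on $\widetilde{X}_H \subset Y$. Since $Z_{G'}(H) \supseteq Z_G(H)$ is non-amenable by hypothesis, the contrapositive of the Proposition produces some $\Lambda \in \mathcal{G}$ and a partial embedding $\widetilde{\Omega}$ of $H$ into $\Lambda$ inside $\widetilde{\Sigma}_H := \R_\beta \cap (\widetilde{X}_H \times Y) = \Gamma \widetilde{X}_H$.

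Finally, I would pull $\widetilde{\Omega}$ back into $\Sigma_H$ using the decomposition $\Gamma \widetilde{X}_H = \bigcup_{g \in Z_G(H)} g \Sigma_H$: some $g \in Z_G(H)$ must satisfy that $\widetilde{\Omega} \cap g \Sigma_H$ is non-null, and $\Omega := g^{-1} \widetilde{\Omega} \cap \Sigma_H$ is then a non-null subset of $\Sigma_H$. Because $g$ commutes with $H$ and with every element of $\Gamma$, $g^{-1} \widetilde{\Omega}$ remains $H \times \Lambda$-invariant, as does $\Omega$, with $\Lambda$-fundamental measure bounded by that of $\widetilde{\Omega}$. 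If the blow-up by $X_1$ was used, the partial embedding $\Omega' \subset \Sigma_H \times X_1$ obtained this way is pushed down to the original $\Sigma_H$ via the $H \times \Lambda$-invariant function $f(\sigma) := \mu_{X_1}\{x_1 : (\sigma, x_1) \in \Omega'\}$, which is integrable over a $\Lambda$-fundamental domain by Fubini, so the level set $\{f \ge t\}$ for small $t > 0$ yields the desired partial embedding inside $\Sigma_H$. The ``in particular'' statement is the special case $\Sigma_H = \Sigma$. The principal obstacle is identifying the right enlargement: applying Proposition directly to $(H \subset G, X, Y)$ only controls partial embeddings inside $\Sigma$, possibly disjoint from $\Sigma_H$; the $Z_G(H)$-saturation simultaneously makes the centralizer hypothesis available (via $Z_G(H) \subseteq Z_{G'}(H)$) and keeps the back-translation compatible with $H$-invariance.
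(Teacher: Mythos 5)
Your route is, in substance, the paper's own: saturate $\Sigma_H$ under the centralizer, tensor with a weakly mixing free $G$-action to make the dot action free, apply Proposition \ref{Proposition; NonEm implies Ame in SOE setting} to $H \subset H \cdot Z_G(H)$, push the resulting partial embedding down by integrating out the auxiliary coordinate (Fubini plus a level set), and translate it back into $\Sigma_H$ by an element commuting with $H$; all of those steps are correct and coincide with the paper's. The genuine gap is in the very first reduction, the claim that after replacing $\Sigma$ by $\Sigma \times X_1$ ``we may assume $\Sigma = \R_\beta \cap (X \times Y)$ with $X \subset Y$ a $\Gamma$-fundamental domain of measure $1$.'' Tensoring with a probability space does not change the coupling index $[\Gamma : G]_\Sigma$, and any coupling of the form $\R_\beta \cap (X \times Y)$ with $X \subset Y$ and $\alpha(G)x \subset \beta(\Gamma)x$ automatically has index at least $1$: its $\Gamma$-fundamental domain is the diagonal copy of $X$, while a $G$-fundamental domain must contain at least one point over every $y$ whose $\Gamma$-orbit meets $X$, so (taking $Y = \Gamma X$, as one may) its measure is at least $\mu(Y) \ge \mu(X)$. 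Hence when $[\Gamma : G]_\Sigma < 1$ --- which the hypotheses of the theorem allow, for instance an ME coupling of index $1/2$ --- no such realization of $\Sigma$ exists, and ``the argument of Lemma \ref{Lemma; MEm and SOE}'' does not rescue you: for compression constant $< 1$ that lemma swaps the roles of $G$ and $\Gamma$, which is forbidden here because Proposition \ref{Proposition; NonEm implies Ame in SOE setting} needs the bi-exact group $\Gamma$ to be the one acting on the ambient space $Y \supset X$.

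The paper closes exactly this case by a separate amplification of the $\Gamma$-side: choose $n$ with $n [\Gamma : G]_\Sigma \ge 1$, replace $\Gamma$ by $\widetilde{\Gamma} = \Gamma \times \Z / n\Z$ and $\Sigma$ by $\widetilde{\Sigma} = \Sigma \times \Z / n \Z$ (with $G$ acting trivially on $\Z/n\Z$). Then $\widetilde{\Gamma}$ is still bi-exact relative to the corresponding family, the new coupling index is $\ge 1$, so your first-case argument applies to $\widetilde{\Sigma}$ and $\Sigma_H \times \Z/n\Z$, and a slice $(\Sigma \times \{k\}) \cap \widetilde{\Omega}$ of the resulting partial embedding is $H \times \Lambda$-invariant with finite $\Lambda$-fundamental measure, i.e.\ a partial embedding of $H$ into $\Lambda$ inside the original $\Sigma_H$. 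You need to add this case (or an equivalent device); as written, your proof only covers couplings with $[\Gamma : G]_\Sigma \ge 1$.
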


\begin{proof}
Let $\Sigma$ be an arbitrary ergodic measurable embedding of $G$ into $\Gamma$.
We denote by $\hat{G}$ the subgroup of $G$ generated by $H$ and $Z_G(H)$.
Let $\Sigma_H \subset \Sigma$ be a non-null measurable subset
invariant under $H \times \Gamma$.
To show that there exists a partial embedding of $H \preceq_\Sigma \Lambda \in \G$ in
$\Sigma_H$,
we have only to find a partial embedding $\Omega$
in $\Sigma_1 = \bigcup \{g \Sigma_H \ | \ g \in \hat{G}\}$.
Suppose that $Z_G(H)$ is non-amenable.

First we consider the case of $[\Gamma : G]_\Sigma \ge 1$.
We take a standard probability space $(X^\prime, \mu)$ which is equipped
with a weakly mixing free measure preserving $G$-action.
Let $\Gamma$ act on $X^\prime$ trivially. We regard
$\Sigma^{\mathrm{free}} = \Sigma \times X^\prime$ as
a measurable embedding, on which $G$ and $\Gamma$ act by diagonal actions respectively.
Since the $G$-action on the set
$\Gamma \backslash \Sigma^{\mathrm{free}} \cong (\Gamma \backslash \Sigma) \times X^\prime$ is free and ergodic,
$\Sigma^{\mathrm{free}}$ is an ergodic measurable embedding coming form SOE. The coupling index
is $[\Gamma : G]_{\Sigma^{\mathrm{free}}} = [\Gamma : G]_\Sigma \ge 1$.
There exist a $\Gamma$-action $\beta$ on a standard measure space $Y$,
a measurable subset $X \subset Y$ and a $G$-action $\alpha$ on a standard probability space $X$
such that
$\Sigma^{\mathrm{free}} \cong \R_\beta \cap (X \times Y)$.
The measurable subset $\Sigma^{\mathrm{free}}_1 = \Sigma_1 \times X^\prime \subset \Sigma^{\mathrm{free}}$
is a measurable embedding of $\hat{G}$ into $\Gamma$.
Since $\Sigma^{\mathrm{free}}_1$ is $\Gamma$-invariant, $\Sigma^{\mathrm{free}}_1 = \R_\beta \cap (X_1 \times Y)$
for some $\hat{G}$-invariant measurable subset $X_1 \subset X$.
We apply the contrapositive of Proposition \ref{Proposition; NonEm implies Ame in SOE setting} for $\alpha |_{\hat{G}} \colon \hat{G} \curvearrowright X_1$ and $\beta \colon \Gamma \curvearrowright Y$.
We get some $\Lambda \in \mathcal{G}$ and an $H \times \Lambda$-invariant measurable subset $\Omega^{\mathrm{free}}_1 \subset \Sigma^{\mathrm{free}}_1$
so that the measure of a $\Lambda$ fundamental domain of $\Omega^{\mathrm{free}}_1$ is finite.

We define the measurable function $\phi$ on $\Sigma_1$ by
\begin{eqnarray*}
    \phi(s) = \mu(\{x \in X^\prime \ | \ (s, x) \in \Sigma_1 \times X^\prime = \Omega^{\mathrm{free}}_1 \}),
\end{eqnarray*}
which is defined almost everywhere on $s \in \Sigma_1$. The function $\phi$ is invariant under
the $H$-action and $\Lambda$-action on $\Sigma_1$ outside a null set.
Take a fundamental domain
$D_1 \subset \Sigma_1$ for the $\Lambda$-action on $\Sigma_1$.
Since $\Omega^{\mathrm{free}}_1 \cap (D_1 \times X^\prime)$
is the $\Lambda$-fundamental domain of $\Omega^{\mathrm{free}}_1$ and has finite measure,
the function $\phi |_{D_1}$ is integrable, by Fubini's Theorem.
Any non-trivial level set of $\phi$ gives a partial embedding of $H$ into $\Lambda$ in $\Sigma_1$.

We consider the case of $[\Gamma : G]_\Sigma < 1$. We take an integer $n$ with
$n [\Gamma : G]_\Sigma \ge 1$. We define
$\widetilde{\Gamma} = \Gamma \times \Z / n\Z$ and
$\widetilde{\Sigma} = \Sigma \times \Z / n \Z$.
Let $\widetilde{\Gamma}$ act on $\widetilde{\Sigma}$ by the product action and $G$ act
on $\Z / n \Z$ trivially.
We note that
$\widetilde{\Gamma}$ is bi-exact relative to $\mathcal{G} \times  \{ 1 \}$.
Since
$[\widetilde{\Gamma} : G]_{\widetilde{\Sigma}} = n [\Gamma : G]_\Sigma \ge 1$,
by the above argument
there exist $\Lambda \in \mathcal{G}$ and a partial embedding $\widetilde{\Omega} \subset \widetilde{\Sigma}$ of $H$ into $\Lambda \times \{1\}$.
Then we define a non-null subset $\Omega \subset \Sigma$ by a non-null
$\Omega \times \{k\}
= (\Sigma \times \{k\}) \cap \widetilde{\Omega}$.
This measurable subset gives an embedding of $H$ into $\Lambda$.
\end{proof}

\section{Factorization of Product Groups}
\label{Section; Factorization of Product Groups}
Before stating main theorems in this section,
we remark some general fact (Proposition \ref{Proposition; ME and MEm passes to Quotient Groups}) on partial embeddings of normal subgroups.
\subsection{ME Coupling between Quotient Groups}
Let $(\A, \Tr)$ be a pair of an abelian von Neumann algebra and its faithful
normal semi-finite trace. Let $\Gamma$ be a countable group acting on $\A$ in
trace preserving way. We do not need a condition on freeness.
The following notation will be useful.
\begin{definition}\label{Definition; Fundamental Pair}
A pair $(f, \Lambda_f)$ of a non-zero projection $f \in \A$ and a subgroup $\Lambda_f \subset \Gamma$ is said to be
\textbf{a fundamental pair} if the following conditions hold:
\begin{enumerate}
\item
The projection $f$ is an absolute invariant projection of the $\Lambda_f$-action,
namely, for any projection $f^\prime \le f$ in $\A$ and $\lambda \in \Lambda_f$,
we have $\lambda(f^\prime) = f^\prime$.
\item
For any $\gamma \in \Gamma \cap (\Lambda_f) ^c$, the projection $\gamma (f)$ is
orthogonal to $f$.
\item
The projection $\bigvee_{\gamma \in \Gamma} \gamma(f)$ is $1$.
\end{enumerate}
\end{definition}

Let $\Gamma_\mathrm{nor}$ be the normalizing subgroup for $\Lambda_f$;
$\Gamma_\mathrm{nor} = \{ \gamma \in \Gamma \ | \ \gamma \Lambda_f \gamma^{-1} = \Lambda_f \}$.
Then the group $\Gamma_\mathrm{nor} / \Lambda_f$ naturally acts on $\A q$, where $q$ is the projection
$q = \bigvee_{\gamma \in \Gamma_\mathrm{nor}} \gamma(f)$.
The group $\Lambda_f$ acts on $\A q$ trivially.
If we consider $\A q$ as an $L^\infty$ function space, a measurable subset
corresponding to $f$ is a fundamental domain for the $\Gamma_\mathrm{nor} / \Lambda_f$-action on $\A q$.

\begin{lemma}\label{Lemma; Find a Fundamental Pair}
Let $H \subset G$, $\Lambda \subset \Gamma$ be normal subgroups
and let $(\Sigma, \nu)$ be a standard measure space on which an ergodic $G \times \Gamma$-action is given.
Suppose that the $\Gamma$-action on $\Sigma$ has a fundamental domain $X \subset \Sigma$.

If there exists an $H \times \Lambda$-invariant projection $e \in L^\infty \Sigma$ with $\mathrm{range}(\E^\Lambda_X (e)) \not\subset \{0, \infty\}$,
then there exist an $H \times \Lambda$-invariant
projection $f$ and an intermediate subgroup $\Lambda \subset \Lambda_f \subset \Gamma$
such that
$[\Lambda_f \colon \Lambda] < \infty$ and that
the pair $(f, \Lambda_f / \Lambda)$ is a fundamental pair for the
$\Gamma / \Lambda$-action on $(L^\infty \Sigma)^{H \times \Lambda}$.
\end{lemma}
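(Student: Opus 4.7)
My plan is to carry out the proof in three stages.

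First, I would reduce to the case of finite trace. Because $e$ is $\Lambda$-invariant, the function $\E^\Lambda_X(e)$ takes values in $\{0,1,2,\ldots\}\cup\{\infty\}$, and by Lemma~\ref{Lemma; Function Valued Measure}(2) it is invariant under the dot action of $H$ on $X\cong\Gamma\backslash\Sigma$. The hypothesis that its essential range meets the positive integers supplies an $n\ge 1$ and an $H$-invariant set $W=\{\E^\Lambda_X(e)=n\}\subset X$ of positive finite measure. Setting $e_1=e\wedge\chi(\Gamma W)$ (which is legitimate by Lemma~\ref{Lemma; Function Valued Measure}(3)) yields an $H\times\Lambda$-invariant projection with finite trace $\Tr_\Lambda(e_1)=n\mu(W)$.

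Next, I would work inside the tracial commutative algebra $(\widetilde{\A},\Tr_\Lambda):=((L^\infty\Sigma)^{H\times\Lambda},\Tr_\Lambda)$, on which $\Gamma/\Lambda$ acts by measure-preserving automorphisms since $\Lambda\triangleleft\Gamma$. Among nonzero projections $p\le e_1$ in $\widetilde{\A}$ that enjoy the \emph{atom property} $\gamma(p)\wedge p\in\{0,p\}$ for every $\gamma\in\Gamma/\Lambda$, I would select one, call it $f$, of minimal trace via a Zorn's lemma argument (using the finite trace of $e_1$ to ensure a nonzero minimum is attained). The subgroup $\Lambda_f=\{\lambda\in\Gamma:\lambda(f')=f'\text{ for every projection }f'\le f\text{ in }\widetilde{\A}\}$ contains $\Lambda$, and conditions~(1) and~(2) of Definition~\ref{Definition; Fundamental Pair} hold by construction: (1) is the definition of $\Lambda_f$, and (2) follows because for $\gamma\in\Gamma\setminus\Lambda_f$, the $H\times\Lambda$-invariant projection $\gamma(f)\wedge f<f$ also has the atom property, so minimality of $\Tr_\Lambda(f)$ forces $\gamma(f)\wedge f=0$.

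Finally, I would address coverage~(3) and finite index. The normality of $H,\Lambda$ together with the ergodicity of $G\times\Gamma$ on $\Sigma$ yields an ergodic action of $G/H\times\Gamma/\Lambda$ on $\widetilde{\A}$, since $\widetilde{\A}^{G/H\times\Gamma/\Lambda}=(L^\infty\Sigma)^{G\times\Gamma}=\C$. Because $G$ and $\Gamma$ commute on $\Sigma$, $G$-translation preserves the atom property and the subgroup $\Lambda_f$; enlarging $f$ to $\bigvee_{g\in G_0}g(f)$ for a suitable countable $G_0\subset G/H$ (and re-selecting an atom if necessary) should produce $\bigvee_{\gamma\in\Gamma/\Lambda}\gamma(f)=1$ in $\widetilde{\A}$ without losing the earlier properties. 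The hard part will be establishing $[\Lambda_f:\Lambda]<\infty$: I expect this to follow from the finite trace of $f$ combined with the triviality of the $\Lambda_f/\Lambda$-action on the finite-trace corner $f\widetilde{\A}$, via a separation argument showing that an infinite $\Lambda_f/\Lambda$ acting trivially on the $H\times\Lambda$-orbit structure inside the set $F$ corresponding to $f$ would produce infinitely many disjoint $\Lambda$-translates inside a $\Lambda$-fundamental domain of $F$, contradicting $\Tr_\Lambda(f)<\infty$.
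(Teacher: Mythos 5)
Your overall architecture (cut down to finite trace, find an ``atomic'' projection, enlarge by the commuting $G$-action, get finite index from finiteness of $\Tr_\Lambda$) parallels the paper's, and your finite-trace reduction and finite-index argument are fine in principle. But the central step --- producing the projection $f$ --- has genuine gaps. First, you never establish that any nonzero projection $p \le e_1$ with your atom property exists; that existence is essentially the whole content of the lemma, not something one can start from. Second, the selection ``of minimal trace via Zorn's lemma'' does not work: Zorn produces maximal elements of chain-closed posets, not minimizers of a real-valued quantity, and the infimum of $\Tr_\Lambda$ over such projections need not be attained (there is no discreteness in the trace). The paper circumvents both problems by minimizing an \emph{integer}: it takes $k$ to be the smallest positive integer occurring in $\mathrm{range}(\E^\Lambda_X(e))$ over \emph{all} $H\times\Lambda$-invariant projections $e$, restricts to the level set $\{\E^\Lambda_X(e)=k\}$, and reads off $\Lambda_f$ concretely as a union of exactly $k$ cosets $\bigsqcup_i \Lambda\gamma_i$ (so $[\Lambda_f:\Lambda]=k<\infty$ is automatic), with the subgroup property coming from the dichotomy $|\Lambda\backslash(\gamma\Lambda_f\cap\Lambda_f)| \in \{0,k\}$ forced by minimality of $k$.

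Third, your verification of condition (2) of Definition \ref{Definition; Fundamental Pair} is circular: with $\Lambda_f$ defined by \emph{absolute} invariance (fixing every subprojection of $f$), an element $\gamma\notin\Lambda_f$ may still satisfy $\gamma(f)=f$ while permuting subprojections of $f$ nontrivially, so the claim ``$\gamma(f)\wedge f < f$'' is unjustified and the atom property of $f$ gives you nothing. If instead you define $\Lambda_f$ as the stabilizer of $f$, condition (2) follows from the atom property, but then condition (1) (absolute invariance) is exactly what remains unproven --- and minimality of trace among atom-property projections does not yield it, since an $H\times\Lambda$-invariant subprojection $f'\le f$ of smaller trace need not itself have the atom property, so your minimality gives no contradiction. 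This is precisely where the paper again uses minimality of the integer $k$: any $H\times\Lambda$-invariant $f'\le f$ must have $\mathrm{range}(\E^\Lambda_X(f'))\subset\{0,k\}$, hence $f'=f\chi(\Gamma D)$ for some $D\subset X$, which is automatically $\Lambda_f$-invariant. Finally, the coverage step (3) is only asserted (``should produce \dots without losing the earlier properties''); the paper carries it out via ergodicity of the dot $G$-action on $X$, enlarging $f$ by $g(f\chi(\Gamma W))$ over $H$-invariant sets $W$ disjoint from the support of $\E^\Lambda_X(f)$ and checking the properties survive. To repair your proof you would need to replace the trace-minimization by the integer-valued minimization of $\E^\Lambda_X$ (or otherwise prove existence of an atomic projection and the absolute invariance of its subprojections), at which point you are essentially reproducing the paper's argument.
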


Before the proof, we note that the action of $\Gamma$ on $L^\infty \Sigma$
globally fixes the fixed point subalgebras $(L^\infty \Sigma)^\Lambda$,
$(L^\infty \Sigma)^{H \times \Lambda}$,
since $\Lambda$ is a normal subgroup of $\Gamma$. Furthermore,
this action preserves the trace $\Tr_\Lambda$ defined in Subsection
\ref{Subsection; Function Valued Measure}.
This is because the definition of $\Tr_\Lambda$
does not depend on the choice of a $\Lambda$ fundamental domain of $\Sigma$.

\begin{proof}
Let $k$ be the minimal element among the positive integers
\begin{eqnarray*}
\bigcup \{
\mathrm{range}(\E^\Lambda_X(e)) \ | \
e \in (L^\infty \Sigma)^{H \times \Lambda}\} \cap \{0, \infty \}^c.
\end{eqnarray*}
We assume $k \in \mathrm{range}(\E^\Lambda_X(e))$.
Let $U \subset X$ be the preimage of $k$.
We replace $e$ with the restriction $e \chi(\Gamma U)$.
Since the subset $U$ is invariant under
the $H$-action on $X \cong \Gamma \backslash \Sigma$,
the restriction is also $H \times \Lambda$-invariant.
Then $\E^\Lambda_X(e)$ is non-zero and $\mathrm{range}(\E^\Lambda_X(e)) \subset \{0, k\}$.
Let $\Omega$ be a measurable subset corresponding to $e$.
There exists a non-null measurable
subset $X_1 \subset U$ such that
\begin{eqnarray*}
\Omega \cap \Gamma X_1 = \bigsqcup_{\gamma_i \in \Gamma_0} \Lambda \gamma_i X_1,
\end{eqnarray*}
for some finite subset $\Gamma_0 = \{\gamma_1, \gamma_2, \cdots, \gamma_k\}$.
By replacing $X$ with $\gamma_1 X_1 \sqcup (X \cap (X_1) ^c)$, we may assume
that $1 = \gamma_1$.
Then the union of $k$-cosets
$\Lambda_f = \bigsqcup_{\gamma_i \in \Gamma_0} \Lambda \gamma_i$
is a subgroup of $\Gamma_1$. Indeed, for $\gamma \in \Gamma$, we get
\begin{eqnarray*}
\E^\Lambda_X (\gamma(e) e) 1_{X_1}
= |\Lambda \backslash (\gamma \Lambda_f \cap \Lambda_f)| 1_{X_1}.
\end{eqnarray*}
Since the projection $\gamma(e) e$ is also $H \times \Lambda$-invariant,
by the minimality of $k$, it follows that
$|\Lambda \backslash (\gamma \Lambda_f \cap \Lambda_f)|
     = k {\rm \ or \ } 0$.
In other words,
we get $\gamma \Lambda_f \cap \Lambda_f = \Lambda_f$ or $\emptyset$. It follows
that $\Lambda_f$ is a subgroup of $\Gamma$.
We define $f$ by $\bigwedge_{\gamma \in \Lambda_f} \gamma(e)$.
Since $\chi(\cup_{i = 1}^n \Lambda \gamma_i X_1) \le f \le e$, the projection $f$
satisfies $\mathrm{range}(\E^\Lambda_X(f)) \subset \{0, k\}$ and
\begin{eqnarray}\label{Equation; minimal pair}
    \gamma(f) =     f \ (\gamma \in \Lambda_f), \quad
    \gamma(f) \perp f \ (\gamma \in \Gamma \cap (\Lambda_f)^c).
\end{eqnarray}

Furthermore, there exists a projection $f$ with the property $(4)$ and
$\E^\Lambda_X (f)$ is $k 1_X$. Let $\alpha$ be the $G$-action on $X$ defined
by the natural identification $X \cong \Gamma \backslash \Sigma$. Since the $G \times \Gamma$-action
on $\Sigma$ is ergodic, the dot action $\alpha \colon G \curvearrowright X \cong \Gamma \backslash \Sigma$ is also ergodic.
Let $V \subset X$ be the support of $\E^\Lambda_X(f)$. This
is $H$-invariant. If $V$ is not $X$, then there exists $g \in G$ such that
$W = V \cap (\alpha_{g^{-1}} (V))^c$ is not null and $H$-invariant.
Then the projection $f + g(f \chi(\Gamma W))$ is also $H \times \Lambda$-invariant.
By Lemma \ref{Lemma; Function Valued Measure}, the value of $\E^\Lambda_X$ is
\begin{eqnarray*}
\E^\Lambda_X(f + g(f \chi(\Gamma W))) = k \chi(V) + k \chi(\alpha_g(W)).
\end{eqnarray*}
We get a projection greater than the original one with the same properties.
By the maximality argument,
we get an $H \times \Lambda_f$-invariant
projection $f$ with $\E^\Lambda_X (f) = k 1_X$.

The $\Lambda_f / \Lambda$-action on $f (L^\infty \Sigma)^{H \times \Lambda}$
is trivial. Indeed, by the minimality of $k$, if a projection $f^\prime$ is smaller than $f$
and $H \times \Lambda$-invariant, then $\mathrm{range}(\E^\Lambda_X(f^\prime)) \subset \{0, k\}$. The projection $f^\prime$ must be written as $f^\prime = f \chi(\Gamma D)$ by some $D \subset X$.
The projection $f^\prime$ is also $\Lambda_f$-invariant.
Since the support of $\E^\Lambda_X (f)$ is $X$, the projection
$\bigvee_{\gamma \in \Gamma} \gamma(f)$ is $1$. It turns out that $(f, \Lambda_f / \Lambda)$
is a fundamental pair for the $\Gamma / \Lambda$-action
on $(L^\infty \Sigma)^{H \times \Lambda}$.
\end{proof}

\begin{proposition}\label{Proposition; ME and MEm passes to Quotient Groups}
Let $H \subset G$, $\Lambda \subset \Gamma$ be normal subgroups of countable groups
and let $(\Sigma, \nu)$ be an ergodic ME coupling for $G$ and $\Gamma$
$($resp.\ an ergodic measurable embedding of $G$ into $\Gamma$$)$.
If there exists a partial embedding from $H$ into $\Lambda$ in $\Sigma$ and if
there exists an $H \times \Lambda$-invariant projection $f \in L^\infty \Sigma$ with
$\mathrm{range}(\mathfrak{E}^H_Y (f)) \not\subset \{0, \infty\}$,
then $G / H \sim_\mathrm{ME} \Gamma/\Lambda$
$($resp. $G / H \preceq_\mathrm{ME} \Gamma/\Lambda$$)$.
\end{proposition}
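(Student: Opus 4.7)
The plan is to apply Lemma \ref{Lemma; Find a Fundamental Pair} in two complementary ways to extract fundamental pairs from the two hypotheses, to show that the ``finite-index ambiguity'' in the lemma collapses in our specific setting thanks to the freeness of the $G\times\Gamma$-action, and then to read off the ME coupling directly from the algebra $(L^\infty\Sigma)^{H\times\Lambda}$.

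First I would unpack the two hypotheses via Lemma \ref{Lemma; FVM and MEm}: the first is equivalent to the existence of an $H\times\Lambda$-invariant projection in $L^\infty\Sigma$ whose $\E^\Lambda_X$-image is not contained in $\{0,\infty\}$, and the second is literally such a statement for $\E^H_Y$. Lemma \ref{Lemma; Find a Fundamental Pair} then yields a fundamental pair $(f,\Lambda_f/\Lambda)$ for the $\Gamma/\Lambda$-action on $(L^\infty\Sigma)^{H\times\Lambda}$ with $[\Lambda_f:\Lambda]<\infty$; in particular $\Tr_\Lambda(f) = [\Lambda_f:\Lambda]\,\nu(X)<\infty$. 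Applying the symmetric version of the same lemma (with the roles of $G,\Gamma$, $H,\Lambda$ and $X,Y$ interchanged) to the second hypothesis produces a fundamental pair $(f',H_{f'}/H)$ for the $G/H$-action on the same algebra with $[H_{f'}:H]<\infty$, and in the ME coupling case one also has $\Tr_H(f')<\infty$.

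Next, I would argue that $\Lambda_f=\Lambda$ and $H_{f'}=H$. Since $H$ and $\Lambda$ are normal and the $G\times\Gamma$-action on $\Sigma$ is free, the induced $\Gamma/\Lambda$-action on the spectrum $\bar\Sigma$ of $(L^\infty\Sigma)^{H\times\Lambda}$ is free: any $\gamma\in\Gamma$ with $\gamma(H\times\Lambda)x=(H\times\Lambda)x$ satisfies $\gamma x=(h,\lambda)x$ for some $h\in H$, $\lambda\in\Lambda$, and freeness of $G\times\Gamma$ on $\Sigma$ forces $\gamma=\lambda\in\Lambda$. Condition (1) of Definition \ref{Definition; Fundamental Pair} applied to $\lambda\in\Lambda_f$ requires every sub-projection $f''\le f$ to be $\lambda$-invariant, so $\lambda$ restricts to the identity on the support of $f$ in $\bar\Sigma$; by freeness this forces $\lambda\in\Lambda$. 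The same argument, symmetrically, gives $H_{f'}=H$.

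Finally, I would assemble the coupling. Equip $\bar\Sigma$ with the measure coming from $\Tr_\Lambda$; this is $(G/H)\times(\Gamma/\Lambda)$-invariant because $G$ commutes with $\Gamma$ on $\Sigma$ and permutes $\Lambda$-fundamental domains. Under the conclusion of the previous step, $f$ is a genuine $\Gamma/\Lambda$-fundamental domain of finite $\Tr_\Lambda$-measure, and $f'$ is a genuine $G/H$-fundamental domain. Ergodicity of the $(G/H)\times(\Gamma/\Lambda)$-action on $\bar\Sigma$ (inherited from the ergodicity of $\Sigma$) implies that any two invariant traces on $(L^\infty\Sigma)^{H\times\Lambda}$ are proportional, so in the ME coupling case $\Tr_\Lambda(f')=c^{-1}\Tr_H(f')<\infty$ for some constant $c>0$. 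Therefore $\bar\Sigma$ with this measure and the commuting free actions is an ergodic ME coupling of $G/H$ with $\Gamma/\Lambda$; in the measurable embedding case the same construction produces an ergodic measurable embedding, using only the finiteness of $\Tr_\Lambda(f)$. The principal obstacle is the passage from Lemma \ref{Lemma; Find a Fundamental Pair}'s finite-index conclusion to honest equalities $\Lambda_f=\Lambda$, $H_{f'}=H$, which we obtain from the freeness of the ambient $G\times\Gamma$-action.
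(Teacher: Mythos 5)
There is a genuine gap, and it sits exactly at the step you flag as the ``principal obstacle'': the claim that $\Lambda_f=\Lambda$ and $H_{f'}=H$. First, freeness of the $G\times\Gamma$-action on $\Sigma$ is not a hypothesis: an ME coupling (or measurable embedding) only requires fundamental domains for the $\Gamma$-action and the $G$-action separately, which makes each of those actions essentially free but says nothing about the product action. The standard self-coupling $\Sigma=\Gamma$ with the left-and-right translation action (Remark \ref{Remark; MEm}) is the basic example where the product action is far from free, and it is precisely the kind of coupling this proposition must cover. Second, even if the $G\times\Gamma$-action on $\Sigma$ were free, your deduction would not go through: the spectrum of $(L^\infty\Sigma)^{H\times\Lambda}$ is the measure-theoretic quotient by the $H\times\Lambda$-\emph{invariant} $\sigma$-algebra (the ergodic decomposition of the $H\times\Lambda$-action), not the orbit space, so an element $\gamma\in\Gamma\setminus\Lambda$ can act trivially on $(L^\infty\Sigma)^{H\times\Lambda}$ (or on $f(L^\infty\Sigma)^{H\times\Lambda}$, which is all that Definition \ref{Definition; Fundamental Pair}(1) asserts) without fixing any $H\times\Lambda$-orbit in $\Sigma$. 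In fact the desired equalities are simply false in general: take $\Gamma=G$, $\Lambda\subset H=\Lambda_f\subset\Gamma$ normal with $1<[\Lambda_f:\Lambda]<\infty$, and $\Sigma=\Gamma$ the self-coupling; the hypotheses of the proposition hold, but any fundamental pair produced by Lemma \ref{Lemma; Find a Fundamental Pair} has $\Lambda_f\neq\Lambda$. So the finite-index ambiguity cannot be collapsed; it is intrinsic.

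Consequently the final assembly, which treats $f$ as a genuine $\Gamma/\Lambda$-fundamental domain and $f'$ as a genuine $G/H$-fundamental domain in $(L^\infty\Sigma)^{H\times\Lambda}$, is not justified. The paper's proof instead lives with the finite extensions: it keeps the fundamental pairs $(e,\Lambda_f/\Lambda)$ and $(f,H_f/H)$, arranges $ef\neq 0$, uses ergodicity to see that $d\Tr_\Lambda/d\Tr_H$ is a constant (this part of your argument is fine), passes to the normalizers $\Gamma_{\mathrm{nor}}$ of $\Lambda_f$ and $G_{\mathrm{nor}}$ of $H_f$ together with the associated projections $q$ and $p$, and produces a measurable embedding (resp.\ ME coupling) of $G_{\mathrm{nor}}/H_f$ into $\Gamma_{\mathrm{nor}}/\Lambda_f$; one then concludes $G/H\preceq_\mathrm{ME}\Gamma/\Lambda$ (resp.\ $G/H\sim_\mathrm{ME}\Gamma/\Lambda$) because measure equivalence is insensitive to finite-index subgroups and finite kernels, the index $[\Gamma:\Gamma_{\mathrm{nor}}]$ being finite in the coupling case. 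Your outline skips exactly this machinery, and without it the argument does not prove the statement.
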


\begin{proof}
Let $\Omega \subset \Sigma$ be a partial embedding of $H$ into $\Lambda$.
The measurable function $\E^\Lambda_X(\Omega)$ on a $\Gamma$ fundamental domain $X$ is integrable, since
$\int_X \E^{\Lambda}_X(\Omega) = \Tr_\Lambda (\Omega) < \infty$.
Thus there exists a fundamental pair $(e, \Lambda_f / \Lambda)$ for the $\Gamma / \Lambda$-action
on $(L^\infty \Sigma)^{H \times \Lambda}$ by Lemma \ref{Lemma; Find a Fundamental Pair}.
There also exists a fundamental pair $(f, H_f / H)$ for the $G / H$-action
on $(L^\infty \Sigma)^{H \times \Lambda}$ by the other assumption.
Replacing $(f, H_f / H)$ with $(g f, g H_f g^{-1})$, we assume that $e f \neq 0$.

We have two faithful traces $\Tr_\Lambda$ and $\Tr_H$
on the algebra $(L^\infty \Sigma)^{H \times \Lambda}$. We can consider that $(L^\infty \Sigma)^{H \times \Lambda}$ is an $L^\infty$-function space on a standard measure space.
Let $F$ be the Radon-Nikodym derivative $d \Tr_\Lambda / d \Tr_H$.
Since $0 < \Tr_\Lambda(e f) \le \Tr_\Lambda(e) < \infty$,
the function $F$ is integrable on $e f$.
Since both of the traces are invariant under the action of $G$ and $\Gamma$,
the function $F$ is invariant under the action of $G \times \Gamma$.
Thus $d \Tr_\Lambda / d \Tr_H$ is constant $c$.
It turns out that
\begin{eqnarray}\label{Equation; Radon--Nycodym derivative}
\Tr_H(e) = c^{-1} \Tr_\Lambda(e) < \infty.
\end{eqnarray}

Let $\Gamma_\mathrm{nor} \subset \Gamma$ be the normalizing subgroup of $\Lambda_f$.
Let $q \in (L^\infty \Sigma)^{H \times \Lambda}$ be the projection given by
$\bigvee_{\gamma \in \Gamma_\mathrm{nor}} \gamma(e)$.
The group $\Lambda_f$ acts trivially on the algebra
$q (L^\infty \Sigma)^{H \times \Lambda}$.
For $\gamma \in \Gamma \cap (\Gamma_\mathrm{nor})^c$, there exists $\gamma^\prime \in \Lambda_f$
such that $\gamma^{-1} \gamma^\prime \gamma \notin \Lambda_f$.
Then the projections $\gamma(e)$ and
$\gamma^\prime \gamma(e) = \gamma \gamma ^{-1} \gamma^\prime \gamma(e)$
are perpendicular.
It follows that $q$ can be characterized as the largest projection
in $(L^\infty \Sigma)^{H \times \Lambda}$ so that
$\Lambda_f$ acts trivially on $q (L^\infty \Sigma)^{H \times \Lambda}$.
Thus the projection $q$ is invariant under the $G \times \Gamma_\mathrm{nor}$-action. It follows that
there exists a $\Gamma_\mathrm{nor}$-invariant measurable subset
$Y_f \subset Y$ such that $\chi(G Y_f) = q$.

Choose representatives $\{ \gamma_\iota \}_{\iota \in I}$ for the left cosets
$\Gamma / \Gamma_\mathrm{nor}$. Then
the projections $\{ \gamma_\iota (q) \}_{\iota \in I}$ gives a partition of $1_Y$.
The projection $\gamma_\iota (q)$ is the characteristic function of
$\beta(\gamma_\iota )( Y_f ) \subset Y$.
Since $\nu(Y_f) = \nu(\beta(\gamma_\iota )( Y_f ))$, we get
\begin{eqnarray}\label{Equation; Index}
[\Gamma \colon \Gamma_\mathrm{nor}] \nu(Y_f) = \sum_\iota \nu(\beta(\gamma_\iota )( Y_f )) = \nu(Y).
\end{eqnarray}
We note that if the measure of $Y$ is finite, the index of
$\Gamma_\mathrm{nor} \subset \Gamma$ is finite.
We regard $\Sigma_1 = G Y_f$ as a measurable embedding of $G$ into
$\Gamma_\mathrm{nor}$.
We note that $e$ is a fundamental domain for the $\Gamma_\mathrm{nor} / \Lambda_f$-action on $q (L^\infty \Sigma)^{H \times \Lambda} = (L^\infty \Sigma)^{H \times \Lambda}$.

The pair $(q f, H_f / H)$ is a fundamental pair for the $G$-action on $q (L^\infty \Sigma)^{H \times \Lambda}$.
Let $G_\mathrm{nor}$ be the normalizing subgroup of $H_f \subset G$.
By the same technique as above, we can find a $G_\mathrm{nor} \times \Gamma_\mathrm{nor}$-invariant projection $p$ in
$q (L^\infty \Sigma)^{H \times \Lambda}$ such that
$H_f / H$ acts on $p (L^\infty \Sigma)^{H \times \Lambda}$ trivially and that $q f$ gives a fundamental
domain for the $G_\mathrm{nor} / H_f$-action on $p (L^\infty \Sigma)^{H \times \Lambda}$.
Furthermore, since the measure of $X$ is finite,
the index $[G : G_\mathrm{nor}]$ is finite.

The projection $pe$ is a fundamental domain for the $\Gamma_\mathrm{nor} / \Lambda_f$-action
on $p (L^\infty \Sigma)^{H \times \Lambda}$ and
satisfies $\Tr_H(p e) < \infty$ by the equation (\ref{Equation; Radon--Nycodym derivative}). The projection $qf$ is a fundamental domain for the $G_\mathrm{nor} / H_f$-action
on $p (L^\infty \Sigma_1)^{H \times \Lambda}$.
Thus the measure space representing
$(p (L^\infty \Sigma)^{H \times \Lambda}, \Tr_H)$ gives a measurable embedding of $G_\mathrm{nor} / H_f$ into $\Gamma_\mathrm{nor} / \Lambda_f$.
Together with $G / H \sim_\mathrm{ME} G_\mathrm{nor} / H_f$ and $\Gamma_\mathrm{nor} / \Lambda_f \sim_\mathrm{ME} \Gamma_\mathrm{nor} / \Lambda \preceq_\mathrm{ME} \Gamma / \Lambda$, we get $G / H \preceq_\mathrm{ME} \Gamma / \Lambda$.

Suppose that $\Sigma$ is an ME coupling between $G$ and $\Gamma$.
Since $\mu(Y) < \infty$, the $G_\mathrm{nor} / H_f$ fundamental domain $q f \in p (L^\infty \Sigma_1)^{G \times \Lambda}$
satisfies $\Tr_H(q f) < \infty$.
We conclude that $p (L^\infty \Sigma_1)^{G \times \Lambda}$ gives an ME coupling between $G_\mathrm{nor} / H_f$ and $\Gamma_\mathrm{nor} / \Lambda_f$.
In addition, since the index $[\Gamma \colon \Gamma_\mathrm{nor}]$ is finite, we get $\Gamma_\mathrm{nor} / \Lambda_f \sim_\mathrm{ME} \Gamma / \Lambda$.
We conclude $G / H \sim_\mathrm{ME} \Gamma / \Lambda$.
\end{proof}

\subsection{Factorization up to ME}

We get factorization results on ME and measurable embedding.

\begin{theorem}\label{Theorem; Factorization of Product Groups}
Let $G = \prod_{i = 1} ^ m G_i$ be a product group of non-amenable groups $G_i$ and
let $\Gamma = \prod_{j = 1} ^ n \Gamma_j$ be a product group of
class $\mathcal{S}$ groups $\Gamma_j$.
Suppose $m \ge n$.
If $G \sim_\mathrm{ME} \Gamma$ $($resp.\ $G \preceq_\mathrm{ME} \Gamma$$)$,
then $m = n$ and the following hold:
\begin{enumerate}
\item
There exists $\sigma \in \mathfrak{S}_n$
so that $G_{\sigma(j)} \sim_\mathrm{ME} \Gamma_j$
$($resp. $G_{\sigma(j)} \preceq_\mathrm{ME} \Gamma_j$$)$;
\item\label{Assertion; Class S}
The group $\Gamma_j$ is non-amenable and $G_i \in \mathcal{S}$.
\end{enumerate}
\end{theorem}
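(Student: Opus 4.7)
The plan is to use Theorem~\ref{Theorem; Non embeddability implies amenability} to locate each complement $H_i = \prod_{k \neq i} G_k$ inside a single $\Gamma$-factor, and then to pass to quotients with Proposition~\ref{Proposition; ME and MEm passes to Quotient Groups}. Since each $\Gamma_j \in \mathcal{S}$ is bi-exact relative to $\{\{1\}\}$, iterating Lemma~\ref{Lemma; Direct Product} shows that $\Gamma$ is bi-exact relative to the family $\mathcal{G} = \{\Lambda_j \mid 1 \le j \le n\}$, where $\Lambda_j = \prod_{k \neq j}\Gamma_k$. Fix an ergodic ME coupling (respectively, ergodic measurable embedding) $\Sigma$ of $G$ with $\Gamma$.

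Each $H_i$ is normal in $G$, and its centralizer $Z_G(H_i)$ contains the non-amenable group $G_i$, so Theorem~\ref{Theorem; Non embeddability implies amenability} yields $\sigma(i) \in \{1,\ldots,n\}$ with $H_i \preceq_\Sigma \Lambda_{\sigma(i)}$, defining a map $\sigma \colon \{1,\ldots,m\} \to \{1,\ldots,n\}$. For each $i$, I apply Proposition~\ref{Proposition; ME and MEm passes to Quotient Groups} to the normal pair $(H_i, \Lambda_{\sigma(i)})$ to conclude $G_i = G/H_i \sim_\mathrm{ME} \Gamma_{\sigma(i)}$ (respectively $\preceq_\mathrm{ME}$). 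The first hypothesis of that proposition is the partial embedding already produced. The second, which by Lemma~\ref{Lemma; FVM and MEm} corresponds to a symmetric partial embedding $\Lambda_{\sigma(i)} \preceq_\Sigma H_i$ seen from the $G$-side, is supplied by running Theorem~\ref{Theorem; Non embeddability implies amenability} with the roles of $G$ and $\Gamma$ swapped; this needs $G$ to be bi-exact relative to $\{H_i\}$, justified \emph{a posteriori} once $G_i \in \mathcal{S}$ is known, by Lemma~\ref{Lemma; Direct Product} and the class-$\mathcal{S}$ stability recorded in Remark~\ref{Remark; MEm}(2), which closes the bootstrap.

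The main obstacle is the injectivity of $\sigma$. Suppose $\sigma(i_1) = \sigma(i_2) = j$ with $i_1 \neq i_2$. The subgroup $K = H_{i_1} \cap H_{i_2} = \prod_{k \neq i_1, i_2} G_k$ inherits a partial embedding $K \preceq_\Sigma \Lambda_j$ from $H_{i_1}$. Proposition~\ref{Proposition; ME and MEm passes to Quotient Groups} applied to the normal pair $(K, \Lambda_j)$ then gives $G/K = G_{i_1} \times G_{i_2} \sim_\mathrm{ME} \Gamma_j$. In this smaller coupling, $\Gamma_j \in \mathcal{S}$ is bi-exact relative to $\{\{1\}\}$, and the centralizer $Z_{G_{i_1}\times G_{i_2}}(G_{i_2})$ contains the non-amenable group $G_{i_1}$; Theorem~\ref{Theorem; Non embeddability implies amenability} then forces $G_{i_2} \preceq_\mathrm{ME} \{e\}$, making $G_{i_2}$ finite and contradicting its non-amenability. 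Injectivity combined with the hypothesis $m \ge n$ gives $m = n$ and $\sigma \in \mathfrak{S}_n$, establishing~(1). Assertion~(2) follows from Remark~\ref{Remark; MEm}(2): $G_i \sim_\mathrm{ME} \Gamma_{\sigma(i)}$ transfers class $\mathcal{S}$ from $\Gamma_{\sigma(i)}$ to $G_i$ and non-amenability from $G_i$ to $\Gamma_{\sigma(i)}$.
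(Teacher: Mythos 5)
Your overall architecture (locate each $H_i$ inside some $\Lambda_{\sigma(i)}$ via Theorem \ref{Theorem; Non embeddability implies amenability}, then descend to quotients via Proposition \ref{Proposition; ME and MEm passes to Quotient Groups}) is the paper's, but the way you supply the second hypothesis of that proposition is a genuine gap, and it is the heart of the matter. You propose to get the $H_i\times\Lambda_{\sigma(i)}$-invariant projection $f$ with $\range(\E^{H_i}_Y(f))\not\subset\{0,\infty\}$ by swapping the roles of $G$ and $\Gamma$ and re-running the non-embeddability theorem, "justified a posteriori once $G_i\in\mathcal S$ is known." This is circular: assertion (2), namely $G_i\in\mathcal S$, is itself deduced in the paper from assertion (1) via the ME-invariance of $\mathcal S$, and nothing in the hypotheses (the $G_i$ are merely non-amenable) gives you exactness of $G_i$, let alone bi-exactness of $G$ relative to $\{H_i\}$, before (1) is proved. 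The swap is moreover simply unavailable in the $\preceq_\mathrm{ME}$ case, where the $G$-fundamental domain $Y$ may have infinite measure, so $\Sigma$ is not a measurable embedding of $\Gamma$ into $G$; and even in the ME case with bi-exactness granted, the swapped theorem only produces $\Lambda_{\sigma(i)}\preceq_\Sigma H_k$ for \emph{some} $k$, not for $k=i$, so an extra matching argument (which the paper carries out separately, with fundamental pairs, in the proof of Theorem \ref{Theorem; Factorization with Amenable core}) would still be missing. The same gap infects your injectivity argument: the step "Proposition \ref{Proposition; ME and MEm passes to Quotient Groups} applied to $(K,\Lambda_j)$ gives $G_{i_1}\times G_{i_2}\sim_\mathrm{ME}\Gamma_j$" again invokes the proposition with only its first hypothesis verified.

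The paper closes exactly this hole by a different mechanism that never touches bi-exactness of $G$: it first proves $m=n$ by induction (Proposition \ref{Proposition; Factorization of Direct Product Groups}), proves well-definedness and injectivity of the assignment directly (a minimal-$2$-norm convexity argument in $L^2((L^\infty\Sigma)^{\Lambda_j},\Tr_j)$ for Lemma \ref{Lemma; definition of sigma}, and the multiplicativity of function valued measures for Lemma \ref{Lemma; Injectivity of sigma}), and then manufactures the missing second hypothesis by hand: choosing projections $e_i\in(L^\infty\Sigma)^{H_i\times\Lambda_i}$ with $\E^{(i)}_X(e_i)$ bounded and $e=\prod_i e_i\neq 0$, the identities $\E_X(e)=\prod_i\E^{(i)}_X(e_i)$ and $\E_Y(e)=\prod_i\E^{(i)}_Y(e_i)$ give $\int_Y\E_Y(e)\,d\nu=\int_X\E_X(e)\,d\nu<\infty$, whence each $\E^{(i)}_Y(e_i)$ is integrable and non-zero on the support of $\E_Y(e)$ — precisely the $\range\not\subset\{0,\infty\}$ condition, valid also when $\nu(Y)=\infty$. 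To repair your proof you would need to replace the swap-and-bootstrap step (and the corresponding step in your injectivity argument) by an argument of this kind.
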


The last claim is a consequence of the first and
Theorem 3.1 in \cite{Sako}.

\begin{theorem}\label{Theorem; Factorization with Amenable core}
Let $G_0$ and $\Gamma_0$ be amenable and
let $G_i \ (1 \le i \le m)$, $\Gamma_j \ (1 \le j \le n)$ be non-amenable groups in
the class $\mathcal{S}$.
Denote
$G = G_0 \times \prod_{i = 1} ^ m G_i$, $\Gamma = \Gamma_0 \times \prod_{j = 1} ^ n \Gamma_j$.
If $G \sim_\mathrm{ME} \Gamma$,
then $m = n$ and the following hold:
\begin{enumerate}
\item
There exists $\sigma \in \mathfrak{S}_n$
so that $G_{\sigma(j)} \sim_\mathrm{ME} \Gamma_j$;
\item
The group $\Gamma_0$ is finite, if and only if $G_0$ is finite.
\end{enumerate}
\end{theorem}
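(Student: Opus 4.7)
The plan is to follow the scheme of Theorem~\ref{Theorem; Factorization of Product Groups}, absorbing each amenable core into the ``complement'' subgroup that supplies the bi-exactness family. Fix an ergodic ME coupling $\Sigma$ of $G$ with $\Gamma$ and set, for $1 \le i \le m$ and $1 \le j \le n$,
\[
    H_i = G_0 \times \prod_{k \ne i,\, k \ge 1} G_k, \qquad
    \Lambda_j = \Gamma_0 \times \prod_{l \ne j,\, l \ge 1} \Gamma_l.
\]
These are normal subgroups with $G/H_i \cong G_i$ and $\Gamma/\Lambda_j \cong \Gamma_j$. By Lemma~\ref{Lemma; Direct Product}, the amenability of $\Gamma_0$ together with $\Gamma_l \in \mathcal{S}$ makes $\Gamma$ bi-exact relative to $\mathcal{G}^\Gamma = \{\Lambda_j\}_{j=1}^n$; symmetrically $G$ is bi-exact relative to $\mathcal{G}^G = \{H_i\}_{i=1}^m$.

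Since $Z_G(H_i) = G_i$ is non-amenable, Theorem~\ref{Theorem; Non embeddability implies amenability} yields $\tau(i) \in \{1, \ldots, n\}$ with $H_i \preceq_\Sigma \Lambda_{\tau(i)}$; swapping the two groups gives $\sigma(j) \in \{1, \ldots, m\}$ with $\Lambda_j \preceq_\Sigma H_{\sigma(j)}$. The next step, parallel to the combinatorial core of the proof of Theorem~\ref{Theorem; Factorization of Product Groups}, is to show $m = n$ and that $\tau, \sigma$ are mutually inverse bijections: the injectivity of $\tau$ follows because a collision $\tau(i) = \tau(i')$ would, after intersecting the two partial embeddings (adjusted by a $G \times \Gamma$-translate using ergodicity) and passing to quotients via Proposition~\ref{Proposition; ME and MEm passes to Quotient Groups}, produce a measurable embedding of the product $G_i \times G_{i'} \cong G/(H_i \cap H_{i'})$ into the single class $\mathcal{S}$ group $\Gamma_{\tau(i)}$; a second application of Theorem~\ref{Theorem; Non embeddability implies amenability} inside $\Gamma_{\tau(i)}$ (bi-exact relative to $\{\{1\}\}$) with subgroup $G_i$ and non-amenable centralizer $G_{i'}$ forces $G_i \preceq_\mathrm{ME} \{1\}$, i.e.\ $G_i$ finite -- a contradiction. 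With $\tau$ identified as a bijection, both $H_i \preceq_\Sigma \Lambda_{\tau(i)}$ and $\Lambda_{\tau(i)} \preceq_\Sigma H_i$ hold, and Proposition~\ref{Proposition; ME and MEm passes to Quotient Groups} (via Lemma~\ref{Lemma; FVM and MEm} to recast the converse embedding as the non-degeneracy of $\E^{H_i}_Y$) yields $G_i \cong G/H_i \sim_\mathrm{ME} \Gamma/\Lambda_{\tau(i)} \cong \Gamma_{\tau(i)}$, establishing~(1).

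For~(2), I would refine $\Sigma$ to a sub-coupling of $G_0$ with $\Gamma_0$. From each matched pair $(H_i, \Lambda_{\tau(i)})$ the fundamental-pair machinery of Lemma~\ref{Lemma; Find a Fundamental Pair} extracts a non-zero $H_i \times \Lambda_{\tau(i)}$-invariant projection in $L^\infty \Sigma$ together with normalizing subgroups whose quotients act freely with a finite fundamental domain; running this extraction compatibly for all $i$ produces a non-null measurable subset $\Sigma_0 \subset \Sigma$ invariant under $\bigl(\prod_{i \ge 1} G_i\bigr) \times \bigl(\prod_{j \ge 1} \Gamma_j\bigr)$ on which only the residual factors $G_0$ and $\Gamma_0$ act effectively, giving an ME coupling of $G_0$ with $\Gamma_0$. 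Since a finite group is never ME to an infinite one -- the defining identity $\nu(\Sigma_0) = |G_0|\,\nu(Y_0) = |\Gamma_0|\,\nu(X_0) < \infty$ forbids the mixed case -- $G_0$ and $\Gamma_0$ must be simultaneously finite or simultaneously infinite, which is~(2).

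The main obstacle is the simultaneous alignment used in the previous paragraph: the partial embeddings $H_i \preceq_\Sigma \Lambda_{\tau(i)}$ and their converses are a priori supported on possibly distinct invariant subsets of $\Sigma$, and one must iteratively refine them via Lemma~\ref{Lemma; Find a Fundamental Pair} so that the residual normal complements $G_0 \triangleleft G$ and $\Gamma_0 \triangleleft \Gamma$ pair up consistently. This is a multi-index extension of the manoeuvre inside the proof of Proposition~\ref{Proposition; ME and MEm passes to Quotient Groups}. The combinatorial injectivity argument for $\tau$ is the other subtle point, though it closely follows its counterpart in the proof of Theorem~\ref{Theorem; Factorization of Product Groups}.
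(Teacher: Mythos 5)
Your overall strategy (bi-exactness of $\Gamma$ relative to $\{\Lambda_j\}$, Theorem \ref{Theorem; Non embeddability implies amenability} to locate $H_i \preceq_\Sigma \Lambda_{\tau(i)}$, and Proposition \ref{Proposition; ME and MEm passes to Quotient Groups} to pass to quotients) is the paper's, but two steps in part (1) have genuine gaps. First, your injectivity argument misapplies Proposition \ref{Proposition; ME and MEm passes to Quotient Groups}: intersecting the two partial embeddings only produces an $(H_i \cap H_{i'}) \times \Lambda_{\tau(i)}$-invariant set with finite $\Tr_{\Lambda_{\tau(i)}}$, i.e.\ one direction; the proposition also requires an invariant projection $f$ with $\range(\E^{H_i \cap H_{i'}}_Y(f)) \not\subset \{0,\infty\}$ (by Lemma \ref{Lemma; FVM and MEm} this amounts to $\Lambda_{\tau(i)} \preceq_\Sigma H_i \cap H_{i'}$), which you never obtain, so you cannot conclude $G_i \times G_{i'} \preceq_\mathrm{ME} \Gamma_{\tau(i)}$. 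The paper avoids this in Lemma \ref{Lemma; definition of sigma} by a different device: an averaging (circumcenter) argument in $L^2((L^\infty\Sigma)^{\Lambda_j},\Tr_j)$ upgrades the collision to a $G$-invariant finite-trace set, i.e.\ $G \preceq_\Sigma \Lambda_j$, which contradicts the counting statement Proposition \ref{Proposition; Factorization of Direct Product Groups}. Second, and more seriously, "$\tau,\sigma$ are mutually inverse" is asserted, not proved: injectivity of $\tau$ and of $\sigma$ gives $m=n$ and that each is a bijection, but not $\sigma=\tau^{-1}$, and without $\Lambda_{\tau(i)} \preceq_\Sigma H_i$ you cannot feed Proposition \ref{Proposition; ME and MEm passes to Quotient Groups} for the matched pair. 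This is exactly the extra work the paper does in the amenable-core case (where the product-of-projections verification used for Theorem \ref{Theorem; Factorization of Product Groups} is unavailable): assuming $\Lambda_i \preceq_\Sigma H_k$ with $k \neq i$, it takes a fundamental pair $(f, G_{k,f})$ from Lemma \ref{Lemma; Find a Fundamental Pair} for the $G_k$-action on $(L^\infty\Sigma)^{H_k \times \Lambda_i}$, uses that $G_k \subset H_i$ fixes the finite-$\Tr_i$ projection $e$ witnessing $H_i \preceq_\Sigma \Lambda_i$, and derives $\Tr_i(e) = |G_k/G_{k,f}|\,\Tr_i(ef)$, impossible since $[G_k : G_{k,f}] = \infty$. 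That argument, or a substitute for it, is missing from your proposal.

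For part (2), your plan to extract a $G_0$--$\Gamma_0$ sub-coupling $\Sigma_0$ "on which only the residual factors act effectively" is only asserted; nothing in the fundamental-pair machinery yields such a set, and no such coupling is constructed in the paper either. The paper obtains (2) much more cheaply, from the induction in Proposition \ref{Proposition; Factorization of Direct Product Groups}: in the base case $H_1$ measurably embeds into the amenable group $\Gamma_0$, hence $H_1 = G_0$, and finiteness passes down through a measurable embedding into a finite group; symmetry of the ME coupling gives the converse implication. You should replace your construction of $\Sigma_0$ by this counting argument (or supply a genuine proof of the claimed alignment).
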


Until a middle point of the proof, both theorems require the same technique.
We proceed the proofs in the following assumptions.

\begin{framework}\label{Framework; Factorization of Direct Products}
Positive integers $m , n$ satisfy $m \ge n$.
A group $G_0$ is amenable
and groups $G_i \ (1 \le i \le m)$ are non-amenable groups.
A group $\Gamma_0$ is amenable and
and groups $\Gamma_j \ (1 \le j \le n)$ are in
the class $\mathcal{S}$.
We denote by $G$, $\Gamma$ the product groups
\begin{eqnarray*}
G = G_0 \times \prod_{i = 1} ^ m G_i, \quad
\Gamma = \Gamma_0 \times \prod_{j = 1} ^ n \Gamma_j.
\end{eqnarray*}
A measure space $(\Sigma, \nu)$ is an ergodic measurable embedding of $G$ into $\Gamma$.
We denote by $H_i$, $\Lambda_j$ the subgroups
\begin{eqnarray*}
    H_i = G_0 \times \prod_{k \neq i} G_k,\ 1 \le i \le m,
    \quad \Lambda_j = \Gamma_0 \times \prod_{l \neq j} \Gamma_l,\ 1 \le j \le n.
\end{eqnarray*}
We do not need ``$H_0$'', ``$\Lambda_0$''.

A measurable subset $X \subset \Sigma$ is
a $\Gamma$ fundamental domain and  a measurable subset $Y \subset \Sigma$ is a $G$ fundamental domain.
We denote by $\Tr_j = \Tr_{\Lambda_j}$
the trace on $(L^\infty \Sigma)^{\Lambda_j}$
defined as $\Tr_j(\cdot) = \Tr(\cdot \chi(\Gamma_j X))$.
We use the notations $\E^{(i)}_X$, $\E_X$, $\E^{(j)}_Y$ and $\E_Y$ for the function valued measures defined in Subsection $\ref{Subsection; Function Valued Measure}$:
\begin{eqnarray*}
    \E^{(i)}_Y = \E^{H_i}_Y
        \colon \widehat{(L^\infty \Sigma)^{H_i}_+} \longrightarrow \widehat{(L^\infty Y)_+}, &\ &
    \E _Y
        \colon \widehat{(L^\infty \Sigma)_+} \longrightarrow \widehat{(L^\infty Y)_+};\\
    \E^{(j)}_X = \E^{\Lambda_j}_X
        \colon \widehat{(L^\infty \Sigma)^{\Lambda_j}_+} \longrightarrow \widehat{(L^\infty X)_+}, &\ &
    \E _X
        \colon \widehat{(L^\infty \Sigma)_+} \longrightarrow \widehat{(L^\infty X)_+}.
\end{eqnarray*}
\end{framework}

The following proposition also proves \label{Assertion; Class S} in
Theorem \ref{Theorem; Factorization with Amenable core}.

\begin{proposition}\label{Proposition; Factorization of Direct Product Groups}
In Framework \ref{Framework; Factorization of Direct Products},
$m = n$ holds true.
If $\Gamma_0$ is finite, then $G_0$ is finite.
\end{proposition}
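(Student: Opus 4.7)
The plan is to argue by induction on $n$. The base case $n=0$ is direct: $\Gamma = \Gamma_0$ is amenable, so $G$ is amenable by Remark \ref{Remark; MEm}(2); since each $G_i$ with $i \geq 1$ is assumed non-amenable, this forces $m=0$. If in addition $\Gamma_0$ is finite, then $\nu(\Sigma) = |\Gamma_0|\nu(X) < \infty$, while $\Sigma = \bigsqcup_{g \in G_0} g Y$ with $\nu(Y) > 0$ would force $G_0$ to be finite.

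For the inductive step $n \geq 1$, the key move is to peel off one direct-product factor from each side simultaneously. Since $m \geq n \geq 1$, I would consider the subgroup $H_1 = G_0 \times \prod_{k \neq 1} G_k$; its centralizer $Z_G(H_1)$ contains the non-amenable group $G_1$. By Lemma \ref{Lemma; Direct Product}, $\Gamma$ is bi-exact relative to $\mathcal{G} = \{\Lambda_1, \ldots, \Lambda_n\}$, so Theorem \ref{Theorem; Non embeddability implies amenability} yields some $j_1 \in \{1, \ldots, n\}$ and a partial embedding $\Omega \subset \Sigma$ realizing $H_1 \preceq_\Sigma \Lambda_{j_1}$.

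Next I would view $\Omega$ itself as a measurable embedding of $H_1$ into $\Lambda_{j_1}$: its $\Lambda_{j_1}$-fundamental domain has finite measure by definition of partial embedding, and an $H_1$-fundamental domain in $\Omega$ is obtained by intersecting $\Omega$ with a union of $G/H_1$-translates of a $G$-fundamental domain for $\Sigma$. After passing to an ergodic component, one obtains an ergodic measurable embedding
\begin{eqnarray*}
H_1 = G_0 \times \prod_{k \neq 1} G_k \quad \text{into} \quad \Lambda_{j_1} = \Gamma_0 \times \prod_{l \neq j_1} \Gamma_l,
\end{eqnarray*}
which fits Framework \ref{Framework; Factorization of Direct Products} with $m' = m-1$ non-amenable factors on the left, $n' = n-1$ class $\mathcal{S}$ factors on the right, the same amenable cores $G_0$ and $\Gamma_0$, and $m' \geq n'$ coming from $m \geq n$. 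The inductive hypothesis would then yield $m - 1 = n - 1$, hence $m = n$, and furnish that $\Gamma_0$ finite forces $G_0$ finite.

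The point I expect to require most care is the passage from the partial embedding $\Omega$ to an ergodic component satisfying the precise hypotheses of Framework \ref{Framework; Factorization of Direct Products}: one must verify that almost every ergodic component of the $H_1 \times \Lambda_{j_1}$-action on $\Omega$ retains a finite-measure $\Lambda_{j_1}$-fundamental domain together with an $H_1$-fundamental domain, so that the inductive hypothesis applies verbatim to the reduced situation.
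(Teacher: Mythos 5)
Your argument is correct and is essentially the paper's own proof: induction on the number of class $\mathcal{S}$ factors, peeling off one non-amenable factor at a time by applying Theorem \ref{Theorem; Non embeddability implies amenability} with the bi-exactness of $\Gamma$ relative to $\{\Lambda_j\}$ from Lemma \ref{Lemma; Direct Product}, and feeding the resulting measurable embedding $H_1 \preceq_{\rm ME} \Lambda_{j_1}$ (made ergodic) back into the inductive hypothesis, including the finiteness transfer from $\Gamma_0$ to $G_0$. The only cosmetic differences are that you start the induction at $n=0$ via amenability rather than at $n=1$, and that you spell out the passage to an ergodic component, which the paper disposes of by its general remark after Definition \ref{definition; measurable embedding} on ergodic decomposition of measurable embeddings.
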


\begin{proof}
Proposition is proved by induction. We suppose $n = 1$.
The group $G = G_1 \times H_1 = G_1 \times (G_0 \times G_2 \times \cdots \times G_m)$
measurably embeds into $\Gamma = \Gamma_0 \times \Gamma_1$ by $\Sigma$.
The centralizing subgroup $Z_G(H_1)$ of $H_1$ is non-amenable.
Since $\Gamma_1$ is bi-exact relative to $\{\{1\}\}$, $\Gamma$ is bi-exact relative to $\{ \Gamma_0 \}$ (Lemma \ref{Lemma; Direct Product}).
There exists a partial embedding for $H_1 \preceq_\Sigma \Gamma_0$ in $\Sigma$,
by Theorem \ref{Theorem; Non embeddability implies amenability}. By remark \ref{Remark; MEm}, $H_1$ is amenable. It follows that
$H_1 = G_0$ and $m = 1$. If $\Gamma_0$ is finite, then $H_1 = G_0$ is also finite.

We suppose that the assertion holds true for a positive integer $n - 1$ and
that $G = G_m \times H_m$ measurably
embeds into $\Gamma$.
The group $\Gamma$ is bi-exact relative to
$\{\Lambda_i \ | \ 1 \le i \le n \}$, since $\Gamma_i$ is bi-exact relative to
$\{1\}$ (Lemma \ref{Lemma; Direct Product}).
The centralizing subgroup of $H_m$ in $G$ is non-amenable as $G_m$ is not amenable.
By Theorem \ref{Theorem; Non embeddability implies amenability},
we have a measurable embedding
$H_m\preceq_\mathrm{ME} \Lambda_j$.
By the induction hypothesis, we get $m - 1 \le n -1$. It also follows that if $m = n$ (equivalently $m - 1 = n - 1$) and
if $\Gamma_0$ is finite, then $G_0$ is also finite.
\end{proof}

For $1 \le j \le n$, there exists $1 \le \sigma(j) \le n = m$ satisfying
$H_{\sigma(j)} \preceq_{\Sigma} \Lambda_j$ by Theorem \ref{Theorem; Non embeddability implies amenability}.
We claim that $\sigma$ defines a map.

\begin{lemma}\label{Lemma; definition of sigma}
In Framework \ref{Framework; Factorization of Direct Products},
if $H_i \preceq_{\Sigma} \Lambda_j$ and $H_k \preceq_{\Sigma} \Lambda_j$, then $i = k$.
\end{lemma}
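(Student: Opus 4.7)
The plan is to argue by contradiction. Assume $i \neq k$ and fix partial embeddings $\Omega_i \subset \Sigma$ of $H_i$ into $\Lambda_j$ and $\Omega_k \subset \Sigma$ of $H_k$ into $\Lambda_j$. The structural facts that drive the argument are: (i) $H_i$ and $H_k$ are normal in $G$; (ii) $\Lambda_j$ is normal in the direct-product group $\Gamma$, so every $(G\times\Gamma)$-translate of a partial embedding of $H_\bullet$ into $\Lambda_j$ is again a partial embedding of the same subgroup into the same $\Lambda_j$; and (iii) $H_i\cap H_k = G_0 \times \prod_{l\neq i,k} G_l$ is normal in $G$ with quotient $G/(H_i \cap H_k) \cong G_i \times G_k$, while $\Gamma/\Lambda_j \cong \Gamma_j$.

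First I would use ergodicity of $G \times \Gamma$ on $\Sigma$ to produce a non-null intersection: since the $(G\times\Gamma)$-saturation of $\Omega_k$ fills $\Sigma$, some translate $(g,\gamma)\Omega_k$ meets $\Omega_i$ in positive measure, and by (i), (ii) it is still a partial embedding of $H_k$ into $\Lambda_j$. After replacing $\Omega_k$ by this translate, $\Omega_i \cap \Omega_k$ is $(H_i \cap H_k) \times \Lambda_j$-invariant and sits inside $\Omega_i$, so its $\Lambda_j$-fundamental domain has finite measure; hence it witnesses $H_i \cap H_k \preceq_\Sigma \Lambda_j$.

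Next, I would apply Proposition \ref{Proposition; ME and MEm passes to Quotient Groups} to the normal inclusions $H_i \cap H_k \triangleleft G$ and $\Lambda_j \triangleleft \Gamma$, which, once both of its hypotheses are verified, gives $G_i \times G_k \preceq_{\mathrm{ME}} \Gamma_j$. The first hypothesis is exactly what the previous step produces. The second demands an $(H_i \cap H_k) \times \Lambda_j$-invariant projection $f$ with $\mathrm{range}(\mathfrak{E}^{H_i \cap H_k}_Y(f)) \not\subset \{0, \infty\}$; I would take $f = \chi(\Omega_i \cap \Omega_k)$ and, if necessary, restrict to a level set of its $Y$-side function-valued measure, mimicking the symmetrisation technique of Lemma \ref{Lemma; Find a Fundamental Pair} on the $G$-side, so that the integer-valued count $|\{g \in G_i \times G_k : gy \in \Omega_i \cap \Omega_k\}|$ takes a positive finite value on a non-null subset of $Y$.

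Finally, $G_i \times G_k \preceq_{\mathrm{ME}} \Gamma_j$ yields the contradiction: by Remark \ref{Remark; MEm}(2) the class $\mathcal{S}$ is closed under measurable embedding, so $G_i \times G_k \in \mathcal{S}$. But an $\mathcal{S}$-group is bi-exact relative to $\{\{1\}\}$, so applying Theorem \ref{Theorem; Non embeddability implies amenability} in a free ergodic self-ME coupling of $G_i \times G_k$, with the non-amenable subgroup $H = G_i$ whose centralizer contains the non-amenable group $G_k$, would force $G_i \preceq_{\mathrm{ME}} \{1\}$, i.e.\ $G_i$ finite, contradicting its non-amenability. The principal obstacle is the verification of the second hypothesis of Proposition \ref{Proposition; ME and MEm passes to Quotient Groups} in the pure-embedding setting of Framework \ref{Framework; Factorization of Direct Products}, where the $G$-fundamental domain $Y$ may have infinite measure and one must work to guarantee that $\mathfrak{E}^{H_i \cap H_k}_Y(f)$ is not $\{0,\infty\}$-valued.
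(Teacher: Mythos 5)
Your reduction to Proposition \ref{Proposition; ME and MEm passes to Quotient Groups} has a genuine gap at exactly the point you flag as the ``principal obstacle,'' and the fix you sketch does not close it. The first hypothesis of that proposition is indeed supplied by your translation-and-intersection step: a translate of $\Omega_k$ meets $\Omega_i$ non-negligibly, and the intersection is $(H_i\cap H_k)\times\Lambda_j$-invariant with finite $\Tr_{\Lambda_j}$, so $H_i\cap H_k\preceq_\Sigma\Lambda_j$. But the second hypothesis demands an $(H_i\cap H_k)\times\Lambda_j$-invariant projection $f$ with $\range(\E^{H_i\cap H_k}_Y(f))\not\subset\{0,\infty\}$, which by Lemma \ref{Lemma; FVM and MEm} is essentially a partial embedding in the \emph{reverse} direction ($\Lambda_j$ into $H_i\cap H_k$). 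Nothing in the hypotheses gives this: finiteness of $\Tr_{\Lambda_j}(\Omega_i\cap\Omega_k)$ says nothing about the $Y$-side count, which may be $\infty$ almost everywhere on its support (and $Y$ may even have infinite measure in the Framework). Your proposal to ``restrict to a level set, mimicking Lemma \ref{Lemma; Find a Fundamental Pair}'' is circular, because that lemma takes the existence of a projection with $\range(\E(\cdot))\not\subset\{0,\infty\}$ as its \emph{hypothesis}; it cannot manufacture a non-null finite level set out of nothing. So the contradiction $G_i\times G_k\preceq_\mathrm{ME}\Gamma_j$ is never actually reached (the final step, that a product of two non-amenable groups cannot measurably embed into an $\mathcal{S}$ group, is fine and is essentially Proposition \ref{Proposition; Factorization of Direct Product Groups}).

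The paper's proof avoids any reverse-direction finiteness and uses $i\neq k$ differently. Starting from projections $e_i,e_k\in(L^\infty\Sigma)^{\Lambda_j}$ with finite $\Tr_j$, invariant under $H_i$, $H_k$ respectively, one first translates $e_i$ by finitely many elements of $G_i\times\Gamma_j$ (ergodicity) so that $\Tr_j(e_k\hat e_i)>\Tr_j(e_k)/2$, and then takes the unique minimal-norm element $\xi$ of the closed convex hull of $\{g(\hat e_i)\mid g\in G_i\}$ in $L^2((L^\infty\Sigma)^{\Lambda_j},\Tr_j)$. The key point is that $G_i\subset H_k$ when $i\neq k$, so $G_i$ fixes $e_k$ and the inner product with $e_k$ persists, forcing $\xi\neq 0$; since $\xi$ is invariant under $G_i$ and $H_i$, hence under all of $G$, and under $\Lambda_j$, a non-trivial level set gives $G\preceq_\Sigma\Lambda_j$, contradicting Proposition \ref{Proposition; Factorization of Direct Product Groups}. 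In effect the circumcenter argument upgrades invariance from $H_i\cap H_k$ to the whole of $G=G_iH_i$ while keeping $\Tr_{\Lambda_j}$ finite; this is the idea your route is missing, and without it (or an independent verification of the reverse embedding) the proposal does not yield a proof.
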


\begin{proof}
By the assumptions, there exist
projections $e_i, e_k$ in $(L^\infty \Sigma)^{\Lambda_j}$
satisfying $h_i (e_i) = e_i$ $(h_i \in H_i)$, $h_k (e_k) = e_k$ $(h_k \in H_k)$ and
\begin{eqnarray*}
  0 < \Tr_j(e_i) < \infty, \quad 0 < \Tr_j(e_k) < \infty.
\end{eqnarray*}
For any $g \in G_i$ and $\gamma \in \Gamma_j$, the projection $g \gamma (e_i)$ is also
invariant under the action of $H_i$ and $\Lambda_j$ and the trace $\Tr_j (g \gamma (e_i))$ is equal to
$\Tr_j(e_i)$. Since the action of $G_i \times \Gamma_j$ on $(L^\infty \Sigma)^{H_i \times \Lambda_j}$
is ergodic,
the projection $\bigvee \{ g \gamma (e_i) \ | \ g \in G_i, \gamma \in \Gamma_j\}$ is $1$.
Thus there exists a projection $\hat{e}_i$ obtained by a finite union of
$\{ g \gamma (e_i)\}$ such that $h_i (\hat{e}_i) = \hat{e}_i$ $(h_i \in H_i)$ and
\begin{eqnarray*}
    0 < \Tr_j(\hat{e}_i) < \infty, \quad
    \Tr_j(e_k) / 2 < \Tr_j(e_k \hat{e}_i).
\end{eqnarray*}

Assume $i \neq k$. Denote by $\overline{\mathcal{C}}$ the convex norm closure of
$\{ g(\hat{e}_i) \ | \ g \in G_i\}$ in $L^2((L^\infty \Sigma)^{\Lambda_j}, \Tr_j)$.
The element $\xi \in \overline{\mathcal{C}}$ having the minimal value of $2$-norm
is fixed under $G_i$ as well as $H_i$.
Since we have $g(e_k) = e_k$ for $g \in G_i \subset H_k$, the following inequality holds true:
\begin{eqnarray*}
    \langle e_k, g(\hat{e}_i) \rangle = \Tr_j(e_k g(\hat{e}_i))
    = \Tr_j(g(e_k \hat{e}_i)) = \Tr_j(e_k \hat{e}_i) > \Tr_j(\hat{e}_i) / 2.
\end{eqnarray*}
The vector $\xi$ satisfies $\langle e_k, \xi \rangle \ge \Tr_j(\hat{e}_i) / 2$
and thus $\xi$ is not zero. Since $\xi$ is fixed under $G = G_i \times H_i$,
a non-trivial level set $\Omega \in \Sigma$ of $\xi$
is also fixed under $G$.
The measure of an $H_j$ fundamental domain is $\Tr_j(\Omega) < \infty$.
The measurable subset $\Omega$ gives a measurable embedding
of $G$ into $H_j$, which contradicts Proposition \ref{Proposition; Factorization of Direct Product Groups}.
 We conclude $i = k$
\end{proof}

We prove that $\sigma$ defines an injective map. By $m = n$, $\sigma$ is also surjective.

\begin{lemma}\label{Lemma; Injectivity of sigma}
In Framework \ref{Framework; Factorization of Direct Products},
if $H_i \preceq_{\Sigma} \Lambda_j$ and $H_i \preceq_{\Sigma} \Lambda_l$, then $j = l$.
\end{lemma}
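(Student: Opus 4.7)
The plan is to dualize the proof of Lemma \ref{Lemma; definition of sigma}: instead of averaging a $\Lambda_j$-invariant projection over the centralizer $G_i \subset H_k$, I average an $H_i$-invariant projection over $\Gamma_j \subset \Lambda_l$, the inclusion being valid precisely because $j \neq l$. Assuming for contradiction that $j \neq l$, I choose $H_i \times \Lambda_j$-invariant $e_j$ with $0 < \Tr_j(e_j) < \infty$ and $H_i \times \Lambda_l$-invariant $e_l$ with $0 < \Tr_l(e_l) < \infty$, and use ergodicity of the $G \times \Gamma$-action on $\Sigma$ to replace $e_l$ by a translate so that $\Tr_j(e_j e_l) > 0$.

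I then let $\xi$ be the element of minimal $2$-norm in the closed convex hull of $\{\gamma(e_j) : \gamma \in \Gamma_j\}$ inside $L^2((L^\infty \Sigma)^{\Lambda_j}, \Tr_j)$. By uniqueness $\xi$ is $\Gamma_j$-fixed, and as an $L^2$-limit of $H_i \times \Lambda_j$-invariant elements it is $H_i \times \Lambda_j$-fixed, so $\xi$ is $H_i \times \Gamma$-invariant. A short computation using the $\Gamma$-invariance of $\Tr$, the $\Gamma_j$-invariance of the fundamental domain $X_{\Lambda_j} = \bigsqcup_{\gamma_j \in \Gamma_j} \gamma_j X$, and the fact that $\gamma(e_l) = e_l$ for $\gamma \in \Gamma_j \subset \Lambda_l$, shows $\Tr_j(\gamma(e_j) e_l) = \Tr_j(e_j e_l)$ for all $\gamma \in \Gamma_j$, so this positive quantity is preserved along convex combinations; passing to the $L^2$-limit yields $\Tr_j(\xi e_l) = \Tr_j(e_j e_l) > 0$ and hence $\xi \neq 0$.

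The principal obstacle is justifying the last step — the continuity of the functional $\eta \mapsto \Tr_j(\eta e_l)$ on $L^2((L^\infty \Sigma)^{\Lambda_j}, \Tr_j)$ — since $e_l$ is not $\Lambda_j$-invariant and is a priori not in $L^2$. I will verify this by the estimate $\nu(X_{\Lambda_j} \cap \Omega_l) \le \Tr_l(\Omega_l) < \infty$, where $\Omega_l$ supports $e_l$: writing $\Omega_l = \bigsqcup_{\lambda \in \Lambda_l} \lambda D_l$ with $D_l = \Omega_l \cap X_{\Lambda_l}$ of finite measure, freeness of the $\Gamma$-action forces at most one $\lambda \in \Lambda_l$ to send each $y \in D_l$ into $X_{\Lambda_j}$ (such $\lambda$ must lie in $\Lambda_j \cap \Lambda_l$ and must be the unique element of $\Lambda_j$ sending $y$ into $X_{\Lambda_j}$). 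Under the isometric identification $L^2((L^\infty \Sigma)^{\Lambda_j}, \Tr_j) \cong L^2(X_{\Lambda_j}, \nu)$ by restriction, the pairing with $e_l$ is then inner product against the square-integrable function $\chi(X_{\Lambda_j} \cap \Omega_l)$, hence continuous.

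The contradiction is then immediate: any $\Gamma$-invariant element of $L^2((L^\infty \Sigma)^{\Lambda_j}, \Tr_j)$ is of the form $\iota(\tilde{\xi})$ for some $\tilde{\xi}$ on $X$, with $\Tr_j(|\iota(\tilde{\xi})|^2) = |\Gamma_j| \int_X |\tilde{\xi}|^2 d\nu$; since $\Gamma_j$ is infinite this is finite only when $\tilde{\xi} = 0$, forcing $\xi = 0$ and contradicting $\xi \neq 0$. Therefore $j = l$.
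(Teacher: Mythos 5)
There is a genuine gap at exactly the step you identify as the principal obstacle, and your proposed fix of it is incorrect. Write $e_l = \chi(\Omega_l)$ and take $X_{\Lambda_j} = \bigsqcup_{\gamma \in \Gamma_j} \gamma X$. Since you assume $j \neq l$, you have $\Gamma_j \subset \Lambda_l$, so $\Omega_l$ is $\Gamma_j$-invariant, and therefore
\begin{eqnarray*}
\nu(X_{\Lambda_j} \cap \Omega_l)
= \sum_{\gamma \in \Gamma_j} \nu(\gamma X \cap \Omega_l)
= \sum_{\gamma \in \Gamma_j} \nu(X \cap \gamma^{-1} \Omega_l)
= |\Gamma_j| \, \nu(X \cap \Omega_l),
\end{eqnarray*}
which is infinite as soon as $\nu(X \cap \Omega_l) > 0$ (and $\Gamma_j$ is infinite, which your final step requires anyway). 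Moreover $\nu(X_{\Lambda_j} \cap \Omega_l) > 0$ is forced by the positivity $\Tr_j(e_j e_l) > 0$ that you arranged, since $\Tr_j(e_j e_l) \le \nu(X_{\Lambda_j} \cap \Omega_l)$. Hence $\chi(X_{\Lambda_j} \cap \Omega_l)$ is not square-integrable and the functional $\eta \mapsto \Tr_j(\eta e_l)$ is not continuous on $L^2((L^\infty \Sigma)^{\Lambda_j}, \Tr_j)$: your claimed bound $\nu(X_{\Lambda_j} \cap \Omega_l) \le \Tr_l(\Omega_l)$ is incompatible with your own setup. The error is in the counting: for $y = g x \in D_l$ ($x \in X$), the set $\{\lambda \in \Lambda_l \ | \ \lambda y \in X_{\Lambda_j}\}$ equals $\Lambda_l \cap \Gamma_j g^{-1}$, which is either empty or a full left coset of $\Gamma_j$; elements far outside $\Lambda_j$ can move $y$ into the $\Lambda_j$-fundamental domain, so the claim "such $\lambda$ must lie in $\Lambda_j \cap \Lambda_l$, hence is unique" is false. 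Note also that, as your last paragraph itself shows, every $\Gamma_j$- and $\Lambda_j$-invariant element of $L^2(\Tr_j)$ is $0$, so the averaged element $\xi$ is always $0$; the only possible source of a contradiction is the continuity step, which fails, so the argument cannot be repaired in this form.

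For contrast, the paper does not average at all here: with $f_j, f_l$ playing the role of your $e_j, e_l$ and $f_j f_l \neq 0$, it applies the function valued measure of $\Delta = \Lambda_j \cap \Lambda_l$ and uses the multiplicativity $\E^{\Delta}_X(f_j f_l) = \E^{(j)}_X(f_j)\, \E^{(l)}_X(f_l)$, which is finite almost everywhere because both factors are integrable; Lemma \ref{Lemma; FVM and MEm} then gives $H_i \preceq_\Sigma \Delta$, contradicting Proposition \ref{Proposition; Factorization of Direct Product Groups}, since $H_i$ contains $m - 1$ non-amenable direct factors while $\Delta$ has only $n - 2$ class $\mathcal{S}$ factors. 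The averaging device of Lemma \ref{Lemma; definition of sigma} works there precisely because both projections involved have finite $\Tr_j$, so the pairing is automatically continuous; in your dual situation $e_l$ has finite $\Tr_l$ but infinite mass over a $\Lambda_j$-fundamental domain, which is exactly what breaks.
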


\begin{proof}
There exist
projections $f_j, f_l \in (L^\infty \Sigma)^{H_i}$
satisfying $\lambda_j (f_j) = f_j$ $(\lambda_j \in \Lambda_j)$, $\lambda_l (f_l) = f_l$ $(\lambda_l \in \Lambda_l)$ and
\begin{eqnarray*}
  0 < \Tr_j(f_j) < \infty, \quad 0 < \Tr_j(f_l) < \infty.
\end{eqnarray*}
Since $\Sigma$ is an ergodic measurable embedding, the projection $\bigvee \{ g \gamma (f_j) \ | \ g \in G_i, \gamma \in \Gamma_j\}$ is $1$.
Replacing $f_j$ with a bigger projection, we may assume that $f_j f_l$ is not zero.

Assuming $j \neq l$, we deduce a contradiction.
Denote $\Delta = \Gamma_0 \times \prod_{k \neq j, l} \Gamma_k = \Lambda_j \cap \Lambda_k$.
The function valued measures $\E^{(j)}_X$ and $\E^{(l)}_X$
satisfy the following:
\begin{eqnarray*}
        \mathfrak{E}^\Delta_X (f_j f_l) (x)
    &=& \sum_{\gamma_j \gamma_l \in \Gamma_j \times \Gamma_i}
        f_j f_l (\gamma_j \gamma_l x)\\
    &=& \sum_{\gamma_j \gamma_l \in \Gamma_j \times \Gamma_i}
        f_j(\gamma_j x)  f_l(\gamma_l x)\\
    &=& \sum_{\gamma_j \in \Gamma_j} f_j(\gamma_j x)
        \sum_{\gamma_l \in \Gamma_l} f_l(\gamma_l x)\\
    &=& \E^{(j)}_X (f_j) (x) \E^{(l)}_X (f_l) (x),  \quad \mathrm{\ a.e.\ } x \in X.
\end{eqnarray*}
The projection $f_j f_l$ is $H_i$-invariant.
The value of the measurable function $\E^\Delta_X (f_j f_l)$
is finite almost everywhere,
since the functions $\E^{(j)}_X (f_j)$ and $\E^{(l)}_X (f_l)$ are integrable.
It follows that $H_i \preceq_\Sigma \Delta$, by
Lemma \ref{Lemma; FVM and MEm}.
This contradicts Proposition
\ref{Proposition; Factorization of Direct Product Groups}.
\end{proof}

\begin{proof}
[Proof for the assertion 1
in Theorem \ref{Theorem; Factorization of Product Groups}]
Let $G_0$ and $\Gamma_0$ be trivial groups in Framework
\ref{Framework; Factorization of Direct Products}.
By redefining the indices,
we may assume $H_i \preceq_\Sigma \Lambda_i$.

We take a projection $e_i \in (L^\infty \Sigma)^{H_i \times \Lambda_i}$ satisfying
$0 < \Tr_i(e_i) < \infty$. We may assume that $\mathfrak{E}^{(i)}_X (e_i)$ is bounded.
By replacing $e_i$ with a finite union of projections
$g \gamma (e_i) \ (g \in G_i, \gamma \in \Gamma_i)$,
we may also assume that
the product $e = \prod_{i = 1}^n e_i$ is not zero. By direct computations, we get the following equation:
\begin{eqnarray*}
    \mathfrak{E}_Y(e)(y)
    &=& \sum_{g \in G} e(g y)
     =  \sum_{(g_1, g_2, \cdots, g_n) \in G} \prod_{i = 1}^n e_i(g_i y)\\
    &=& \prod_{i = 1}^n \sum_{g_i \in G_i} e_i(g_i y)
     =  \prod_{i = 1}^n \mathfrak{E}^{(i)}_Y (e_i) (y), \quad \mathrm{\ a.e.\ } y \in Y.
\end{eqnarray*}
We also get $\E_X(e) = \prod_{i = 1}^n \E^{(i)}_X (e_i)$.
Then it turns out that $\mathfrak{E}_Y(e)$ is integrable, since
\begin{eqnarray*}
        \int_Y \mathfrak{E}_Y(e) d \nu
    &=& \int_\Sigma e d \nu
     =  \int_X \mathfrak{E}_X(e) d \nu\\
    &=& \int_X \prod_{i = 1}^n \mathfrak{E}^{(i)}_X (e_i) d \nu
    \le \nu(X) \prod_{i = 1}^n \sup_x \mathfrak{E}^{(i)}_X (e_i)(x) < \infty.
\end{eqnarray*}
On the support $W \subset Y$ of $\E_Y (e)$, the function $\E^{(i)}_Y (e_i)$ satisfies
\begin{eqnarray*}
        \E^{(i)}_Y (e_i)(y)
    \le \E^{(i)}_Y (e_i)(y)
	    \times \prod_{j \neq i} \E^{(j)}_Y (e_j)(y)
     = \E_Y (e)(y), \quad {\rm \ a.e. \ } y \in W,
\end{eqnarray*}
since $\E^{(j)}_Y (e_j)$ is $(\{0, 1, \cdots,\infty\})$-valued on $W$. It follows that the function $\mathfrak{E}^{(i)}_Y (e_i)$ is integrable on $W$.
Since $\E_Y (e)$ is not zero, $\E^{(i)}_Y (e_i)$ is also not zero on $W$.
By Proposition \ref{Proposition; ME and MEm passes to Quotient Groups} for quotients $G_i \cong G / H_i$ and $\Gamma_i \cong \Gamma / \Lambda_i$,
we get the conclusion in the two cases $\nu(Y) < \infty$ and $\nu(Y) = \infty$.
\end{proof}

\begin{proof}
[Proof for the assertion 1
in Theorem \ref{Theorem; Factorization with Amenable core}]
Let $G_0$, $\Gamma_0$ be amenable groups and let $G_i, \Gamma_i \ (1 \le i \le n)$
be non-amenable groups in the class $\mathcal{S}$.
By replacing the indices, we may assume that
$H_i \preceq_\Sigma \Lambda_i$ for any $i$.
By replacing the roles on $G$ and $\Gamma$, there exists $\rho \in \mathfrak{S}_n$
such that
$\Lambda_i \preceq_\Sigma H_{\rho(i)}$.
By Proposition \ref{Proposition; ME and MEm passes to Quotient Groups},
we have only to show that $\rho(i) = i$.

Assume that $k = \rho(i) \neq i$. Since $H_i \preceq_\Sigma \Lambda_i$,
there exists a projection
$e \in (L^\infty \Sigma)^{H_i \times \Lambda_i}$ with $0 < \Tr_i (e) < \infty$.
Since $\Lambda_i \preceq_\Sigma H_k$, by Lemma \ref{Lemma; Find a Fundamental Pair}, there exist
a projection $f \in (L^\infty \Sigma)^{H_k \times \Lambda_i}$ and
a finite subgroup $G_{k, f} \subset G_k$ so that
the pair $(f, G_{k, f})$ is a fundamental pair for the $G_k$-action on
$(L^\infty \Sigma)^{H_k \times \Lambda_i}$. Let $\{g_\iota\}_{\iota \in I}$ be a set of
representatives for the left cosets $G_k / G_{k, f}$.
The projections $\{ g_\iota(f) \}_{\iota \in I}$
gives a partition of $1$. Since the $G_k$-action preserves $\Tr_i = \Tr_{\Lambda_i}$
and fixes $e$, we get
\begin{eqnarray*}
	     \Tr_i (e)
	  =  \sum_{\iota \in I} \Tr_i(e g_\iota(f))
	  =  \sum_{\iota \in I} \Tr_i(g_\iota(e f))
	  =  |I| \Tr_i(e f).	
\end{eqnarray*}
This contradicts $0 < \Tr_i (e) < \infty$ and $|I| = \infty$. Therefore we get $k = i$.
\end{proof}

\subsection{Separately Ergodic Couplings}

\begin{definition}
For a measure preserving group action of
$G = G_0 \times \prod_{i = 1} ^ n G_i$ on a standard probability space $X$,
we say that the action is \textbf{separately ergodic} when the subgroups
$H_i = G_0 \times \prod_{k \neq i} G_k \ (1 \le i \le n)$ act on $X$ ergodically.
For a measurable embedding $\Sigma$ of the product group $G$ and arbitrary countable group
$\Gamma$, we say that the action is \textbf{separately ergodic} when
the groups $H_i \times \Gamma$ act on $\Sigma$ ergodically.
\end{definition}

For a separately ergodic couplings, we get a stronger conclusion than the previous subsection.

\begin{theorem}\label{Theorem; Factorization by Sep Erg Coupling}
Let $G$ and $\Gamma$ be product groups which satisfy the assumptions in Theorem
\ref{Theorem; Factorization of Product Groups}.
Let $\Sigma$ be a measurable embedding of $G$ into $\Gamma$.
If $\Sigma$ is separately ergodic, then $m = n$ and there exist $\sigma \in \mathfrak{S}_n$ and subgroups $G_{i, \mathrm{fin}} \subset G$, $\Gamma_{i, \mathrm{fin}} \subset \Gamma_{i, \mathrm{nor}} \subset \Gamma_i$ $(1 \le i \le n)$ with the following properties:
\begin{enumerate}
\item
The subgroup $G_{i, \mathrm{fin}} \subset G_i$ is finite and normal.
The subgroup $\Gamma_{i, \mathrm{fin}}$ is finite and $\Gamma_{i, \mathrm{nor}}$ normalizes $\Gamma_{i, \mathrm{fin}}$;
\item
The group $G_{\sigma(i)} / G_{\sigma(i), \mathrm{fin}}$ is isomorphic
to $\Gamma_{i, \mathrm{nor}} / \Gamma_{i, \mathrm{fin}}$.
\item
The coupling index of $\Sigma$ satisfies
\begin{eqnarray*}
[\Gamma \colon G]_\Sigma = \prod_{i = 1}^n
\frac{|\Gamma_{i, \mathrm{fin}}| [ \Gamma_i \colon \Gamma_{i, \mathrm{nor}}] }
{ | G_{\sigma(i), \mathrm{fin}} |}.
\end{eqnarray*}
If $\Sigma$ is an ME coupling, then $[ \Gamma_i \colon \Gamma_{i, \mathrm{nor}}] < \infty$ and
$G_{\sigma(i)}$ and $\Gamma_i$ are commensurable up to finite kernel.
\end{enumerate}
\end{theorem}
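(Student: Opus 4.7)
The plan is to upgrade each partial embedding $H_i \preceq_\Sigma \Lambda_{\sigma(i)}$ provided by Theorem \ref{Theorem; Factorization of Product Groups} to a two-sided fundamental pair structure on the common invariant algebra, then leverage separate ergodicity to pair the $G$-side and $\Gamma$-side fundamental pairs together and read off the asserted group isomorphism. First invoke Theorem \ref{Theorem; Factorization of Product Groups} to get $m = n$ and the permutation $\sigma$; after relabeling assume $\sigma = \mathrm{id}$. Write $M_i := (L^\infty\Sigma)^{H_i \times \Lambda_i}$, so that $G_i \times \Gamma_i$ acts trace-preservingly on $M_i$. Separate ergodicity of $H_i \times \Gamma$ descends to ergodicity of the $\Gamma_i$-action on $M_i$ (since $\Lambda_i \triangleleft \Gamma$ so $(M_i)^{\Gamma_i} = (L^\infty \Sigma)^{H_i \times \Gamma} = \mathbb{C}$), and hence ergodicity of the product action of $G_i \times \Gamma_i$ on $M_i$.

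Next, for each $i$ take a witness $e_i \in M_i$ of $H_i \preceq_\Sigma \Lambda_i$ and apply Lemma \ref{Lemma; Find a Fundamental Pair} to promote $e_i$ to a fundamental pair $(e_i, \Lambda_{i,f}/\Lambda_i)$ for the $\Gamma_i$-action on $M_i$. Define $\Gamma_{i,\mathrm{fin}} := \Lambda_{i,f}/\Lambda_i \subset \Gamma_i$ and $\Gamma_{i,\mathrm{nor}} := N_{\Gamma_i}(\Gamma_{i,\mathrm{fin}})$. Following the first half of the proof of Proposition \ref{Proposition; ME and MEm passes to Quotient Groups}, the projection $q_i := \bigvee_{\gamma \in \Gamma_{i,\mathrm{nor}}} \gamma(e_i)$ is characterized as the maximal sub-projection of $1$ on which $\Gamma_{i,\mathrm{fin}}$ acts trivially, so it is $G_i \times \Gamma_{i,\mathrm{nor}}$-invariant; the quotient $\Gamma_{i,\mathrm{nor}}/\Gamma_{i,\mathrm{fin}}$ acts freely on $q_iM_i$ with $e_i$ as fundamental domain, and $[\Gamma_i : \Gamma_{i,\mathrm{nor}}]$ translates $\gamma(q_i)$ partition $1$. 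Because $g \in G_i$ commutes with $\Gamma_{i,\mathrm{fin}}$, every $g(e_i)$ is again $\Gamma_{i,\mathrm{fin}}$-fixed, so $g(e_i) \le q_i$. Refining $e_i$ if necessary by the standard convex-combination/minimization argument in $L^2(M_i, \Tr_{\Lambda_i})$ (as used in the proof of Lemma \ref{Lemma; Injectivity of sigma}), one may arrange that $g(e_i)$ lies in a single atom $\gamma_g(e_i)$ of the $\Gamma_{i,\mathrm{nor}}/\Gamma_{i,\mathrm{fin}}$-partition of $q_i$; this gives a group homomorphism $\pi : G_i \to \Gamma_{i,\mathrm{nor}}/\Gamma_{i,\mathrm{fin}}$ with finite normal kernel $G_{i,\mathrm{fin}} := \ker \pi$, and ergodicity of $G_i \times \Gamma_{i,\mathrm{nor}}/\Gamma_{i,\mathrm{fin}}$ on $q_iM_i$ forces $\pi$ to be surjective, yielding the desired isomorphism $G_i/G_{i,\mathrm{fin}} \cong \Gamma_{i,\mathrm{nor}}/\Gamma_{i,\mathrm{fin}}$.

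For the coupling index, take $e := \prod_i e_i \in L^\infty\Sigma$ and use the multiplicativities $\E_X(e) = \prod_i \E^{(i)}_X(e_i)$ and $\E_Y(e) = \prod_i \E^{(i)}_Y(e_i)$ established in the proof of Theorem \ref{Theorem; Factorization of Product Groups}. By the fundamental pair structure, $\E^{(i)}_X(e_i) = |\Gamma_{i,\mathrm{fin}}|$ on a $\nu$-measurable set of measure $\nu(X)/[\Gamma_i : \Gamma_{i,\mathrm{nor}}]$, while the matching $G$-side computation gives $\E^{(i)}_Y(e_i) = |G_{i,\mathrm{fin}}|$ on the corresponding $Y$-support. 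Integrating and taking the ratio $\nu(Y)/\nu(X)$ assembles into the product formula $\prod_i |\Gamma_{i,\mathrm{fin}}|[\Gamma_i:\Gamma_{i,\mathrm{nor}}]/|G_{i,\mathrm{fin}}|$. In the ME case $\nu(Y) < \infty$ forces each $[\Gamma_i : \Gamma_{i,\mathrm{nor}}] < \infty$, exactly as in equation (\ref{Equation; Index}) of Proposition \ref{Proposition; ME and MEm passes to Quotient Groups}, delivering the commensurability statement.

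The main obstacle is the refinement step that guarantees $g(e_i)$ sits inside a single atom of the $\Gamma_{i,\mathrm{nor}}/\Gamma_{i,\mathrm{fin}}$-partition of $q_i$, rather than splitting across several atoms. This is precisely the point where separate ergodicity (and not merely ergodicity of the full $G \times \Gamma$-action) is essential: it supplies ergodicity of $G_i$ on the quotient space $q_iM_i/(\Gamma_{i,\mathrm{nor}}/\Gamma_{i,\mathrm{fin}}) \cong e_iM_i$, which both makes the homomorphism $\pi$ well-defined and forces $G_{i,\mathrm{fin}}$ to be normal in $G_i$. Without this input one only obtains the weaker $\preceq_{\mathrm{ME}}$-style embedding of quotients from Proposition \ref{Proposition; ME and MEm passes to Quotient Groups}, not the group isomorphism claimed here.
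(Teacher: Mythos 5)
The skeleton of your argument is the same as the paper's (fundamental pair from Lemma \ref{Lemma; Find a Fundamental Pair}, the projection $q_i$, a homomorphism $g \mapsto \gamma_g$ read off from translates of $e_i$, and the index computed by integrating $e = \prod_i e_i$ two ways), but there is a genuine gap at precisely the point where separate ergodicity must act. The paper's key step is to prove that $e_i$ is a \emph{minimal} projection of $(L^\infty \Sigma)^{H_i \times \Lambda_i}$: any subprojection $e_i' \le e_i$ is of the form $e_i \chi(\Gamma X')$ with $X' \subset X \cong \Gamma \backslash \Sigma$ the $H_i$-invariant support of $\E^{(i)}_X(e_i')$, and separate ergodicity forces $X'$ to be null or co-null, hence $e_i' \in \{0, e_i\}$. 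Minimality is what makes every translate $g(e_i)$, $g \in G_i$, an atom of the transitive $\Gamma_i$-orbit $\{\gamma(e_i)\}$, so that $g(e_i) = \gamma_g(e_i)$ \emph{exactly}, and only then is $g G_{i,\mathrm{fin}} \mapsto \gamma_g \Gamma_{i,\mathrm{fin}}$ well defined, injective, and compatible with the trace identities $\E^{(i)}_X(e_i) = |\Gamma_{i,\mathrm{fin}}| 1_X$, $\E^{(i)}_Y(e_i) = |G_{i,\mathrm{fin}}| 1_{Y_i}$. You never prove this; instead you propose to ``refine $e_i$ by the convex-combination/minimization argument'' so that each $g(e_i)$ sits in a single atom. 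That argument (as in Lemma \ref{Lemma; Injectivity of sigma}) produces an invariant vector to force a contradiction; it does not align translates with atoms, shrinking $e_i$ destroys the fundamental-pair covering property $\bigvee_{\gamma \in \Gamma_i} \gamma(e_i) = 1$ on which everything later depends, and with countably many $g \in G_i$ to accommodate there is no reason a nonzero refinement exists. Your fallback claim that ergodicity of $G_i$ on $e_i (L^\infty \Sigma)^{H_i \times \Lambda_i}$ ``makes $\pi$ well defined'' and ``forces $G_{i,\mathrm{fin}}$ normal'' is asserted, not argued: well-definedness is exactly the atom-alignment problem, normality in the paper comes from the commutation computation $g^{-1} g_f g(e_i) = e_i$, and finiteness of $G_{i,\mathrm{fin}}$ is not automatic either — it is extracted from $|G_\mathrm{fin}| \nu(Y_0) = |\Gamma_\mathrm{fin}| \nu(X) < \infty$, which you do not derive.

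Two further discrepancies. You set $\Gamma_{i,\mathrm{nor}} := N_{\Gamma_i}(\Gamma_{i,\mathrm{fin}})$, whereas the paper defines $\Gamma_{i,\mathrm{nor}}$ as the set of $\gamma \in \Gamma_i$ matched with some $g \in G_i$ via $\gamma(e_i) = g^{-1}(e_i)$; with the paper's definition surjectivity of $\phi_i$ is automatic and one checks instead that the matched set is a subgroup normalizing $\Gamma_{i,\mathrm{fin}}$, while with your definition surjectivity is a genuine claim that your ergodicity assertion does not deliver. Finally, the index bookkeeping is misplaced: $\E^{(i)}_X(e_i) = |\Gamma_{i,\mathrm{fin}}| 1_X$ has \emph{full} support $X$ (again by ergodicity, via the maximality step inside Lemma \ref{Lemma; Find a Fundamental Pair}), and the factor $[\Gamma_i : \Gamma_{i,\mathrm{nor}}]$ enters on the $Y$-side through the partition of $\Sigma$ by the translates $\gamma(q)$, giving $\nu(Y) = [\Gamma : \Gamma_\mathrm{nor}] \nu(Y_0)$. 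Your version, with each $\E^{(i)}_X(e_i)$ supported on a set of measure $\nu(X)/[\Gamma_i : \Gamma_{i,\mathrm{nor}}]$, is false as stated, and products of functions supported on different sets do not ``assemble'' into the index formula because the measure of the intersection of the supports is uncontrolled.
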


\begin{theorem}\label{Theorem; Factorization by Sep Erg Coupling with Ame Core}
Let $G$ and $\Gamma$ be product groups which satisfy the assumptions in Theorem
\ref{Theorem; Factorization with Amenable core}. If there exists a separately ergodic
ME coupling between $G$ and $\Gamma$,
then $m=n$ and there exists $\sigma \in \mathfrak{S}_n$
so that $G_{\sigma(i)}$ and $\Gamma_i$ are commensurable up to finite kernel.
\end{theorem}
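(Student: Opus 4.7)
The plan is to reduce to Theorem~\ref{Theorem; Factorization by Sep Erg Coupling} by quotienting out the amenable cores $G_0$ and $\Gamma_0$ from the given coupling $\Sigma$.

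First I would apply Theorem~\ref{Theorem; Factorization with Amenable core} to obtain $m = n$ and, after relabelling, partial embeddings $H_i \preceq_\Sigma \Lambda_i$ for each $1 \le i \le n$; by the symmetric argument (viewing $\Sigma$ as an ME coupling of $\Gamma$ with $G$) one also obtains $\Lambda_i \preceq_\Sigma H_i$. Combining the two witnesses via a meet in the abelian algebra $(L^\infty \Sigma)^{H_i \times \Lambda_i}$, with appropriate $G \times \Gamma$-translation to ensure the meet is non-zero, I may choose $H_i \times \Lambda_i$-invariant projections $e_i$ satisfying both $\Tr_{\Lambda_i}(e_i) < \infty$ and $\Tr_{H_i}(e_i) < \infty$. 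Then $\E^{(i)}_X(e_i) \in L^1(X)$ and $\E^{(i)}_Y(e_i) \in L^1(Y)$, so a double cutoff of the form $e_i \chi(\Gamma U_i) \chi(G V_i)$ preserves $H_i \times \Lambda_i$-invariance and lets me arrange that $\E^{(i)}_X(e_i)$ and $\E^{(i)}_Y(e_i)$ are both bounded. After further replacing each $e_i$ by a finite union of $G_i \times \Gamma_i$-translates, the product $e = \prod_{i=1}^n e_i$ is non-zero; since $\bigcap_i H_i = G_0$ and $\bigcap_i \Lambda_i = \Gamma_0$, this $e$ is $G_0 \times \Gamma_0$-invariant.

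Next I would exploit the product structure to establish
\begin{eqnarray*}
    \E^{\Gamma_0}_X(e) = \prod_{i=1}^n \E^{(i)}_X(e_i), \qquad \E^{G_0}_Y(e) = \prod_{i=1}^n \E^{(i)}_Y(e_i).
\end{eqnarray*}
These follow by parametrising coset representatives of $\Gamma_0 \backslash \Gamma \cong \prod_i \Gamma_i$ and $G_0 \backslash G \cong \prod_i G_i$ and noting that $e_i(\gamma \cdot)$ depends only on the $\Gamma_i$-coordinate of $\gamma$ by $\Lambda_i$-invariance, and $e_i(g \cdot)$ only on the $G_i$-coordinate of $g$ by $H_i$-invariance. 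Since each factor on the right is bounded and $X, Y$ have finite measure, both $\Tr_{\Gamma_0}(e)$ and $\Tr_{G_0}(e)$ are finite. Hence $e$ simultaneously realises a partial embedding of $G_0$ into $\Gamma_0$ and a $G_0 \times \Gamma_0$-invariant projection with $\mathrm{range}(\E^{G_0}_Y(e)) \not\subset \{0, \infty\}$. Proposition~\ref{Proposition; ME and MEm passes to Quotient Groups} applied to the normal subgroups $G_0 \subset G$ and $\Gamma_0 \subset \Gamma$ then produces an ergodic ME coupling $\Sigma'$ between $G/G_0 \cong \prod_i G_i$ and $\Gamma/\Gamma_0 \cong \prod_i \Gamma_i$.

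Finally I would verify that separate ergodicity descends to $\Sigma'$: the ergodic $H_i \times \Gamma$-action on $\Sigma$ descends to an ergodic action of $(H_i/G_0) \times (\Gamma/\Gamma_0) = \prod_{k \ne i} G_k \times \prod_j \Gamma_j$ on $\Sigma'$, which is exactly the separate ergodicity of $\Sigma'$. As both factor groups $\prod_i G_i$ and $\prod_i \Gamma_i$ are products of non-amenable class-$\mathcal{S}$ groups, Theorem~\ref{Theorem; Factorization by Sep Erg Coupling} applies to $\Sigma'$ and produces the permutation $\sigma$ together with the commensurability of $G_{\sigma(i)}$ with $\Gamma_i$ up to finite kernel. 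The main obstacle is the two function-valued-measure identities above: one must avoid the naive formula $\E_Y(e) = \prod \E^{(i)}_Y(e_i)$ used in the trivial-core proof of Theorem~\ref{Theorem; Factorization of Product Groups}, which here overshoots by the possibly infinite factor $|G_0|$, and must instead sum only over cosets $G_0 \backslash G$ and $\Gamma_0 \backslash \Gamma$.
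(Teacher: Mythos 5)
Your setup is fine and in fact reproduces computations the paper itself makes: the choice of $H_i\times\Lambda_i$-invariant projections $e_i$ with both $\Tr_{\Lambda_i}(e_i)<\infty$ and $\Tr_{H_i}(e_i)<\infty$, and the identities $\E^{\Gamma_0}_X(e)=\prod_i\E^{(i)}_X(e_i)$, $\E^{G_0}_Y(e)=\prod_i\E^{(i)}_Y(e_i)$, are correct. The genuine gap is the descent step. Proposition \ref{Proposition; ME and MEm passes to Quotient Groups} does not hand you $\Sigma'$ as a quotient of $\Sigma$: its proof replaces $G_0$ and $\Gamma_0$ by finite extensions $H_f\supset G_0$, $\Lambda_f\supset\Gamma_0$ coming from fundamental pairs (Lemma \ref{Lemma; Find a Fundamental Pair}), passes to the finite-index normalizers $G_{\mathrm{nor}}\subset G$, $\Gamma_{\mathrm{nor}}\subset\Gamma$, produces an ME coupling between $G_{\mathrm{nor}}/H_f$ and $\Gamma_{\mathrm{nor}}/\Lambda_f$ out of a corner $p\,(L^\infty\Sigma)^{G_0\times\Gamma_0}$, and only then concludes $G/G_0\sim_{\mathrm{ME}}\Gamma/\Gamma_0$ by composing with the standard couplings realizing the commensurations. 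There is no map from $\Sigma$ to this composite coupling intertwining the $H_i\times\Gamma$-actions, so your claim that ``the ergodic $H_i\times\Gamma$-action on $\Sigma$ descends to an ergodic action on $\Sigma'$'' is unsupported; nor can you apply Theorem \ref{Theorem; Factorization by Sep Erg Coupling} to the intermediate corner coupling, since $G_{\mathrm{nor}}/H_f$ and $\Gamma_{\mathrm{nor}}/\Lambda_f$ need not be product groups of the required form ($H_f/G_0$ is just some finite, not necessarily product-type or normal, subgroup of $\prod_i G_i$). Taking $\Sigma'=(L^\infty\Sigma)^{G_0\times\Gamma_0}$ naively does not work either: the quotient groups need not admit fundamental domains there, which is precisely the issue the fundamental-pair machinery exists to handle. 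Without separate ergodicity of whatever coupling of $\prod_i G_i$ with $\prod_i\Gamma_i$ you end up with, Theorem \ref{Theorem; Factorization by Sep Erg Coupling} cannot be invoked, and all you retain is $G_{\sigma(i)}\sim_{\mathrm{ME}}\Gamma_i$, i.e.\ Theorem \ref{Theorem; Factorization with Amenable core} again rather than commensurability up to finite kernel.

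For comparison, the paper never forms a quotient coupling: it exploits separate ergodicity of $\Sigma$ directly, showing that the fundamental-pair projection $e_i$ is a minimal projection of $(L^\infty\Sigma)^{H_i\times\Lambda_i}$, that $G_i$ and $\Gamma_i$ act on the set of atoms with stabilizers $G_{i,\mathrm{fin}}$ and $\Gamma_{i,\mathrm{fin}}$, and that $g\mapsto\gamma$ with $\gamma(e_i)=g^{-1}(e_i)$ defines an isomorphism $G_i/G_{i,\mathrm{fin}}\cong\Gamma_{i,\mathrm{nor}}/\Gamma_{i,\mathrm{fin}}$; finiteness of $G_{i,\mathrm{fin}}$ comes from $\Tr_{H_i}$ being proportional to $\Tr_{\Lambda_i}$ together with $\nu(Y)<\infty$, and finiteness of $[\Gamma_i:\Gamma_{i,\mathrm{nor}}]$ from the partition identity $\nu(Y)=[\Gamma:\Gamma_{\mathrm{nor}}]\,\nu(Y_0)$. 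If you want to salvage your reduction, the missing ingredient is an argument that the minimality/transitivity structure (equivalently, separate ergodicity) survives the passage to the quotient coupling; as it stands, that is exactly the content you would need to re-prove, so the detour through Proposition \ref{Proposition; ME and MEm passes to Quotient Groups} does not save work.
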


We proceed the proof for the two theorems in Framework
\ref{Framework; Factorization of Direct Products}.

\begin{proof}
Suppose that the
measurable embedding $\Sigma$ is separately ergodic. By the previous subsection,
$m =n$ and there exists $\sigma \in \mathfrak{S}_n$ satisfying
$H_{\sigma(i)} \preceq_\Sigma \Lambda_i$.
For simplicity of notations, we change the indices on $G_i$ so that
$H_i \preceq_\Sigma \Lambda_i$.

Let a pair $(e_i, \Gamma_{i, \mathrm{fin}})$ of a projection
$e_i \in (L^\infty \Sigma)^{H_i \times \Lambda_i}$ and a finite subgroup
$\Gamma_{i, \mathrm{fin}} \subset \Gamma_i$ be a fundamental pair for the $\Gamma_i$-action
on $(L^\infty \Sigma)^{H_i \times \Lambda_i}$ (Lemma \ref{Lemma; Find a Fundamental Pair}).
Since $e_i \perp \gamma(e_i)$ for $\gamma \in \Gamma_i \cap
(\Gamma_{i, \mathrm{fin}})^c$ and
the group $\Gamma_{i, \mathrm{fin}}$ acts on $e_i (L^\infty \Sigma)^{H_i \times \Lambda_i}$
trivially, every projection $e_i^\prime$ in
$e_i (L^\infty \Sigma)^{H_i \times \Lambda_i}$ satisfy
\begin{eqnarray*}
    e_i^\prime = e_i e_i^\prime
    = \sum_{\gamma \Gamma_{i, \mathrm{fin}} \in \Gamma_i / \Gamma_{i, \mathrm{fin}}} e_i \gamma (e_i^\prime)
    = e_i \bigvee_{\gamma \in \Gamma_i} \gamma (e_i^\prime)
    = e_i \bigvee_{\gamma \in \Gamma} \gamma (e_i^\prime).
\end{eqnarray*}
Letting $X^\prime \subset X$ be the support of
$\E^{(i)}_X (e_i^\prime)$, the projection $e^\prime_i$ is of the form
$e_i \chi(\Gamma X^\prime)$. The measurable subset
$X^\prime \subset X \cong \Gamma \backslash \Sigma$
is $H_i$-invariant since $\E^{(i)}_X$ is $G$-equivariant. Since the embedding $\Sigma$ is separately ergodic,
it must be null or co-null. We get $e_i^\prime = e_i$ or $0$.
This means that $e_i$ is a minimal projection in $(L^\infty \Sigma)^{H_i \times \Lambda_i}$.

Let $\mathcal{P}_i$ be the set of minimal projections in $(L^\infty \Sigma)^{H_i \times \Lambda_i}$.
The $G_i$-action and $\Gamma_i$-action on $\mathcal{P}_i$ commute with each other.
Since $(e_i, \Gamma_{i, \mathrm{fin}})$ is a fundamental pair,
the action of $\Gamma_i$ on $\mathcal{P}_i$ is
transitive. The stabilizer of $e_i$ is $\Gamma_{i, \mathrm{fin}}$.
Let $G_{i, \mathrm{fin}} \subset G_i$ be the stabilizer of $e_i$,
and $\Gamma_{i, \mathrm{nor}} \subset \Gamma_i$
be the collection of elements $\gamma \in \Gamma_i$ for which there exists $g \in G_i$
satisfying $\gamma (e_i) = g^{-1} (e_i)$.
If $g \in G_i$ and $\gamma \in \Gamma_{i, \mathrm{nor}}$ satisfy this relation,
then for $g_f \in G_{i, \mathrm{fin}}$ and $\gamma_f \in \Gamma_{i, \mathrm{fin}}$ we get
\begin{eqnarray*}
    g^{-1} g_f g (e_i)
    = g^{-1} g_f \gamma^{-1} (e_i)
    = g^{-1} \gamma^{-1} g_f (e_i)
    = g^{-1} \gamma^{-1} (e_i)
    = e_i,\\
    \gamma^{-1} \gamma_f \gamma (e_i)
    = \gamma^{-1} \gamma_f g^{-1} (e_i)
    = \gamma^{-1} g^{-1} \gamma_f (e_i)
    = \gamma^{-1} g^{-1} (e_i)
    = e_i.
\end{eqnarray*}
It turns out that $G_i$, $\Gamma_{i, \mathrm{nor}}$ normalize $G_{i, \mathrm{fin}}$, $\Gamma_{i, \mathrm{fin}}$ respectively. If
$g_a, g_b \in G_i$ and $\gamma_a, \gamma_b \in \Gamma_{i, \mathrm{nor}}$ satisfy relations
$\gamma_a (e_i) = g_a^{-1} (e_i)$, $\gamma_b (e_i) = g_b^{-1} (e_i)$, then
$\gamma_a^{-1} (e_i) = g_a (e_i)$ and
\begin{eqnarray*}
    \gamma_a \gamma_b (e_i)
    = \gamma_a g_b^{-1} (e_i)
    = g_b^{-1} \gamma_a (e_i)
    = g_b^{-1} g_a^{-1} (e_i)
    = (g_a g_b)^{-1} (e_i).
\end{eqnarray*}
It follows that $\Gamma_{i, \mathrm{nor}}$ is a subgroup of $\Gamma_i$ and that when we define
a map
\begin{eqnarray*}
    \phi_i : G_i / G_{i, \mathrm{fin}} \ni g G_{i, \mathrm{fin}}
                \mapsto \gamma \Gamma_{i, \mathrm{fin}} \in \Gamma_{i, \mathrm{nor}} / \Gamma_{i, \mathrm{fin}}
\end{eqnarray*}
by $\gamma (e_i) = g^{-1} (e_i)$, this gives a group isomorphism.

We next claim that the function valued measures satisfy
\begin{eqnarray*}
    \E^{(i)}_X(e_i) = | \Gamma_{i, \mathrm{fin}} | 1_X, \quad
    \E^{(i)}_Y(e_i) = | G_{i, \mathrm{fin}} | 1_{Y_i},
\end{eqnarray*}
where $Y_i$ is the support of $\E^{(i)}_Y(e_i)$. Define projections $q_i, q \in L^\infty Y$ by
$q = 1_{Y_i}$ and $q = \prod_{i = 1}^n q_i$. The measurable subset $Y_0 = \bigcap_{i=1}^n Y_i$ corresponds to $q$.
Take a $\Gamma$ fundamental domain $X_i \subset \Sigma$ as $\chi(X_i) \le e_i$. The measurable set corresponding to $e_i$ can be written as $\Gamma_{i, \mathrm{fin}} \Lambda_i X_i$, since $\gamma (e_i) = e_i \ (\gamma \in \Gamma_{i, \mathrm{fin}})$,
$\gamma (e_i) \perp e_i \ (\gamma \in \Gamma_i \cap \Gamma_{i, \mathrm{fin}}^c)$ and $e_i$ is $\Lambda_i$-invariant. The function valued measure satisfies $\E^{\Lambda_i}_{X_i}(e_i) = | \Gamma_{i, \mathrm{fin}} | 1_{X_i}$ and this confirms the first equation by the identification $X_i \cong \Gamma \backslash \Sigma \cong X$. The proof for the second equation is the same.
Define $e = \prod_{i = 1}^n e_i \in L^\infty \Sigma$.
The function valued measures of $e$ with respect to $\Gamma_0$ and $G_0$ are
\begin{eqnarray}\label{Equation; FVM and Minimal Proj}
\qquad
    \E^{\Gamma_0}_X(e) &=& \prod_{i = 1}^n \E^{(i)}_X (e_i)
    = \prod_{i = 1}^n |\Gamma_{i, \mathrm{fin}}| 1_X,
     \label{Equation; FVM and Minimal Proj X}\\
    \mathfrak{E}^{G_0}_Y (e)
    &=& \prod_{i = 1}^n \mathfrak{E}^{(i)}_Y (e_i)
    = \prod_{i = 1}^n |G_{i, \mathrm{fin}}| q.
     \label{Equation; FVM and Minimal Proj Y}
\end{eqnarray}

Define subgroups $G_\mathrm{fin} \subset G$
and $\Gamma_\mathrm{fin} \subset \Gamma_\mathrm{nor} \subset \Gamma$ by
\begin{eqnarray*}
    G_\mathrm{fin} = G_0 \times \prod_{i = 1}^n G_{i, \mathrm{fin}}, \quad
    \Gamma_\mathrm{fin} = \Gamma_0 \times \prod_{i = 1}^n \Gamma_{i, \mathrm{fin}}, \quad
    \Gamma_\mathrm{nor} = \Gamma_0 \times \prod_{i = 1}^n \Gamma_{i, \mathrm{nor}}.
\end{eqnarray*}
We next claim
\begin{eqnarray}\label{Equation; Index in Sep Erg case}
    \nu (Y) = [ \Gamma : \Gamma_\mathrm{nor} ] \nu (Y_0).
\end{eqnarray}
We denote by  $q_i$ the union of $G_i$-orbit of $e_i$,
\begin{eqnarray*}
    q_i
	= \bigvee_{g \in G_i} g(e_i)
	= \bigvee_{\gamma \in \Gamma_{i, \mathrm{nor}}} \gamma(e_i).
\end{eqnarray*}
The measurable subset $Y_i \subset Y \cong G \backslash \Sigma$ corresponds
to $q_i$. We note that for $\gamma, \gamma^\prime \in \Gamma_i$, we get either
$\gamma(q_i) = \gamma^\prime(q_i)$
$(\gamma^{-1} \gamma^\prime \in \Gamma_{i,\mathrm{nor}})$ or
$\gamma(q_i) \perp \gamma^\prime(q_i)$
$(\gamma^{-1} \gamma^\prime \in
\Gamma_i \cap (\Gamma_{i,\mathrm{nor}})^c)$.
Then for $\gamma, \gamma^\prime \in \Gamma$, we get
\begin{eqnarray*}
    \gamma(q) = \gamma^\prime(q),
    \ \gamma^{-1} \gamma^\prime \in \Gamma_\mathrm{nor}, \quad
    \gamma(q) \perp \gamma^\prime(q),
    \ \gamma^{-1} \gamma^\prime \in \Gamma \cap (\Gamma_\mathrm{nor})^c.
\end{eqnarray*}
It follows that representatives $\{\gamma_\iota\}_\iota$ for $\Gamma / \Gamma_\mathrm{nor}$ give
a partition $\{\gamma_\iota q\}_\iota$ of $1_\Sigma$. Since the measurable sets
$\{\gamma_\iota Y_0 \}_\iota$ have
the same measure, we have the equation (\ref{Equation; Index in Sep Erg case}).

Suppose $G_0 = \Gamma_0 = \{1\}$. Since $|\Gamma_{i, \mathrm{fin}}|$ is finite, by (\ref{Equation; FVM and Minimal Proj X}) and (\ref{Equation; FVM and Minimal Proj Y}), we get
\begin{eqnarray*}
    |G_\mathrm{fin}| \nu(Y_0)
    = \int_Y \mathfrak{E}_Y (e) \nu
    = \int_\Sigma e d \nu
    = \int_X \mathfrak{E}_X (e) \nu
    = |\Gamma_\mathrm{fin}| \nu(X) < \infty.
\end{eqnarray*}
It follows that the subgroups $G_{i, \mathrm{fin}}$ are finite. Furthermore, the coupling index
of $\Sigma$ is given by
\begin{eqnarray*}
    [ \Gamma : G ] _{\Sigma}
    = \nu(Y) / \nu(X)
    = [\Gamma : \Gamma_\mathrm{nor} ] \nu(Y_0) / \nu(X)
    = [\Gamma : \Gamma_\mathrm{nor}] |\Gamma_\mathrm{fin}| / |G_\mathrm{fin}|.
\end{eqnarray*}
In particular, if $[ \Gamma : G ] _{\Sigma} < \infty$, then
$[\Gamma : \Gamma_\mathrm{nor}] < \infty$.
The map
$\phi_i$ gives an isomorphism between $G_i / G_{i, \mathrm{fin}}$ and $\Gamma_{i, \mathrm{nor}} / \Gamma_{i, \mathrm{fin}}$.
Theorem \ref{Theorem; Factorization by Sep Erg Coupling} was confirmed.

In turn, we suppose that $\nu(Y) < \infty$ and $G, \Gamma$ are product groups satisfying the assumptions
in Theorem \ref{Theorem; Factorization with Amenable core}.
The proof of
Theorem \ref{Theorem; Factorization with Amenable core} has shown that $\Lambda_i \preceq_{\Sigma} H_i$ and that $\Tr_{H_i}$ is a scalar multiple of $\Tr_{\Lambda_i}$.
Thus the projection $e_i$ satisfies
$0< \Tr_{H_i} (e_i) = \int_Y \mathfrak{E}^{(i)}_Y (e_i) d \nu < \infty$.
The group $G_{i, \mathrm{fin}}$ is finite as $\mathfrak{E}^{(i)}_Y (e_i) = |G_{i, \mathrm{fin}}| 1_{Y_i}$ is integrable.
The index $[ \Gamma \colon \Gamma_\mathrm{nor} ] = \nu(Y) / \nu(Y_0)$
is also finite by the equation (\ref{Equation; Index in Sep Erg case}). It follows that $\Gamma_{i, \mathrm{nor}}$ is a finite index subgroup
of $\Gamma_i$. The map
$\phi_i$ gives an isomorphism between $G_i / G_{i, \mathrm{fin}}$ and $\Gamma_{i, \mathrm{nor}} / \Gamma_{i, \mathrm{fin}}$.
This confirms Theorem
\ref{Theorem; Factorization by Sep Erg Coupling with Ame Core}.
\end{proof}

\subsection{OE Strong Rigidity Theorems}
\label{Subsection; OE Str Rigidity Thms}
\begin{definition}
Let $G$ and $\Gamma$ be arbitrary countable groups. Suppose that
$\alpha$ is a free e.m.p.\ action of $G$ on a standard probability space $X$
and that $\phi : G \rightarrow \Gamma$ be a group homomorphism with finite kernel.
Consider the $G \times \Gamma$-action $\mathfrak{A}$ defined on $\Sigma = \Gamma \times X$ by
\begin{eqnarray*}
     \mathfrak{A}(\gamma_0, g) (\gamma, x)
     = (\gamma_0 \gamma \phi(g)^{-1}, \alpha_g(x))
\end{eqnarray*}
and choose a fundamental domain $Y$ for the $G$ action.
\textbf{The induced action} $\mathrm{Ind}_G^\Gamma(\alpha, \phi)$ is a $\Gamma$-action
on $Y \cong G \backslash \Sigma$ defined by
\begin{eqnarray*}
     \gamma_0 (\mathfrak{A}(G) (\gamma, x))
     = \mathfrak{A}(G) (\gamma_0 \gamma, x).
\end{eqnarray*}
\end{definition}

The induced action is free, ergodic and measure preserving.
If the group homomorphism $\phi$ is an isomorphism, then
the induced action $\beta = \mathrm{Ind}_G^\Gamma(\alpha, \phi)$
is conjugate to the action $\alpha$. The measure of $Y$ is finite if and only if the image of $\phi$ is a finite index subgroup of $\Gamma$.

\begin{definition}
A group $\Gamma$ is said to be in $\mathcal{S}_0$ when $\Gamma \in \mathcal{S}$ and does not have
a pair of subgroups $\{1\} \neq \Gamma_\mathrm{fin} \subset \Gamma_\mathrm{nor} \subset \Gamma$
satisfying
$(1)$ $\Gamma_\mathrm{fin}$ is finite,
$(2)$ $\Gamma_\mathrm{nor}$ normalizes $\Gamma_\mathrm{fin}$,
$(3)$ $\Gamma_\mathrm{nor} \subset \Gamma$ is finite index.
\end{definition}

ICC groups satisfy these three conditions.

\begin{theorem}\label{Theorem; OE Str Rigidity for Product Group}
Let $G = \prod_{i = 1} ^n G_i$ be a product group of non-amenable groups and
let $\Gamma = \prod_{i = 1} ^n \Gamma_i$ be a product group of groups in $\mathcal{S}_0$.
Suppose that $\alpha$ is a free e.m.p.\ $G$-action on a standard probability space $X$
and that $\beta$ is a free e.m.p.\ $\Gamma$-action on a standard measure space $Y$.
If the two group actions $\alpha$ and $\beta$ are SOE
with compression constant $s \in (0, \infty]$,
$($that is, $\R_\alpha^s \cong \R_\beta$$)$, and if $\alpha$ is separately ergodic,
then there exist $\sigma \in \mathfrak{S}_n$ and
a group homomorphism from $\phi_i \colon G_{\sigma(i)} \rightarrow \Gamma_i$ with the following properties;
\begin{enumerate}
\item
The $\Gamma$-action $\beta$ is conjugate to the induced action $\mathrm{Ind}_G^\Gamma(\alpha, \phi)$, where
$\phi$ is the group homomorphism from $G$ to $\Gamma$ given by
$\phi((g_i)_{\sigma(i)}) = (\phi_i(g_i))$.
\item
The compression constant $s$ satisfies
\begin{eqnarray*}
s = \prod_{i = 1}^n
\frac{[ \Gamma_i \colon \mathrm{image}(\phi_i)] }{ | \ker(\phi_i) |}.
\end{eqnarray*}
If $s < \infty$, then $[ \Gamma_i \colon \mathrm{image}(\phi_i)] < \infty$ and $G_{\sigma(i)}, \Gamma_i$ are commensurable up to finite kernel.
\end{enumerate}
\end{theorem}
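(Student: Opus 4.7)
The plan is to translate the stable orbit equivalence into the measurable embedding framework and then apply the separately ergodic factorization already established in Theorem \ref{Theorem; Factorization by Sep Erg Coupling}. By Lemma \ref{Lemma; MEm and SOE}, the SOE between $\alpha$ and $\beta$ yields an ergodic measurable embedding $\Sigma = \R_\beta \cap (X \times Y)$ of $G$ into $\Gamma$ of coupling index $s$, where $G$ acts via the first coordinate and $\Gamma$ via the second. Separate ergodicity of $\alpha$ transfers to $\Sigma$: under $X \cong \Gamma\backslash\Sigma$ the $H_i$-action agrees with $\alpha|_{H_i}$, and since $\Gamma$ acts freely on $\Sigma$ with $X$ as fundamental domain, any $H_i \times \Gamma$-invariant subset of $\Sigma$ descends to an $H_i$-invariant subset of $X$ and is therefore null or co-null. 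Applying Theorem \ref{Theorem; Factorization by Sep Erg Coupling} produces $\sigma \in \mathfrak{S}_n$, finite normal subgroups $G_{\sigma(i),\mathrm{fin}} \subset G_{\sigma(i)}$, subgroups $\Gamma_{i,\mathrm{fin}} \subset \Gamma_{i,\mathrm{nor}} \subset \Gamma_i$ with $\Gamma_{i,\mathrm{fin}}$ finite and normalized by $\Gamma_{i,\mathrm{nor}}$, an isomorphism $\widetilde{\phi}_i \colon G_{\sigma(i)}/G_{\sigma(i),\mathrm{fin}} \to \Gamma_{i,\mathrm{nor}}/\Gamma_{i,\mathrm{fin}}$, and the coupling index formula $s = \prod_i |\Gamma_{i,\mathrm{fin}}| [\Gamma_i : \Gamma_{i,\mathrm{nor}}] / |G_{\sigma(i),\mathrm{fin}}|$.

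Next I invoke the $\mathcal{S}_0$ hypothesis on each $\Gamma_i$ to force $\Gamma_{i,\mathrm{fin}} = \{1\}$. When $s < \infty$ this is immediate: Theorem \ref{Theorem; Factorization by Sep Erg Coupling} then guarantees $[\Gamma_i : \Gamma_{i,\mathrm{nor}}] < \infty$, so the pair $(\Gamma_{i,\mathrm{fin}}, \Gamma_{i,\mathrm{nor}})$ would contradict the definition of $\mathcal{S}_0$ unless $\Gamma_{i,\mathrm{fin}} = \{1\}$. For $s = \infty$ I plan to reduce to the finite case by the stabilizing trick used in the proof of Theorem \ref{Theorem; Non embeddability implies amenability}: replace $\Gamma$ and $\Sigma$ by $\Gamma \times \Z/k\Z$ and $\Sigma \times \Z/k\Z$ for suitable $k$, apply the finite case coordinate-wise, and descend, noting that the extracted subgroups live in the $\Gamma$-factor. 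With $\Gamma_{i,\mathrm{fin}}$ trivial, define
\begin{equation*}
\phi_i \colon G_{\sigma(i)} \twoheadrightarrow G_{\sigma(i)}/G_{\sigma(i),\mathrm{fin}} \xrightarrow{\widetilde{\phi}_i} \Gamma_{i,\mathrm{nor}} \hookrightarrow \Gamma_i,
\end{equation*}
so that $\ker \phi_i = G_{\sigma(i),\mathrm{fin}}$ and $\mathrm{image}(\phi_i) = \Gamma_{i,\mathrm{nor}}$; assemble $\phi = \prod \phi_i : G \to \Gamma$. The compression constant formula then reduces to the stated one, and when $s < \infty$ the commensurability up to finite kernel follows since $\phi_i$ has finite kernel and finite-index image.

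It remains to realize $\beta$ explicitly as $\mathrm{Ind}_G^\Gamma(\alpha, \phi)$. Take the minimal projection $e = \prod_i e_i$ produced in the proof of Theorem \ref{Theorem; Factorization by Sep Erg Coupling}. The identity $\E_X(e) = \prod_i |\Gamma_{i,\mathrm{fin}}|\,1_X = 1_X$ shows that the measurable subset $E \subset \Sigma$ corresponding to $e$ is a $\Gamma$-fundamental domain, giving a bijection $\Gamma \times E \to \Sigma$, $(\gamma,\xi) \mapsto \gamma\xi$, that intertwines left $\Gamma$-translation with the $\Gamma$-action on $\Sigma$. The key point is that $\widetilde{\phi}_i$ was defined by the stabilizer identity $g_i \cdot e_i = \phi_i(g_i)^{-1} \cdot e_i$ on the finite set of minimal projections $\mathcal{P}_i$, so the pulled back $G$-action on $\Gamma \times E$ is precisely $g(\gamma,\xi) = (\gamma \phi(g)^{-1}, \alpha_g(\xi))$ after identifying $E$ with the $G$-fundamental domain $Y_0 \subset Y$ via the $|\ker\phi|$-to-$1$ quotient (this identification is controlled by the formula $\E_Y(e) = |\ker\phi|\,\chi(Y_0)$). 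Passing to the $G$-quotient yields the $\Gamma$-equivariant isomorphism $Y \cong G\backslash(\Gamma \times X)$ realizing $\beta$ as the induced action. I expect the main obstacle to lie in this final bookkeeping step: verifying rigorously that the pulled back $G$-action on $\Gamma \times E$ matches the induced-action formula requires carefully tracking the commuting $G_{\sigma(i)}$- and $\Gamma_i$-actions on $\mathcal{P}_i$ and how they transport between the fundamental domains $E$, $X$, and $Y_0$.
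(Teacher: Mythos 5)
Your route is the same as the paper's: pass from the SOE to the separately ergodic measurable embedding $\Sigma = \R \cap (X \times Y)$, apply Theorem \ref{Theorem; Factorization by Sep Erg Coupling}, use the $\mathcal{S}_0$ hypothesis to force $\Gamma_{i,\mathrm{fin}} = \{1\}$, read $\phi_i$ off the stabilizer identity $\phi_i(g)(e_i) = g^{-1}(e_i)$, and use the minimal projection $e = \prod_i e_i$ with $\E_X(e) = 1_X$ to identify $\Sigma$ with $\Gamma \times X$ and recognize $\beta$ as $\mathrm{Ind}_G^\Gamma(\alpha,\phi)$. The final step you flag as the ``main obstacle'' is in fact the easy part and is done in the paper in a few lines: the computation $g(f\,\gamma(e)) = \alpha_g(f)\,\gamma\, g(e) = \alpha_g(f)\,\gamma\,\phi(g)^{-1}(e)$ shows the pulled-back $G$-action on $\Gamma \times X$ is $g(\gamma,x) = (\gamma\phi(g)^{-1},\alpha_g(x))$, and $Y \cong G\backslash\Sigma$ as a $\Gamma$-space finishes assertion (1); assertion (2) is the index formula of Theorem \ref{Theorem; Factorization by Sep Erg Coupling} with $\Gamma_{i,\mathrm{fin}}$ trivial, exactly as you say.

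The step that would fail is your reduction of the case $s = \infty$. The stabilization used in the proof of Theorem \ref{Theorem; Non embeddability implies amenability} \emph{multiplies} the coupling index: passing to $\Gamma \times \Z/k\Z$ acting on $\Sigma \times \Z/k\Z$ gives $[\Gamma \times \Z/k\Z : G] = k\,[\Gamma : G]_\Sigma$ (and stabilizing on the $G$ side divides by $k$), so it is used there to raise an index $< 1$ up to $\ge 1$; no finite $k$ can turn an infinite coupling index into a finite one, and the reduction of $s = \infty$ to $s < \infty$ collapses. Indeed, when $s = \infty$ the image of $\phi_i$ is allowed to have infinite index in $\Gamma_i$, so no reduction to a finite-index situation should be expected; what genuinely needs justification there is $\Gamma_{i,\mathrm{fin}} = \{1\}$, and the definition of $\mathcal{S}_0$ only excludes nontrivial finite subgroups normalized by a \emph{finite-index} subgroup, so one must know $[\Gamma_i : \Gamma_{i,\mathrm{nor}}] < \infty$ or argue some other way. (At the corresponding point the paper simply asserts that $\Gamma_{i,\mathrm{nor}} \subset \Gamma_i$ has finite index; the ME-coupling clause of Theorem \ref{Theorem; Factorization by Sep Erg Coupling} delivers this only when $s < \infty$.) As written, your proposal proves the theorem for $s < \infty$ but leaves the $s = \infty$ case open, and the specific trick you invoke cannot close it.
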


\begin{proof}
Let $\R$ be a type $\mathrm{II}$ relation on a standard measure space $(Z, \nu)$, which gives
SOE between $\alpha$ and $\beta$. Namely, $X, Y \subset Z$ be measurable subsets with
$\mu(X) = 1, \mu(Y) = s$ and that $\R_\alpha = \R \cap (X \times X)$,
$\R_\beta = \R \cap (Y \times Y)$. We consider that the measure space $\Sigma = \R \cap (X \times Y)$
as a measurable embedding of $G$ into $\Gamma$ and that $X$ is a separately ergodic $G$-space. Then the embedding $\Sigma$ is separately ergodic.

We use notations in Framework \ref{Framework; Factorization of Direct Products}.
For the simplicity for notations, we assume that $H_i \preceq_\Sigma \Lambda_i$.
Let $e_i \in (L^\infty \Sigma)^{H_i \times \Lambda_i}$ be a minimal projection and let
$G_{i, \mathrm{fin}} \subset G_i$ and  $\Gamma_{i, \mathrm{fin}} \subset \Gamma_{i, \mathrm{nor}} \subset \Gamma_i$ be
subgroups given in the previous proof. The subgroup $\Gamma_{i, \mathrm{fin}}$
is finite and the inclusion $\Gamma_{i, \mathrm{nor}} \subset \Gamma_i$ has finite index.
The condition $\Gamma \in \mathcal{S}_0$ means $\Gamma_{i, \mathrm{fin}} = \{1\}$.
We get a surjective group homomorphism $\phi_i
\colon G_i \rightarrow \Gamma_{i, \mathrm{nor}}$ with kernel $G_{i, \mathrm{fin}}$ by $\phi(g) e_i = g^{-1} e_i$.
Defining $\phi
\colon G \rightarrow \Gamma$ by $\phi((g_i)) = (\phi_i(\gamma_i))$,
we get $\phi(g) e = g^{-1} e$ for $g \in G$.
By using (\ref{Equation; FVM and Minimal Proj X}), the projection $e = \prod_{i = 1}^n e_i$ satisfies
$\E_X(e) = \prod_{i =1}^n |\Gamma_{i, \mathrm{fin}}| 1_X = 1_X$.
We identify the measurable set $X$ and the support of $e$. We also identify the
measurable set $\Sigma$ and $\Gamma \times X$ by
$\Gamma \times X \ni (\gamma, x) \mapsto \gamma (x) \in \Sigma$.
The $G$-action on $L^\infty \Sigma$ satisfies
\begin{eqnarray*}
    g (f \gamma(e)) = \alpha_g (f) \gamma g (e)
    = \alpha_g (f) \gamma \phi(g)^{-1}(e),
    \quad f \in L^\infty X \cong (L^\infty \Sigma)^{\Gamma}.
\end{eqnarray*}
It follows that the $G$-action on $\Gamma \times X$ can be written as
\begin{eqnarray*}
    g (\gamma, x) = (\gamma \phi(g)^{-1}, \alpha_g(x)), \quad
    \gamma \in \Gamma, {\rm \ a.e.\ } x \in X.
\end{eqnarray*}
Since $Y$ is isomorphic to $G \backslash \Sigma$ as a $\Gamma$-space,
the $\Gamma$-action $\beta$ is isomorphic to the action $\mathrm{Ind}_G^\Gamma(\alpha, \phi)$.
\end{proof}

\begin{corollary}
\label{Corollary; Trivial Fundamental Group}
Let $G = \prod_{i = 1} ^n G_i$ and $\Gamma = \prod_{i = 1} ^n \Gamma_i$
be product groups of non-amenable groups in $\mathcal{S}_0$.
Suppose that $\alpha$ is a free e.m.p.\ $G$-action on a standard probability space $X$
and that $\beta$ is a free e.m.p.\ $\Gamma$-action on a standard finite measure space $Y$.
If the two group actions $\alpha$ and $\beta$ are stably orbit equivalent with constant $s \in (0, \infty)$, that is $\R_\alpha^s \cong \R_\beta$, and if $\alpha$ and $\beta$ are separately ergodic,
then $s = 1$.
In particular, the fundamental group $\mathcal{F}(\R_\alpha)$ is $\{1\}$.
\end{corollary}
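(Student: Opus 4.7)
The plan is to apply Theorem \ref{Theorem; OE Str Rigidity for Product Group} in both directions of the stable orbit equivalence, exploiting that both families $\{G_i\}$ and $\{\Gamma_i\}$ consist of groups in $\mathcal{S}_0$.

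First, since $\alpha$ is separately ergodic and each $\Gamma_i \in \mathcal{S}_0 \subset \mathcal{S}$, Theorem \ref{Theorem; OE Str Rigidity for Product Group} supplies a permutation $\sigma \in \mathfrak{S}_n$ and group homomorphisms $\phi_i \colon G_{\sigma(i)} \to \Gamma_i$ with finite kernel and finite-index image, satisfying
$$s = \prod_{i=1}^n \frac{[\Gamma_i \colon \mathrm{image}(\phi_i)]}{|\ker(\phi_i)|}.$$
The key step is to observe that $\ker(\phi_i)$ is a finite \emph{normal} subgroup of $G_{\sigma(i)}$. The pair $(\Gamma_{\mathrm{fin}}, \Gamma_{\mathrm{nor}}) := (\ker(\phi_i),\, G_{\sigma(i)})$ then satisfies conditions (1)--(3) in the definition of $\mathcal{S}_0$ (the index condition being trivial). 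Since $G_{\sigma(i)} \in \mathcal{S}_0$, we are forced to conclude $\ker(\phi_i) = \{1\}$. Therefore $s = \prod_{i=1}^n [\Gamma_i \colon \mathrm{image}(\phi_i)]$ is a positive integer.

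Next, I would apply the same theorem to the inverse stable orbit equivalence $\mathcal{R}_\beta^{1/s} \cong \mathcal{R}_\alpha$, interchanging the roles of $(G, \alpha)$ and $(\Gamma, \beta)$. This is legitimate because $\beta$ is also separately ergodic by hypothesis and each $G_j \in \mathcal{S}_0$. By the symmetric argument, the resulting homomorphisms $\psi_j \colon \Gamma_{\tau(j)} \to G_j$ have trivial kernel (now using $\Gamma_{\tau(j)} \in \mathcal{S}_0$), and $1/s$ is a positive integer. Since both $s$ and $1/s$ lie in $\mathbb{Z}_{\ge 1}$, we conclude $s = 1$.

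Finally, for the fundamental group: any $t \in \mathcal{F}(\mathcal{R}_\alpha)$ is witnessed by an isomorphism $\mathcal{R}_\alpha^t \cong \mathcal{R}_\alpha$, which is exactly the setting of the corollary with $\beta = \alpha$; the assumption that $\alpha$ is separately ergodic covers both sides. Applying the preceding argument yields $t = 1$, and hence $\mathcal{F}(\mathcal{R}_\alpha) = \{1\}$. The only delicate point, beyond the direct appeal to the OE rigidity theorem, is the bookkeeping for which side's $\mathcal{S}_0$ hypothesis kills which finite kernel; once this is tracked, the conclusion is immediate.
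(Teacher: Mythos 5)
Your argument is correct and coincides with the paper's own proof: both apply Theorem \ref{Theorem; OE Str Rigidity for Product Group} in each direction of the stable orbit equivalence, use the $\mathcal{S}_0$ hypothesis to force the finite (normal) kernels $\ker(\phi_i)$ to be trivial, and conclude $s \ge 1$ and $s^{-1} \ge 1$, hence $s = 1$, with the fundamental group statement following by taking $\beta = \alpha$. The only cosmetic difference is that you note $s$ is in fact a positive integer, which the paper does not need.
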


\begin{proof}
Since the group $G_i$ is in $\mathcal{S}_0$, $G_i$ has no normal finite subgroup
other than $\{1\}$.
Thus we get
$s = \prod_{i = 1}^n [ \Gamma_i \colon \Gamma_{i, \mathrm{nor}}] \ge 1$ by Theorem.
By replacing the roles on $G$ and $\Gamma$, we also get $s^{-1} \ge 1$.
\end{proof}

\begin{corollary}
\label{Corollary; Trivial Out Group}
Let $G$ and $\Gamma$
be as in Corollary \ref{Corollary; Trivial Fundamental Group}.
Suppose that $G$ and $\Gamma$ act on a common standard probability space $Z$ by $\alpha$ and $\beta$, respectively,
in free e.m.p.\ ways.
If the two group actions $\alpha$ and $\beta$ give the same equivalence relation $\R$ on $Z$,
and if $\alpha$ and $\beta$ are separately ergodic,
then there exists a measure preserving map $\theta$ on $Z$ so that its graph is essentially included in $\R$
and that it gives conjugacy between $\alpha$ and $\beta$.
In particular, the outer automorphism group $\mathrm{Out}(\R_\alpha)$ is $\{1\}$.
\end{corollary}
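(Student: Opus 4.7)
My plan is to reduce the statement to Theorem \ref{Theorem; OE Str Rigidity for Product Group} applied to the coupling of compression constant $s = 1$ (since $\alpha$ and $\beta$ have literally the same orbits on the same space $Z$). First I regard $\Sigma = \mathcal{R} \cap (Z \times Z)$ as a measurable embedding of $G$ into $\Gamma$ with $X = Y = Z$; then $[\Gamma : G]_\Sigma = 1$ and the embedding is separately ergodic because $\alpha$ is, so the theorem yields a permutation $\sigma \in \mathfrak{S}_n$ and group homomorphisms $\phi_i \colon G_{\sigma(i)} \to \Gamma_i$ such that $\beta$ is conjugate to $\mathrm{Ind}_G^\Gamma(\alpha, \phi)$, with
\begin{eqnarray*}
    1 = s = \prod_{i = 1}^n \frac{[\Gamma_i : \mathrm{image}(\phi_i)]}{|\ker(\phi_i)|}.
\end{eqnarray*}

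Next I want to upgrade each $\phi_i$ to a group isomorphism, using $G_{\sigma(i)}, \Gamma_i \in \mathcal{S}_0$. The kernel $\ker(\phi_i)$ is the finite normal subgroup $G_{\sigma(i), \mathrm{fin}}$ appearing in the proof of Theorem \ref{Theorem; OE Str Rigidity for Product Group}; taking $\Gamma_\mathrm{nor} = G_{\sigma(i)}$ and $\Gamma_\mathrm{fin} = \ker(\phi_i)$ in the definition of $\mathcal{S}_0$ forces $\ker(\phi_i) = \{1\}$. The image is the finite-index subgroup $\Gamma_{i, \mathrm{nor}} \subset \Gamma_i$; applying $\mathcal{S}_0$ to $\Gamma_i$ (with $\Gamma_\mathrm{nor} = \Gamma_{i, \mathrm{nor}}$ and $\Gamma_\mathrm{fin} = \Gamma_{i, \mathrm{fin}}$ from the theorem) similarly yields $\Gamma_{i, \mathrm{fin}} = \{1\}$. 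Combining these, every factor on the right above is a positive integer with product $1$, forcing $[\Gamma_i : \mathrm{image}(\phi_i)] = 1$. Hence each $\phi_i$, and therefore $\phi \colon G \to \Gamma$, is an isomorphism.

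Since $\phi$ is an isomorphism, the induced action $\mathrm{Ind}_G^\Gamma(\alpha, \phi)$ on $G \backslash (\Gamma \times Z)$ is naturally identified with $\alpha$ reparametrized by $\phi^{-1}$: the $G$-orbit of $(\gamma, z)$ contains exactly one representative with $\gamma = 1$, so $G \backslash(\Gamma \times Z) \cong Z$ as a $\Gamma$-space, with $\gamma_0 \cdot z = \alpha_{\phi^{-1}(\gamma_0)}(z)$. The conjugation furnished by Theorem \ref{Theorem; OE Str Rigidity for Product Group} thus produces a measure-preserving $\theta \colon Z \to Z$ with $\theta \circ \beta_\gamma = \alpha_{\phi^{-1}(\gamma)} \circ \theta$ for all $\gamma \in \Gamma$. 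That the graph of $\theta$ is essentially contained in $\mathcal{R}$ follows from the construction inside the proof of Theorem \ref{Theorem; OE Str Rigidity for Product Group}: the identification $G \backslash \Sigma \cong \mathrm{supp}(e) \subset \Sigma \subset \mathcal{R}$ used there sends each point $z \in Z$ to some $(z', z) \in \mathcal{R}$ with $z' = \theta(z)$, so $(\theta(z), z) \in \mathcal{R}$ a.e.

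Finally, to derive $\mathrm{Out}(\mathcal{R}_\alpha) = \{1\}$, I take any $\psi \in \mathrm{Aut}(\mathcal{R}_\alpha)$ and set $\beta_g = \psi \circ \alpha_g \circ \psi^{-1}$; this is a free e.m.p.\ $G$-action on $Z$ with the same orbit relation as $\alpha$, and it is separately ergodic because $\alpha$ is. Taking $\Gamma = G$, the first part yields $\theta \in [\mathcal{R}_\alpha]$ with $\beta_g = \theta \alpha_{\phi^{-1}(g)} \theta^{-1}$ for some automorphism $\phi$ of $G$; the class of $\psi$ in $\mathrm{Out}(\mathcal{R}_\alpha)$ therefore equals that of $\theta$, which is trivial. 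The main obstacle I anticipate is the bookkeeping in the second paragraph, in particular verifying that the finite subgroups produced by the factorization machinery coincide with $\ker(\phi_i)$ and with the $\Gamma_{i, \mathrm{fin}}$ to which the $\mathcal{S}_0$ hypothesis applies; once those identifications are in place the remaining arguments are straightforward.
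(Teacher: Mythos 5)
Your reduction to Theorem \ref{Theorem; OE Str Rigidity for Product Group} with $s=1$, and the use of the $\mathcal{S}_0$ hypothesis together with the product formula to force $\ker(\phi_i)=\{1\}$ and $[\Gamma_i:\mathrm{image}(\phi_i)]=1$, is exactly the paper's strategy and that part is sound. The genuine gap is the step that places the graph of $\theta$ inside $\R$, which is the whole point of the corollary (the theorem by itself only gives an abstract conjugacy of actions). Your justification rests on the identification $G\backslash\Sigma\cong\supp(e)$, i.e.\ on $\supp(e)$ being a $G$-fundamental domain of $\Sigma=\R$, equivalently $\E_Y(e)=1_Y$. But the proof of the theorem only establishes $\E_X(e)=1_X$ (on the $G$-side it gives $\E_Y(e)=1_{Y_0}$ for a possibly smaller subset $Y_0\subset Y$), so this identification is precisely what must be proved, not assumed. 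The paper settles it by running the same minimal-projection argument with the roles of $G$ and $\Gamma$ exchanged (legitimate because the $G_i$ are also non-amenable $\mathcal{S}_0$ groups), after which $\supp(e)$ is simultaneously a $\Gamma$- and a $G$-fundamental domain of $\R$ and is therefore literally the graph of a measure-preserving map $\theta$ lying in $\R$, and the conjugacy identity is read off from $g^{-1}e=\phi(g)e$. In your set-up one can alternatively recover $\E_Y(e)=1_Y$ from $s=1$ (which forces $\prod_i[\Gamma_i:\Gamma_{i,\mathrm{nor}}]=1$, hence $\nu(Y_0)=\nu(Y)$), but some such argument has to be supplied; without it, the identifications actually used in the theorem's proof send the class of $z$ to a point of $\supp(e)$ whose second coordinate is in general not $z$, and the containment of the graph in $\R$ requires a separate orbit-tracking verification that your one-line claim glosses over.

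Your final deduction of $\mathrm{Out}(\R_\alpha)=\{1\}$ is also a non sequitur as written: from $\psi\alpha_g\psi^{-1}=\beta_g=\theta\alpha_{\phi^{-1}(g)}\theta^{-1}$ you only learn that $\theta^{-1}\psi$ conjugates $\alpha$ to $\alpha$ twisted by an automorphism of $G$; this does not place $\theta^{-1}\psi$ in the full group $[\R_\alpha]$, so the assertion that the class of $\psi$ in $\mathrm{Out}(\R_\alpha)$ equals that of $\theta$ does not follow. To be fair, the paper itself offers no argument for the ``In particular'' clause (its proof ends with the conjugacy identity), so you should at least flag this as an additional claim requiring its own justification rather than present it as immediate from the main statement.
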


\begin{proof}
We regard $\R$ as an ME coupling
between $G$ and $\Gamma$ with coupling index $1$,
letting $G$ act on the first entry and $\Gamma$ act on
the second entry.
Let $X$ be a $\Gamma$ fundamental domain and $Y$ be a $G$ fundamental domain. Although
the subset $X$ and $Y$
can be identical (for example, the diagonal set), we distinguish them.
We may assume that $H_i \preceq_{\R} \Lambda_i$.
The product $e$ of minimal projections
$e_i \in (L^\infty \R)^{H_i \times \Lambda_i}$ satisfies
$\E_X(e) = 1_X$,
since the groups $\Gamma_{i, \mathrm{fin}}$ in the proof of Theorem
\ref{Theorem; Factorization by Sep Erg Coupling} are $\{1\}$.
By replacing the roles on
$G$ and $\Gamma$, we also get $\E_Y(e) = 1_Y$.
Then there exists a measure preserving map $\theta$ on $Z$ such that
$\chi(\{ (y, \theta(y)) \ | \ y \in Z\}) = e$.

The group homomorphism
$\phi : G \rightarrow \Gamma$ given in the proof of Theorem
\ref{Theorem; OE Str Rigidity for Product Group} is bijective, since
$G_{i,\mathrm{fin}} = \{1\}$, $\prod_{i = 1}^n [ \Gamma_i \colon \Gamma_{i, \mathrm{nor}}] = 1$.
For $g \in G$, we get
\begin{eqnarray*}
            g^{-1} e
        &=& \chi(\{ \alpha(g^{-1})(y), \theta(y)) \ | \ y \in Z\})
         =  \chi(\{ y, \theta(\alpha(g)(y)) \ | \ y \in Z\}),\\
            \phi(g) e
        &=& \chi(\{ (y, \beta(\phi(g)) \theta(y)) \ | \ y \in Z\}).
\end{eqnarray*}
Since $g^{-1} e = \phi(g) e$, there exists a co-mull subset $Z^\prime \subset Z$
such that
\begin{eqnarray*}
        \theta(\alpha(g)(y))
        = \beta(\phi(g))\theta(y), \quad y \in Z^\prime, g \in G,
\end{eqnarray*}
\end{proof}

\begin{theorem}\label{Theorem; OE Str Rigidity for Product Group with Ame Core}
Let $G_0$ $($resp.\ $\Gamma_0$$)$ be an amenable group and let
$G_i\ (1 \le i \le n)$ $($resp.\ $\Gamma_i$$)$ be non-amenable groups in $\mathcal{S}$ with no finite normal subgroup.
Denote $G = G_0 \times \prod_{i = 1} ^ n G_i$
$($resp.\ $\Gamma = \Gamma_0 \times \prod_{i = 1} ^ n \Gamma_i$$)$.
Suppose that $\alpha$ $($resp.\ $\beta$$)$ is a free m.p.\ $G$-action $($resp.\ $\Gamma$-action$)$
on a standard probability space $X$ $($resp.\ $Y$$)$ on which $G_0$
acts $($resp.\ $\Gamma_0$$)$ ergodically.
If the two group actions $\alpha$ and $\beta$ are orbit equivalent,
then there exist $\sigma \in \mathfrak{S}_n$,
group isomorphisms $\phi_i : G_{\sigma(i)} \rightarrow \Gamma_i$ and measure preserving map
$\theta : X \rightarrow Y$ which satisfy:

Define $\phi$ by $\phi \colon \prod_{i=1}^n G_i \ni (g_i)_{\sigma(i)} \mapsto (\phi_i(g_i))_i \in \prod_{i=1}^n \Gamma_i$.
For almost every $x \in X$ and every $g \in \prod_{i = 1} ^ n G_i$,
$\theta(\alpha(g G_0) x) = \beta(\phi(g) \Gamma_0) \theta(x)$.
\end{theorem}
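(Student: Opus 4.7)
Via the orbit equivalence between $\alpha$ and $\beta$, realize them on a common standard probability space $Z$ with $\R_\alpha = \R_\beta =: \R$ and view $\Sigma := \R$ as an ergodic ME coupling of $G$ with $\Gamma$ of coupling index $1$ ($G$ acting on the first entry, $\Gamma$ on the second). Since $G_0 \subset H_i$ acts ergodically on $Z$ for every $i$, so does $H_i$, so $\Sigma$ is separately ergodic from the $G$-side; symmetrically, $\Gamma_0 \subset \Lambda_j$ being ergodic makes $\Sigma$ separately ergodic from the $\Gamma$-side as well. Applying Theorem~\ref{Theorem; Factorization by Sep Erg Coupling with Ame Core} yields $m=n$, a permutation $\sigma$ (which I relabel to the identity), minimal projections $e_i \in (L^\infty \Sigma)^{H_i \times \Lambda_i}$, and subgroups $G_{i,\mathrm{fin}} \subset G_i$ (finite and normal) and $\Gamma_{i,\mathrm{fin}} \subset \Gamma_{i,\mathrm{nor}} \subset \Gamma_i$ (with $|\Gamma_{i,\mathrm{fin}}|<\infty$ and $[\Gamma_i:\Gamma_{i,\mathrm{nor}}]<\infty$), together with an isomorphism $\phi_i : G_i/G_{i,\mathrm{fin}} \to \Gamma_{i,\mathrm{nor}}/\Gamma_{i,\mathrm{fin}}$ characterized by $\phi_i(g)(e_i) = g^{-1}(e_i)$.

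Next I upgrade this commensurability to a genuine isomorphism. The hypothesis on $G_i$ gives $G_{i,\mathrm{fin}} = \{1\}$ immediately. To kill $\Gamma_{i,\mathrm{fin}}$, swap the roles of $G$ and $\Gamma$ and apply the same theorem again: this produces a minimal projection $f_i$ in the \emph{same} algebra $(L^\infty \Sigma)^{H_i \times \Lambda_i}$ whose $\Gamma_i$-stabilizer is finite and normal in $\Gamma_i$, hence trivial by hypothesis on $\Gamma_i$. Since $\Gamma_i$ acts transitively on the set of minimal projections of this algebra (as in the proof of Theorem~\ref{Theorem; Factorization by Sep Erg Coupling}), the stabilizer of $e_i$ is conjugate to that of $f_i$, forcing $\Gamma_{i,\mathrm{fin}} = \{1\}$. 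Finally, the coupling index equaling $1$ gives $\nu(X) = \nu(Y)$; combining this with $\nu(Y) = [\Gamma:\Gamma_\mathrm{nor}]\nu(Y_0)$ and the identities $\nu(X) = \int_X \E^{\Gamma_0}_X(e)\,d\nu = \int_\Sigma e\,d\nu = \int_Y \E^{G_0}_Y(e)\,d\nu = \nu(Y_0)$ (valid since all finite factors are trivial, by the analogues of (\ref{Equation; FVM and Minimal Proj X}) and (\ref{Equation; FVM and Minimal Proj Y})), I conclude $[\Gamma:\Gamma_\mathrm{nor}]=1$, hence $\Gamma_{i,\mathrm{nor}}=\Gamma_i$. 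Thus each $\phi_i:G_i\to\Gamma_i$ is a genuine group isomorphism.

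With these $\phi_i$ in hand, set $\phi((g_i)) = (\phi_i(g_i))$ and let $\Omega \subset \Sigma$ correspond to $e := \prod_i e_i$. Because $g(e_i) = \phi_i(g)^{-1}(e_i)$ and $\Gamma_k \subset \Lambda_i$ for $k\neq i$, the set $\Omega$ is invariant under $G_0 \times \Gamma_0$ and under the skew-diagonal $\{(g,\phi(g)):g\in\prod_iG_i\} \subset G\times\Gamma$. Moreover $\E^{\Gamma_0}_X(e) = 1_X$ and $\E^{G_0}_Y(e) = 1_Y$ (all finite factors trivial and $\Gamma_\mathrm{nor} = \Gamma$), which says exactly that for almost every $x \in X\cong Z$ the fiber $\{w\in Z:(x,w)\in\Omega\}$ is a single $\beta(\Gamma_0)$-orbit, and symmetrically for the $Y$-projection. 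Hence $\Omega$ induces a measure preserving bijection $\bar\theta: X/G_0 \to Y/\Gamma_0$; choosing a measurable selector $\theta:Z\to Z$ with $(x,\theta(x))\in\Omega$ and combining the skew-invariance of $\Omega$ with the normality of $G_0\triangleleft G$ and $\Gamma_0\triangleleft\Gamma$ delivers the identity $\theta(\alpha(gG_0)x) = \beta(\phi(g)\Gamma_0)\theta(x)$.

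The main difficulty is the second paragraph, namely upgrading the commensurability output of Theorem~\ref{Theorem; Factorization by Sep Erg Coupling with Ame Core} to a genuine isomorphism $\phi_i:G_i\to\Gamma_i$: killing $G_{i,\mathrm{fin}}$ is free from the hypothesis, but killing $\Gamma_{i,\mathrm{fin}}$ truly uses the $\Gamma$-side separate ergodicity supplied by $\Gamma_0$ together with the transitivity of $\Gamma_i$ on minimal projections, and the equality $\Gamma_{i,\mathrm{nor}}=\Gamma_i$ crucially exploits that the coupling index equals $1$, not merely that it is finite.
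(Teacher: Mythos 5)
Your overall route is the paper's (realize the OE as an index-one ME coupling, use separate ergodicity on both sides, pass to minimal projections $e_i \in (L^\infty\Sigma)^{H_i\times\Lambda_i}$, build $\phi_i$, then build $\theta$), and your treatment of the finite groups is fine: $G_{i,\mathrm{fin}}=\{1\}$ by hypothesis, and killing $\Gamma_{i,\mathrm{fin}}$ by the swapped application plus conjugacy of stabilizers under the transitive $\Gamma_i$-action is essentially the paper's symmetry argument. The first genuine gap is your derivation of $\Gamma_{i,\mathrm{nor}}=\Gamma_i$. The quantity $\int_X \E^{\Gamma_0}_X(e)\,d\nu$ equals $\Tr_{\Gamma_0}(e)$, i.e.\ the integral of $e$ over a $\Gamma_0$-fundamental domain of $\Sigma$, not $\int_\Sigma e\,d\nu$; likewise $\int_Y \E^{G_0}_Y(e)\,d\nu=\Tr_{G_0}(e)$. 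Since $G_0,\Gamma_0$ are only amenable and may well be infinite, $\int_\Sigma e\,d\nu$ is in general infinite and your chain of equalities collapses; triviality of the finite groups does not rescue it, and $\Tr_{\Gamma_0}(e)=\Tr_{G_0}(e)$ is not known -- the Radon--Nikodym argument of Proposition \ref{Proposition; ME and MEm passes to Quotient Groups} only gives proportionality of the two traces on $(L^\infty\Sigma)^{G_0\times\Gamma_0}$ with an undetermined constant. So $[\Gamma:\Gamma_\mathrm{nor}]=1$ is not established, and your closing remark that this step ``crucially exploits that the coupling index equals $1$'' points at the wrong mechanism. The correct argument (and the paper's) is structural: the swapped application also makes the $G_i$-action on the minimal projections of $(L^\infty\Sigma)^{H_i\times\Lambda_i}$ transitive, and then $\Gamma_{i,\mathrm{nor}}=\Gamma_i$ is immediate, because for every $\gamma\in\Gamma_i$ the atom $\gamma(e_i)$ already lies in the $G_i$-orbit of $e_i$. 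You have all the ingredients for this; the index being $1$ is not what forces surjectivity of $\phi_i$ (note also that $\E^{G_0}_Y(e)=1_Y$, which you use later, itself depends on this point).

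The second gap is the construction of $\theta$. Knowing that a.e.\ horizontal fiber of $\Omega$ is a single $\beta(\Gamma_0)$-orbit and a.e.\ vertical fiber a single $\alpha(G_0)$-orbit does not let you simply ``choose a measurable selector'': an arbitrary selector inside $\Omega\subset\R$ need not be essentially injective nor satisfy $\theta_*\mu=\mu$, and since $G_0$ acts ergodically the quotient $X/G_0$ carries no useful measure, so ``$\bar\theta$ measure preserving'' has no content from which measure preservation of $\theta$ could be inherited. Moreover the asserted identity is an equality of sets; a general selector only gives the inclusion $\theta(\alpha(gG_0)x)\subset\beta(\phi(g)\Gamma_0)\theta(x)$, not the reverse. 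This is exactly what the paper's maximality argument supplies: take $e_0\le e$ maximal with $\E_X(e_0)\le 1_X$ and $\E_Y(e_0)\le 1_Y$, and use ergodicity of the $G_0$-action to force $\E_X(e_0)=1_X$ and $\E_Y(e_0)=1_Y$; then $e_0$ is the graph of a measure preserving $\theta$, and since $e$ is the union of the $G_0$-translates (equivalently the $\Gamma_0$-translates) of $e_0$, the two-sided identity $\theta(\alpha(gG_0)x)=\beta(\phi(g)\Gamma_0)\theta(x)$ follows from $g^{-1}e=\phi(g)e$. Without some such selection/exhaustion argument your final step is incomplete.
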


\begin{proof}
We may assume that both of $\alpha$ and $\beta$ are actions on a standard probability space
$Z$ and that they give the same equivalence relation $\Sigma$. We regard $\Sigma$ as an ME coupling
between $G$ and $\Gamma$ with coupling index $1$, letting $G$ act on the first entry and $\Gamma$ act on
the second entry. We choose a $G$ fundamental domain $X$ and $\Gamma$ fundamental domain $Y$.
Define a bijection $\sigma$ by
$H_{\sigma(i)} \preceq_{\Sigma} \Lambda_i$ and $\Lambda_i \preceq_{\Sigma} H_{\sigma(i)}$.
For simplicity, we assume that $\sigma = \mathrm{id}$.

By the previous subsection, $(L^\infty \Sigma)^{H_i \times \Lambda_i}$ is atomic and the
$\Gamma_i$-action on the set of minimal projections is transitive. Since the assumptions are
symmetric on $G$ and $\Gamma$, the $G_i$-action is also transitive. It follows that
for a minimal projection $e_i \in (L^\infty \Sigma)^{H_i \times \Lambda_i}$, the stabilizers
$\Gamma_{i, \mathrm{fin}} \subset \Gamma_i$ and $G_{i, \mathrm{fin}} \subset G_i$ are finite normal subgroups. Thus they are
$\{1\}$. Let $\phi_i : G_i \rightarrow \Gamma_i$ be the group isomorphism given by $g^{-1} e_i = \phi_i(g) e_i$.
The product of projections $e = \prod_{i = 1}^n e_i$ satisfies
$\E^{G_0}_X (e) = \prod_{i = 1}^n |G_{i, \mathrm{fin}}| 1_X = 1_X$.
By replacing the roles on $G$ and $\Gamma$, we also get
$\E^{\Gamma_0}_Y(e) = 1_Y$.

We claim that there exists a measure preserving map $\theta$ on $X$ whose graph
is included in the support of $e$.
Let $e_0$ be maximal among projections dominated by $e$ with the properties
$\E_X(e_0) \le 1_X, \E_Y(e_0) \le 1_Y$.
Suppose that $\int_X \E_X(e_0) d \nu = \int_Y \E_Y(e_0) d \nu = \nu(e_0) < 1$.
By replacing $X$ and $Y$, we may assume that $e_0 \le \chi(X) \le e$ and $e_0 \le \chi(Y) \le e$.
There exists a non-null measurable subset $Y_0 \subset Y$ so that $\chi(Y_0)$ is perpendicular with $e_0$ and that the graph of $Y_0$ gives partial isomorphism on $Z$. Since the $G_0$-action on $\Gamma \backslash Z$ is ergodic,
replacing $Y_0$ with a smaller non-null measurable subset, there exists
$g \in G_0$ satisfying
$
\alpha_g(\E_X (Y_0)) \perp \E_X(e_0)$.
Then the projection $e_0 + g_0 \chi(Y_0)$ is dominated by $e$ and satisfies
\begin{eqnarray*}
    \E_X(e_0 + g \chi(Y_0)) &=& \E_X(e_0) + \alpha_g(\E_X(\chi(Y_0))) \le 1_X,\\
    \quad \E_Y(e_0 + g \chi(Y_0)) &=& \E_Y(e_0) + 1 |_{Y_0} \le 1_Y.
\end{eqnarray*}
This contradicts the maximality of $e_0$. Thus we get $\E_X(e_0) = 1_X$ and $\E_Y(e_0) = 1_Y$.
This means that the projection $e_0$ corresponds to a graph of a measure preserving map
$\theta : Z \rightarrow Z$, that is, $\chi(\{ (x, \theta(x)) \ | \ x \in Z\}) = e_0$.
Then for $g \in \prod_{i = 1}^n G_i$, we have the following equality of projections:
\begin{eqnarray*}
    g^{-1} e
    &=& \sum_{g_0 \in G_0} g^{-1}  g_0^{-1} e_0
     = \chi(\{ (\alpha(g_0^{-1} g^{-1})(x), \theta(x)) \ | \ x \in Z, g_0 \in G_0 \})\\
    &=&  \chi(\{ (x, \theta \alpha(g g_0)(x)) \ | \ x \in Z, g_0 \in G_0 \}),\\
    \phi(g) e
    &=& \sum_{\gamma_0 \in G_0} \phi(g) \gamma_0 e_0
    = \chi(\{ (x, \beta(\phi(g) \gamma_0) \theta(x)) \ | \ x \in Z, \gamma_0 \in \Gamma_0\}).
\end{eqnarray*}
Since $g^{-1} e = \phi(g) e$, it follows that
$
\theta(\alpha(g G_0) x) = \beta(\phi(g) \Gamma_0) \theta(x),\ {\rm a.e.\ } x \in Z$.
\end{proof}

\subsection{OE Super Rigidity Type Theorems}
\label{Subsection; OE Super Rigidity Type Theorems}
\begin{theorem}\label{Theorem; ME SR For Direct Product}
Let $\Gamma = \prod_{i = 1}^n \Gamma_i$ be a direct product group of non-amenable ICC groups in $\mathcal{S}$ and let $G$ be an arbitrary countable group.
\begin{enumerate}
\item
Suppose that there exists an ME coupling $\Sigma$ of $G$ with $\Gamma$.
If the $\Gamma$-action on $G \backslash \Sigma$ is separately ergodic and if
the $G$-action on $\Gamma \backslash \Sigma$ is mildly mixing, then there exists a group homomorphism $\phi \colon G \rightarrow \Gamma$ with finite kernel and the coupling index satisfies $[\Gamma : \phi(G)] = |\ker(\phi)|[\Gamma : G]_\Sigma$.
\item
Suppose that there exist a free separately ergodic m.p.~$\Gamma$-action on a standard probability space
$X$ and a free mildly mixing m.p.~$G$-action on a standard finite measure space $Y$.
If the actions $\alpha$ and $\beta$ are SOE with finite constant, then there exists a homomorphism $\phi \colon G \rightarrow \Gamma$ with finite kernel and finite index image such that
the induced action $\mathrm{Ind}_G^\Gamma(\alpha, \phi)$ is conjugate to $\beta$.
\end{enumerate}
\end{theorem}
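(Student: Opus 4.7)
The plan is to follow the blueprint of Theorem \ref{Theorem; Factorization by Sep Erg Coupling}, with the separate ergodicity of the $G$-action there replaced by the mild mixing of the $G$-action on $\Gamma\backslash\Sigma$ used here. First I would reduce (2) to (1) using Lemma \ref{Lemma; MEm and SOE}: an SOE with finite compression constant $s$ produces an ergodic ME coupling $\Sigma$ of $G$ with $\Gamma$ of coupling index $s$ in which $\alpha$ is realised as the $\Gamma$-action on $G\backslash\Sigma$ (hence separately ergodic) and $\beta$ as the $G$-action on $\Gamma\backslash\Sigma$ (hence mildly mixing). Once (1) produces the homomorphism $\phi$, the measurable trivialisation $\Sigma\cong\Gamma\times X$ obtained below identifies $\beta$ with $\mathrm{Ind}_G^\Gamma(\alpha,\phi)$ exactly as at the end of the proof of Theorem \ref{Theorem; OE Str Rigidity for Product Group}, and the finite-index conclusion $[\Gamma\colon\phi(G)]<\infty$ follows from the index equation of (1) combined with $[\Gamma\colon G]_\Sigma<\infty$.

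For part (1), fix a $\Gamma$-fundamental domain $X\subset\Sigma$ and the associated ME cocycle $c=(c_1,\dots,c_n)\colon G\times X\to\Gamma=\prod_{j}\Gamma_j$. Bi-exactness of $\Gamma$ relative to $\{\Lambda_1,\dots,\Lambda_n\}$ (Lemma \ref{Lemma; Direct Product}), combined with separate ergodicity of $\alpha$, makes the factorisation machinery of Section \ref{Section; Factorization of Product Groups} directly available on the $\Gamma$-side: the target is to show that each $c_j$ is cohomologous to a group homomorphism $\phi_j\colon G\to\Gamma_j$, which then assembles into $\phi=(\phi_j)$ and, through the resulting $G$-equivariant measurable section of the $\Gamma$-action, identifies the $G$-action on $\Sigma$ with the $\phi$-action of $G$ on $\Gamma\times X$.

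The heart of the argument, and the main obstacle, is the $c_j$-to-$\phi_j$ step. I would imitate the convex-hull construction from the proof of Theorem \ref{Theorem; Factorization by Sep Erg Coupling}: fix $j$, consider the $G$-orbit of $\chi(X)\in(L^\infty\Sigma)^{\Lambda_j}$ inside $L^2((L^\infty\Sigma)^{\Lambda_j},\Tr_j)$, and let $\xi_j$ be the unique minimum-$2$-norm element of its closed convex hull, which is automatically $G$-invariant. In the proof of Theorem \ref{Theorem; Factorization by Sep Erg Coupling}, separate ergodicity of the $G$-action forced $\xi_j$ to come from a single minimal projection of $(L^\infty\Sigma)^{\Lambda_j}$; here, with $G$ arbitrary, I would instead exploit mild mixing of $G\curvearrowright\Gamma\backslash\Sigma$ to argue that any non-trivial level set of $\xi_j$ would descend to a $G$-invariant, finite-measure recurrent subset of $\Gamma\backslash\Sigma$, contradicting mild mixing. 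This forces $\xi_j$ to be a scalar multiple of a single projection whose $\Gamma_j$-stabiliser $\Gamma_{j,\mathrm{fin}}$ is a finite normal subgroup; the ICC hypothesis on $\Gamma_j$ (which places it in $\mathcal{S}_0$) then kills $\Gamma_{j,\mathrm{fin}}$, and the homomorphism is defined by $\phi_j(g)\chi(X)=g^{-1}\chi(X)$. Assembling, $\phi=(\phi_1,\dots,\phi_n)\colon G\to\Gamma$ is a group homomorphism; $\ker\phi$ is finite because a kernel element fixes the section $\chi(X)$ and freeness of the $G$-action bounds $|\ker\phi|$ by the coupling index; and the identity $[\Gamma\colon\phi(G)]=|\ker\phi|\cdot[\Gamma\colon G]_\Sigma$ drops out of the fundamental-domain count underlying equation (\ref{Equation; Index in Sep Erg case}).
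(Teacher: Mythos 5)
Your central construction in part (1) does not produce anything: the minimum-norm element $\xi_j$ of the closed convex hull of the $G$-orbit of $\chi(\Lambda_j X)$ (note that $\chi(X)$ itself is not even $\Lambda_j$-invariant) is necessarily zero. Indeed, $\xi_j$ is $G\times\Lambda_j$-invariant, so by Lemma \ref{Lemma; FVM and MEm} any non-trivial level set of $\xi_j$ would be a partial embedding of the \emph{whole} group $G$ into $\Lambda_j$, giving $\Gamma\sim_\mathrm{ME}G\preceq_\mathrm{ME}\Lambda_j$; this is impossible by Theorem \ref{Theorem; Factorization of Product Groups} (a product of $n$ non-amenable groups cannot measurably embed into a product of $n-1$ class $\mathcal{S}$ groups). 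So $\xi_j$ has no non-trivial level sets; since $\Tr_j$ has infinite total mass, the only such $L^2$-element is $0$, not ``a scalar multiple of a single projection,'' and no homomorphism $\phi_j$ can be extracted. The deeper reason the blueprint of Theorem \ref{Theorem; Factorization by Sep Erg Coupling} cannot be transplanted is that every step of Section \ref{Section; Factorization of Product Groups} feeds on Theorem \ref{Theorem; Non embeddability implies amenability}, which needs a subgroup $H\subset G$ with non-amenable centralizer (there the factors $G_i$); for an arbitrary countable $G$ no such subgroup is available, and the non-vanishing of the minimal convex-hull element in the paper's arguments (e.g.\ Lemma \ref{Lemma; definition of sigma}) always comes from an inner-product bound against a vector fixed by the acting subgroup, which again uses the product structure of $G$. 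Your use of mild mixing is also misplaced: a non-trivial $G$-invariant level set would already contradict ergodicity, and in any case it lives in $\Lambda_j\backslash\Sigma$, not in a finite measure subset of $\Gamma\backslash\Sigma$, so no ``recurrent set'' argument applies at this point.

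The paper's actual route is entirely different and is the standard Furman--Monod--Shalom mechanism: it does not work on $\Sigma$ directly, but forms the self-coupling $\check{\Sigma}\times_G\Sigma$ of $\Gamma$ with itself, to which the separately ergodic factorization and strong rigidity results of Section \ref{Section; Factorization of Product Groups} \emph{do} apply, since both sides are now the product of non-amenable ICC class $\mathcal{S}$ groups. The mild mixing hypothesis on the $G$-action on $\Gamma\backslash\Sigma$ and the ICC condition on the $\Gamma_i$ are then used, exactly as in the sixth chapter of Monod--Shalom (Furman's homomorphism), to convert the resulting equivariant map on the self-coupling into a homomorphism $\phi\colon G\rightarrow\Gamma$ with finite kernel, with the index formula and the identification of $\beta$ with $\mathrm{Ind}_G^\Gamma(\alpha,\phi)$ falling out of that construction. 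Your reduction of (2) to (1) via Lemma \ref{Lemma; MEm and SOE} is reasonable in spirit, but without the self-coupling step the proof of (1) has a genuine gap.
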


The technique we need here has already given by Monod--Shalom. The above theorems are obtained by verbatim translations of the sixth chapter of Monod and Shalom's paper \cite{Monod--Shalom}. We remark that we use the ICC condition on $\Gamma_i$ to construct Furman's homomorphism.

\section{Measure Equivalence between Wreath Product Groups}
\label{Section; Wreath Products}
The goal of this section is Theorem \ref{Introduction; ME between Wreath Product Groups}.

\begin{lemma}\label{Lemma; Wreath Product--Uniqueness of Partial Embedding}
Let $H \subset G$ be an infinite subgroup of a countable group and let $\widetilde{\Gamma} = B \wr \Gamma$ be a
countable wreath product group with $B \neq \{1\}$. Suppose that $\Sigma$ is a measurable embedding
of $G$ into $B \wr \Gamma$.

If $H$ measurably embeds into $\Gamma$ in $\Sigma$, then
there exists a partial embedding $\Omega$ of $H$ into $\Gamma$ such that for any partial embedding $\Omega^\prime$
of $H \preceq_\Sigma \Gamma$, we get $\Omega^\prime \subset \Omega$,
after subtracting a null set.
The $\Gamma$-support of $H \preceq_\Sigma \Gamma$ $($Definition \ref{Definition; Supports}$)$ satisfies
$\E^\Gamma_X (\Omega) = \mathrm{supp}^\Gamma_X(H \preceq_\Sigma \Gamma) \in L^\infty X$.
\end{lemma}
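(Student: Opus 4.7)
The statement asserts existence of a maximum partial embedding and an accompanying description of the $\Gamma$-support. Once we produce $\Omega$ maximal (i.e.\ containing every partial embedding up to null sets), the equation $\E^\Gamma_X(\Omega)=\mathrm{supp}^\Gamma_X(H\preceq_\Sigma\Gamma)$ follows tautologically: with $X$ a $\Gamma$-fundamental domain the map $\E^\Gamma_X$ acts as the identification $(L^\infty\Sigma)^\Gamma\cong L^\infty X$, and $\mathrm{supp}^\Gamma_X$ is by definition the projection associated to $\bigvee_\Omega\chi(\Omega)\in(L^\infty\Sigma)^\Gamma$. So the entire content of the lemma is to show that the set-theoretic union $\widetilde\Omega:=\bigcup\Omega$ (over all partial embeddings) itself has finite $\Tr_\Gamma$, hence is itself a partial embedding.

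My first step will be the easy observation that the family of partial embeddings is directed under inclusion: if $\Omega_1,\Omega_2$ are partial embeddings then $\Omega_1\cup\Omega_2$ is $H\times\Gamma$-invariant with $\Tr_\Gamma(\Omega_1\cup\Omega_2)\le\Tr_\Gamma(\Omega_1)+\Tr_\Gamma(\Omega_2)<\infty$. Consequently it suffices to prove a uniform bound $\sup_\Omega\Tr_\Gamma(\Omega)<\infty$; then an increasing exhaustion produces the maximum. For the bound I introduce the structural setup dictated by the wreath product: pick a $\widetilde\Gamma$-fundamental domain $X_0$ with $\nu(X_0)<\infty$, so $X=\bigsqcup_{n\in N}nX_0$ is a $\Gamma$-fundamental. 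Using the identification $\Sigma\cong\widetilde\Gamma\times X_0$ and $\Gamma\backslash\widetilde\Gamma\cong N$, a partial embedding becomes an $H$-invariant finite-measure subset $V\subset N\times X_0$, where $H$ acts by the skew-product
\[
h\cdot(n,y)=\bigl(\gamma_c(h,y)^{-1}\!\cdot\!(n\, n_c(h,y)),\,\widehat h(y)\bigr),
\]
the $\widetilde\Gamma$-cocycle $c(h,y)=n_c(h,y)\gamma_c(h,y)\in N\cdot\Gamma$ encoding the $G$-action on $X_0$.

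The heart of the argument is the following bound, which exploits both $B\neq\{1\}$ and $|H|=\infty$. Suppose $(n_1,y),(n_2,y)$ lie in a common partial embedding with $n_1\ne n_2$; then by iterating the skew-product formula along an $H$-orbit and comparing the two resulting $N$-coordinates, one finds that the element $\delta=n_1^{-1}n_2\in N\setminus\{1\}$ must be fixed by every shift $\gamma_c(h,y)$ for $h\in H$. Because $N=\bigoplus_\Gamma B$ and $B\neq\{1\}$, every non-trivial $\delta\in N$ has \emph{finite} $\Gamma$-stabilizer (the stabilizer permutes the non-empty finite set $\mathrm{supp}(\delta)\subset\Gamma$, hence has cardinality at most $|\mathrm{supp}(\delta)|$). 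Combined with $|H|=\infty$ and the fact that the $G$-action on $\Sigma$ is free (so $\{\gamma_c(h,y)\}_{h\in H}$ is generically infinite in $\Gamma$), this forces $\delta=1$, and hence $|V_y|\le 1$ for a.e.\ $y$. Therefore $\nu(V)=\int_{X_0}|V_y|\,d\nu\le\nu(X_0)<\infty$, uniformly in $V$; the model situation is the homomorphism case, where $\phi(H)\subset n_1^{-1}\Gamma n_1\cap n_2^{-1}\Gamma n_2=n_1^{-1}(\Gamma\cap(n_1n_2^{-1})\Gamma(n_1n_2^{-1})^{-1})n_1$ is forced into a finite group by the same stabilizer computation, contradicting $|\phi(H)|=\infty$.

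With the uniform bound in hand, the directed union $\widetilde\Omega$ satisfies $\Tr_\Gamma(\widetilde\Omega)\le\nu(X_0)<\infty$, so it is the desired maximum partial embedding $\Omega$, and the support equation holds by the reformulation above. The main technical obstacle is the measure-theoretic justification of the key step: one must rule out positive-measure sets of $y$ where $\{\gamma_c(h,y)\}_{h\in H}$ happens to be small (equivalently, where $H$ virtually maps into a finite subgroup of $\Gamma$ along the cocycle), and handle the resulting degenerate strata by a separate argument using that a partial embedding at such $y$ would give an ME-embedding of $H$ into a finite group, again producing either a uniform bound or a direct contradiction. Managing this dichotomy cleanly is where I expect to spend the most care.
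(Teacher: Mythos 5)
Your overall skeleton is the same as the paper's: reduce to showing that every partial embedding has at most one $\widetilde{B}$-sheet over the $\widetilde{\Gamma}$-fundamental domain (equivalently, $\E^\Gamma_X(\Omega)$ is $\{0,1\}$-valued), deduce the uniform bound $\Tr_\Gamma(\Omega)\le\nu(X)<\infty$, and take a directed/increasing union to get the maximum and the support identity. The underlying wreath-product fact you invoke (a non-trivial element of $\bigoplus_\Gamma B$ has finite level sets under the Bernoulli shift) is also the one the paper uses. However, the heart of your argument has a genuine gap: the claim that $\delta=n_1^{-1}n_2$ ``must be fixed by every shift $\gamma_c(h,y)$'' does not follow from $H$-invariance of $V$. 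Invariance only says that $h$ carries the two points $(n_1,y),(n_2,y)$ to two points of the fiber over $\hat h(y)$ whose difference is $\gamma_c(h,y)^{-1}(\delta)$; nothing brings this back to the original fiber or compares it with $\delta$, so no element of $N\setminus\{1\}$ is being stabilized and the finite-stabilizer contradiction never starts. Moreover, the auxiliary input you want --- that $\{\gamma_c(h,y)\}_{h\in H}$ is generically infinite --- is not a consequence of freeness of the $G$-action on $\Sigma$ (only $h\mapsto c(h,y)$ is generically injective; its $\Gamma$-component can be constant or finite-valued on positive-measure sets), and you defer exactly this degenerate stratum to an unspecified ``separate argument''; that stratum is not peripheral, it is where the content of the lemma lies. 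A smaller inaccuracy: in this lemma $X$ is a $\widetilde{\Gamma}$-fundamental domain, so $\E^\Gamma_X$ is a $\widetilde{B}$-sheet count, not the identification $(L^\infty\Sigma)^\Gamma\cong L^\infty X$, and the support identity is not purely formal, since a $\widetilde{\Gamma}$-translate of a partial embedding is invariant only under a conjugate of $\Gamma$.

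The paper closes exactly this gap by an intersection-of-translates device instead of a pointwise cocycle argument. If two distinct sheets $b_1\neq b_2\in\widetilde{B}$ occur over a non-null $W\subset X$, one may arrange $\Omega\cap\Gamma W=\bigsqcup_{i=1}^k\Gamma b_iW$ with $k\ge2$, and then the set $b_1^{-1}\Omega\cap b_2^{-1}\Omega$ is $H$-invariant, while its full count over $W$ (the function valued measure for the trivial subgroup) equals the cardinality of $\bigcup_{i,j}\bigl(b_1^{-1}\Gamma b_i\cap b_2^{-1}\Gamma b_j\bigr)$; this set is non-empty and finite because $b_1^{-1}\gamma b_i=b_2^{-1}\gamma' b_j$ forces $\gamma=\gamma'$ and $\gamma(b_ib_j^{-1})=b_1b_2^{-1}\neq1$, and the set of such $\gamma$ is finite. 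Lemma \ref{Lemma; FVM and MEm} then gives $H\preceq_\Sigma\{1\}$, which is impossible for infinite $H$, since an $H$-invariant non-null set of finite measure cannot exist when the $H$-action admits a fundamental domain and $|H|=\infty$. This route needs no statement about the $\Gamma$-component of the cocycle and no freeness of the dot action; if you replace your pointwise $\delta$-argument by it, the remainder of your outline (directedness, uniform bound, maximal union, support identity) goes through as in the paper.
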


\begin{proof}
We denote $\widetilde{B} = \bigoplus_{\Gamma} B$ and
$p = \mathrm{supp}^\Gamma_X(H \preceq_\Sigma \Gamma)$.
Let $\Omega \subset \Sigma$ be an arbitrary partial embedding of $H$ into $\Gamma$ and let $X$ be a fundamental domain of $\Sigma$ under the $\widetilde{\Gamma}$-action.
We can write $\Omega$ as
$\Omega = \bigsqcup_{b \in \widetilde{B}} \Gamma b X_b$, for some measurable subsets $X_b \subset X$.
The measurable function
$\E^\Gamma_X (\Omega)$ is written as $\sum_{b \in \widetilde{B}} \chi(X_b)$
and it is integrable.
First we claim that $\E^\Gamma_X (\Omega)$ is a projection.

Suppose that the essential range of
$\E^\Gamma_X(\Omega)$ is not contained in $\{0, 1\}$.
Then there exist a non-null measurable subset $W \subset X$
and finite subset $\{b_1, b_2, \cdots, b_k\} \subset \widetilde{B}$
satisfying $k \ge 2, b_i \neq b_j\ (i \neq j)$ and
$\Omega \cap \Gamma W = \bigsqcup_{i = 1}^k \Gamma b_i W$.
The measurable set $b_1^{-1} \Omega \cap b_2^{-1} \Omega$
is $H$-invariant and satisfies
\begin{eqnarray*}
    b_1^{-1} \Omega \cap b_2^{-1} \Omega \cap \Gamma W
    = \bigcup_i b_1^{-1} \Gamma b_i W \cap \bigcup_j b_2^{-1} \Gamma b_j W
    = \bigcup_{i,j} (b_1^{-1} \Gamma b_i \cap b_2^{-1} \Gamma b_j)W.
\end{eqnarray*}
Applying the function valued measure $\E_X \colon \widehat{L^\infty(\Sigma)_+} \rightarrow \widehat{L^\infty(X)_+}$, we get
\begin{eqnarray*}
    \E_X(b_1^{-1} \Omega \cap b_2^{-1} \Omega) 1_W
    = \left| \bigcup_{i,j} (b_1^{-1} \Gamma b_i \cap b_2^{-1} \Gamma b_j) \right| 1_W.
\end{eqnarray*}
Since $\bigcup_{i,j}b_1^{-1} \Gamma b_i \cap b_2^{-1} \Gamma b_j$ is a finite set and non-empty,
we get $H \preceq_{\Sigma} \{1\}$ (Lemma \ref{Lemma; FVM and MEm}). This contradicts $|H| = \infty$.
Thus the essential range of $\E^\Gamma_X(\Omega)$ is included in $\{0,1\}$ and $\E^\Gamma_X(\Omega)$ is a projection.

When $\Omega, \Omega^\prime$ are partial embeddings of $H$ into $\Gamma$,
the union
$\Omega \cup \Omega^\prime$ is also a partial embedding of $H$ into $\Gamma$.
By the above,
$\E^\Gamma_X(\Omega \cup \Omega^\prime)$ is a projection.

There exists an increasing sequence of $\Omega_n$ of partial embeddings of $H$ into $\Gamma$ with
$\bigvee_n \E^\Gamma_X(\Omega_n) = p$.
Let $\Omega$ be the union of $\{\Omega_n\}$. Applying $\E^\Gamma_X$, we get
\begin{eqnarray*}
    \E^\Gamma_X ( \chi (\Omega) )
    = \sup_n \E^\Gamma_X (\Omega_n) = p.
\end{eqnarray*}
It follows that $\Omega$ is again a partial embedding of $H$ into $\Gamma$.
Let $\Omega^\prime$ be another partial embedding. Then we get
$
    \E^\Gamma_X(\Omega)
    \le \E^\Gamma_X(\Omega \cup \Omega^\prime)
    \le p
    = \E^\Gamma_X(\Omega).
$
Since $\E^\Gamma_X$ is faithful, we conclude $\chi(\Omega \cup \Omega^\prime) = \chi(\Omega)$
and that $\Omega$
dominates all partial embedding, after subtracting a null set.
\end{proof}

\begin{proposition}
\label{Proposition; Wreath Product--Uniqueness of Partial Embedding}
Let $G \times H \subset \widetilde{G}$ be a direct product type subgroup of an exact group $\widetilde{G}$.
Let $\widetilde\Gamma$ be an exact wreath product group $B \wr \Gamma$ with amenable base
$B \neq \{1\}$. Suppose that $G$ is non-amenable and that $H$ is infinite.

If $\Sigma$ is an ergodic measurable embedding of $\widetilde{G}$ into $\widetilde\Gamma$, then there exists
a maximal partial embedding $\Omega$ of $G \times H$ into $\Gamma$.
The embedding satisfies
$\E^\Gamma_X (\Omega) = 1_X \in L^\infty X$.
\end{proposition}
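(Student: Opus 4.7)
The plan is to take $\Omega$ to be the maximal partial embedding of $H$ alone (produced by Lemma \ref{Lemma; Wreath Product--Uniqueness of Partial Embedding}), argue that this maximal object is automatically invariant under the centralizer $Z_{\widetilde{G}}(H) \supset G$ so that it gives a partial embedding of $G \times H$, and finally force $\E^\Gamma_X(\Omega) = 1_X$ by a second application of Theorem \ref{Theorem; Non embeddability implies amenability} to the would-be complement. To begin, I would note that since $B$ is amenable and $\Gamma$ is exact (as a subgroup of the exact group $\widetilde{\Gamma}$), Lemma \ref{Lemma; Wreath Product} gives bi-exactness of $\widetilde{\Gamma} = B \wr \Gamma$ relative to $\{\Gamma\}$. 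Because $Z_{\widetilde{G}}(H) \supset G$ is non-amenable, Theorem \ref{Theorem; Non embeddability implies amenability} applied to $H \subset \widetilde{G}$ produces a partial embedding of $H$ into $\Gamma$ in $\Sigma$, and since $H$ is infinite, Lemma \ref{Lemma; Wreath Product--Uniqueness of Partial Embedding} supplies a distinguished maximal such partial embedding $\Omega \subset \Sigma$, containing every other partial embedding of $H$ into $\Gamma$ up to null sets, with $\E^\Gamma_X(\Omega) = \supp^\Gamma_X(H \preceq_\Sigma \Gamma)$.

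Next I would promote $\Omega$ to a partial embedding of $G \times H$ by this uniqueness. For any $g \in G \subset Z_{\widetilde{G}}(H)$, the translate $g\Omega$ is $H$-invariant because $g$ commutes with $H$ and $\Gamma$-invariant because the $\widetilde{G}$- and $\widetilde{\Gamma}$-actions on $\Sigma$ commute, and it has the same $\Gamma$-fundamental-domain measure as $\Omega$; thus $g\Omega$ is again a partial embedding of $H$ into $\Gamma$, and maximality forces $g\Omega = \Omega$ up to null sets. Consequently $\Omega$ is $G \times H \times \Gamma$-invariant, i.e.\ a partial embedding of $G \times H$ into $\Gamma$, and it is automatically maximal among such because any partial embedding of $G \times H$ is \emph{a fortiori} one of $H$, hence contained in $\Omega$.

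The heart of the argument, and what I expect to be the main obstacle, is to show $p := \E^\Gamma_X(\Omega) = 1_X$. I would argue by contradiction: let $W \subset X$ be the support of $1_X - p$ and suppose it is non-null. Because $\Omega$ is $G \times H$-invariant, Lemma \ref{Lemma; Function Valued Measure}(2) shows that $p$, and therefore $W$, is invariant under the dot action of $G \times H$ on $X \cong \widetilde{\Gamma} \backslash \Sigma$; consequently the saturation $\Sigma_H := \widetilde{\Gamma} W$ is a non-null $(G \times H) \times \widetilde{\Gamma}$-invariant, hence $H \times \widetilde{\Gamma}$-invariant, subset of $\Sigma$. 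Applying Theorem \ref{Theorem; Non embeddability implies amenability} once more to $H \subset \widetilde{G}$ with this $\Sigma_H$—the centralizer still contains the non-amenable $G$—I would obtain a non-null partial embedding $\Omega' \subset \widetilde{\Gamma} W$ of $H$ into $\Gamma$. By maximality, $\Omega' \subset \Omega$; but $\Omega \subset \widetilde{\Gamma}(X \setminus W)$ because $\E^\Gamma_X(\Omega) = p$ vanishes on $W$, and the two $\widetilde{\Gamma}$-saturations are disjoint since $X$ is a $\widetilde{\Gamma}$-fundamental domain. This forces $\Omega'$ to be null, the desired contradiction, and so $p = 1_X$.
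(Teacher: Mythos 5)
Your proposal is correct and follows essentially the same route as the paper: bi-exactness of $B \wr \Gamma$ relative to $\{\Gamma\}$ plus Theorem \ref{Theorem; Non embeddability implies amenability} to get $H \preceq_\Sigma \Gamma$, Lemma \ref{Lemma; Wreath Product--Uniqueness of Partial Embedding} for the maximal $\Omega$, and commutation of $G$ with $H$ together with maximality to make $\Omega$ a $G \times H$-partial embedding. Your final contradiction argument on the complement of the support is just a spelled-out version of the paper's one-line assertion that the $\widetilde{\Gamma}$-support is $1_X$, which likewise rests on applying Theorem \ref{Theorem; Non embeddability implies amenability} inside an arbitrary non-null $H \times \widetilde{\Gamma}$-invariant subset.
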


\begin{proof}
The group $\widetilde\Gamma$ is bi-exact relative to $\{\Gamma \}$ by Lemma \ref{Lemma; Wreath Product}.
By Theorem \ref{Theorem; Non embeddability implies amenability},
$H$ measurably embeds into $\Gamma$ in $\Sigma$. Furthermore, its
$\widetilde\Gamma$-support of the embedding is $1_X$.
Let $\Omega$ be the largest embedding of $H$ into $\Gamma$
(Lemma \ref{Lemma; Wreath Product--Uniqueness of Partial Embedding}).

Since $g \in G$ commutes with all elements in $H$,
the measurable subsets $g \Omega, g^{-1} \Omega$ also
give embeddings of $H$ into $\Gamma$. The maximality of $\Omega$ means that
$g \Omega \subset \Omega$ and $g^{-1} \Omega \subset \Omega$,
after null sets are subtracted.
It follows that the difference between $g \Omega$ and $\Omega$ is null.
We may assume that $\Omega$ is $G \times H$-invariant. The measurable
subset $\Omega$ gives a measurable embedding of $G \times H$ into $\Gamma$.
The embedding $\Omega$ of $G \times H$ into $\Gamma$ is maximal, since
it is maximal as an embedding of $H$.
\end{proof}

\begin{proof}
[Proof for Theorem \ref{Introduction; ME between Wreath Product Groups}]
Let $\Sigma$ be an ergodic ME coupling between two wreath products
$\widetilde{G}$ and $\widetilde{\Gamma}$.
By Proposition \ref{Proposition; Wreath Product--Uniqueness of Partial Embedding}, we take
the largest embedding $\Omega_l \subset \Sigma$ of $G \times H$ into $\Gamma \times \Lambda$
and the largest embedding $\Omega_r \subset \Sigma$ of $\Gamma \times \Lambda$ into $G \times H$.
It suffices to show that the difference between $\Omega_l$, $\Omega_r$ is null. Since the assumptions are symmetric,
we only prove that $\Omega_l \cap \Omega_r^c$ is null.

By the equality
$\E^{G \times H}_Y (\Omega_r) = 1_Y$,
there exists a measurable subset $Y^\prime \subset \Omega_r$ so that $Y^\prime$ is a fundamental domain for the $\widetilde{G}$-action on $\Sigma$ and that
$\chi((G \times H) Y^\prime) = \chi(\Omega_r)$.
Denote $\widetilde{A} = \oplus_{G \times H} A$.
We may assume that $\Omega_r$ is an $\widetilde{A}$-fundamental domain for the action  $\widetilde{A} \curvearrowright \Sigma$.

Suppose that $\Omega_l \cap (\Omega_r)^c$ is not null.
Then there exists $1 \neq a \in \widetilde{A}$
such that $\Omega_l \cap a \Omega_r$ is not null.
We note that
this is $\Gamma \times \Lambda$-invariant.
There exist infinitely many elements $\{g_i\}_{i \in I}$ in $G \times H$
such that $\{g_i(a)\}_{i \in I}$ are different from each other.
The following equation holds true
\begin{eqnarray*}
        \Tr_{\Gamma \times \Lambda}(\Omega_l \cap g_i(a) \Omega_r)
     =  \Tr_{\Gamma \times \Lambda}(g_i (\Omega_l \cap a \Omega_r))
     =  \Tr_{\Gamma \times \Lambda}(\Omega_l \cap a \Omega_r).
\end{eqnarray*}
Since the measurable subsets
$\{ g_i(a) \Omega_r\}$ are disjoint, we get
\begin{eqnarray*}
    0
    < \sum_{i \in I}
            \Tr_{\Gamma \times \Lambda}(\Omega_l \cap g_i(a) \Omega_r)
    \le \Tr_{\Gamma \times \Lambda}(\Omega_l) < \infty.
\end{eqnarray*}
This contradicts $|I| = \infty$. We conclude that $\Omega_l \subset \Omega_r$,
after subtracting a null set.
Since the assumptions are symmetric, we get $\Omega_l = \Omega_r$,
after subtracting null sets. The measurable subset $\Omega_l = \Omega_r$
gives an ME coupling of $G \times H$ with $\Gamma \times \Lambda$.

For the second assertion, we suppose that the coupling $\Sigma$
comes from SOE, in other words,
the dot actions $\alpha \colon \widetilde{G} \curvearrowright X$ and
$\beta \colon \widetilde\Gamma \curvearrowright Y$ are free.
We further assume that the actions $\alpha |_{G \times H}$, $\beta |_{\Gamma \times \Lambda}$ are ergodic.
Since $\E^{\Gamma \times \Lambda}_X (\Omega_l) = 1_X$,
the action $\alpha |_{G \times H}$ is conjugate to the dot action
$G \times H \curvearrowright (\Gamma \times \Lambda) \backslash
\Omega_l$. By symmetricity, the action
$\beta |_{\Gamma \times \Lambda}$ is conjugate to the dot action
$\Gamma \times \Lambda \curvearrowright (G \times H) \backslash
\Omega_l$. Choose an embedding from $X$ to a $\Gamma \times \Lambda$ fundamental domain of $\Omega_l$ and an embedding from $Y$ to a $G \times H$ fundamental domain of $\Omega_l$. The compositions $p \colon X \hookrightarrow \Omega_l \rightarrow (G \times H) \backslash \Omega_l \cong Y$ and $q \colon Y \hookrightarrow \Omega_l \rightarrow (\Gamma \times \Lambda) \backslash \Omega_l \cong X$ gives SOE (weak OE) between $\alpha |_{G \times H}$ and $\beta |_{\Gamma \times \Lambda}$.
\end{proof}

\section{Factorization of Amalgamated Free Products}
\label{Section; Amalgamated Free Products}

The goal of this section is Theorem \ref{Theorem; B--S rigidity}. We start with an argument on Bass--Serre trees.

\begin{lemma}\label{Lemma; BS Tree}
Let $\Gamma$ be an amalgamated free product $\Gamma_1 \ast_B \Gamma_2$
of countable groups. Let $i$ be either $1$ or $2$ and $u$ be an element of $\Gamma$.
If $u \Gamma_i \neq \Gamma_1$,
then there exist $\gamma \in \Gamma$
and a subgroup $B_u \subset \gamma B \gamma^{-1}$ with the following property:\\
For all $s, t \in \Gamma$, $S = s \Gamma_i \cap t \Gamma_1 u \subset \Gamma$ is either empty
or a left coset of $B_u$.
\end{lemma}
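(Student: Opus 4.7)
The plan is to first identify the intersection $S$ with a coset of a fixed subgroup $B_u$ independent of $s,t$, and then to bound $B_u$ inside a conjugate of $B$ using the Bass--Serre tree of $\Gamma$. For the first step, observe that $t\Gamma_1 u = (tu)(u^{-1}\Gamma_1 u)$ is a left coset of the conjugate subgroup $u^{-1}\Gamma_1 u$. Hence if $S$ is non-empty and $x \in S$, then $s\Gamma_i = x\Gamma_i$ and $t\Gamma_1 u = x \cdot u^{-1}\Gamma_1 u$, whence
\[
    S \;=\; x\bigl(\Gamma_i \cap u^{-1}\Gamma_1 u\bigr).
\]
Define $B_u := \Gamma_i \cap u^{-1}\Gamma_1 u$; then $S$ is either empty or a single left $B_u$-coset, and crucially $B_u$ depends only on $u$ and $i$, not on $s$ or $t$.

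Next, I would invoke the Bass--Serre tree $T$ of $\Gamma_1 \ast_B \Gamma_2$: its vertex set is $\Gamma/\Gamma_1 \sqcup \Gamma/\Gamma_2$, its edge set is $\Gamma/B$, and an edge $gB$ joins the vertices $g\Gamma_1$ and $g\Gamma_2$. Under the natural $\Gamma$-action, the stabilizer of a vertex $g\Gamma_j$ is $g\Gamma_j g^{-1}$ and the stabilizer of an edge $gB$ is $gBg^{-1}$. Set $v_1 := \Gamma_i$ and $v_2 := u^{-1}\Gamma_1$. The hypothesis $u\Gamma_i \neq \Gamma_1$ forces $v_1 \neq v_2$: when $i = 2$ the two vertices are of different orbit types (one lies in $\Gamma/\Gamma_2$, the other in $\Gamma/\Gamma_1$), so they are automatically distinct; when $i = 1$, equality of $v_1, v_2$ would give $u^{-1} \in \Gamma_1$, i.e.\ $u\Gamma_1 = \Gamma_1$, contradicting the assumption.

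Finally, since $v_1 \neq v_2$, the unique geodesic in $T$ from $v_1$ to $v_2$ contains at least one edge. Any $g \in \mathrm{Stab}(v_1) \cap \mathrm{Stab}(v_2) = B_u$ preserves this geodesic and fixes both endpoints, so it acts as an isometry of $T$ fixing the endpoints of a uniquely determined path, and hence fixes every vertex and every edge along it. Taking $e = \gamma B$ to be the initial edge of the geodesic emanating from $v_1$ yields $B_u \subset \mathrm{Stab}(e) = \gamma B \gamma^{-1}$, completing the argument. I expect no real obstacle here; the proof is entirely standard Bass--Serre theory, and the only care required is to cover both cases $i = 1, 2$ in the distinctness step for $v_1, v_2$.
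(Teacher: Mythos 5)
Your proof is correct and follows essentially the same route as the paper: both identify $B_u$ as the simultaneous stabilizer of the vertices $\Gamma_i$ and $u^{-1}\Gamma_1$ of the Bass--Serre tree (the paper phrases it as the pointwise stabilizer of the geodesic between them, which coincides with your $\Gamma_i \cap u^{-1}\Gamma_1 u$), and both bound $B_u$ by the stabilizer $\gamma B \gamma^{-1}$ of an edge on that geodesic. Your direct algebraic identification $S = x\bigl(\Gamma_i \cap u^{-1}\Gamma_1 u\bigr)$ is merely a cleaner packaging of the coset step; the substance is identical.
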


\begin{proof}
Fix $u$ throughout of this proof. Let $s, t$ be arbitrary elements in $\Gamma$.
Let $T = \Gamma / \Gamma_1 \sqcup \Gamma / \Gamma_2$
be the Bass--Serre tree for
$\Gamma = \Gamma_1 \ast_B \Gamma_2$, on which the group $\Gamma$ acts.
The set $t \Gamma_1 u$ is identical to the collection of elements which move
$u^{-1} \Gamma_1 \in T$ to $t \Gamma_1 \in T$.
The set $s \Gamma_i$ is the collection of elements which move
$\Gamma_i \in T$ to $u \Gamma_i \in T$.

Let $B_u$ be the set of elements which stabilize all points
\begin{eqnarray*}
\{u^{-1} \Gamma_1 = p_1, p_2, \cdots, p_l = \Gamma_i\} \subset T
\end{eqnarray*}
on the geodesic
from $u^{-1} \Gamma_1$ to $\Gamma_i$.
Suppose that $S$ is not empty. We take an element
$v \in S$. Then the set $S$ is of the form $v B_u$. Any element $b \in B_u$ stabilizes
the edge $\{ p_{l - 1}, p_l = \Gamma_i \}$.
The stabilizer of $\{ p_{l - 1}, p_l = \Gamma_i \}$ is
of the form $\gamma B \gamma^{-1}$ for some $\gamma \in \Gamma$.
It follows that $B_u$ is a subgroup of $\gamma B \gamma^{-1}$.
\end{proof}

\begin{lemma}\label{Lemma; AFProduct--Uniqueness of Partial Embedding}
Let $H \subset G$ be an inclusion of countable groups and
let $\Gamma = \Gamma_0 \ast_B \Lambda$
be a free product with amalgamation over a common subgroup $B$.
Suppose that $\Sigma$ is a measurable embedding
of $G$ into $\Gamma$.

If $H \preceq_\Sigma \Gamma_0$
and if $H \not\preceq_\mathrm{ME} B$, then
there exists a partial embedding $\Omega$ of $H$ into $\Gamma_0$ in $\Sigma$,
which is maximal.
Namely, for any partial embedding $\Omega^\prime$ of $H$ into $\Gamma_0$,
$\Omega^c \cap \Omega^\prime$ is a null set. Furthermore, the $\Gamma$-support of $H \preceq_\Sigma \Gamma_0$ satisfies
$\mathrm{supp}^{\Gamma}_X (H \preceq_\Sigma \Gamma_0)
= \E^{\Gamma_0}_X (\Omega)$.
\end{lemma}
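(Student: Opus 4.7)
The plan is to mirror the proof of Lemma \ref{Lemma; Wreath Product--Uniqueness of Partial Embedding}, with the hypothesis $H \not\preceq_\mathrm{ME} B$ playing the role that $|H| = \infty$ plays there. The central intermediate claim I aim to prove is that for every partial embedding $\Omega$ of $H$ into $\Gamma_0$, the function $\E^{\Gamma_0}_X(\Omega)$ takes values in $\{0, 1\}$, i.e.\ is a projection in $L^\infty X$. Once this is established, both assertions in the lemma follow from a standard supremum argument.

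For the central claim I will argue by contradiction. Enumerating representatives $\{u_n\}_{n \geq 1}$ of $\Gamma_0 \backslash \Gamma$ and using that $\Omega$ is $\Gamma_0$-invariant, I can rewrite $\E^{\Gamma_0}_X(\Omega) = \sum_n \chi(u_n^{-1} \Omega \cap X)$. If the essential range is not contained in $\{0,1\}$, then by a countable-union argument some pair $n_1 \neq n_2$ and non-null $W_0 \subset u_{n_1}^{-1} \Omega \cap u_{n_2}^{-1} \Omega \cap X$ must exist; writing $u_1 := u_{n_1}$, $u_2 := u_{n_2}$, I set $\Sigma_0 := u_1^{-1} \Omega \cap u_2^{-1} \Omega$ and $K := u_1^{-1} \Gamma_0 u_1 \cap u_2^{-1} \Gamma_0 u_2$. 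Then $\Sigma_0$ is $H \times K$-invariant and contains $W_0$, and because $u_2 u_1^{-1} \notin \Gamma_0$, $K$ stabilises two distinct vertices of the Bass--Serre tree of $\Gamma_0 \ast_B \Lambda$; the argument of Lemma \ref{Lemma; BS Tree} then yields $\gamma \in \Gamma$ with $K \subset \gamma B \gamma^{-1}$. To convert this into an embedding of $H$ into $K$, I will compute $\E^K_X(\Sigma_0)$ on $W_0$: the map $K k \mapsto (\Gamma_0 u_1 k, \Gamma_0 u_2 k)$ from $K \backslash \Gamma$ to $(\Gamma_0 \backslash \Gamma)^2$ is injective directly from the definition of $K$, and $k x \in \Sigma_0$ forces $\Gamma_0 u_i k \in S_x := \{\Gamma_0 u : u x \in \Omega\}$, so $\E^K_X(\Sigma_0)(x) \leq |S_x|^2 = \E^{\Gamma_0}_X(\Omega)(x)^2$, which is a.e.\ finite. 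Positivity on $W_0$ is immediate (the trivial coset contributes since $x \in \Sigma_0$), so Lemma \ref{Lemma; FVM and MEm} gives $H \preceq_\Sigma K$; transitivity together with $K \subset \gamma B \gamma^{-1}$ (Remark \ref{Remark; MEm}) then forces $H \preceq_\mathrm{ME} B$, contradicting the hypothesis. The main obstacle will be marshalling the Bass--Serre identification cleanly enough to read off that $K$ sits in a conjugate of $B$.

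For the last step, the collection of partial embeddings of $H$ into $\Gamma_0$ is directed under union (since $\Tr_{\Gamma_0}$ is subadditive and finite on each element), so I can choose an increasing sequence $\Omega_n$ with $\E^{\Gamma_0}_X(\Omega_n) \nearrow \supp^\Gamma_X(H \preceq_\Sigma \Gamma_0) =: p$ and set $\Omega := \bigcup_n \Omega_n$. Complete additivity of $\E^{\Gamma_0}_X$ yields $\E^{\Gamma_0}_X(\Omega) = p \leq \chi(X)$, so $\Tr_{\Gamma_0}(\Omega) \leq \nu(X) < \infty$ and $\Omega$ is itself a partial embedding; this immediately gives the support identity $\supp^\Gamma_X(H \preceq_\Sigma \Gamma_0) = \E^{\Gamma_0}_X(\Omega)$. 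For maximality, any partial embedding $\Omega'$ combines with $\Omega$ into a partial embedding $\Omega \cup \Omega'$ whose image under $\E^{\Gamma_0}_X$ is a projection (by the central claim) sandwiched between $p$ and $p$; faithfulness of $\E^{\Gamma_0}_X$ then forces $\Omega' \setminus \Omega$ to be null.
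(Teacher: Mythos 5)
Your proposal is correct and follows essentially the same route as the paper: you show that $\E^{\Gamma_0}_X(\Omega)$ is $\{0,1\}$-valued for every partial embedding by turning a non-null overlap of two distinct $\Gamma_0$-coset translates of $\Omega$ into an embedding $H \preceq_\Sigma K$ with $K$ inside a conjugate of $B$ (contradicting $H \not\preceq_{\mathrm{ME}} B$), and then run the same increasing-union and faithfulness argument as in Lemma \ref{Lemma; Wreath Product--Uniqueness of Partial Embedding}, exactly as the paper does. The only difference is in the bookkeeping of the contradiction step: where the paper localizes to $\Gamma W$ and uses Lemma \ref{Lemma; BS Tree} to compute $\E^{B_2}_X(s_2\Omega \cap \Omega)$ exactly on $W$, you take $K = u_1^{-1}\Gamma_0 u_1 \cap u_2^{-1}\Gamma_0 u_2$, note it fixes two distinct vertices of the Bass--Serre tree and hence lies in a conjugate of $B$, and bound $\E^{K}_X(u_1^{-1}\Omega \cap u_2^{-1}\Omega)$ by $(\E^{\Gamma_0}_X(\Omega))^2$ via your injectivity observation --- an equally valid, slightly softer estimate.
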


\begin{proof}
We choose and fix representatives $\{s_\iota\}_{\iota \in I}$ of the
right cosets $\Gamma_0 \backslash \Gamma$.
Let $\Omega \subset \Sigma$ be an arbitrary partial embedding of $H$ into $\Gamma_0$ and let $X$ be a fundamental domain of $\Sigma$ under the $\Gamma$-action.
We can write $\Omega = \bigsqcup_{\iota \in I} \Gamma_0 s_\iota X_\iota$,
for some measurable subsets $X_\iota \subset X$. The measurable function
$\E^{\Gamma_0}_X(\Omega) = \sum_{\iota \in I} \chi(X_\iota)$ is integrable.

Suppose that the essential range of $\E^{\Gamma_0}_X(\Omega)$
is not included in $\{0, 1\}$.
Then there exist a non-null measurable subset $W \subset X$
and a finite subset $\{s_1, s_2, \cdots, s_k\} \subset \{s_\iota\}_{\iota \in I}$
satisfying $k \ge 2, s_i \neq s_j\ (i \neq j)$ and
$\Omega \cap \Gamma W = \bigcup_{i = 1}^k \Gamma_0 s_i W$.
Replacing $X$ with $s_1 W \sqcup (X \cap (W)^c)$ and $\{s_i\}$ with $\{s_i s_1^{-1}\}$,
we may assume $s_1 = 1$.

The measurable set $s_2 \Omega \cap \Omega$
is $H$-invariant and satisfies
\begin{eqnarray*}
    s_2 \Omega \cap \Omega = \bigcup_i s_2 \Gamma_0 s_i W \cap \bigcup_j \Gamma_0 s_j W
    = \bigcup_{i,j} (s_2 \Gamma_0 s_i \cap \Gamma_0 s_j)W.
\end{eqnarray*}
By Lemma \ref{Lemma; BS Tree}, there exists a subgroup $B_2 \subset \gamma^{-1} B \gamma$ for some $\gamma \in \Gamma$ so that
$S = \bigcup_{i,j} (s_2 \Gamma_0 s_i \cap \Gamma_0 s_j)$
is a finite union of right cosets of $B_2$.
The set $S$ is not empty since $s_2$ is an element of $S$.
The function valued measure of $b_1 \Omega \cap b_2 \Omega$ with respect to $B_2$ satisfies
\begin{eqnarray*}
    \E^{B_2}_X(s_2 \Omega \cap \Omega) |_{W}
    = \left| B_2 \backslash S \right| 1_{W}.
\end{eqnarray*}
Thus we get $H \preceq_{\Sigma} B_2 \subset \gamma^{-1} B \gamma$ (Lemma \ref{Lemma; FVM and MEm}). Since $H \not\preceq_\mathrm{ME} B$,
this is a contradiction.
We conclude that the essential range of
$\E^{\Gamma_0}_X(\Omega)$ is included in $\{0, 1\}$.
For the rest of the proof,
we do the same argument as Lemma
\ref{Lemma; Wreath Product--Uniqueness of Partial Embedding}.
\end{proof}

\begin{proposition}\label{Proposition; AFP--Uniqueness of Partial Embedding}
Let $G_1 \times H \subset G$ be a direct product type subgroup of an exact group $G$.
Let $\Gamma$ be an exact free product group $\ast_B \Gamma_i\  (1 \le i \le n)$ with amalgamation
over a common amenable subgroup
$B$. Suppose that $G_1, H$ are non-amenable.
If $\Sigma$ is an ergodic measurable embedding of $G$ into $\Gamma$, then
\begin{enumerate}
\item
The $\Gamma$-supports $p_i$ of $G_1 \times H \preceq_\Sigma \Gamma_i$
are mutually orthogonal and satisfy
$\sum_{i = 1}^n p_i= 1_X$.
\item
There exist maximal measurable embeddings
$\Omega_i \subset \Sigma$ of $G_1 \times H$ into $\Gamma_i$.
Their function valued measure $\E^{\Gamma_i}_X = \E^{(i)}_X$ satisfies
$\E^{(i)}_X (\Omega_i) = p_i$.
\end{enumerate}
\end{proposition}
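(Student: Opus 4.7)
The plan is to construct maximal partial embeddings $\Omega_i$ one at a time, then verify $\sum_i p_i = 1_X$, and finally establish the mutual orthogonality of the $p_i$'s. First, by iterating Lemma \ref{Lemma; Amalgamated Free Product}, $\Gamma = \ast_B \Gamma_k$ is bi-exact relative to $\{\Gamma_1, \ldots, \Gamma_n\}$. Since $B$ is amenable while $H$ is non-amenable, Remark \ref{Remark; MEm}(2) gives $H \not\preceq_\mathrm{ME} B$. Writing $\Gamma = \Gamma_i \ast_B (\ast_{B, k \neq i} \Gamma_k)$ and applying Lemma \ref{Lemma; AFProduct--Uniqueness of Partial Embedding}, whenever $H \preceq_\Sigma \Gamma_i$ there is a maximal partial embedding $\Omega_i^{(H)}$ of $H$ into $\Gamma_i$ with $\E^{\Gamma_i}_X(\Omega_i^{(H)}) = \supp^\Gamma_X(H \preceq_\Sigma \Gamma_i)$ a $\{0,1\}$-valued projection. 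Because each $g \in G_1$ commutes with $H$ and with the $\Gamma$-action, $g \Omega_i^{(H)}$ is again a partial embedding of $H$ into $\Gamma_i$; maximality forces $g \Omega_i^{(H)} = \Omega_i^{(H)}$, so $\Omega_i := \Omega_i^{(H)}$ is $(G_1 \times H) \times \Gamma_i$-invariant. Any partial embedding of $G_1 \times H$ into $\Gamma_i$ is in particular one of $H$, hence contained in $\Omega_i$, so $\Omega_i$ is maximal also as a $G_1 \times H$-embedding and $\E^{(i)}_X(\Omega_i) = p_i$.

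For the exhaustion $\sum_i p_i = 1_X$, I would argue by contradiction. Suppose $q := 1_X - \sum_i p_i$ is non-zero. Since $H$ centralizes $G_1$, conjugation by any element of $H$ permutes the partial embeddings of $G_1 \times H$ into each $\Gamma_i$, so each $p_i$ (and hence $q$) is fixed by the dot action of $H$ on $X$. Therefore $\Sigma_U := \Gamma \cdot \supp(q)$ is an $H \times \Gamma$-invariant, non-null subset of $\Sigma$. Since $Z_G(H) \supset G_1$ is non-amenable, Theorem \ref{Theorem; Non embeddability implies amenability} yields a non-null partial embedding $\Omega \subset \Sigma_U$ of $H$ into some $\Gamma_i$. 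The $\Gamma$-support of $\Omega$, viewed as a projection in $L^\infty X$, is then dominated by both $p_i$ (by definition) and $q$ (since $\Gamma \Omega \subset \Sigma_U$), contradicting $p_i \wedge q = 0$.

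The main difficulty is the mutual orthogonality of the $p_i$. Assume $p_i \wedge p_j \neq 0$ for some $i \neq j$. Writing $\Omega_i = \bigsqcup_\iota \Gamma_i s_\iota X_\iota^{(i)}$ and $\Omega_j = \bigsqcup_\kappa \Gamma_j t_\kappa X_\kappa^{(j)}$ as in the proof of Lemma \ref{Lemma; AFProduct--Uniqueness of Partial Embedding}, one can restrict to a non-null $W \subset X_{\iota_0}^{(i)} \cap X_{\kappa_0}^{(j)}$ for a suitable pair, above which the $\Omega_i$-fiber and the $\Omega_j$-fiber are the constant cosets $\Gamma_i s_0$ and $\Gamma_j t_0$. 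I would then pick $\gamma \in \Gamma_i s_0 t_0^{-1} \Gamma_j$ (a non-empty double coset) so that $\gamma \Gamma_j t_0 \cap \Gamma_i s_0$ contains some element $\delta$; a direct coset computation gives $\gamma \Gamma_j t_0 \cap \Gamma_i s_0 = B' \delta$ with $B' := \Gamma_i \cap \gamma \Gamma_j \gamma^{-1}$. By the $n$-factor analogue of Lemma \ref{Lemma; BS Tree}, the simultaneous stabilizer of two distinct vertices in the Bass--Serre tree of an amalgamated free product is contained in a conjugate of the edge stabilizer $B$, so $B'$ is amenable. The set $\Omega_i \cap \gamma \Omega_j$ is invariant under $G_1 \times H$ and under $B'$, and above each point of $W$ its fiber is the single $B'$-orbit $B' \delta$; consequently $\E^{B'}_X(\Omega_i \cap \gamma \Omega_j) \ge \chi(W)$ is non-zero and integer-valued, and Lemma \ref{Lemma; FVM and MEm} yields $G_1 \times H \preceq_\Sigma B'$. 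By Remark \ref{Remark; MEm}(2), $G_1 \times H$ would then be amenable, contradicting the non-amenability of $G_1$.
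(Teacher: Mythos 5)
Your proposal is correct, and its first two steps (building the maximal embedding for $H$ via Lemma \ref{Lemma; AFProduct--Uniqueness of Partial Embedding}, upgrading it to a $G_1 \times H$-invariant maximal embedding by the commutation/maximality trick, and proving exhaustion by applying Theorem \ref{Theorem; Non embeddability implies amenability} to the $H \times \Gamma$-invariant set over the complement of $\bigvee_i p_i$) are essentially the paper's argument; only note that before orthogonality is known you should write $q = 1_X - \bigvee_i p_i$ rather than $1_X - \sum_i p_i$, which is how the paper phrases it. Where you genuinely diverge is the orthogonality step: you compare $\Gamma_i$ and $\Gamma_j$ directly, pinning down constant coset fibers $\Gamma_i s_0$ and $\Gamma_j t_0$ over a common non-null $W$, translating by $\gamma \in \Gamma_i s_0 t_0^{-1}\Gamma_j$, and using the fact that the joint stabilizer of two distinct vertices of the Bass--Serre tree of $\ast_B \Gamma_k$ lies in a conjugate of $B$, so that $B' = \Gamma_i \cap \gamma \Gamma_j \gamma^{-1}$ is amenable and Lemma \ref{Lemma; FVM and MEm} gives the contradictory $G_1 \times H \preceq_\Sigma B'$. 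The paper instead shows the stronger statement that $p_i$ is orthogonal to the $\Gamma$-support of $G_1 \times H \preceq_\Sigma \Lambda$ for the whole complementary factor $\Lambda = \ast_{B, k \neq i}\Gamma_k$, using its two-factor coset Lemma \ref{Lemma; BS Tree} for the splitting $\Gamma = \Gamma_i \ast_B \Lambda$; your pairwise version suffices for the statement, and if you prefer not to invoke an ``$n$-factor analogue'' of Lemma \ref{Lemma; BS Tree}, you can simply observe $\Gamma_i \cap \gamma \Gamma_j \gamma^{-1} \subset \Gamma_i \cap \gamma \Lambda \gamma^{-1}$ and run the same tree argument in the two-factor splitting, so the extra generality costs nothing. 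Both routes end at the same contradiction, namely a measurable embedding of the non-amenable group $G_1 \times H$ into a subgroup of a conjugate of the amenable group $B$.
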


\begin{proof}
Since $\Gamma$ is bi-exact relative to $\{\Gamma_i\}$ and $G_1$ is non-amenable,
$H$ measurably embeds into some $\Gamma_i$ in $\Sigma$ (Theorem \ref{Theorem; Non embeddability implies amenability}).
Define $p_i$ as the
$\Gamma$-support of the embedding $H \preceq_\Sigma \Gamma_i$ instead of $G_1 \times H \preceq_\Sigma \Gamma_i$.
By the maximality argument,
Theorem \ref{Theorem; Non embeddability implies amenability}
tells that the union of the $\Gamma$-supports covers $X$, that is,
$\bigvee_i p_i = 1_X$.
Since non-amenable group $H$ does not measurably embed into amenable group $B$,
we can take the largest partial embedding $\Omega_i$ of $H$ into $\Gamma_i$
(Lemma \ref{Lemma; AFProduct--Uniqueness of Partial Embedding}).
The function valued measure satisfies $\E_X^{(i)}(\Omega_i) = \supp_X^\Gamma(H \preceq_\Sigma \Gamma_i)$.

Since $g \in G_1$ commutes with all elements in $H$,
the measurable subsets $g \Omega, g^{-1} \Omega$ also
give embeddings of $H$ into $\Gamma$. By the maximality of $\Omega$, we have
$g \Omega \subset \Omega$ and $g^{-1} \Omega \subset \Omega$,
after subtracting null sets.
We may assume that $\Omega$ is $G_1 \times H$-invariant. The measurable
subset $\Omega_i$ gives a measurable embedding of $G_1 \times H$ into $\Gamma_i$.
The maximal embedding $\Omega$ of $H$  into
$\Gamma_i$ is also maximal as an embedding of $G_1 \times H$.
The support of $G_1 \times H \preceq_\Sigma \Gamma_i$ satisfies
\begin{eqnarray*}
p_i = \E_X^{(i)}(\Omega_i) \le \supp_X^\Gamma(G_1 \times H \preceq_\Sigma \Gamma_i) \le \supp_X^\Gamma(H \preceq_\Sigma \Gamma_i).
\end{eqnarray*}
It follows that
$p_i = \supp_X^\Gamma(G_1 \times H \preceq_\Sigma \Gamma_i)$.

We claim that the projections $p_i$ are mutually orthogonal. It suffices to show that
the $\Gamma$-support $P_i$ for the embedding $G_1 \times H \preceq_\Sigma \ast_{B, j \neq i}\Gamma_j$
is perpendicular to $p_i$.
Denote $\Lambda = \ast_{B, j \neq i}\Gamma_j$.
Suppose that $P_i p_i \neq 0$. Then there exists a partial embedding
$\Omega^\prime \subset \Sigma$
of $G_1 \times H$ into $\Lambda$ such that
$\E^\Lambda_X(\Omega^\prime) p_i \neq 0$. Since $\E^\Lambda_X(\Omega^\prime)$ is a projection, there exist a non-null measurable subset $W \subset X$ and $s, t \in \Gamma$
such that
\begin{eqnarray*}
    \Omega \cap \Gamma W = \Gamma_i s W, \quad
    \Omega^\prime \cap \Gamma W = \Lambda t W.
\end{eqnarray*}
By Lemma \ref{Lemma; BS Tree}, the set $t s^{-1} \Gamma_i s \cap \Lambda t$ is a right coset
of a subgroup $C = C_{t s^{-1}} \subset \Gamma$ which is isomorphic to a subgroup of $B$. The function valued measure
$\E^{C}_X$ of $t s^{-1}\Omega \cap \Omega^\prime$ satisfies
\begin{eqnarray*}
        \E^{C}_X(t s^{-1}\Omega \cap \Omega^\prime) 1_W
    = \E^{C}_X((t s^{-1}\Gamma_i s \cap \Lambda t) W)
    = 1_W.
\end{eqnarray*}
This means that $G_1 \times H \preceq_\Sigma C$, which contradicts non-amenability of
$G_1 \times H$.
It follows that $p_i$ is perpendicular to $P_i$ and that $\{p_i\}$ are mutually orthogonal.
\end{proof}

\begin{theorem}\label{Theorem; B--S rigidity}
Let $G_i\ (1 \le i \le m)$ and $\Gamma_j\ (1 \le j \le n)$ be
direct products of two non-amenable exact groups. Suppose that $\{G_i\}$ have
a common amenable subgroup $A$ and that $\{\Gamma_j\}$ have
a common amenable subgroup $B$.
Denote by $G$, $\Gamma$ the amalgamated free products
$G = \ast_A G_i$, $\Gamma = \ast_B \Gamma_j$.
Then we have the following:
\begin{enumerate}
\item
If $G \sim_\mathrm{ME} \Gamma$, then for any $1 \le i \le m$ there
exists $1 \le \sigma(i) \le n$ satisfying
$G_i \sim_\mathrm{ME} \Gamma_{\sigma(i)}$
and for any $1 \le j \le n$ there exists
$1 \le \rho(j) \le m$ satisfying
$G_{\rho(j)} \sim_\mathrm{ME} \Gamma_j$;

\item
If $m = n = 2$ and $G \sim_\mathrm{ME} \Gamma$, then there exists $i \in \{1,2\}$ satisfying
$G_1 \sim_\mathrm{ME} \Gamma_i$, $G_2 \sim_\mathrm{ME} \Gamma_{i + 1}$,
where $i+1 \in \{1,2\} \cap \{i\}^c$;

\item
Let $\Sigma$ be an ME coupling between $G$ and $\Gamma$.
If the $G_i \times \Gamma$-action
on $\Sigma$ is ergodic for any $i$
and if $G \times \Gamma_j$-action on $\Sigma$ is ergodic
for any $j$,
then $m=n$ and there exists $\sigma \in \mathfrak{S}_n$ satisfying
$G_i \sim_\mathrm{ME} \Gamma_{\sigma(i)}$.
More precisely, there exist $G_i \times \Gamma_{\sigma(i)}$-invariant measurable subsets $\Omega(i, \sigma(i)) \subset \Sigma$ which gives an ME coupling of $G_i$ with $\Gamma_{\sigma(i)}$ and satisfies
$[\Gamma : G]_{\Sigma} = [\Gamma_{\sigma(i)}: G_i]_{\Omega(i, \sigma(i))}$;

\item
Let $\alpha$ be a free m.p.\ $G$-action on standard probability space $X$ and let $\beta$ be a free m.p.\ $\Gamma$-action on a standard finite measure space $Y$.
Suppose that the $G_i$-action $\alpha |_{G_i}$ on $X$ and the
$\Gamma_j$-action $\beta |_{\Gamma_j}$ on $Y$ are ergodic
for any $i, j$. If the $G$-action and $\Gamma$-action are
SOE, then $m=n$ and there exists $\sigma \in \mathfrak{S}_n$ so that
$\alpha |_{G_i}$ and $\beta |_{\Gamma_{\sigma(i)}}$ are SOE.
\end{enumerate}
\end{theorem}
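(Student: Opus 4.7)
The plan is to mirror the wreath-product analysis of Section \ref{Section; Wreath Products}, using Proposition \ref{Proposition; AFP--Uniqueness of Partial Embedding} in the role of Proposition \ref{Proposition; Wreath Product--Uniqueness of Partial Embedding}. Let $\Sigma$ be an ergodic ME coupling between $G$ and $\Gamma$, with $\Gamma$-fundamental domain $X$ and $G$-fundamental domain $Y$. Since each $G_i$ is a product of two non-amenable exact groups, Proposition \ref{Proposition; AFP--Uniqueness of Partial Embedding} applies to the inclusion $G_i \subset G = \ast_A G_{i'}$ and yields, for each $i$, mutually orthogonal $\Gamma$-supports $\{p_j^{(i)}\}_{j=1}^n \subset L^\infty X$ with $\sum_j p_j^{(i)} = 1_X$ together with maximal $G_i \times \Gamma_j$-invariant partial embeddings $\Omega^{(i\to j)} \subset \Sigma$ satisfying $\E^{\Gamma_j}_X(\Omega^{(i\to j)}) = p_j^{(i)}$. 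Applying the same proposition after swapping the roles of $G$ and $\Gamma$ produces the symmetric data: $G$-supports $\{q_k^{(j)}\} \subset L^\infty Y$ partitioning $1_Y$ and maximal $\Gamma_j \times G_k$-invariant partial embeddings $\Omega^{(j\to k)}$.

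Under the separate ergodicity hypothesis of (iii), each $p_j^{(i)}$ is $G_i \times \Gamma$-invariant in $L^\infty \Sigma$ and hence belongs to $\{0, 1_X\}$; by the partition condition exactly one $p_j^{(i)}$ is $1_X$, which defines $\sigma(i)$. Symmetrically, $\rho(j)$ is defined via $\Gamma_j$-ergodicity on $Y$. For (i) and (ii) one first replaces $\Sigma$ by an ergodic component; the same proposition still yields at least one non-zero $p_j^{(i)}$ per $i$ and at least one non-zero $q_k^{(j)}$ per $j$, which in the case $m=n=2$ of (ii) forces the two $G$-factors to pair with different $\Gamma$-factors by orthogonality of $\{p_j^{(i)}\}_j$.

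The main obstacle is upgrading the maximal partial embedding $\Omega := \Omega^{(i\to\sigma(i))}$ to a genuine ME coupling of $G_i$ with $\Gamma_{\sigma(i)}$, equivalently showing $\E^{G_i}_Y(\Omega) = 1_Y$ and $\rho\sigma(i) = i$. The plan is to compare $\Omega$ with the symmetric maximal partial embedding $\Omega' := \Omega^{(\sigma(i)\to\rho\sigma(i))}$. Both sets are $\Gamma_{\sigma(i)}$-invariant, so $\E^{\Gamma_{\sigma(i)}}_X(\Omega')$ is $G_{\rho\sigma(i)}$-invariant; by $G_{\rho\sigma(i)}\times \Gamma$-ergodicity on $\Sigma$ this function is a.e.\ constant, with value in $\{1,2,\dots\}\cup\{\infty\}$ (the value $0$ is excluded because $\Omega'$ is non-null and $\Gamma_{\sigma(i)}$-invariant). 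If the constant is a positive integer then $\Omega'$ is a partial embedding of $G_{\rho\sigma(i)}$ into $\Gamma_{\sigma(i)}$, and uniqueness of $\sigma$ forces $\rho\sigma(i) = i$. To rule out the value $\infty$, I would argue as in the proof of Theorem \ref{Introduction; ME between Wreath Product Groups}: translating $\Omega'$ by infinitely many coset representatives of $G_{\rho\sigma(i)}\backslash G$ sitting inside $\Omega$ and invoking the Bass--Serre tree lemma (Lemma \ref{Lemma; BS Tree}) to control the intersections of $\Gamma_{\sigma(i)}$-orbits---here the amenability of $B$ is essential, as the edge stabilizers of the Bass--Serre tree would otherwise provide obstructions---one obtains infinitely many pairwise disjoint subsets of $\Omega$ of equal positive $\Gamma_{\sigma(i)}$-fundamental-domain measure, contradicting $\Tr_{\Gamma_{\sigma(i)}}(\Omega) = \nu(X) < \infty$. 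Consequently $\Omega = \Omega'$ modulo null sets, both fundamental-domain measures of $\Omega(i,\sigma(i)) := \Omega$ are finite, $\sigma$ is a bijection (forcing $m = n$), and the coupling index equals $\nu(Y)/\nu(X) = [\Gamma : G]_\Sigma$.

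Part (iv) then follows from (iii) by the ME/SOE correspondence (Lemma \ref{Lemma; MEm and SOE}) applied factorwise: the ergodicity of the restricted actions $\alpha|_{G_i}$ and $\beta|_{\Gamma_{\sigma(i)}}$ ensures that the ME coupling $\Omega(i,\sigma(i))$ arises from SOE between these restricted actions, exactly as in the concluding paragraph of the proof of Theorem \ref{Introduction; ME between Wreath Product Groups}.
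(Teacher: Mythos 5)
Your skeleton is the right one (Proposition \ref{Proposition; AFP--Uniqueness of Partial Embedding}, maximal partial embeddings, a translation--disjointness argument modelled on the wreath-product case), but the load-bearing step of the paper's proof is missing, and this leaves genuine gaps. For parts (1) and (2) the conclusion is $G_i \sim_\mathrm{ME} \Gamma_j$, whereas a non-zero support $p^{(i)}_j$ only gives $G_i \preceq_\Sigma \Gamma_j$, i.e.\ an invariant set with finite $\Gamma_j$-side fundamental domain; finiteness of the $G_i$-side is the whole difficulty, and your proposal offers no argument for it outside the separately ergodic setting of (3). The paper does this for an arbitrary ergodic coupling by comparing the maximal embedding $\Omega(i,j)$ of $G_i$ into $\Gamma_j$ with the maximal embedding $\Xi(k,j)$ of $\Gamma_j$ into $G_k$ and proving $\Omega(i,j)=\Xi(i,j)$ modulo null sets: one shows that translates $g_\iota h\,\Xi(k,j)$ over coset representatives of $G_i/A$ are pairwise disjoint (this uses the multiplicity-one form $\Xi(k,j)=G_k Y_k$, which rests on $\Gamma_j\not\preceq_\mathrm{ME} A$, hence on amenability of $A$ and the $G$-side Bass--Serre tree, together with $[G_i:A]=\infty$), and intersecting them with the $G_i$-invariant set $\Omega(i,j)$ of finite $\Tr_{\Gamma_j}$ gives the contradiction, first for $k\neq i$ and then for $h\notin G_i$. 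Your sketch attributes the needed control to the tree of $\Gamma$ and amenability of $B$ and deploys it only to ``rule out the value $\infty$'' in case (3); in fact the argument must run on the $G$-side and must be carried out without any separate ergodicity, since (1) and (2) are stated for a general coupling. Also, your justification of (2) is off: the orthogonality in Proposition \ref{Proposition; AFP--Uniqueness of Partial Embedding} is over the target index for a fixed source factor and does not force $G_1$, $G_2$ to pair with different $\Gamma$-factors; (2) follows from (1) by a pigeonhole argument, as in the paper.

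Within part (3) itself there are two further gaps. The step ``uniqueness of $\sigma$ forces $\rho\sigma(i)=i$'' is a non sequitur: knowing that $\Omega'$ is also a partial embedding of $G_{\rho\sigma(i)}$ into $\Gamma_{\sigma(i)}$ only yields $\sigma(\rho\sigma(i))=\sigma(i)$, and cancelling $\sigma$ needs its injectivity, which at that point is precisely what has not been proved (in the paper the mutual-inverse property is an immediate consequence of the identification $\Omega(i,j)=\Xi(i,j)$, not an input). Similarly ``Consequently $\Omega=\Omega'$'' is asserted, not proved: maximality of $\Omega$ gives $\Omega'\subset\Omega$ only after $\rho\sigma(i)=i$ and $\Tr_{\Gamma_{\sigma(i)}}(\Omega')<\infty$ are known, and the reverse inclusion requires the symmetric finiteness $\Tr_{G_i}(\Omega)<\infty$, for which you give no argument. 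Finally, the index formula needs $\E^{G_i}_Y(\Omega)=1_Y$ exactly; your ergodicity argument alone yields a constant $c\in\{1,2,\dots\}$ and hence only $[\Gamma_{\sigma(i)}:G_i]_{\Omega}=c\,\nu(Y)/\nu(X)$, and $c=1$ again comes from identifying $\Omega$ with the symmetric maximal embedding, whose $G$-side function valued measure is a projection by Lemma \ref{Lemma; AFProduct--Uniqueness of Partial Embedding}. In short, the two disjointness claims establishing $\Omega(i,j)=\Xi(i,j)$ are the heart of the proof, and without them parts (1), (2) and the precise assertions of (3), (4) do not follow from what you have written.
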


\begin{proof}
Let $\Sigma$ be an ergodic ME coupling between two amalgamated
free products $G$ and $\Gamma$
and let $X, Y$ be fundamental domains for the $\Gamma$-action and $G$-action, respectively.
We write $G = G_i \ast_A H_i, \Gamma = \Gamma_j \ast_B \Lambda_j$.

Denote by $\Omega(i,j) \subset \Sigma$ the
(possibly null)
maximal partial embedding of $G_i$ into $\Gamma_j$ in $\Sigma$
in Proposition \ref{Proposition; AFP--Uniqueness of Partial Embedding}.
The functions $\{ \E^{\Gamma_j}_X(\Omega(i,j)) \}$ are
characteristic functions and satisfy
\begin{eqnarray*}
    \sum_{j =1}^n \E^{\Gamma_j}_X(\Omega(i,j)) = 1_X,
     \quad i = 1, 2, \cdots, m.
\end{eqnarray*}
Since the assumptions are symmetric,
again by Proposition \ref{Proposition; AFP--Uniqueness of Partial Embedding},
we get
the maximal partial embeddings $\Xi(i, j)$
of $\Gamma_j \preceq_\Sigma G_i$.
The functions
$\E^{G_i}_Y(\Xi(i,j))$
are characteristic functions and satisfy
\begin{eqnarray*}
    \sum_{i =1}^m \E^{G_i}_Y(\Xi(i,j)) = 1_X,
     \quad j = 1, 2, \cdots, n.
\end{eqnarray*}

First we claim that
$\supp^G_Y(\Omega(i,j)) \le \supp^G_Y(\Xi(i,j))$.
We have only to show that if $1 \le i, k \le m$ satisfy
$\E^{G_i}_Y(\Omega(i,j)) \E^{G_k}_Y(\Xi(k,j)) \neq 0$, then $i = k$.
Under the assumption, there exists $h \in G$ such that
$\Omega(i,j) \cap h (\Xi(k,j))$ is non-null.
Since the essential range of $\E^{G_k}_Y(\Xi(k,j))$ is contained by
$\{0, 1\}$, there exists a measurable subset $Y_k \subset \Sigma$ such that
\begin{eqnarray*}
    \Xi(k,j) =  G_k Y_k, \quad
    h \Xi(k,j) = h G_k Y_k,
\end{eqnarray*}
after subtracting null sets.
Suppose $k \neq i$.
For $g \in G_i \cap A^c$, the $\Gamma_i$-invariant measurable subsets
$h \Xi(k,j)$ and $g h \Xi(k,j)$ are almost disjoint.
Letting $\{g_\iota\}_{\iota \in I}$ be representatives for the left cosets
$G_i / A$,
we get that $\{g_\iota h \Xi(k,j)\}_{\iota \in I}$ are almost disjoint and
\begin{eqnarray*}
    0
    < \Tr_{\Gamma_j} \left(
          \Omega(i,j) \cap \bigsqcup_{\iota \in I} g_\iota h \Xi(k,j) \right)
    \le \Tr_{\Gamma_j} (\Omega(i,j)) < \infty.
\end{eqnarray*}
The measurable subsets $\Omega(i,j) \cap g_\iota h \Xi(k,j)$
equal to $g_\iota (\Omega(i,j) \cap h \Xi(k,j))$ and have the same value
of $\Tr_{\Gamma_j}$.
This contradicts $|I| = [G_i : A]= \infty$.
The first claim was confirmed.

We next claim that $\Omega(i,j)$ is essentially included in $\Xi(i,j)$.
By the last paragraph, we get $\chi(\Omega(i,j)) \le \bigvee_{h \in G} h\chi(\Xi(i,j))$.
it suffices to deduce a contradiction supposing $h \in G \cap G_i^c$ satisfies
$\chi(\Omega(i, j)) h \chi(\Xi(i,j)) \neq 0$.
For $g \in G_i \cap B^c$, the measurable subsets
\begin{eqnarray*}
    h \Xi(i,j) = h G_i Y_i, \quad g h \Xi(i,j) =  g h G_i Y_i
\end{eqnarray*}
are disjoint.
By the same calculation as the last paragraph, we get
\begin{eqnarray*}
    0
    < |I| \Tr_{\Gamma_j} (\Omega(i,j) \cap h \Xi(i,j))
    \le \Tr_{\Gamma_j} (\Omega(i,j)) < \infty.
\end{eqnarray*}
We get a contradiction with
$|I| = [G_i : A]= \infty$.
We conclude that $\chi(\Omega(i,j)) \le \chi(\Xi(i,j))$. Since the
assumptions are symmetric on $G$ and $\Gamma$, it follows that
that $\Omega(i,j) = \Xi(i,j)$ after subtracting null sets.

The measurable set $\Omega(i,j) = \Xi(i,j)$ gives an ME coupling of $G_i$ with $\Gamma_j$ if it is non-null. For every $1 \le i \le m$ there exists $1 \le j \le n$
satisfying $\E^{\Gamma_j}_X(\Omega(i, j)) \neq 0$. This means that $\Omega(i, j)$ is non-null and
$G_i \sim_\mathrm{ME} \Gamma_j$. By the same way,
for $1 \le j \le n$ there exists $1 \le i \le m$
satisfying $G_i \sim_\mathrm{ME} \Gamma_j$.
We get the first assertion.

Suppose $m = n = 2$. By the first assertion, there exist $i, j \in \{1,2\}$ such that
$G_1 \sim_{\mathrm{ME}} \Gamma_i$, $G_2 \sim_{\mathrm{ME}} \Gamma_j$.
If $i = j$, then there exists $k \in \{1,2\}$ satisfying
$G_k \sim_{\mathrm{ME}} \Gamma_{i+1}$
again by the first assertion. Then we get the second assertion.

We next suppose that the $G_i \times \Gamma$-action on $\Sigma$ is ergodic
for any $1 \le i \le  m$
and that the $G \times \Gamma_j$-action on $\Sigma$ is ergodic
for any $1 \le i \le n$.
Since the $G_i$-action on $X \cong \Gamma \backslash \Sigma$ is ergodic,
the function $\E^{\Gamma_j}_X (\Omega(i, j))$ is either $0$ or $1_X$.
It follows that for $1 \le i \le m$ there exists a unique
$1 \le j = \sigma(i) \le n$ such that
$\Omega(i, j)$ is non-null.
Since the assumptions are symmetric,
for $1 \le j \le n$ there exists a unique
$1 \le \rho(j) \le m$ such that
$\Omega(i, j)$ is non-null.
The maps $\sigma$ and $\rho$ must be the inverse maps of each other,
and in particular $m = n$.
Since the measure of a $\Gamma_{\sigma(i)}$ fundamental domain
of $\Omega(i, \sigma(i))$ is
\begin{eqnarray*}
    \Tr_{\Gamma_{\sigma(i)}}(\Omega(i, \sigma(i)))
    = \int_X \E^{{\Gamma_{\sigma(i)}}}_X(\Omega(i, \sigma(i))) d \nu
    = \nu(X),
\end{eqnarray*}
and that of a $G_i$ fundamental domain is
\begin{eqnarray*}
    \Tr_{G_i}(\Omega(i, \sigma(i)))
    = \int_Y \E^{G_i}_Y (\Omega(i, \sigma(i))) d \nu
    = \nu(Y),
\end{eqnarray*}
we get the following equation between two coupling indices,
\begin{eqnarray*}
    [\Gamma : G]_{\Sigma} = \nu(Y) / \nu(X)
    = [\Gamma_{\sigma(i)}: G_i]_{\Omega(i, \sigma(i))}.
\end{eqnarray*}

Suppose that the coupling $\Sigma$ comes from SOE, in other words,
the actions $G \curvearrowright X \cong \Gamma \backslash \Sigma$ and $\Gamma \curvearrowright Y \cong G\backslash \Sigma$ are essentially
free and that the actions $G_i \curvearrowright X$, $\Gamma_j \curvearrowright Y$ is ergodic. Then the actions
$G_i \curvearrowright \Gamma_\sigma(i) \backslash \Omega(i, \sigma(i))$,
$\Gamma_{\sigma(i)} \curvearrowright G_i \backslash \Omega(i, \sigma(i))$ are conjugate to
the original dot actions. It follows that the coupling $\Omega(i, \sigma(i))$ give
the stable orbit equivalence between two actions
$G_i \curvearrowright X$ and $\Gamma_{\sigma(i)} \curvearrowright Y$.
\end{proof}

\begin{acknowledgment}
This paper was written during the author's stay in UCLA.
The author is grateful to Professor Sorin Popa and Professor Narutaka Ozawa
for their encouragement and fruitful conversations.
He is supported by JSPS Research Fellowships for Young Scientists.
\end{acknowledgment}

\end{document}